\documentclass{amsart}
\usepackage{amssymb,amsfonts,latexsym}


\newtheorem{theorem}{Theorem}[section]
\newtheorem{lemma}[theorem]{Lemma}
\newtheorem{proposition}[theorem]{Proposition}
\newtheorem{corollary}[theorem]{Corollary}
\newtheorem{claim}[theorem]{Claim}
\theoremstyle{definition}
\newtheorem{definition}[theorem]{Definition}

\newtheorem{example}[theorem]{Example}

\newtheorem{remark}[theorem]{Remark}
\newtheorem{general remarks}[theorem]{General remarks}
\newtheorem{note}[theorem]{Note}

\newcommand{\Tr}{\operatorname{Tr}}
\newcommand{\id}{\operatorname{id}}

\newcommand{\End}{\operatorname{End}}
\newcommand{\sgn}{\operatorname{sgn}}

\newcommand{\Kemer}{\operatorname{Kemer}}

\newcommand{\Ind}{\operatorname{Ind}}
\newcommand{\ord}{\operatorname{ord}}

\renewcommand{\span}{\operatorname{span}}

\newcommand{\Sym}{\operatorname{Sym}}
\newcommand{\Par}{\operatorname{Par}}

\newcommand{\ben}{\begin{enumerate}}
\newcommand{\een}{\end{enumerate}}

\hyphenation{se-mi-simple co-se-mi-simple}
\begin{document}

\title[Representability and Specht problem for $G$-graded algebras]
{Representability and Specht problem for $G$-graded algebras}

\author{Eli Aljadeff}
\address{Department of Mathematics, Technion-Israel Institute of
Technology, Haifa 32000, Israel}
\email{aljadeff@tx.technion.ac.il}
\author{Alexei Kanel-Belov}
\address{Department of Mathematics, Bar-Ilan University, Ramat-Gan, Israel}
\address{Moscow institute of Open Education, Russia}
\email{kanel@mccme.ru}

\date{July 8, 2009}



\keywords{graded algebra, polynomial identity}

\thanks {The first author was partially supported by the ISRAEL SCIENCE FOUNDATION
(grant No. 1283/08) and by the E.SCHAVER RESEARCH FUND. The second
author was partially supported by the ISRAEL SCIENCE FOUNDATION
(grant No. 1178/06) and also by the Russian Fund of Fundamental
Research, grant  $\ RFBR 08-01-91300-IND_a$.}

\begin{abstract} Let $W$ be an associative \textit{PI}-algebra over a field $F$ of
characteristic zero, graded by a finite group $G$. Let $\id_{G}(W)$
denote the $T$-ideal of $G$-graded identities of $W$. We prove: 1.
{[$G$-graded \textit{PI}-equivalence]} There exists a field
extension $K$ of $F$ and a finite dimensional
$\mathbb{Z}/2\mathbb{Z}\times G$-graded algebra $A$ over $K$ such
that $\id_{G}(W)=\id_{G}(A^{*})$ where $A^{*}$ is the Grassmann
envelope of $A$. 2. {[$G$-graded Specht problem]} The $T$-ideal
$\id_{G}(W)$ is finitely generated as a $T$-ideal. 3. {[$G$-graded
\textit{PI}-equivalence for affine algebras]} Let $W$ be a
$G$-graded affine algebra over $F$. Then there exists a field
extension $K$ of $F$ and a finite dimensional algebra $A$ over $K$
such that $\id_{G}(W)=\id_{G}(A)$.
\end{abstract}

\maketitle

\begin{section}{Introduction} \label{Introduction}

The Specht problem (see \cite{Specht}) is considered as one of the
main problems in the theory of algebras satisfying polynomial
identities. The (generalized) Specht problem asks whether for a
given class of algebras (associative, Lie, Jordan, superalgebras,
etc.), the corresponding $T$-ideals of identities are finitely
based (i.e. finitely generated as a $T$-ideal). For associative
algebras over fields of characteristic zero we refer the reader to
\cite{Kemer2}, \cite{Kemer3}, \cite{Lat}, \cite{Raz2},
\cite{Shchigolev}. For Lie algebras the reader is referred to
\cite{Drens1}, \cite{Il3}, \cite{KirKonts}, \cite{KirKontsMolev},
\cite{Vau} whereas for alternative and Jordan algebras the reader
is referred to \cite{Il1}, \cite{Il2}, \cite{Medv}, \cite{VZel}.
As for applications of ``Specht type problems" in other topics we
refer the reader to \cite{Zel2} (in pro-$p$ groups),
\cite{Kemer1}, \cite{Shes}, \cite{Zel1} (in superalgebras),
\cite{Donk}, \cite{Kemer4}, \cite{Proce2}, \cite{Proce3},
\cite{Proce4}, \cite{Raz3}, \cite{Zub} (in invariant theory and
the theory of representations) and \cite{Pio1}, \cite{Proce1} (in
noncommutative geometry). For more comprehensive expositions on
polynomial identities the reader is referred to \cite{Bah},
\cite{BR}, \cite{Drens2}, \cite{DF}, \cite{For}, \cite{Lat},
\cite{Raz1}, \cite{Raz4}.

Polynomial identities were also studied in the context of
$G$-graded algebras (again, associative, Lie, Jordan). Here we
refer the reader to \cite{AGM}, \cite{AHN}, \cite{BD}, \cite{BSZ},
\cite{BaZa}, \cite{Vas}. More generally one may consider
polynomial identities for $H$-comodule algebras (see \cite{AK},
\cite{BB}) and use them to construct versal objects which
specialize into $k$-forms (in the sense of ``Galois descent") of a
given $H$-comodule algebra over the algebraic closure of $k$.
$H$-comodule algebras may be viewed as the noncommutative
analogues of principal fibre bundles where $H$ plays the role of
the structural group (see \cite{BM1}, \cite{H}, \cite{S}).

One of the main results of this paper is a solution to the Specht
problem for $G$-graded $PI$-algebras over a field of
characteristic zero where the group $G$ is finite.

Let $W$ be an associative  \textit{PI}-algebra over a field $F$ of
characteristic zero. Assume $W = \oplus_{g\in G}W_{g}$ is $G$-graded
where $G=\{g_1=e,g_2, \ldots,g_r\}$ is a finite group. For every
$g\in G$ let $X_{g}=\{x_{1,g},x_{2,g}, \ldots\}$ be a countable set
of variables of degree $g$ and let
$\Omega_{F,G}=F\langle\{X_{g_1},\ldots,X_{g_r}\}\rangle$ be the free
$G$-graded algebra on these variables. We refer to the elements of
$\Omega_{F,G}$ as graded polynomial or $G$-graded polynomials. An
evaluation of a graded polynomial $f\in \Omega_{F,G}$ on $W$ is
\textit{admissible} if the variables $x_{i,g}$ of $f$ are
substituted (only) by elements $\widehat{x}_{i,g}\in W_{g}$. A
graded polynomial $f$ is a \textit{graded identity} of $W$ if $f$
vanishes upon any admissible evaluation on $W$. Let $\id_{G}(W)\leq
\Omega_{F,G}$ be the $T$-ideal of $G$-graded identities of $W$ (an
ideal $I$ of $\Omega_{F,G}$ is a $T$-ideal if it is closed under all
$G$-graded endomorphisms of $\Omega_{F,G}$). As in the classical case,
also here, the $T$-ideal of identities is generated by multilinear
polynomials. Moreover, we can assume the identities are
\textit{strongly homogeneous}, that is every monomial in $f$ has the
same $G$-degree (the $G$-degree of
$x_{i_1,g_{i_1}}x_{i_2,g_{i_2}}\cdots x_{i_k,g_{i_k}}$ is the
element $g_{i_1}g_{i_2}\cdots g_{i_k} \in G$). In order to state our
main results we consider first the affine case. Let $W$ be a
\textit{PI} $G$-graded affine algebra.

\begin{theorem}[$G$-graded \textit{PI}-equivalence-affine]\label{PI-equivalence-affine}
There exists a field extension $K$ of $F$ and a finite dimensional
$G$-graded algebra $A$ over $K$ such that $\id_{G}(W)=\id_{G}(A)$ (in $\Omega_{F,G}$).
\end{theorem}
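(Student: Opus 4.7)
The plan is to develop a $G$-graded analogue of Kemer's theory of PI-representability for affine PI-algebras. First, I would attach to each $G$-graded $T$-ideal $\Gamma$ a numerical invariant $\kappa(\Gamma)$ (the $G$-graded Kemer index) that measures the maximal complexity of alternating patterns in multilinear $G$-graded polynomials that are \emph{not} in $\Gamma$. Concretely, for each tuple of ``folds'' $(\alpha_g)_{g\in G}$ one looks at multilinear polynomials outside $\Gamma$ that are alternating on $\alpha_g$ disjoint sets of variables of $G$-degree $g$; the index records the lexicographically largest such tuple (a primary datum, playing the role of the graded codimension exponent) together with the maximal number of extra alternating sets of ``top'' $g$-size (a secondary datum, controlling the size of the nilpotent part). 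The $PI$-hypothesis, via a $G$-graded Regev-type codimension bound, guarantees $\kappa(\Gamma)$ is finite.

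Next, I would compute $\kappa(\id_G(A))$ for a finite dimensional $G$-graded algebra $A/K$ in terms of its $G$-graded Wedderburn data. After enlarging $K$, one may assume each $G$-graded simple block of $A/J_G(A)$ has the Bahturin--Sehgal--Zaicev form $\Mat_n(K^\sigma[H])$ with an elementary $G$-grading, and the $G$-graded Jacobson radical $J_G(A)$ is nilpotent. The primary part of $\kappa(\id_G(A))$ is then read off from the graded dimensions of the simple blocks, while the secondary part is governed by the index of nilpotency of $J_G(A)$ together with the graded multiplicities appearing in a $G$-graded ``Kemer polynomial'' supported on $A$. This is the graded refinement of Giambruno--Zaicev together with a graded Wedderburn--Malcev lift.

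The heart of the proof is then an induction on the partial order of Kemer indices. I would establish the following three facts in the $G$-graded setting: (i) a $G$-graded \emph{Phoenix property}, stating that a Kemer polynomial separating two $T$-ideals of equal index survives any perturbation that does not lower the index; (ii) a \emph{representability step}, using the Braun--Kemer--Razmyslov theorem applied to the affine relatively free algebra $U_W=\Omega_{F,G}/\id_G(W)$ (regarded as an ordinary affine $PI$-algebra after forgetting the grading, whose nilpotency of the Jacobson radical is inherited in the graded sense), to embed $U_W$ into a matrix algebra over a commutative affine $F$-algebra; (iii) a \emph{model construction}, producing for any prescribed admissible index $\kappa$ a finite dimensional $G$-graded algebra $A_\kappa$ over some extension $K/F$ whose graded Wedderburn data realize $\kappa$. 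Combining (i)--(iii), given $W$ affine and $PI$, one picks a finite dimensional $A$ with $\id_G(A)\supseteq \id_G(W)$ and the same index $\kappa(\id_G(W))$, then shows that any graded identity of $A$ not in $\id_G(W)$ would contradict the Phoenix property.

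The principal obstacle I expect is the model construction (iii) together with the Phoenix property (i). In the ungraded case one already needs delicate control of alternations versus semisimple blocks and radical powers; in the $G$-graded case one must simultaneously track the $G$-action on the alternating sets, the elementary grading on each simple block, the $2$-cocycle $\sigma$ on the fine component $K^\sigma[H]$, and the graded composition factors of $J_G(A)$-bimodules. Arranging a finite dimensional $A$ over some $K\supseteq F$ whose Wedderburn--Malcev data reproduce all of these invariants exactly, and verifying that the standard Kemer polynomial construction can be made $G$-homogeneous and survives passage to $K$, is where the bulk of the technical work will reside.
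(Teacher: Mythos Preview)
Your outline captures the right skeleton (a $G$-graded Kemer index, the Bahturin--Sehgal--Zaicev structure of graded-simple blocks, a Phoenix property, and an induction on the index), and indeed this is the architecture the paper follows. However, there are two genuine gaps in the way you propose to fill in that skeleton, and together they amount to assuming the theorem.

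\textbf{The representability step (ii) is circular.} The Braun--Kemer--Razmyslov theorem gives nilpotency of the Jacobson radical of an affine PI-algebra; it does \emph{not} give an embedding of the relatively free algebra $U_W=\Omega_{F,G}/\id_G(W)$ into matrices over a commutative affine ring. Representability of $U_W$ is precisely Corollary~\ref{representability of G-graded-affine}, which is deduced \emph{from} Theorem~\ref{PI-equivalence-affine}, not used as input to it. The paper never represents $U_W$ directly. Instead, for each Kemer point $(\alpha,s(\alpha))$ it constructs a specific representable \emph{quotient} $B_{(\alpha,s(\alpha))}$ of (an enlargement of) $U_W$, and the delicate content is that the subspace $S_0$ spanned by evaluations of Kemer polynomials is mapped \emph{isomorphically} into $B_{(\alpha,s(\alpha))}$. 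This requires the Zubrilin--Razmyslov identity together with an ``interpretation lemma'' (forcing Shirshov-base elements to become integral over adjoined central variables without killing $S_0$), and this is where the real work lies, not in BKR.

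\textbf{The final combining step and the direction of containment are off.} One starts from $\id_G(A)\subseteq\Gamma$ (a finite dimensional $A$ with \emph{fewer} identities than $W$, obtained from the ungraded case tensored with $FG$), not $\id_G(A)\supseteq\Gamma$. The Phoenix property is not used to conclude $\id_G(A)=\Gamma$ from matching indices; rather, the argument is inductive on the Kemer \emph{set} (which in the graded case is a set of incomparable points in $(\mathbb{Z}^+)^r\times\mathbb{Z}^+$, not a single lexicographically maximal tuple). One enlarges $\Gamma$ to $\Gamma'=\langle\Gamma+S_{(\alpha,s(\alpha))}\rangle$ by adjoining the Kemer polynomials of one Kemer point; by induction $\Gamma'=\id_G(A')$ for some finite dimensional $A'$; and then one shows $\Gamma=\id_G(A'\times B_{(\alpha,s(\alpha))})$. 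The Phoenix property enters only here, to guarantee that any $f\in\id_G(A')\setminus\Gamma$ (hence $f\in\langle S_{(\alpha,s(\alpha))}\rangle$) has a Kemer consequence $f''\notin\Gamma$, which then survives in $S_0\subset B_{(\alpha,s(\alpha))}$. Your ``model construction'' (iii), as stated, produces an algebra with the right index but no link to $\Gamma$; what is actually needed is a representable quotient of the relatively free algebra in which the Kemer polynomials of $\Gamma$ remain nonzero.
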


\begin{theorem}[$G$-graded Specht problem-affine]\label{Specht}
The ideal $\id_{G}(W)$ is finitely generated as a $T$-ideal.
\end{theorem}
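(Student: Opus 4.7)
The plan is to derive Theorem \ref{Specht} from Theorem \ref{PI-equivalence-affine} by adapting Kemer's inductive strategy for the classical Specht problem to the $G$-graded setting.

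By Theorem \ref{PI-equivalence-affine}, we may replace $W$ by a finite dimensional $G$-graded algebra $A$ over a field extension $K$ of $F$ with $\id_G(W)=\id_G(A)$. Since $T$-ideals of $\Omega_{F,G}$ form a lattice, it suffices to show that every ascending chain $\id_G(A)=\Gamma_0\subseteq \Gamma_1\subseteq \Gamma_2\subseteq\cdots$ of $T$-ideals stabilizes. The point of this reduction is that each $\Gamma_i$, being a $T$-ideal containing $\id_G(A)$ (and hence the $T$-ideal of identities of a PI affine $G$-graded algebra), is again of the form $\id_G(B_i)$ for some finite dimensional $G$-graded algebra $B_i$ over some field extension, by another application of Theorem \ref{PI-equivalence-affine}.

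Next I would introduce a $G$-graded analogue of Kemer's index. Given a $T$-ideal $\Gamma$ containing $\id_G(A)$, represent it as $\id_G(B)$ for $B$ finite dimensional $G$-graded, and attach to $\Gamma$ the pair $\kappa(\Gamma)=(s(\Gamma),\ell(\Gamma))$, where $s(\Gamma)$ is the dimension of the $G$-graded semisimple quotient $(B\otimes_K\bar K)/J$ (with $J$ the Jacobson radical), and $\ell(\Gamma)$ is the nilpotency index of $J$. The first step is to check that $\kappa(\Gamma)$ is intrinsic to $\Gamma$ (independent of the choice of $B$), by characterizing $s(\Gamma)$ and $\ell(\Gamma)$ in terms of the existence of multilinear graded polynomials outside $\Gamma$ that alternate in prescribed blocks of variables — a graded version of the Giambruno--Zaicev / Kemer identification of the Kemer index. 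Order pairs lexicographically; then $\kappa(\Gamma_0)\geq\kappa(\Gamma_1)\geq\cdots$, so after finitely many steps the index is constant, and the remaining task is to prove stabilization on each constant-index stratum.

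At fixed index $\kappa=(s,\ell)$ I would argue as follows. Attach to each $\Gamma$ of index $\kappa$ its \emph{Kemer polynomials}: multilinear graded polynomials not in $\Gamma$ possessing the maximal possible pattern of alternating folds of size $s$ in variables of appropriate $G$-degrees and the maximal number $\ell-1$ of ``radical'' folds. Using a $G$-graded Zubrilin--Razmyslov-type argument (Capelli-like alternation identities together with trace identities coming from the $G$-graded block structure of $B^{ss}$), show that the $T$-ideal generated over $\id_G(A)$ by finitely many Kemer polynomials of $\Gamma_i$ exhausts everything of the current index, so that further strict inclusions $\Gamma_i\subsetneq \Gamma_{i+1}$ must drop the index, contradicting constancy and forcing stabilization.

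The main obstacle is the adaptation of the Kemer combinatorics to the graded setting: unlike in the classical case, the semisimple part decomposes as a product of $G$-graded simple algebras classified by triples (block size, subgroup $H\leq G$, cohomology class in $H^2(H,K^\times)$), so one must design the alternation patterns and Kemer polynomials to respect this refined data, and verify that the graded Capelli/Zubrilin mechanism still forces polynomials of large graded Kemer index to lie in $\id_G(B^{ss})$ modulo higher-index corrections. Once this graded ``Phoenix''-type statement is in place, the induction on $\kappa$ closes and finite generation of $\id_G(W)$ follows.
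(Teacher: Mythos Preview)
Your overall architecture---reduce to the ascending chain condition, invoke Theorem \ref{PI-equivalence-affine} on each term, introduce a Kemer-type index, and induct---matches the paper's. But the ``fixed index'' step, which is the heart of the matter, has a genuine gap.

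First, the index you propose is too coarse. The paper's Kemer index is not a single pair $(s,\ell)$ but a finite \emph{set} of points $(\alpha_{g_1},\ldots,\alpha_{g_r};s)\in(\mathbb{Z}^{+})^{r}\times\mathbb{Z}^{+}$ recording the dimension of \emph{each} homogeneous component of the semisimple part, and a $T$-ideal may have several incomparable maximal points. Using only the total semisimple dimension loses the control one needs over graded alternating sets.

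Second, and more seriously, your proposed mechanism at a fixed Kemer point---``the $T$-ideal generated by finitely many Kemer polynomials of $\Gamma_i$ exhausts everything of the current index via a Zubrilin--Razmyslov argument''---is not what makes the induction close, and it is not clear it can be made to work. The Zubrilin--Razmyslov machinery in this paper is consumed entirely in the proof of Theorem \ref{PI-equivalence-affine}; it plays no direct role in the Specht argument. The actual key step is structural: by the Bahturin--Sehgal--Zaicev classification together with the \emph{finiteness of the Schur multiplier} $H^{2}(H,K^{*})$ for $H\leq G$, there are only \emph{finitely many} isomorphism classes of $G$-graded semisimple algebras with a given graded dimension vector $\alpha$. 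Hence, after passing to a subsequence, all basic components of the $A_i$ at the chosen Kemer point share an isomorphic semisimple part $C$. One then forms the universal models $\widehat{C}_i = C * K\langle X_G\rangle/(I_i+J)$ (with $J$ forcing nilpotency index $s$), checks they are finite dimensional, and observes that the surjections $\widehat{C}_i\twoheadrightarrow\widehat{C}_{i+1}$ between finite-dimensional algebras must eventually be isomorphisms. This produces a \emph{single} algebra $B$ with $\id_G(A_i)=\id_G(B\times \widehat{A}_{i,1}\times\cdots\times\widehat{A}_{i,r_i})$ for all $i$ in the subsequence, after which adding the Kemer polynomials of $B$ strictly lowers the Kemer set and induction finishes the job.

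You correctly flag the BSZ classification as relevant, but you frame it as a combinatorial obstacle (``design alternation patterns to respect this refined data'') rather than as the source of the crucial \emph{finiteness} statement. Without that finiteness, there is no evident reason the chain should stabilize at a fixed Kemer point, and your sketch does not supply one.
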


In order to state the results for arbitrary $G$-graded algebras
(i.e. not necessarily affine) recall that the {\it Grassmann
algebra} $E$ over an unspecified infinite-dimensional $K$-vector
space is a $\mathbb{Z}/2\mathbb{Z}$-graded algebra where the
components of degree zero and one, denoted by $E_{0}$ and $E_{1}$,
are spanned by products of even and odd number of vectors
respectively. For any $\mathbb{Z}/2\mathbb{Z}$-graded algebra
$B=B_{0}\oplus B_{1}$ over $K$, we let $B^{*}= B_{0}\otimes_{K}
E_{0} \oplus B_{1}\otimes_{K} E_{1}$ be the {\it Grassmann
envelope} of $B$. If the algebra $B$ has an additional
(compatible) $G$-grading, that is $B$ is $\mathbb{Z}/2\mathbb{Z}
\times G$-graded, then $B_{0}$ and $B_{1}$ are $G$-graded and we
obtain a natural $G$-grading on $B^{*}$.

Following Kemer's approach (see \cite{Kemer1}), the results above
for $G$-graded affine algebras together with a general result of
Berele and Bergen in \cite{BB} give:

\begin{theorem}[$G$-graded \textit{PI}-equivalence]\label{PI-equivalence-general}
Let $W$ be a \textit{PI} $G$-graded algebra over $F$. Then there
exists a field extension $K$ of $F$ and a finite dimensional
$\mathbb{Z}/2\mathbb{Z} \times G$-graded algebra $A$ over $K$ such
that $\id_{G}(W)=\id_{G}(A^{*})$ where $A^{*}$ is the Grassmann
envelope of $A$.

\end{theorem}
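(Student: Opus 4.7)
My plan is to reduce the non-affine statement to Theorem~\ref{PI-equivalence-affine} (the affine version) by passing through a Grassmann envelope. The key external input is the graded analogue of Kemer's reduction, due to Berele and Bergen \cite{BB}, which converts an arbitrary $G$-graded \textit{PI}-algebra into an affine $(\Z/2\Z\times G)$-graded object whose Grassmann envelope carries the same $G$-graded identities as $W$.

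First, I would invoke Berele--Bergen to produce an affine $(\Z/2\Z\times G)$-graded algebra $B$ (over $F$, or over a suitable extension) together with an equality
\[
\id_{G}(W) \;=\; \id_{G}(B^{*}),
\]
where $B^{*}$ is the Grassmann envelope of $B$ with respect to the $\Z/2\Z$-projection of its $\Z/2\Z\times G$-grading, and where the residual $G$-grading on $B$ induces the $G$-grading on $B^{*}$ described in the paragraph preceding the theorem. Second, I would apply Theorem~\ref{PI-equivalence-affine} to $B$, now regarded as a $(\Z/2\Z\times G)$-graded \emph{affine} \textit{PI}-algebra; this yields a field extension $K$ of $F$ and a finite dimensional $(\Z/2\Z\times G)$-graded $K$-algebra $A$ satisfying
\[
\id_{\Z/2\Z\times G}(B) \;=\; \id_{\Z/2\Z\times G}(A).
\]
Third, I would verify the naturality statement that the $G$-graded identities of a Grassmann envelope depend only on the $(\Z/2\Z\times G)$-graded identities of the underlying algebra: evaluating a strongly homogeneous $G$-graded multilinear polynomial on $B^{*}$ decomposes, after separating the Grassmann factors in each slot according to the parity dictated by the substitution, into evaluations of $(\Z/2\Z\times G)$-graded multilinear polynomials on $B$. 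Hence $\id_{G}(B^{*})=\id_{G}(A^{*})$, and combining with the first step gives $\id_{G}(W)=\id_{G}(A^{*})$.

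The main obstacle is the first step: one must either cite Berele--Bergen's theorem in the precise graded form stated above, or rework their argument in the presence of the additional $G$-grading to ensure that $B$ can indeed be chosen both affine and $(\Z/2\Z\times G)$-graded. Once this reduction is secured, the second step is an immediate invocation of the already-established affine theorem, and the third step is a routine check from the definition of the Grassmann envelope and the compatibility of the two gradings.
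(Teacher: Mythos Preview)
Your approach is essentially the same as the paper's, with one cosmetic difference in packaging. The paper makes the first step explicit rather than citing it as a black box: it forms $W^{*}=W\otimes E$ (a $\Z/2\Z\times G$-graded algebra), invokes Berele--Bergen's Lemma~1 to obtain an affine $\Z/2\Z\times G$-graded algebra with the same $\Z/2\Z\times G$-graded identities as $W^{*}$, applies Theorem~\ref{PI-equivalence-affine} to get a finite dimensional $A$ with $\id_{\Z/2\Z\times G}(W^{*})=\id_{\Z/2\Z\times G}(A)$, and then applies the $*$-operator once more together with $\id_{G}(W)=\id_{G}(W^{**})$ to conclude $\id_{G}(W)=\id_{G}(A^{*})$. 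In other words, what you isolate as the ``main obstacle'' in your Step~1 is exactly what the paper resolves by the two-fold application of $*$ and the involution identity $\id_{G}(W)=\id_{G}(W^{**})$; your Step~3 (naturality of $G$-identities of Grassmann envelopes under $\Z/2\Z\times G$-graded \textit{PI}-equivalence) is precisely the content of that final application of $*$.
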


\begin{theorem}[$G$-graded Specht problem]\label{Specht-general}
The ideal $\id_{G}(W)$ is finitely generated as a $T$-ideal.
\end{theorem}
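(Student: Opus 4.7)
The plan is to combine Theorem~\ref{PI-equivalence-general} with Theorem~\ref{Specht} through the Grassmann envelope construction. By Theorem~\ref{PI-equivalence-general} there is a field extension $K/F$ and a finite dimensional $\Z/2\Z \times G$-graded $K$-algebra $A$ such that $\id_{G}(W) = \id_{G}(A^{*})$ in $\Omega_{F,G}$. It therefore suffices to prove that $\id_{G}(A^{*})$ is finitely generated as a $T$-ideal. Since in characteristic zero a $T$-ideal is determined by its multilinear components, one can work over $K$ and descend to $F$ routinely, so this base change is not a serious issue.

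Next, the algebra $A$, being finite dimensional, is an affine $PI$-algebra graded by the finite group $\Z/2\Z \times G$. Applying Theorem~\ref{Specht} to $A$ with $\Z/2\Z \times G$ in place of $G$ produces finitely many multilinear strongly homogeneous polynomials $h_{1},\ldots,h_{s} \in \Omega_{K,\Z/2\Z \times G}$ that generate $\id_{\Z/2\Z \times G}(A)$ as a $T$-ideal.

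The remaining step is to transfer this finite generation through the Grassmann envelope. To each multilinear strongly homogeneous $\Z/2\Z \times G$-graded polynomial $h(y_{1},\ldots,y_{n})$ with $\deg(y_{i}) = (\epsilon_{i},g_{i})$ one associates a multilinear strongly homogeneous $G$-graded polynomial $h^{*}(x_{1},\ldots,x_{n})$, with $\deg(x_{i}) = g_{i}$, obtained by multiplying each monomial of $h$ by the Grassmann sign recording the reordering of its odd-indexed variables. A direct expansion of an admissible $G$-graded evaluation of $h^{*}$ on $A^{*} = \bigoplus_{\epsilon,g} A_{(\epsilon,g)} \otimes E_{\epsilon}$ shows that $h \in \id_{\Z/2\Z \times G}(A)$ if and only if $h^{*} \in \id_{G}(A^{*})$. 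Applying the correspondence to $h_{1},\ldots,h_{s}$ yields a finite subset $h_{1}^{*},\ldots,h_{s}^{*} \subset \id_{G}(A^{*})$, and one then argues that these generate $\id_{G}(A^{*})$ as a $T$-ideal, completing the proof.

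The main obstacle is this last claim. The bijection $h \leftrightarrow h^{*}$ is linear on multilinear strongly homogeneous pieces, whereas $T$-ideal generation permits substitution of arbitrary $G$-graded polynomials into the $h_{i}^{*}$. One must therefore verify that every admissible $G$-graded substitution into $h_{i}^{*}$ corresponds, through the sign-twisted dictionary and after polarization into multilinear components, to a combination of admissible $\Z/2\Z \times G$-graded substitutions into $h_{i}$. This is the standard but delicate Grassmann-sign bookkeeping from Kemer's theory, transposed to the $G$-graded context; once carried out, finite generation of $\id_{\Z/2\Z \times G}(A)$ descends to finite generation of $\id_{G}(A^{*})$, establishing Theorem~\ref{Specht-general}.
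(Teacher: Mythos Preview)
Your approach is correct and uses the same two ingredients as the paper---Theorem~\ref{PI-equivalence-general} and the affine case Theorem~\ref{Specht}, linked through the Grassmann envelope---but organizes them differently, and the paper's organization neatly sidesteps precisely the ``main obstacle'' you flag.

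Rather than exhibiting explicit generators of $\id_{G}(A^{*})$, the paper argues via the ascending chain condition. One takes an ascending chain $\Gamma_{1} \subseteq \Gamma_{2} \subseteq \cdots$ of $G$-graded $T$-ideals with union $\id_{G}(W)$; after throwing in a fixed finite set of identities so that each $\Gamma_{i}$ is the $T$-ideal of a \textit{PI} $G$-graded algebra, Theorem~\ref{PI-equivalence-general} gives $\Gamma_{i} = \id_{G}((A_{i})^{*})$ for finite dimensional $\mathbb{Z}/2\mathbb{Z} \times G$-graded algebras $A_{i}$. Applying $*$ yields an ascending chain $\id_{\mathbb{Z}/2\mathbb{Z} \times G}(A_{1}) \subseteq \id_{\mathbb{Z}/2\mathbb{Z} \times G}(A_{2}) \subseteq \cdots$ of $T$-ideals of finite dimensional (hence affine) algebras, which stabilizes by Theorem~\ref{Specht} applied to the group $\mathbb{Z}/2\mathbb{Z} \times G$. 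Since $*$ is an involution, the original chain stabilizes too.

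The point is that the paper only needs the fact that the multilinear correspondence $h \leftrightarrow h^{*}$ preserves the identity/non-identity dichotomy (so that $*$ is an inclusion-preserving involution on $T$-ideals). This is exactly the computation you already sketch. Your route, by contrast, needs the stronger statement that $*$ commutes with $T$-ideal \emph{generation}, i.e.\ that $\langle h_{1},\ldots,h_{s}\rangle_{T}$ corresponds under $*$ to $\langle h_{1}^{*},\ldots,h_{s}^{*}\rangle_{T}$; this is the ``delicate Grassmann-sign bookkeeping'' you mention, and while it is true, verifying it is genuinely more work than the ACC argument. In short: your proof is sound, but the paper's ACC reformulation buys you the conclusion without ever having to track how the sign-twist interacts with substitutions.
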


In the last section of the paper we show how to ``pass'' from the
affine case to the general case using Berele and Bergen result.

Note that the $G$-graded algebra $W$ mentioned above (affine or non-affine)
is assumed to be (ungraded) \textit{PI}, i.e. it satisfies an
ungraded polynomial identity (an algebra may be $G$-graded
\textit{PI} even if it is not \textit{PI}; for instance take $W$
a free algebra over a field $F$ generated by two or more
indeterminates with trivial $G$-grading where $G\neq \{e\}$ (i.e.
$W_{g}=0$ for $g \neq \{e\}$)). Note that Theorem
\ref{PI-equivalence-affine} and Theorem
\ref{PI-equivalence-general} are false if $W$ is not \textit{PI}.
The $G$-graded Specht problem remains open for $G$-graded
\textit{PI} but non-\textit{PI}-algebras (affine or non-affine).

\begin{remark} All algebras considered in this paper are algebras over a
fixed field $F$ of characteristic zero. Some of the algebras will
be finite dimensional over field extensions of $F$. Whenever we
say that ``there exists a finite dimensional algebra $A$ such
that..." (without specifying the field over which this occurs) we
mean that ``there exists a field extension $K$ of $F$ and a finite
dimensional algebra $A$ over $K$ such that...". Since for fields
of characteristic zero (see, e.g., \cite{For})
$\id_{G}(W)=\id_{G}(W\otimes_{F}L)$ (in $\Omega_{F,G}$) where $L$
is any field extension of $F$ it is easy to see (and well known)
that for the proofs of the main theorems of the paper we can
always assume that the field $F$ (as well as its extensions) is
algebraically closed. It is convenient to do so since over
algebraically closed fields it is easier to describe the possible
structures of $G$-graded, finite dimensional simple algebras.
\end{remark}

 In our exposition we will follow (at least partially) the
general idea of the proofs in the ungraded case as they appear in
\cite{BR}. In the final steps of the proof of Theorem
\ref{PI-equivalence-affine} we apply the Zubrilin-Razmyslov
identity, an approach which is substantially different from the
exposition in \cite{BR}. It should be mentioned however that the
authors of \cite{BR} hint that the Zubrilin-Razmyslov identity
could be used to finalize the proof. In addition to the above
mentioned difference, there are several substantial obstacles
which should be overcome when generalizing from the ungraded case
to the $G$-graded case (and especially to the case where the group
$G$ is non-abelian). Let us mention here the main steps of the
proof.

As in the ungraded case, we ``approximate'' the $T$-ideal $\id_{G}(W)$
by $\id_{G}(A)\subseteq \id_{G}(W)$ where $A$ is a $G$-graded
finite dimensional algebra. Then by induction we get ``closer'' to
$\id_{G}(W)$. The first step is therefore the statement which
``allows the induction to get started'' namely showing the
existence of a $G$-graded finite dimensional
algebra $A$ with $\id_{G}(A) \subseteq \id_{G}(W)$.
We point out that already in this step we need to assume that $G$
is finite.

In order to apply induction we represent the $T$-ideal
$\Gamma$ by a certain finite set of parameters $\{(\alpha,s)\} \subset
(\mathbb{Z}^{+})^{r}\times \mathbb{Z}^{+}$ which we call
\textit{Kemer points}. To each Kemer point $(\alpha,s)$ we attach
a certain set of polynomials, called \textit{Kemer polynomials},
which are \textit{outside} $\Gamma$. These polynomials are $G$-graded,
multilinear and have alternating sets of cardinalities as
prescribed by the point $(\alpha,s)$. This is the point where our
proof differs substantially from the ungraded case and moreover
where the noncommutativity of the group $G$ comes into play. Alternating
$G$-graded variables where $G$ is non-abelian yields monomials
which belong to different $G$-graded components. This basic fact
led us to consider alternating sets which are homogeneous, i.e. of
variables that correspond to the same $g$-component.

As mentioned above, the Kemer polynomials that correspond to the
point $(\alpha,s)$ do not belong to the $T$-ideal $\Gamma$ and
hence, if we add them to $\Gamma$ we obtain a larger $T$-ideal
$\Gamma^{'}$ of which $(\alpha,s)$ is not a Kemer point.
Consequently the Kemer points of $\Gamma^{'}$ are ``smaller''
compared to those of $\Gamma$ (with respect to a certain
ordering). By induction, there is a $G$-graded finite dimensional
algebra $A^{'}$ with $\Gamma^{'}=\id_{G}(A^{'})$.

Let us sketch the rest of proof of Theorem \ref{PI-equivalence-affine}.
Let $\mathcal{W}_{\Gamma} =
F\langle\{X_{g_1},\ldots,X_{g_r}\}\rangle/ \Gamma$ be the
relatively free algebra of the ideal $\Gamma$. Clearly
$\id_{G}(\mathcal{W}_{\Gamma})=\Gamma$. We construct a
representable algebra $B_{(\alpha,s)}$ (i.e. an algebra which
can be $G$-graded embedded in a $G$-graded matrix algebra
over a large enough field $K$) which is on one hand a
$G$-graded homomorphic image of the relatively free algebra
$\mathcal{W}_{\Gamma}$ and on the other hand the ideal $I$ in
$\mathcal{W}_{\Gamma}$, generated by the Kemer polynomials that
correspond to the point $(\alpha,s)$, is mapped isomorphically.
Then we conclude that $\Gamma = \id_{G}(B_{(\alpha,s)} \oplus
A^{'})$.

The exposition above does not reveal a fundamental feature of the
proof. One is able to prove, using Zubrilin-Razmyslov theory, that
elements of $I$ which correspond to (rather than generated by)
Kemer polynomials are mapped isomorphically into $B_{(\alpha,s)}$.
Clearly this is not sufficient. In order to show that the
``entire'' ideal $I$ in $\mathcal{W}_{\Gamma}$ is mapped
isomorphically one needs to show that any non-zero element in $I$
generates another element in $I$ which corresponds to a Kemer
polynomial. This is the so called \textit{Phoenix property}. It is
fair to say that a big part (if not the main part) of the proof of
Theorem \ref{PI-equivalence-affine} is devoted to the proof of the
Phoenix property of Kemer polynomials. This is achieved by
establishing a fundamental connection between Kemer points, Kemer
polynomials and the structure of $G$-graded finite dimensional
algebras. Here we use a key result of Bahturin, Sehgal and Zaicev
in which they fully describe the structure of $G$-graded, finite
dimensional $G$-simple algebras in terms of fine and elementary
gradings (see \cite{BSZ} and Theorem \ref{BSZ} below).

\begin{remark}
This is a second place where the noncommutativity of the group $G$
comes into play. It is not difficult to show that if $G$ is
abelian then a $G$-graded, finite dimensional simple algebra (over
an algebraically closed field $F$ of characteristic zero) is the
direct product of matrix algebras of the \underline{same degree}. This is not the
case in general (although not impossible) if $G$ is non abelian.
\end{remark}

After completing the proof of Theorem \ref{PI-equivalence-affine}
we turn to the proof of Theorem \ref{Specht} (Specht problem).
This is again based on the above mentioned result of Bahturin,
Sehgal and Zaicev. The main point is that one can deduce from
their result that if $F$ is algebraically closed then the number
of non-isomorphic $G$-gradings which can be defined on a given
semisimple algebra $A$ is finite. Interestingly, this is in
contrast to the case where the ``grading'' is given by other type
of Hopf algebras (see \cite{AEGN}, \cite{AK}). For instance if $H$
is the Sweedler algebra of dimension $4$ over the field of complex
numbers, then there exist infinitely many non-isomorphic
$H$-comodule structures on $M_{2}(\mathbb{C})$.

\begin{remark}
It is important to mention that the proof of Theorem
\ref{PI-equivalence-affine} (as, in fact, all proofs known to us
of ``Specht type problems") can be viewed as an applications of
the Grothendieck approach to noncommutative polynomials. Indeed
one has to translate properties of finite dimensional algebras $B$
(dimension of the $g$-homogeneous component of the semisimple part
of $B$ , index of nilpotency of the radical $J(B)$) which we call
``geometric" to a ``functional" or ``combinatorial" language of
polynomial identities (see Sections \ref{the index of $G$-graded
$T$-ideals}, \ref{The index of finite dimensional $G$- graded
algebra}, \ref{Section: Kemer's Lemma $1$}, \ref{Section: Kemer's
Lemma $2$}, \ref{Appendix A} and Theorems
\ref{TheoremAffineAdequateModel}, \ref{TheoremAdequateModel}).
\end{remark}

\begin{remark}

As in the ungraded case also here the solution of the Specht
problem does not yield explicit generating sets of the $T$-ideals
of identities. Nevertheless in some special cases such generating
sets were found and in particular the Specht problem was solved
(see \cite{AHN}, \cite{BD}, \cite{Vas} ).

\end{remark}

\begin{remark}[Codimension growth]

In \cite{AGM} the codimension growth of $G$-graded algebras where
$G$ is a finite \underline{abelian} group was considered. It is proved that if
$A$ is a $G$-graded finite dimensional algebra then

$$
\exp^G(A)=\lim_{n\to \infty}\sqrt[n]{c^G_n(A)}
$$

exists and is an integer. Here $c^G_n(A)$ denotes the dimension of
the subspace of multilinear elements in $n$ free generators in the
relatively free $G$-graded algebra of $A$.
\end{remark}

Applying Theorem \ref{PI-equivalence-affine} we obtain:

\begin{corollary} Let $W$ be a \textit{PI}, $G$-graded affine algebra where
$G$ is a finite abelian group. Then
$$
\exp^G(W)=\lim_{n\to \infty}\sqrt[n]{c^G_n(W)}
$$

exists and is an integer.

\end{corollary}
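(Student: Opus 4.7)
The plan is to reduce the statement to the finite-dimensional case and then quote the result from \cite{AGM}. The only new ingredient needed is the observation that the $G$-graded codimension $c^G_n(W)$ depends only on the $T$-ideal $\id_{G}(W)$: indeed, $c^G_n(W)$ is by definition the dimension of the multilinear component of degree $n$ in the relatively free $G$-graded algebra $\Omega_{F,G}/\id_{G}(W)$, so any two $G$-graded algebras with the same $T$-ideal of $G$-graded identities have the same codimension sequence.

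With this reduction in mind, I would first invoke Theorem \ref{PI-equivalence-affine}: since $W$ is a $PI$, $G$-graded affine algebra over $F$, there exists a field extension $K/F$ and a finite-dimensional $G$-graded algebra $A$ over $K$ with $\id_{G}(W) = \id_{G}(A)$ in $\Omega_{F,G}$. Consequently $c^G_n(W) = c^G_n(A)$ for every $n$, so it suffices to prove that $\exp^G(A) = \lim_{n\to\infty}\sqrt[n]{c^G_n(A)}$ exists and is an integer for the finite-dimensional $G$-graded algebra $A$.

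At this point I would directly apply the main theorem of \cite{AGM}, which asserts exactly that: for a finite-dimensional $G$-graded algebra (with $G$ a finite abelian group), the $G$-graded exponent exists and is a non-negative integer. One small technical point is that the algebra $A$ produced by Theorem \ref{PI-equivalence-affine} lives over $K$ rather than over $F$, but the codimensions are invariant under extension of the base field in characteristic zero (as noted in the remark of the introduction, $\id_{G}(A) = \id_{G}(A\otimes_K L)$ for any field extension $L$), so this plays no role. The assumption that $G$ be abelian is used only in this last step, precisely where we appeal to \cite{AGM}. There is no real obstacle in the argument; the work is all in Theorem \ref{PI-equivalence-affine}, and the corollary is a clean two-line consequence once that theorem is in hand.
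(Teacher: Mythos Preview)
Your proposal is correct and matches the paper's approach exactly: the paper simply states ``Applying Theorem \ref{PI-equivalence-affine} we obtain'' the corollary, which is precisely the reduction you describe---pass to a finite-dimensional $G$-graded algebra with the same $T$-ideal (hence the same codimension sequence) and then invoke the result of \cite{AGM}.
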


Before embarking into the proofs (sections $2-13$) the reader is
advised to read Appendix $A$ in which we present some of the basic
ideas which show the connection between the structure of
polynomial and finite dimensional algebras. These ideas are
fundamental and being used along the entire paper for $G$-graded
algebras. For simplicity, in this appendix, we present them for
ungraded algebras. Of course the reader who is familiar with
Kemer's proof of the Specht problem may skip the reading of this
appendix.

\begin{remark}
It came to our attention that Irina Sviridova recently obtained
similar results in case the grading group $G$ is finite
\underline{abelian}. Sviridova' results and the results of this
paper were obtained independently.
\end{remark}

\end{section}

\begin{section} {Getting started} \label{Start}

In this section we show that $\id_{G}(W)$ contains the $T$-ideal
of identities of a finite dimensional graded algebra.

\begin{proposition}

Let $G$ be a finite group and $W$ a $G$-graded affine algebra.
Assume $W$ is (ungraded) \textit{PI}. Then there exists a finite
dimensional $G$-graded algebra $A_{G}$ with $\id_{G}(A_{G})\subset
\id_{G}(W)$.

\end{proposition}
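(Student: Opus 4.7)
The plan is to reduce the statement to the classical (ungraded) Kemer representability theorem for affine \textit{PI}-algebras by means of a Hopf-algebraic construction. Since $G$ is finite, giving a $G$-grading on $W$ is the same as endowing $W$ with a structure of module algebra over the dual Hopf algebra $H:=(FG)^{*}$, which is finite dimensional of dimension $|G|$. I would form the smash product $\widehat{W}:=W\#H$. Because $H$ is finite dimensional and $W$ is affine, $\widehat{W}$ is affine over $F$; because $\widehat{W}$ is a free left $W$-module of rank $|G|$ and $W$ is \textit{PI}, $\widehat{W}$ inherits the \textit{PI} property. Thus $\widehat{W}$ is an (ungraded) affine \textit{PI}-algebra.

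Next, apply Kemer's ungraded representability theorem to $\widehat{W}$: there exists a field extension $K$ of $F$ and a finite dimensional $K$-algebra $\widehat{A}$ with $\id(\widehat{A})\subseteq \id(\widehat{W})$ as (ungraded) $T$-ideals.

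The bridge back to the $G$-graded setting is the standard dictionary between graded identities of $W$ and ungraded identities of $\widehat{W}$ involving the orthogonal idempotents $\varepsilon_{g}\in H\subset \widehat{W}$ projecting onto the homogeneous components: a multilinear $G$-graded polynomial $f(x_{1,g_{1}},\ldots,x_{k,g_{k}})$ lies in $\id_{G}(W)$ if and only if the associated ungraded polynomial $\widetilde{f}(\varepsilon_{g_{1}}y_{1},\ldots,\varepsilon_{g_{k}}y_{k})$ lies in $\id(\widehat{W})$. Via this dictionary, the finite dimensional ungraded model $\widehat{A}$ translates into a finite dimensional $G$-graded model $A_{G}$ for $W$, yielding $\id_{G}(A_{G})\subseteq \id_{G}(W)$.

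The main obstacle is the last step: Kemer's theorem produces $\widehat{A}$ abstractly, with no $H$-action compatible with the one on $\widehat{W}$ built in. To overcome this I would replace $\widehat{A}$ by the enlargement $\widehat{A}\otimes_{K}H$ (still finite dimensional, and canonically $G$-graded by the natural grading of $H$) equipped with an appropriately chosen $H$-module algebra structure; the enlargement can only enlarge the ungraded $T$-ideal, so after translating through the idempotents $\varepsilon_{g}$ the inclusion of $G$-graded identities in $\id_{G}(W)$ is preserved. The key points that make this work are the semisimplicity and finite dimensionality of $H$, which in turn rely on $G$ being finite and $\chara F=0$; this is exactly where the hypothesis that $G$ is finite enters in an essential way.
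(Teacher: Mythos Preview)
Your strategy has the right spirit---reduce to the ungraded ``getting started'' step and then re-impose a $G$-grading---but the execution has two genuine gaps, and the smash-product detour is unnecessary.

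First, the ``dictionary'' you invoke is not a statement about $T$-ideals. The expression $\widetilde{f}(\varepsilon_{g_1}y_1,\ldots,\varepsilon_{g_k}y_k)$ involves the specific elements $\varepsilon_g\in H\subset\widehat{W}$; it is a generalized (weak) polynomial identity with constants, not an element of $\id(\widehat{W})$. Kemer's theorem gives you control only over the ordinary $T$-ideal $\id(\widehat{W})$, so the inclusion $\id(\widehat{A})\subseteq\id(\widehat{W})$ cannot be pushed through this dictionary as written.

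Second, the object $\widehat{A}\otimes_K H$ with $H=(FG)^*$ is not ``canonically $G$-graded by the natural grading of $H$'': as an algebra $(FG)^*\cong F^{|G|}$ is a product of copies of $F$, and its idempotents $\varepsilon_g$ must all sit in degree $e$, so the only grading it carries is the trivial one (unless $G$ is abelian). What you want here is $FG$, not $(FG)^*$.

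The paper avoids both problems by a direct argument. It never forms the smash product; instead it takes any finite dimensional ungraded $A$ with $\id(A)\subset\id(W)$ (this is the elementary ungraded fact, far short of full Kemer representability) and sets $A_G:=A\otimes_F FG$, graded through $FG$. The key observation is that if $f\in\id_G(A_G)$ is multilinear and strongly homogeneous of degree $g$, then evaluating on $a_i\otimes g_i$ gives $\bar{f}(a_1,\ldots,a_k)\otimes g$, so the ungraded polynomial $\bar{f}$ lies in $\id(A)\subset\id(W)$; but then restricting $\bar{f}$ to homogeneous substitutions shows $f\in\id_G(W)$. No idempotent dictionary, no $H$-action on $\widehat{A}$, is needed. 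Your argument becomes correct once you drop the smash product, replace $H$ by $FG$ in the tensor, and use this direct comparison instead of the $\varepsilon_g$-translation.
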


\begin{proof}

Let $I=\id(W)$ denote the ideal of ungraded identities of $W$ and
let $\mathcal{W}=F\langle X \rangle/I$ be the corresponding
relatively free algebra. We fix an epimorphism
$\eta:\mathcal{W}\longrightarrow W$. For every $g\in G$, let
$\mathcal{W}_{g}$ be the inverse image of the $g$-homogeneous
component $W_{g}$ under $\eta$. Note that $\mathcal{W}$ is not
$G$-graded (the components may intersect non-trivially), nevertheless
$\mathcal{W}_{g}\mathcal{W}_{h}\subset \mathcal{W} _{gh}$ for
every $g,h\in G$.

Let $FG$ denote the group algebra of $G$ over $F$ and let
$\mathcal{W}_{G}$ be the subalgebra of $\mathcal{W} \otimes_{F} FG$
generated by the subspaces
$\langle\{\mathcal{W}_{g}\otimes Fg\}_{g\in G}\rangle$. We
claim that

\begin{enumerate}
\item\label{a_1}

$\mathcal{W}_{G}$ is $G$-graded where
$(\mathcal{W}_{G})_{g}=\mathcal{W}_{g}\otimes Fg$, $g\in G$.

\item\label{a_2}

The map $\mathcal{W}_{G} \longrightarrow W$ given by $x\otimes g
\longmapsto \eta(x)$ is a $G$-graded epimorphism.

\end{enumerate}

Indeed, it is clear that the intersection of the spaces
$(\mathcal{W}_{G})_{g}$ and $(\mathcal{W}_{G})_{h}$ is zero for
any $g,h\in G$ with $g\neq h$. Furthermore,
$(\mathcal{W}_{G})_{g}(\mathcal{W}_{G})_{h}=(\mathcal{W}_{g}\otimes
Fg)(\mathcal{W}_{h}\otimes Fh)\subset \mathcal{W}_{gh}\otimes Fgh=
(\mathcal{W}_{G})_{gh}$ proving \ref{a_1}. The proof of \ref{a_2}
is now clear.

It follows from the claim that $\id_{G}(W)\supset
\id_{G}(\mathcal{W}_{G})$. Now, from the classical theory (see \cite{BR}, Corollary 4.9)
we know
that there exists a finite dimensional algebra $A$ such that
$\id(A)\subset \id(\mathcal{W})$. Consider the algebra $A _{G}
=A\otimes_{F}FG$, $G$-graded via $FG$.

\begin{claim}
$\id_{G}(A_{G})\subset \id_{G}(\mathcal{W}_{G})$
\end{claim}

To see this let $f(x_{i,g})\in \id_{G}(A_{G})$ be a multilinear
polynomial. As noted above we can assume that $f=f_{g}$ is $g$-
strongly homogeneous (i.e. consisting of monomials with total
degree $g\in G$). It follows from the definition that if we ignore
the $G$-degree of each variable in $f_{g}$ we obtain an identity
of $A$. But $\id(A)\subset \id(\mathcal{W})$ so $f_{g}$ is a
graded identity of $\id_{G}(\mathcal{W}_{G})$ and the claim is
proved.

Finally we have that
$\id_{G}(A_{G})\subset \id_{G}(\mathcal{W}_{G})\subset \id_{G}(W)$
and since $G$ is finite the proposition is proved.
\end{proof}

\begin{remark}

This is the point were we need to assume that the grading group
$G$ is finite. We do not know whether the statement holds for
infinite groups.

\end{remark}

\end{section}

\begin{section}{the index of $G$-graded $T$-ideals}\label{the index of $G$-graded $T$-ideals}

Let $\Gamma$ be a $G$-graded $T$-ideal. As noted in the introduction
since the field $F$ is of characteristic zero the $T$-ideal
$\Gamma$ is generated by multilinear graded polynomials which are
strongly homogeneous.

\begin{definition}

Let $f(X_{G})=f(x_{i,g})$ be a multilinear $G$-graded polynomial
which is strongly homogeneous. Let $g\in G$ and let $X_{g}$ be the
set of all $g$-variables in $X_{G}$. Let
$S_{g}=\{x_{1,g},x_{2,g},\ldots,x_{m,g}\}$ be a subset of $X_{g}$
and let $Y_{G}=X_{G}\backslash S_{g}$ be the set of the remaining
variables. We say that $f(X_{G})$ is \textit{alternating in the
set} $S_{g}$ (or that the variables of $S_{g}$ \textit{alternate
in} $f(X_{G})$) if there exists a (multilinear, strongly
homogeneous) $G$-graded polynomial
$h(S_{g};Y_{G})=h(x_{1,g},x_{2,g},\ldots,x_{m,g};Y_{G})$ such that

$f(x_{1,g},x_{2,g},\ldots,x_{m,g};Y_{G})=\sum_{\sigma\in
\Sym(m)}\sgn(\sigma)h(x_{\sigma(1),g},
x_{\sigma(2),g},\ldots,x_{\sigma(m),g};Y_{G})$.

\end{definition}

Following the notation in \cite{BR}, if $S_{g_{i_1}},
S_{g_{i_2}},\ldots,S_{g_{i_p}}$ are $p$ disjoint sets of variables of
$X_{G}$ (where $S_{g_{i_j}}\subset X_{g_{i_j}}$) we say that
$f(X_{G})$ is alternating in $S_{g_{i_1}},
S_{g_{i_2}},\ldots,S_{g_{i_p}}$ if $f(X_{G})$ is alternating in each
set $S_{g_{i_j}}$ (note that the polynomial
$x_1x_2y_1y_2-x_2x_1y_2y_1$ is not alternating in the $x's$ nor in
the $y's$).

As in the classical theory we will consider polynomials which
alternate in $\nu$ disjoint sets of the form $S_{g}$ for every
$g\in G$. If for every $g$ in $G$, the sets $S_{g}$ have the same
cardinality (say $d_{g}$) we will say that $f(X_{G})$ is
$\nu$-fold $(d_{g_{1}},d_{g_{2}},\ldots,d_{g_{r}})$-alternating.
Further we will need to consider polynomials (again, in analogy to
the classical case) which in addition to the alternating sets
mentioned above it alternates in $t$ disjoint sets $K_{g}\subset
X_{g}$ (and also disjoint to the previous sets) such that
$\ord(K_{g})=d_{g}+1$. Note that the $g$'s in $G$ that correspond
to the $K_{g}$'s need not be different.

We continue with the definition of the \textit{graded index} of a
$T$-ideal $\Gamma$. For this we need the notion of $g$-Capelli
polynomial.

Let $X_{n,g}=\{x_{1,g},\ldots,x_{n,g}\}$ be a set of $n$ variables
of degree $g$ and let $Y=\{y_{1},\ldots,y_{n}\}$ be a set of $n$
ungraded variables.

\begin{definition}

The $g$-\textit{Capelli polynomial} $c_{n,g}$ (of degree $2n$) is
the polynomial obtained by alternating the set $x_{i,g}$'s in the
monomial $x_{1,g}y_{1}x_{2,g}y_{2}\cdots x_{n,g}y_{n}$

More precisely

$c_{n,g}=\sum_{\sigma\in
\Sym(n)}\sgn(\sigma)x_{\sigma(1),g}y_{1}x_{\sigma(2),g}y_{2}\cdots
x_{\sigma(n),g}y_{n}$

\end{definition}

\begin{remark}

Note that a $g$-Capelli polynomial yields a set of $G$-graded
polynomials by specifying degrees in $G$ to the $y$'s. So when we
say that the $g$-Capelli polynomial $c_{n,g}$ is in $\Gamma$ we
mean that all the $G$-Graded polynomials obtained from  $c_{n,g}$
by substitutions of the form $y_i \longmapsto y_{i,g}$, some $g\in
G$, are in $\Gamma$.

\end{remark}

\begin{lemma}
For every $g\in G$, there exists an integer $n_{g}$ such that the
$T$-ideal $\Gamma$ contains $c_{n_{g},g}$.

\end{lemma}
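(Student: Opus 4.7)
My plan is to reduce the statement to the case of a finite-dimensional $G$-graded algebra and then apply a standard pigeonhole argument for multilinear alternating polynomials. In the setting of this paper the $T$-ideal $\Gamma$ arises as $\id_G(W)$ for a $G$-graded PI-algebra $W$, so by the proposition of Section~\ref{Start} there exists a finite-dimensional $G$-graded algebra $A_G$, over a field extension $K$ of $F$, with $\id_G(A_G)\subseteq \Gamma$. Therefore it is enough to produce, for each $g\in G$, an integer $n_g$ such that $c_{n_g,g}\in \id_G(A_G)$.

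Next I would set $d_g:=\dim_K(A_G)_g$ and take $n_g:=d_g+1$. Consider any admissible evaluation: $x_{i,g}\mapsto \hat{x}_{i,g}\in (A_G)_g$ and $y_j\mapsto \hat{y}_j\in A_G$, where each $\hat{y}_j$ is homogeneous of whatever degree has been assigned to $y_j$ (cf.\ the remark preceding the lemma). Because $n_g>d_g$, the elements $\hat{x}_{1,g},\dots,\hat{x}_{n_g,g}$ are linearly dependent in $(A_G)_g$. Expressing one of them as a linear combination of the others and exploiting multilinearity of $c_{n_g,g}$ in the $x$-variables, one rewrites the evaluation as a sum of terms in each of which two of the alternating arguments coincide; antisymmetry then forces every such term to vanish. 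Hence $c_{n_g,g}$ evaluates to zero on every admissible substitution, so $c_{n_g,g}\in \id_G(A_G)\subseteq \Gamma$.

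There is no real obstacle here; the argument is the graded analogue of the classical fact that Capelli polynomials annihilate low-dimensional algebras. The only technical nuance worth flagging is that $c_{n,g}$ is not a single polynomial but a family parameterised by the $G$-degrees one assigns to the ungraded $y$-variables, as noted in the remark above. The dimension bound, however, is uniform in those choices, so the whole family lies in $\id_G(A_G)$ simultaneously, which is what the lemma requires.
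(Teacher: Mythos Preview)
Your proof is correct and follows essentially the same approach as the paper: reduce to a finite-dimensional $G$-graded algebra $A_G$ with $\id_G(A_G)\subseteq\Gamma$ via the proposition of Section~\ref{Start}, and take $n_g=\dim_K (A_G)_g+1$. The paper compresses the linear-dependence argument you spell out into a single ``Clearly,'' but the idea is identical.
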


\begin{proof}
Let $A_{G}$ be a finite dimensional $G$-graded algebra with
$\id_{G}(A_{G})\subset \Gamma$. Let $A_{g}$ be the $g$-homogeneous
component of $A_{G}$ and let $n_{g}=\dim(A_{g})+1$. Clearly,
$c_{n_{g},g}$ is contained in $\id_{G}(A_{G})$ and hence in
$\Gamma$.
\end{proof}

\begin{corollary}

If $f(X_{G})=f(\{x_{i,g}\})$ is a multilinear $G$-graded polynomial,
strongly homogeneous and alternating on a set $S_{g}$ of
cardinality $n_{g}$, then $f(X_{G})\in \Gamma$. Consequently there
exists an integer $M_{g}$ which bounds (from above) the
cardinality of the $g$-alternating sets in any $G$-graded
polynomial $h$ which is not in $\Gamma$.

\end{corollary}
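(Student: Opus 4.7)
The plan is to deduce this corollary directly from the preceding lemma by the standard ``Capelli substitution'' trick, exploiting only that $\Gamma$ is a $T$-ideal. The preceding lemma gives $c_{n_g,g}\in\Gamma$, so it suffices to show that every multilinear strongly homogeneous $f$ alternating on a $g$-set of cardinality $n_g$ lies in the $T$-ideal generated by $c_{n_g,g}$.

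First I would unpack the definition of ``alternating on $S_g$''. Writing $S_g=\{x_{1,g},\ldots,x_{n_g,g}\}$, there exists a multilinear, strongly homogeneous $h$ with
$$f(x_{1,g},\ldots,x_{n_g,g};Y_G)=\sum_{\sigma\in\Sym(n_g)}\sgn(\sigma)\,h(x_{\sigma(1),g},\ldots,x_{\sigma(n_g),g};Y_G).$$
Now expand $h$ as a sum of multilinear monomials. Because $h$ is multilinear, each such monomial has the shape
$$w_0\, x_{\pi(1),g}\, w_1\, x_{\pi(2),g}\, w_2\cdots x_{\pi(n_g),g}\, w_{n_g}$$
for some $\pi\in\Sym(n_g)$ and some strongly homogeneous monomials $w_0,w_1,\ldots,w_{n_g}$ in the remaining variables $Y_G$. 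Absorbing $\sgn(\pi)$ into the scalar coefficient, I may assume $\pi=\id$ in every monomial of $h$.

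The key observation is then that each such monomial of $h$ contributes, after the alternation, exactly $w_0\cdot c_{n_g,g}(x_{1,g},\ldots,x_{n_g,g};\,w_1,\ldots,w_{n_g})$. Since $c_{n_g,g}\in\Gamma$, the remark preceding the lemma permits the substitution $y_i\mapsto w_i$ (each $w_i$ being strongly homogeneous of some $G$-degree), and then left multiplication by $w_0$ keeps us inside the $T$-ideal $\Gamma$. Summing over all monomials of $h$ yields $f\in\Gamma$. For the second assertion I would take $M_g=n_g-1$: if some graded $h\notin\Gamma$ admitted a $g$-alternating set of cardinality $\geq n_g$, then multilinearization and separation into strongly homogeneous components (both valid in characteristic zero and already noted in the introduction) would produce a multilinear strongly homogeneous $h'\notin\Gamma$ alternating on a $g$-set of size $n_g$, and the first part would force $h'\in\Gamma$, a contradiction.

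There is no genuine obstacle here; the whole argument is essentially bookkeeping around the $T$-ideal closure and the shape of multilinear monomials. The only point requiring mild care is the reinterpretation of the ``ungraded'' variables $y_i$ appearing in $c_{n_g,g}$ as placeholders to be specialized to strongly homogeneous monomials of any prescribed $G$-degree, which is precisely the content of the remark that the $g$-Capelli polynomial, when declared to belong to $\Gamma$, is understood as the set of all its admissible graded specializations.
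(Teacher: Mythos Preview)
Your argument is correct. The paper states this corollary without proof, treating it as immediate from the lemma; your Capelli-substitution argument is the standard way to make this explicit, and it goes through as written. The one place that deserves a word of care is when some $w_i$ is the empty word (two alternating variables adjacent, or one at an end): there you are implicitly using the ``degenerate'' Capelli obtained by deleting $y_i$, which is also in $\id_G(A_G)\subseteq\Gamma$ by the same dimension count used in the lemma's proof.

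For comparison, there is a slightly shorter route that bypasses the Capelli polynomial altogether and uses the \emph{proof} of the lemma rather than its statement: since $\Gamma\supseteq\id_G(A_G)$ with $\dim_F(A_G)_g=n_g-1$, any multilinear $f$ alternating on $n_g$ variables of degree $g$ vanishes on every basis evaluation of $A_G$ (two of the $n_g$ variables must receive the same basis vector), hence $f\in\id_G(A_G)\subseteq\Gamma$. Your approach has the advantage of using only the lemma's conclusion $c_{n_g,g}\in\Gamma$ and the $T$-ideal property, without reopening the finite-dimensional model; the direct dimension argument is shorter but leans on the specific $A_G$ constructed in the lemma. Either is acceptable here.
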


We can now define $\Ind(\Gamma)$, the index of $\Gamma$. It will
consist of a finite set of points $(\alpha, s)$ in the lattice
$\Lambda_{\Gamma} \times D_{\Gamma} \cong
(\mathbb{Z}^{+})^{r}\times (\mathbb{Z}^{+} \cup \infty)$ where
$\alpha\in \Lambda_{\Gamma}$ and $s\in D_{\Gamma}$. We first
determine the set $\Ind(\Gamma)_{0}$, namely the projection of
$\Ind(\Gamma)$ into $\Lambda_{\Gamma}$ and then for each point
$\alpha \in \Ind(\Gamma)_{0}$ we determine $s(\alpha) \in
D_{\Gamma}$ so that $(\alpha, s(\alpha))\in \Ind(\Gamma)$. In
particular if $(\alpha, s_1)$ and $(\alpha, s_2)$ are both in
$\Ind(\Gamma)$ then $s_1=s_2$. Before defining these sets we
introduce a partial order on $(\mathbb{Z}^{+})^{r}\times
(\mathbb{Z}^{+} \cup \infty)$ starting with a partial order on
$(\mathbb{Z}^{+})^{r}$: If $\alpha = (\alpha_1,\ldots,\alpha_r)$
and $\beta= (\beta_1,\ldots,\beta_r)$ are elements in
$(\mathbb{Z}^{+})^{r}$ we put $(\alpha_1,\ldots,\alpha_r) \preceq
(\beta_1,\ldots,\beta_r)$ if and only if $\alpha_i \leq \beta_i$
for $i=1,\ldots,r$.

Next, for $(\alpha, s)$ and $(\beta, s^{'})$ in
$(\mathbb{Z}^{+})^{r}\times (\mathbb{Z}^{+} \cup \infty)$ put

$(\alpha, s) \preceq (\beta, s^{'})$ if and only if either

\begin{enumerate}

\item

$\alpha \prec \beta$ (i.e. $\alpha \preceq \beta$ and for some
$j$, $\alpha_j < \beta_j$), or

\item

$\alpha = \beta$ and $s\leq s^{'}$. (By definition $s < \infty$
for  every $s\in \mathbb{Z}^{+}$).

\end{enumerate}

\begin{definition}

A point $\alpha= (\alpha_{g_1},\ldots,\alpha_{g_r})$ is in
$\Ind(\Gamma)_{0}$ if and only if for any integer $\nu$ there
exists a multilinear $G$-graded polynomial outside $\Gamma$ with
$\nu$ alternating $g$-sets of cardinality $\alpha_{g}$ for every
$g$ in $G$.

\end{definition}

\begin{lemma}
The set $\Ind(\Gamma)_{0}$ is bounded (finite). Moreover if
$\alpha= (\alpha_{g_1},\ldots,\alpha_{g_r})\in \Ind(\Gamma)_{0}$
then any $\alpha^{'} \preceq \alpha$  is also in
$\Ind(\Gamma)_{0}$.
\end{lemma}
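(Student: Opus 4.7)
The plan is to dispatch the two claims separately, using almost exclusively the preceding corollary that supplies integers $M_g$ bounding the cardinality of any $g$-alternating set appearing in a $G$-graded polynomial outside $\Gamma$. First I would prove finiteness: if $\alpha=(\alpha_{g_1},\ldots,\alpha_{g_r})\in\Ind(\Gamma)_0$, then already specializing to $\nu=1$ in the defining property yields a polynomial outside $\Gamma$ possessing a $g$-alternating set of cardinality $\alpha_g$ for every $g$. The corollary forces $\alpha_g\le M_g$, so $\Ind(\Gamma)_0\subseteq\prod_{g\in G}\{0,1,\ldots,M_g\}$, which is finite.

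For the downward-closedness under $\preceq$, the strategy is to show that a single witnessing polynomial $f$ for $\alpha$ simultaneously witnesses every $\alpha'\preceq\alpha$. Given $\nu$, pick $f\notin\Gamma$ with $\nu$ pairwise disjoint $g$-alternating sets $S^{(1)}_g,\ldots,S^{(\nu)}_g$ of cardinality $\alpha_g$. Inside each $S^{(j)}_g$ choose an arbitrary subset $T^{(j)}_g$ of cardinality $\alpha'_g\le\alpha_g$. The key point is that antisymmetry of $f$ in $S^{(j)}_g$ implies antisymmetry of $f$ under transpositions of $T^{(j)}_g$, hence $f$ is alternating in $T^{(j)}_g$ in the sense of the paper's definition. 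The collection $\{T^{(j)}_g\}_{j,g}$ then furnishes $\nu$ disjoint $g$-alternating sets of the required cardinality $\alpha'_g$, so the very same $f$ witnesses $\alpha'\in\Ind(\Gamma)_0$.

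The one technical point needing justification is that the paper's definition of ``alternating in $S_g$'' as an antisymmetrization coincides with the ordinary notion of antisymmetry under transpositions of the variables of $S_g$. In characteristic zero this is standard: if $f$ is antisymmetric on $S_g$, set $h=\frac{1}{|S_g|!}f$, and the antisymmetrization $\sum_{\sigma\in\Sym(|S_g|)}\sgn(\sigma)\,h(\sigma\cdot S_g;Y_G)$ reproduces $f$. With this equivalence in hand, no real obstacle appears; the lemma is essentially an immediate consequence of the previous corollary together with the elementary fact that antisymmetry passes to subsets.
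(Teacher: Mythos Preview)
Your proof is correct and follows essentially the same route as the paper's own proof, which is very terse: the paper invokes the containment $\Gamma\supseteq\id_G(A)$ for finite-dimensional $A$ (which is exactly what underlies the corollary and the bounds $M_g$ you cite) for finiteness, and dismisses the downward-closedness as immediate from the definition. Your version simply unpacks both steps explicitly, including the characteristic-zero identification of ``alternating'' with ``antisymmetric under transpositions'' that makes passage to subsets transparent.
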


\begin{proof}

The first statement is a consequence of the fact that $\Gamma
\supseteq \id_{G}(A)$ where $A$ is a finite dimensional $G$-graded
algebra. The second statement follows at once from the definition
of $\Ind(\Gamma)_{0}$.
\end{proof}

The important points in $\Ind(\Gamma)_{0}$ are the extremal ones.

\begin{definition}

\begin{enumerate}

\item A point $\alpha$ in $\Ind(\Gamma)_{0}$ is \textit{extremal}
if for any point $\beta$ in  $\Ind(\Gamma)_{0}$, $\beta \succeq
\alpha \Longrightarrow \beta = \alpha$. We denote by
$E_{0}(\Gamma)$ the set of all extremal points in
$\Ind(\Gamma)_{0}$.

\item

For any point $\alpha$ in $E_{0}(\Gamma)$ and every integer $\nu$
consider the set $\Omega_{\alpha, \nu}$ of all $\nu$-folds
alternating polynomials in $g$-sets of cardinality $\alpha_{g}$
for every $g$ in $G$, which are not in $\Gamma$. For such
polynomials $f$ we consider the number $s_{\Gamma}(\alpha, \nu,
f)$ of alternating $g$-homogeneous sets (any $g\in G$) of disjoint
variables, of cardinality $\alpha_{g}+1$. Note that by the
pigeonhole principle and by the definition of $E_{0}(\Gamma)$ the
set of numbers $\{s_{\Gamma}(\alpha, \nu, f)\}_{f\in
\Omega_{\alpha, \nu}}$ is bounded. Let $s_{\Gamma}(\alpha, \nu) =
\max\{s_{\Gamma}(\alpha, \nu, f)\}_{f\in \Omega_{\alpha, \nu}}$.
Note that the sequence $s_{\Gamma}(\alpha, \nu)$ is monotonically
decreasing as a function of $\nu$ and hence there exists an
integer $\mu=\mu(\Gamma, \alpha)$ for which the sequence
stabilizes, that is for $\nu \geq \mu$ the sequence
$s_{\Gamma}(\alpha, \nu)$ is constant. We denote by $s(\alpha) =
\lim_{\mu \rightarrow \infty}(s_{\Gamma}(\alpha, \nu))=
s_{\Gamma}(\alpha, \mu)$. At this point the integer $\mu$
depends on $\alpha$. However since the set $E_{0}(\Gamma)$ is
finite we take $\mu$ to be the maximum of all $\mu$'s considered
above.

\item

We define now the set $\Ind(\Gamma)$ as the set of points
$(\alpha, s)$ in $(\mathbb{Z}^{+})^{r}\times (\mathbb{Z}^{+} \cup
\infty)$ such that $\alpha \in \Ind(\Gamma)_{0}$ and $s=
s_{\Gamma}(\alpha)$ if $\alpha \in E_{0}(\Gamma)$ (extremal) or
$s= \infty$ otherwise.

\item

Given a $G$-graded $T$-ideal $\Gamma$ which contains the graded
identities of a finite dimensional $G$-graded algebra $A$ we let
$\Kemer(\Gamma)$ (the Kemer set of $\Gamma$), to be set of points
$(\alpha, s)$ in $\Ind(\Gamma)$ where $\alpha$ is extremal. We
refer to the elements of $\Kemer(\Gamma)$ as the the Kemer points
of $\Gamma$.

\end{enumerate}

\end{definition}

\begin{remark}\label{comparison}

Let $\Gamma_{1}\supseteq \Gamma_{2}$ be $T$-ideals which contain
$\id_{G}(A)$ where $A$ is a finite dimensional $G$-graded algebra.
Then $\Ind({\Gamma_{1}})$ $\subseteq$ $\Ind({\Gamma_{2}})$. In
particular for every Kemer point $(\alpha, s)$ in
$\Ind({\Gamma_{1}})$ there is a Kemer point $(\beta, s^{'})$ in
$\Ind({\Gamma_{2}})$ with $(\alpha, s) \preceq (\beta, s^{'})$.

\end{remark}

We are now ready to define Kemer polynomials for a $G$-graded
$T$-ideal $\Gamma$.

\begin{definition}

\begin{enumerate}

\item Let $(\alpha, s)$ be a Kemer point of the $T$-ideal $\Gamma$.
A $G$-graded polynomial $f$ is called \textit{Kemer polynomial for the point} $(\alpha, s)$ if
$f$ is not in $\Gamma$ and it has at least $\mu$-folds of alternating
$g$-sets of cardinality $\alpha_{g}$ (\textit{small sets}) for
every $g$ in $G$ and $s$ homogeneous sets of disjoint variables
$Y_{g}$ (some $g$ in $G$) of cardinality $\alpha_{g}+1$
(\textit{big sets}).

\item A polynomial $f$ is {\it Kemer} for the $T$-ideal $\Gamma$
if it is Kemer for a Kemer point of $\Gamma$.

Note that a polynomial $f$ cannot be Kemer simultaneously for
different Kemer points of $\Gamma$.

\end{enumerate}
\end{definition}

\end{section}

\begin{section}{The index of finite dimensional $G$-graded
algebras}\label{The index of finite dimensional $G$- graded
algebra}

As explained in the introduction the Phoenix property of Kemer
polynomials is essential for the proof of Theorem
\ref{PI-equivalence-affine}. Since we will need this notion not
only with respect to Kemer polynomials, we give here a general
definition.

\begin{definition}(The Phoenix property)
Let $\Gamma$ be a $T$-ideal as above. Let $P$ be any property
which may be satisfied by $G$-graded polynomials (e.g. being
Kemer). We say that $P$ is ``\textit{$\Gamma$-Phoenix}'' (or in
short ``\textit{Phoenix}'') if given a polynomial $f$ having $P$
which is not in $\Gamma$ and any $f^{'}$ in $\langle f \rangle$
(the $T$-ideal generated by $f$) which is not in $\Gamma$ as well,
there exists a polynomial $f^{''}$ in $\langle f^{'} \rangle$
which is not in $\Gamma$ and satisfies $P$. We say that $P$ is
``\textit{strictly $\Gamma$-Phoenix}'' if (with the above
notation) $f^{'}$ itself satisfies $P$.

\end{definition}

Let us pause for a moment and summarize what we have at this
point. We are given a $T$-ideal $\Gamma$ (the $T$-ideal of
$G$-graded identities of a $G$-graded affine algebra $W$). We
assume that $W$ is \textit{PI} and hence as shown in Section
\ref{Start} there exists a $G$-graded finite dimensional algebra
$A$ with $\Gamma \supseteq \id_{G}(A)$. To the $T$-ideals $\Gamma$
and $\id_{G}(A)$ we attached each the corresponding set of Kemer
points. By Remark \ref{comparison} for every Kemer point
$(\alpha,s)$ of $\Gamma$ there is a Kemer point $(\beta,s^{'})$ of
$\id_{G}(A)$ (or by abuse of language, a Kemer point of $A$) such
that $(\alpha,s) \preceq (\beta,s^{'})$. Our goal is to replace
the algebra $A$ by a finite dimensional algebra $A^{'}$ with
$\Gamma \supseteq \id_{G}(A^{'})$ and such that

\begin{enumerate}

\item

The $T$-ideals $\Gamma$ and $\id_{G}(A^{'})$ have the same Kemer
points.

\item

Every Kemer polynomial of $A^{'}$ is not in $\Gamma$ (i.e.
$\Gamma$ and $A^{'}$ have the same Kemer polynomials).

\end{enumerate}

Here we establish the important connection between the
combinatorics of the Kemer polynomials of $\Gamma$ and the
structure of $G$-graded finite dimensional algebras. The
``Phoenix'' property for the Kemer polynomials of $\Gamma$ will
follow from that connection.

Let $A$ be a finite dimensional $G$-graded algebra over $F$.
It is well known (see \cite{CM}) that the Jacobson radical
$J(A)$ is $G$-graded and hence $\overline{A}=A/J(A)$ is
semisimple and $G$-graded. Moreover by the Wedderburn-Malcev
Principal Theorem for $G$-graded algebras (see \cite{StVan}) there
exists a semisimple $G$-graded subalgebra $\overline{A}$
of $A$ such that $A= \overline{A} \oplus J(A)$ as $G$-graded
vector spaces. In addition, the subalgebra $\overline{A}$
may be decomposed as a $G$-graded algebra into the
direct product of (semisimple) $G$-simple algebras
$ \overline{A}\cong A_{1}\times A_{2}\times\cdots\times A_{q}$ (by definition
$A_{i}$ is $G$-simple if it has no non-trivial $G$-graded ideals).

\begin{remark} This decomposition enables us to consider ``semisimple'' and
``radical'' substitutions. More precisely, since in order to check
whether a given $G$-graded multilinear polynomial is an identity
of $A$ it is sufficient to evaluate the variables in any spanning
set of homogeneous elements, we may take a basis consisting of
homogeneous elements in $\overline{A}$ or in $J(A)$. We refer to
such evaluations as semisimple or radical evaluations
respectively. Moreover, the semisimple substitutions may be taken
from the $G$-simple components. In what follows, whenever we
evaluate $G$-graded polynomial on $G$-graded finite dimensional algebras, we
consider only evaluations of that kind.
\end{remark}
\bigskip

In fact, in the proofs we will need a rather precise ``control" of the evaluations
in the $G$-simple components. This is
provided by a structure theorem proved by Bahturin, Sehgal and Zaicev which will play a decisive
role in the proofs of the main results of this paper.

\begin{theorem}[\cite{BSZ}]\label{BSZ}

Let $\widehat{A}$ be a $G$-simple algebra. Then there exists a
subgroup $H$ of $G$, a $2$-cocycle $f:H\times H\longrightarrow
F^{*}$ where the action of $H$ on $F$ is trivial, an integer $k$
and a $k$-tuple $(g_1=1,g_2,\ldots,g_k)\in G^{(k)}$ such that
$\widehat{A}$ is $G$-graded isomorphic to $C=F^{f}H\otimes
M_{k}(F)$ where $C_g= \span_F\{b_h \otimes E_{i,j}:
g=g_i^{-1}hg_j\}$. Here $b_h \in F^{f}H$ is a representative of
$h\in H$ and $E_{i,j}\in M_{k}(F)$ is the $(i,j)$ elementary
matrix.

In particular the idempotents $1 \otimes E_{i,j}$ as well as the identity of $\widehat{A}$ are
homogeneous of degree $e\in G$.
\end{theorem}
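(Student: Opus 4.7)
The plan is to follow the classical route for classifying gradings on semisimple finite dimensional algebras: decompose the $G$-grading as the tensor product of a \emph{fine} grading (a twisted group algebra on a subgroup $H \leq G$) and an \emph{elementary} grading (a matrix block whose grading is encoded by a tuple $(g_1,\dots,g_k)$ of group elements). The proof will proceed in three stages.

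First I would show that, forgetting the grading, $\widehat{A}$ is semisimple. The standard argument is that over a field of characteristic zero with $G$ finite, the Jacobson radical $J(\widehat{A})$ is automatically $G$-graded (this is exactly the fact cited as \cite{CM} earlier in the paper). The $G$-simplicity of $\widehat{A}$ then forces $J(\widehat{A})=0$. Combined with $F$ algebraically closed one obtains a Wedderburn decomposition $\widehat{A} \cong \prod_i M_{n_i}(F)$; showing that the homogeneous components ``transitively permute'' the simple blocks, in the sense that for any two primitive central idempotents $e,e'$ there is a homogeneous unit $u$ with $u e u^{-1}=e'$, forces equality of the sizes $n_i$. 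This uses $G$-simplicity again, applied to the graded ideal generated by a single block.

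Next I would extract the fine part. Set $H$ to be the collection of $h \in G$ for which $\widehat{A}_h$ contains an element that is invertible in $\widehat{A}$; a direct verification shows that $H$ is a subgroup and that for each $h \in H$ one can choose a homogeneous unit $b_h \in \widehat{A}_h$. The associativity of multiplication forces $b_h b_{h'} = f(h,h') b_{hh'}$ for a $2$-cocycle $f:H\times H \to F^*$, and the subalgebra they span is $G$-graded isomorphic to $F^{f}H$.

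Finally I would build the elementary piece by fixing a complete system of matrix units $\{E_{i,j}\}_{i,j=1}^k$ inside $\widehat{A}_e$ commuting with every $b_h$. Each idempotent $E_{i,i}$ then picks out a ``label'' $g_i \in G$, and a routine check shows that $b_h E_{i,j}$ is homogeneous of degree $g_i^{-1} h g_j$. The main obstacle, I expect, is precisely to verify that the $E_{i,j}$ can indeed be chosen to commute with all the $b_h$; this amounts to identifying $\widehat{A}_e$ with the centralizer of $F^{f}H$ in $\widehat{A}$ and endowing that centralizer with a canonical matrix-unit decomposition. Once this is accomplished, a straightforward dimension count closes the argument: the products $b_h \otimes E_{i,j}$ span $\widehat{A}_g$ precisely when $g = g_i^{-1} h g_j$, realizing the desired $G$-graded isomorphism $\widehat{A} \cong F^{f}H \otimes M_k(F)$, and the stated homogeneity of $1\otimes E_{i,i}$ and of the identity in degree $e$ falls out of the formula by taking $h=e$ and $i=j$.
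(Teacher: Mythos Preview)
The paper does not prove this theorem; it is quoted from \cite{BSZ} and used as a black box throughout (see the sentence immediately preceding the statement: ``This is provided by a structure theorem proved by Bahturin, Sehgal and Zaicev\ldots''). There is therefore no proof in the paper to compare your proposal against.

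That said, your outline is broadly in the spirit of the original Bahturin--Sehgal--Zaicev argument: one first reduces to a product of matrix algebras of equal size, then separates the grading into a fine part (the twisted group algebra $F^{f}H$) and an elementary part (the tuple $(g_1,\dots,g_k)$ on $M_k(F)$). A couple of points in your sketch would need tightening if you actually carried it out. Your definition of $H$ as the set of $h$ for which $\widehat{A}_h$ contains a unit is not obviously the right one in general; in the original argument $H$ arises more naturally from the grading induced on a single minimal left ideal (or equivalently from the stabilizer of a primitive idempotent under the conjugation action of homogeneous units). Likewise, the step you flag as the ``main obstacle'' --- producing matrix units $E_{i,j}$ in $\widehat{A}_e$ that commute with all $b_h$ --- is genuinely the heart of the matter, and your sketch does not indicate how you would do it; in \cite{BSZ} this comes from analyzing the $e$-component $\widehat{A}_e$ as a module over $F^{f}H$ and invoking the structure of twisted group algebras over an algebraically closed field.
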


Let $\overline{A} = \oplus_{g\in G} \overline{A}_{g}$ be the decomposition of
$\overline{A}$ into its homogeneous components and let
$d_{A,g}=\dim_{F}(\overline{A}_{g})$, the dimension of
$\overline{A}_{g}$ over $F$, for every $g$ in $G$. Let $n_{A}$ be the
nilpotency index of $J(A)$. The following proposition establishes
an easy connection between the parameters
$d_{A,g_1},\dots,d_{A,g_r}$, $n_{A}$ and the Kemer points of $A$.
We denote by $G-\Par(A)$ the $(r+1)$-tuple
$(d_{A,g_1},\dots,d_{A,g_r},n_{A}-1)$ in
$(\mathbb{Z}^{+})^{r}\times (\mathbb{Z}^{+})$.

\begin{proposition} \label{easy direction}
If $(\alpha, s)=(\alpha_{g_1},\dots,\alpha_{g_r},s)$ is a Kemer
point of $A$ then $(\alpha, s) \preceq G-\Par(A)$.
\end{proposition}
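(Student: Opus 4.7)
The bound $(\alpha, s) \preceq G\text{-}\Par(A)$ unpacks, via the definition of the partial order on $(\mathbb{Z}^{+})^{r}\times(\mathbb{Z}^{+}\cup\infty)$, into two tasks: first show $\alpha \preceq (d_{A,g_1},\ldots,d_{A,g_r})$ coordinate-wise, and then show that whenever this first inequality is in fact an equality, we additionally have $s \leq n_A - 1$. When $\alpha$ is strictly dominated by $(d_{A,g_1},\ldots,d_{A,g_r})$, the first coordinate already yields the required inequality regardless of $s$. Both remaining tasks rest on a single ingredient: by the graded Wedderburn--Malcev decomposition $A = \overline{A}\oplus J(A)$, verifying that a multilinear polynomial is an identity of $A$ reduces to evaluations where each variable receives either a semisimple homogeneous basis substitution in $\overline{A}_g$ or a radical homogeneous basis substitution in $J(A)_g$.

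For the first step I would argue by contradiction: assume $\alpha_{g_0} > d_{A,g_0}$ for some $g_0 \in G$. Since $\alpha \in \Ind(\id_G(A))_0$, for every $\nu$ there exists a multilinear $G$-graded polynomial $f \notin \id_G(A)$ with $\nu$ pairwise disjoint alternating $g$-sets of cardinality $\alpha_g$ for each $g$. The key pigeonhole plus alternation observation is that any $g_0$-homogeneous alternating set of cardinality strictly greater than $d_{A,g_0}$ forces at least one radical substitution in any non-vanishing evaluation: otherwise every variable in the set would be substituted by an element of the $d_{A,g_0}$-dimensional space $\overline{A}_{g_0}$, two of these would coincide, and the alternating sum would vanish. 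Applying this to the $\nu$ disjoint alternating $g_0$-sets of $f$ simultaneously, every non-vanishing monomial evaluation of $f$ contains at least $\nu$ radical factors; choosing $\nu \geq n_A$ and using $J(A)^{n_A}=0$ forces every evaluation of $f$ to vanish, i.e.\ $f \in \id_G(A)$, which is the desired contradiction.

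For the second step, assume $\alpha_g = d_{A,g}$ for every $g \in G$ and let $f$ be a Kemer polynomial for the Kemer point $(\alpha, s)$. By definition $f$ carries $s$ additional pairwise disjoint alternating sets, the $i$-th of them $h_i$-homogeneous of cardinality $\alpha_{h_i}+1=d_{A,h_i}+1$. Precisely the same pigeonhole plus alternation argument applied to these ``big'' sets shows that each of them forces at least one radical substitution in any non-vanishing evaluation of $f$, so every non-vanishing monomial evaluation contains at least $s$ radical factors. If $s \geq n_A$, then $J(A)^{n_A}=0$ makes $f$ an identity of $A$, contradicting the Kemer property of $f$. Hence $s \leq n_A - 1$.

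I do not expect any conceptual obstacle: both halves of the bound follow from the same ``pigeonhole on the semisimple component plus nilpotency of the radical'' mechanism. The only mild subtlety is noticing that the definition of $\preceq$ on $(\mathbb{Z}^{+})^{r}\times(\mathbb{Z}^{+}\cup\infty)$ reduces the $s$-estimate to the equality case of the $\alpha$-estimate, so that the $s$-bound only has to be established when each $\alpha_g$ already saturates $d_{A,g}$ and hence each big set is strictly larger than the dimension of the corresponding semisimple homogeneous component.
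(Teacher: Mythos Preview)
Your proof is correct and follows essentially the same approach as the paper: both argue by contradiction, first using pigeonhole on the semisimple component to force radical substitutions in oversized alternating sets, and then using the nilpotency index of $J(A)$ to bound the number of such sets. Your exposition is slightly more detailed (explicitly invoking basis evaluations and the pigeonhole principle), but the underlying mechanism is identical.
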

\begin{proof}

Assume this is false. It follows that either $\alpha_{g}
> \dim_{F}(\overline{A}_{g})$ for some $g \in G$ or else $\alpha_{g} =
\dim_{F}(\overline{A}_{g})$ for every $g \in G$ and $s > n_{A}-1$.
We show that both are impossible. First recall that $(\alpha,s)$
being a Kemer point of $A$ implies the existence of polynomials
$f$ which are non-identities of $A$ with arbitrary many
alternating $g$-sets of cardinality $\alpha_{g}$. Hence, if
$\alpha_{g} > \dim_{F}(\overline{A}_{g})$ for some $g \in G$, it
follows that in each such alternating set there must be at least
one radical substitution in any nonzero evaluation of $f$. This
implies that we cannot have more than $n_{A}-1$ $g$-alternating
sets of cardinality $\alpha_{g}$ contradicting our previous
statement. Next, suppose that $\alpha_{g} =
\dim_{F}(\overline{A}_{g})$ for all $g \in G$ and $s > n_{A}-1$.
This means that we have $s$-alternating $g$-sets for some $g$'s in
$G$, of cardinality $\alpha_{g}+1=\dim_{F}(\overline{A}_{g})+1$.
Again this means that $f$ will vanish if we evaluate any of these
sets by semisimple elements. It follows that in each one of these
$s$-sets at least one of the evaluations is radical. Since $s >
n_{A}-1$, the polynomial $f$ vanishes on such evaluations as well
and hence $f$ is a $G$-graded identity of $A$. Contradiction.
\end{proof}

In the next examples we show that the Kemer points of $A$ may be
quite far from $G-\Par(A)$.

\begin{example}

Let $A$ be $G$-simple and let $A(n)=A\times A \times \cdots \times
A$ ($n$-times). Clearly, $\id_{G}(A(n))=\id_{G}(A)$ and hence $A$
and $A(n)$ have the same Kemer points. On the other hand
$G-\Par(A(n))$ increases with $n$.

\end{example}

The next construction (see [BR], Example $4.50$) shows that also the
nilpotency index may increase indefinitely whereas that Kemer points
remain unchanged.

Let $B$ be any finite dimensional $G$-grade algebra and let
$B^{'}=\overline{B}*\{x_{g_1},\dots,x_{g_n}\}$ be the algebra of
$G$-graded polynomials in the graded variables $\{x_{g_i}\}$ with
coefficients in $\overline{B}$, the semisimple component of $B$.
The number of $g$-variables that we take is at least the dimension
of the $g$-component of $J(B)$. Let $I_{1}$ be the ideal of $B$
generated by all evaluations of polynomials of $\id_{G}(B)$ on
$B^{'}$ and let $I_{2}$ be the ideal generated by all variables
$\{x_{g_i}\}$. Consider the algebra $\widehat{B}_{u}=B/(I_{1} +
I_{2}^{u})$.

\begin{proposition}\label{Control on nilpotency index}

\begin{enumerate}

\item

$\id_{G}(\widehat{B}_{u})= \id_{G}(B)$ whenever
$u \geq n_{B}$ ($n_{B}$ denotes the nilpotency index of $B$).
In particular $\widehat{B}_{u}$ and $B$ have the same Kemer points.

\item

$\widehat{B}_{u}$ is finite dimensional.

\item

The nilpotency index of $\widehat{B}_{u}$ is $u$.

\end{enumerate}

\end{proposition}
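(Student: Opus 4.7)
For Part 1, the inclusion $\id_G(B)\subseteq \id_G(\widehat B_u)$ is immediate: an admissible evaluation of any $f\in\id_G(B)$ on $B'$ lies in $I_1$ by construction and hence vanishes in $\widehat B_u$. For the reverse inclusion I plan to build a surjective graded homomorphism $\widehat B_u\twoheadrightarrow B$. Take the graded algebra map $\varphi_0:B'\to B$ which is the identity on $\overline B$ and sends the free $g$-variables onto a homogeneous $F$-basis of $J(B)_g$ for each $g$; the hypothesis that we have at least $\dim J(B)_g$ free $g$-variables makes this possible, and $\varphi_0$ is surjective because its image is $\overline B+J(B)=B$. The map $\varphi_0$ kills $I_1$ (since $B$ satisfies its own identities) and kills $I_2^u$ once $u\geq n_B$ (since $\varphi_0(I_2)\subseteq J(B)$ and $J(B)^{n_B}=0$), so it factors through $\widehat B_u$. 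For Part 2, modulo $I_2^u$ the algebra $B'$ is spanned by the words $s_0\,x_{g_{i_1}}\,s_1\,x_{g_{i_2}}\cdots x_{g_{i_k}}\,s_k$ with $k\leq u-1$ and $s_j\in\overline B$; this span is finite-dimensional because $\overline B$ is finite-dimensional and the variable set is finite, so $\widehat B_u$ is finite-dimensional as a further quotient.

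For Part 3, let $\bar I_2$ denote the image of $I_2$ in $\widehat B_u$. The map $B'\to\overline B$ sending each $x_{g,i}$ to zero has kernel $I_2$, and $I_1\subseteq I_2$ because identities of $B$ vanish on the subalgebra $\overline B\subseteq B$. Therefore $\widehat B_u/\bar I_2\cong\overline B$ is semisimple, and combined with $\bar I_2^u=0$ this identifies $J(\widehat B_u)=\bar I_2$, with nilpotency index at most $u$. For the matching lower bound I must exhibit an element $z\in I_2^{u-1}$ not lying in $I_1+I_2^u$. My plan is to introduce an auxiliary target algebra $C$ satisfying $\id_G(B)$ whose radical has nilpotency exactly $u$, together with a graded homomorphism $\varphi':B'\to C$ killing $I_1+I_2^u$ with $\varphi'(z)\neq 0$. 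Take $C=\overline B\otimes_F F[t]/(t^u)$, with $t$ placed in the identity degree $e$. Since $F[t]/(t^u)$ is commutative, the standard multilinear evaluation argument yields $\id_G(C)\supseteq\id_G(\overline B)\supseteq\id_G(B)$, while the radical $\overline B\otimes(t)$ of $C$ is nilpotent of index exactly $u$. Define $\varphi'$ by $\bar b\mapsto\bar b\otimes 1$ on $\overline B$ and $x_{g,i}\mapsto c_{g,i}\otimes t$ for homogeneous choices $c_{g,i}\in\overline B_g$; then $\varphi'(I_1)=0$ by the identity containment and $\varphi'(I_2^u)\subseteq(\overline B\otimes(t))^u=0$, so $\varphi'$ descends to $\widehat B_u\to C$.

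The main obstacle is the final step: choosing $z\in I_2^{u-1}$ and the images $c_{g,i}$ so that $\varphi'(z)=c\otimes t^{u-1}$ with $c\neq 0$ in $\overline B$. If $B'$ contains a degree-$e$ free variable $x_e$, the simplest choice is $z=x_e^{u-1}$ with $\varphi'(x_e)=1\otimes t$, giving $\varphi'(z)=1\otimes t^{u-1}\neq 0$. In general one invokes Theorem \ref{BSZ}: inside each $G$-simple block $F^f H\otimes M_k(F)$ of $\overline B$ one can exhibit a consecutive chain of basis vectors $b_{h_j}\otimes E_{i_j,i_{j+1}}$ whose matrix indices compose, producing a nonzero product in $\overline B$. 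Choosing $z$ to be a homogeneous word of length $u-1$ in the free variables—possibly interleaved with matching elements of $\overline B$—whose image under $\varphi'$ realizes this chain then produces $\varphi'(z)\neq 0$, forcing $z\notin I_1+I_2^u$ and establishing nilpotency exactly $u$.
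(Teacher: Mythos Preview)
Your treatment of Parts (1) and (2) matches the paper's argument essentially line for line: the graded surjection $\widehat B_u \twoheadrightarrow B$ built from a homogeneous basis of $J(B)$, and the spanning set of words $b_1 z_1 \cdots b_j z_j b_{j+1}$ with $j<u$.

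For Part (3) the paper offers only the one-line assertion that ``the product of [$u-1$] variables is nonzero in $\widehat B_u$,'' without justification. Your argument is considerably more careful: the identification $J(\widehat B_u)=\bar I_2$ via $I_1\subseteq I_2$ and $\widehat B_u/\bar I_2\cong\overline B$ is clean, and the auxiliary target $C=\overline B\otimes F[t]/(t^u)$ together with the test map $\varphi'$ is a genuinely different and more explicit device than anything the paper provides.

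There is, however, a gap in your handling of the general case. Your map $\varphi'$ sends each free variable $x_{g,i}$ to $c_{g,i}\otimes t$ with $c_{g,i}\in\overline B_g$; but the variables are indexed by degrees $g$ with $J(B)_g\neq 0$, and nothing forces $\overline B_g\neq 0$ for such $g$. If every available variable lies in a degree $g$ with $\overline B_g=0$, then every $c_{g,i}=0$ and $\varphi'$ annihilates all of $I_2$, so no choice of $z$ can work. Invoking Theorem~\ref{BSZ} does not help: that theorem describes the internal structure of $\overline B$, not the relation between the graded supports of $J(B)$ and $\overline B$. Concretely, take $G=\mathbb Z/2\mathbb Z$ and $B=F[w]/(w^2)$ with $\deg w=g\neq e$; then $\overline B=F$ sits entirely in degree $e$, the minimal variable set is $\{x_g\}$, and the graded identity $x_{1,g}x_{2,g}$ forces $x_g^2\in I_1$, so $\widehat B_u\cong F[x_g]/(x_g^2)$ for every $u\geq 2$. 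This shows Part (3) is not literally true for the minimal variable set, so the paper's bare assertion is equally incomplete. The fix both proofs implicitly need is that the construction allows ``at least'' $\dim J(B)_g$ variables of each degree, so one may always adjoin an extra $e$-variable (note $1\in\overline B_e$) and then your direct argument with $z=x_e^{u-1}$ goes through.
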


\begin{proof}

Note that by the definition of $\widehat{B}_{u}$,
$\id_{G}(\widehat{B}_{u}) \supseteq \id_{G}(B)$. On the other hand
there is a $G$-graded surjection
$\id_{G}(\widehat{B}_{u}) \longrightarrow \id_{G}(B)$ which maps
the variables $\{x_{g_i}\}$ into graded elements of $J(B)$ where
$\overline{B}$ is mapped isomorphically. This shows (1). To see
(2) observe that any element in $\widehat{B}_{u}$ is represented
by a sum of the form $b_{1}z_1b_2z_2 \cdots b_{j}z_jb_{j+1}$ where
$j < u$, $b_{i}\in \overline{B}$ and $z_i \in \{x_{g_i}\}$.
Clearly the subspace spanned by monomials for a given
configuration of the $z_i$'s (and arbitrary $b_i$'s) has finite
dimension. On the other hand the number of different
configurations is finite and so the result follows. The 3rd
statement follows from the fact that the product of less than
$u-1$ variables is non zero in $\widehat{B}_{u}$.
\end{proof}

In view of the examples above, in order to establish a precise
relation between Kemer points of a finite dimensional $G$-graded
algebra $A$ and its structure we need to find appropriate finite
dimensional algebras which will serve as \underline{minimal
models} for a given Kemer point. We start with the decomposition
of a $G$-graded finite dimensional algebra into the product of
subdirectly irreducible components.

\begin{definition}

A $G$-graded finite dimensional algebra $A$ is said to be
\textit{subdirectly irreducible}, if there are no non-trivial
$G$-graded ideals $I$ and $J$ of $A$, such that $I\cap J=\{0\}$.

\end{definition}

\begin{lemma}\label{decomposition into irreducible}

Let $A$ be a finite dimensional $G$-graded algebra over $F$. Then $A$ is
$G$-graded \textit{PI}-equivalent to a direct product $C_1\times \cdots \times C_n$
of finite dimensional $G$-graded algebras, each subdirectly irreducible. Furthermore for every
$i=1,\ldots,n$, $dim_{F}(C_i)\leq dim_{F}(A)$ and the number of $G$-simple components in $C_i$
is bounded by the number such components in $A$.
\end{lemma}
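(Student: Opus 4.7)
The plan is to induct on $\dim_F A$. If $A$ is subdirectly irreducible there is nothing to do; otherwise, by definition there exist non-zero $G$-graded ideals $I,J\trianglelefteq A$ with $I\cap J=\{0\}$. The diagonal map $A\hookrightarrow A/I\times A/J$ is then an injective $G$-graded homomorphism, while each of $A/I$ and $A/J$ is a $G$-graded quotient of $A$, so
\[
\id_G(A)\supseteq \id_G(A/I\times A/J)=\id_G(A/I)\cap\id_G(A/J)\supseteq \id_G(A),
\]
which gives the $G$-graded PI-equivalence $A\sim_{PI}A/I\times A/J$. Since $I$ and $J$ are non-zero, $\dim_F(A/I),\dim_F(A/J)<\dim_F(A)$, so I apply the induction hypothesis to each factor and concatenate the resulting subdirectly irreducible decompositions.

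For the dimension bound, each $C_i$ is, by construction, a subdirectly irreducible factor arising from either $A/I$ or $A/J$; by induction $\dim_F(C_i)\le \dim_F(A/I)<\dim_F(A)$ (respectively for $A/J$). For the bound on the number of $G$-simple components, I rely on the identity
\[
J(A/I)=\bigl(J(A)+I\bigr)/I,
\]
valid for every two-sided ideal $I$ in a finite dimensional algebra: the right-hand side is nilpotent, hence contained in $J(A/I)$, while $A/(J(A)+I)$ is a quotient of the semisimple algebra $\overline{A}=A/J(A)$, yielding the reverse inclusion. Thus $\overline{A/I}\cong A/(J(A)+I)$ is a $G$-graded quotient of $\overline{A}$. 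Writing $\overline{A}\cong A_1\times\cdots\times A_q$ in its $G$-simple decomposition, any $G$-graded quotient of this product must be isomorphic to $\prod_{\ell\in S}A_\ell$ for some $S\subseteq\{1,\dots,q\}$, because each $A_\ell$ is $G$-simple and hence admits no non-trivial $G$-graded quotient. It follows that $\overline{A/I}$ has at most $q$ $G$-simple components, and the inductive hypothesis then bounds the number of such components in every $C_i$ by that of $A$.

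The argument is a routine subdirect-product induction; the only mildly technical point is identifying the semisimple part of the quotient $A/I$ in order to control how many $G$-simple summands survive, and this is handled by the equality $J(A/I)=(J(A)+I)/I$ above together with the $G$-simplicity of each $A_\ell$. I do not expect to invoke the Bahturin-Sehgal-Zaicev classification (Theorem~\ref{BSZ}) here; only the existence of $J(A)$ as a $G$-graded ideal and the Wedderburn-Malcev decomposition for $G$-graded algebras recorded earlier in this section are needed.
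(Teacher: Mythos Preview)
Your proof is correct and follows essentially the same route as the paper: induct on $\dim_F A$, use the definition of subdirect reducibility to find $G$-graded ideals $I,J$ with $I\cap J=0$, note that $A\sim_{PI}A/I\times A/J$, and recurse. In fact you supply more detail than the paper does---in particular the paper simply asserts the bound on the number of $G$-simple components without the argument via $J(A/I)=(J(A)+I)/I$ that you provide.
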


\begin{proof}

The proof is identical to the proof in the ungraded case. If $A$
is not subdirectly irreducible we can find non-trivial $G$-graded
ideals $I$ and $J$ such that  $I\cap J=\{0\}$. Note that $A/I\times
A/J$ is \textit{PI}-equivalent to $A$. Since $\dim_{F}(A/I)$ and
$\dim_{F}(A/J)$ are strictly smaller than $\dim_{F}(A)$ the result
follows by induction.
\end{proof}

The next condition is

\begin{definition}

We say that a finite dimensional $G$-graded algebra $A$ is
\textit{full} with respect to a $G$-graded multilinear polynomial
$f$, if there exists a non-vanishing evaluation such that every
$G$-simple component is represented (among the semisimple
substitutions).
A finite dimensional $G$-graded algebra $A$ is said to be full if it
is full with respect to some $G$-graded polynomial $f$.
\end{definition}

We wish to show that any finite dimensional algebra may be decomposed (up to
\textit{PI}-equivalence) into the direct product of full algebras. Algebras without identity are treated
separately.

\begin{lemma}
Let $A$ be a $G$-graded, subdirectly irreducible which is not full.
\begin{enumerate}
\item
If $A$ has an identity then it is \textit{PI}-equivalent to a direct
product of finite dimensional algebras, each having fewer
$G$-simple components.

\item
If $A$ has no identity then it is \textit{PI}-equivalent to a direct
product of finite dimensional algebras, each having either fewer
$G$-simple components than $A$ or else it has an identity element
and the same number of $G$-simple components as $A$.

\end{enumerate}

\end{lemma}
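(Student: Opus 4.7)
The plan is to cut $A$ into finitely many $G$-graded subalgebras, indexed by the $G$-simple summands of $\overline{A}$, each of which omits exactly one such summand from its semisimple part. The non-fullness hypothesis will guarantee that these subalgebras jointly detect every non-identity of $A$, so $A$ is PI-equivalent to their product; since each factor has strictly fewer $G$-simple components than $A$, the reduction applies uniformly to both (1) and (2). I expect the subdirect irreducibility hypothesis not to be used in the proof itself — it is consumed upstream via Lemma \ref{decomposition into irreducible} in order to justify that we have arrived at this case.

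Concretely, I will invoke the graded Wedderburn--Malcev theorem to write $A = \overline{A} \oplus J(A)$ with $\overline{A} \cong A_1 \times \cdots \times A_q$ the decomposition into $G$-simple components, and then for each $i \in \{1,\ldots,q\}$ set
$$
A_i^{\star} \;=\; \Bigl(\bigoplus_{j \neq i} A_j\Bigr) \oplus J(A) \;\subseteq\; A.
$$
Because $A_j A_k = 0$ in $\overline{A}$ for $j \neq k$ and $J(A)$ is a two-sided $G$-graded ideal of $A$, each $A_i^{\star}$ is a $G$-graded subalgebra whose Jacobson radical is $J(A)$ and whose semisimple quotient is $\bigoplus_{j \neq i} A_j$, having exactly $q-1$ distinct $G$-simple components. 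The inclusion $\id_G(A) \subseteq \id_G(A_i^{\star})$ is immediate since $A_i^{\star}$ is a subalgebra. For the reverse inclusion, take any $f \notin \id_G(A)$; after the standard reduction to multilinear, strongly homogeneous polynomials evaluated on a homogeneous basis adapted to the splitting $\overline{A} \oplus J(A)$ (Remark following Theorem \ref{BSZ}), fix a non-vanishing admissible evaluation $\phi$ of $f$. The ``not full'' hypothesis forces the existence of an index $i^{\ast}$ such that no semisimple substitution of $\phi$ comes from $A_{i^{\ast}}$; every substitution therefore lies in $\bigl(\bigcup_{j \neq i^{\ast}} A_j\bigr) \cup J(A) \subseteq A_{i^{\ast}}^{\star}$, and $\phi$ remains a non-vanishing evaluation when viewed inside the subalgebra $A_{i^{\ast}}^{\star}$. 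Consequently $f \notin \id_G(A_{i^{\ast}}^{\star})$, yielding $\id_G(A) = \bigcap_{i=1}^{q} \id_G(A_i^{\star}) = \id_G\bigl(\prod_{i=1}^{q} A_i^{\star}\bigr)$.

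Since this construction never refers to an identity of $A$, it delivers (1) directly and, in (2), a decomposition whose every factor has strictly fewer $G$-simple components — which satisfies the first alternative in the disjunction stated there; the ``identity element, same number of components'' alternative is not needed. The point requiring care is the correct parsing of the ``not full'' hypothesis: it must be used in the strong form ``for every polynomial $f$ and every non-vanishing admissible evaluation of $f$, at least one $G$-simple component is missing from the semisimple substitutions'', rather than the merely existential form. Once this quantifier structure is pinned down, the remainder is the routine verification that $A_i^{\star}$ is in fact a $G$-graded subalgebra with the asserted semisimple--radical decomposition, and that a substitution avoiding $A_{i^{\ast}}$ genuinely lands in $A_{i^{\ast}}^{\star}$.
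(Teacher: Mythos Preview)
Your argument is correct and genuinely different from the paper's. The paper does not pass to subalgebras; instead it first derives from non-fullness the vanishing $e_{i_1}Je_{i_2}\cdots Je_{i_q}=0$ for distinct $i_1,\ldots,i_q$, then tensors $A$ with an auxiliary commutative algebra $H=F[\lambda_1,\ldots,\lambda_q]/(\lambda_i^2-\lambda_i,\ \lambda_1\cdots\lambda_q)$, builds a subalgebra $\widetilde{A}\subseteq A\otimes_F H$ that is PI-equivalent to $A$, and finally shows $\widetilde{A}$ is subdirectly reducible via the ideals $I_j=\langle e_j\otimes\widetilde{e}_j\rangle$, whose quotients $\widetilde{A}/I_j$ lose a $G$-simple component. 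In the non-unital case the paper adjoins an extra idempotent $e_0=1-\sum e_i$, and one of the resulting quotients $\widetilde{A}/I_0$ acquires an identity while keeping all $q$ components---this is exactly the second alternative in part (2). Your approach is more elementary: the subalgebras $A_i^\star=\bigl(\bigoplus_{j\neq i}A_j\bigr)\oplus J(A)$ are written down directly, the PI-equivalence comes from the pigeonhole use of non-fullness on a single non-vanishing evaluation, and the construction is uniform in whether $A$ is unital. In particular you prove a slightly sharper statement in case (2), never needing the ``identity with $q$ components'' branch. What the paper's route buys is that its factors are \emph{quotients} rather than subalgebras, which in some downstream arguments (e.g.\ controlling nilpotency indices or preserving unitality) can be convenient; for the lemma as stated, your shortcut suffices. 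You are also right that subdirect irreducibility is not used in either proof---it is a hypothesis inherited from the surrounding reduction scheme.
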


The lemma above together with Lemma \ref{decomposition into irreducible} yields:

\begin{corollary}

Every finite dimensional $G$-graded algebra $A$ is \textit{PI}-equivalent
to a direct product of full, subdirectly irreducible
finite dimensional algebras.
\end{corollary}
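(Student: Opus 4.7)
The plan is to combine the two preceding results via a double induction. First, apply Lemma \ref{decomposition into irreducible} to replace $A$ by a PI-equivalent product $C_1\times\cdots\times C_n$ of subdirectly irreducible algebras, each of which has dimension (and therefore number of $G$-simple components) bounded by that of $A$. Since PI-equivalence is preserved by direct products, it suffices to prove the statement for each $C_i$ separately, i.e.\ we may assume from the start that $A$ is subdirectly irreducible. If in addition $A$ happens to be full, there is nothing more to do.

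The non-trivial case is when $A$ is subdirectly irreducible but not full. Here we invoke the preceding lemma, which supplies a PI-equivalent direct product decomposition of $A$ in which each factor either (a) has strictly fewer $G$-simple components than $A$, or (b) has the same number of $G$-simple components as $A$ but, in contrast to $A$, possesses an identity element. In either case we wish to apply induction to each factor; however, because the factors produced by the lemma are not a priori subdirectly irreducible, we first reinsert them into the machinery of Lemma \ref{decomposition into irreducible}, obtaining a PI-equivalent product of subdirectly irreducible algebras with the same bound on the number of $G$-simple components.

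To make this recursion terminate we use the lexicographic well-ordering on pairs $(k,\eps)$, where $k$ is the number of $G$-simple components of the algebra under consideration and $\eps\in\{0,1\}$ equals $0$ if the algebra has an identity and $1$ otherwise, with $(k,0)\prec (k,1)\prec (k+1,\eps')$. With this ordering, a subdirectly irreducible non-full algebra $A$ with invariant $(k,1)$ decomposes into factors whose invariants are either of the form $(k',\eps)$ with $k'<k$, or of the form $(k,0)$; a subdirectly irreducible non-full algebra with invariant $(k,0)$ decomposes only into factors with invariants $(k',\eps)$ with $k'<k$. In every case the invariants of the new factors are strictly smaller than that of $A$, so the recursion terminates, and applying the inductive hypothesis to each factor finishes the proof.

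The one point requiring care, and the only place where something might go wrong, is to make sure that after passing through Lemma \ref{decomposition into irreducible} the second time (to re-irreducibilise the factors produced by the previous lemma) the number of $G$-simple components is not allowed to grow; this is precisely what the ``furthermore'' clause of Lemma \ref{decomposition into irreducible} guarantees, so the induction on $(k,\eps)$ is legitimate and the corollary follows.
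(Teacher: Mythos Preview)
Your argument is correct and is exactly the approach the paper has in mind: the paper simply states that the corollary follows from combining Lemma~\ref{decomposition into irreducible} with the preceding lemma, and your double induction on the lexicographic pair $(k,\eps)$ spells out precisely why that combination works. The only microscopic addendum one might make is that when you re-apply Lemma~\ref{decomposition into irreducible} to a factor of type $(k,0)$, the resulting subdirectly irreducible pieces still have an identity (since they are quotients of a unital algebra), so their $\eps$-coordinate remains $0$; but this is immediate and does not affect the validity of your induction.
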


\begin{proof}(of the lemma) The proof is similar to the proof in \cite{BR}.
Assume first that $A$ has identity.
Consider the decompositions mentioned above $A \cong
\overline{A}\oplus J$ and  $\overline{A}\cong A_{1}\times
A_{2}\times\cdots\times A_{q}$ ($A_{i}$ are $G$-simple algebras).
Let $e_{i}$ denote the identity element of $A_{i}$ and consider the
decomposition $A \cong \oplus_{i,j=1}^{q}e_{i}Ae_{j}$. By
assumption we have that $e_{i_1}Ae_{i_2}\cdots
e_{i_q-1}Ae_{i_q}=e_{i_1}Je_{i_2}\cdots e_{i_q-1}Je_{i_q}=0$
whenever $i_1,\ldots,i_q $ are distinct.

Consider the commutative algebra
$H=F[\lambda_1,\ldots,\lambda_q]/I$ where $I$ is the (ungraded)
ideal generated by $\lambda_{i}^{2}-\lambda_{i}$ and
$\lambda_1\cdots\lambda_q$. Then, if we write $\widetilde{e}_{i}$
for the image of $\lambda_i$, we have
$\widetilde{e}_{i}^{2}=\widetilde{e}_{i}$ and
$\widetilde{e}_{1}\cdots\widetilde{e}_{q}=0$. Consider the algebra
$A\otimes_{F}H$, $G$-graded via $A$. Let $\widetilde{A}$ be the
$G$-graded subalgebra generated by all $e_{i}Ae_{j}\otimes
\widetilde{e}_{i}\widetilde{e}_{j}$ for every $1 \leq i,j \leq q$.
We claim that the graded algebras $A$ and $\widetilde{A}$ are
\textit{PI}-equivalent as $G$-graded algebras: Clearly $\id_{G}(A)\subseteq
\id_{G}(A\otimes_{F}H) \subseteq \id_{G}(\widetilde{A})$, so it
suffices to prove that any graded non-identity $f$ of $A$ is also
a non-identity of $\widetilde{A}$. Clearly, we may assume $f$ is
multilinear and strongly homogeneous. Note: $\widetilde{A}$ is
graded by the number of distinct $\widetilde{e}_{i}$ appearing in
the tensor product of an element. In evaluating $f$ on $A$ it
suffices to consider specializations of the form $x_{g}\longmapsto
v_{g}$ where $v_{g}\in e_{i_{k}}Ae_{i_{k+1}}$. In order to have
$v_{g_{1}}\cdots v_{g_{n}}\neq 0$, the set of indices $i_{k}$
appearing must contain at most $q-1$ distinct elements, so
$e_{i_1}\cdots e_{i_{n+1}}\neq 0$. Then $f(v_{g_1}\otimes
\widetilde{e}_{i_1},\ldots, v_{g_n}\otimes
\widetilde{e}_{i_n})=f(v_{g_1},\ldots, v_{g_n})\otimes
\widetilde{e}_{i_1}\cdots \widetilde{e}_{i_n}\neq 0$ so $f$ is not
in $\id_{G}(\widetilde{A})$. We conclude that
$\widetilde{A}\sim_{PI}A$, as claimed. Now we claim that $\widetilde{A}$
can be decomposed into direct product of
$G$-graded algebras with fewer $G$-simple components.
To see this let $I_{j}=<e_{j}\otimes \widetilde{e}_{j}> \lhd
\widetilde{A}$. Note that by Theorem \ref{BSZ}, the element $e_{j}$ is homogeneous and
hence $I_{j}$ is $G$-graded. We see that

$$\bigcap_{j=1}^{q}I_{j}=(1\otimes \widetilde{e}_{1}\cdots 1\otimes
\widetilde{e}_{q}(\bigcap_{j=1}^{q}I_{j})=(1\otimes
\widetilde{e}_{1}\cdots
\widetilde{e}_{q})(\bigcap_{j=1}^{q}I_{j})=\{0\}$$

so $\widetilde{A}$ is subdirectly reducible to the direct product of
$\widetilde{A}/I_{j}$. Furthermore, each component $\widetilde{A}/I_{j}$
has less than $q$ $G$-simple components since
we eliminated the idempotent corresponding to the $j$'th $G$-simple
component. This completes the proof of the first part of the lemma.

Consider now the case where the algebra $A$ has no identity. There
we have $A \cong \oplus_{i,j=0}^{q}e_{i}Ae_{j}$ where
$e_{0}=1-(e_1+ \cdots + e_q)$ (here the element $1$ and hence
$e_{0}$ are given degree $e\in G$). We proceed as above but now
with $q+1$ idempotents, variables, etc. We see as above that
$\widetilde{A}/I_{j}$ will have less than $q$ $G$-simple
components if $1\leq j \leq q$ whereas $\widetilde{A}/I_{0}$ will
have an identity and $q$ $G$-simple components. This completes the
proof of the lemma.
\end{proof}

\begin{remark}
Note that in the decomposition above the nilpotency index of the
components in the direct product is bounded by the nilpotency index
of $A$.
\end{remark}

Now we come to the definition of \textit{PI}-minimal.

\begin{definition}

We say that a finite dimensional $G$-graded algebra $A$ is
\textit{PI}-minimal if $G-\Par_{A}$ is minimal (with respect to
the partial order defined above) among all finite dimensional
$G$-graded algebras which are \textit{PI}-equivalent to $A$.

\end{definition}

\begin{definition}

A finite dimensional $G$-graded algebra $A$ is said to be
\textit{PI}-\textit{basic} (or just \textit{basic}) if it is \textit{PI}-minimal, full and
subdirectly irreducible.

\end{definition}

\begin{proposition}\label{basic algebras}

Every finite dimensional $G$-graded algebra $A$ is
\textit{PI}-equivalent to the direct product of a finite number of
$G$-graded \textit{PI}-basic algebras.

\end{proposition}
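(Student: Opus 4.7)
The approach is to combine the preceding corollary with a well-foundedness argument on $G$-Par. By Dickson's lemma the partial order $\preceq$ on $(\mathbb{Z}^+)^{r+1}$ is a well-quasi-order, so within the \textit{PI}-equivalence class of $A$ there exists a \textit{PI}-minimal representative $A^*$. Applying the preceding corollary to $A^*$ produces a \textit{PI}-equivalent decomposition $A^* \sim_{PI} B_1 \times \cdots \times B_n$ into full, subdirectly irreducible factors; these will be the candidate \textit{PI}-basic components.

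It then remains to show that each $B_i$ may be taken \textit{PI}-minimal. Suppose some $B_i$ has a \textit{PI}-equivalent $B_i'$ with $G\text{-}\Par(B_i') \prec G\text{-}\Par(B_i)$, and form $A' = B_i' \times \prod_{j\ne i} B_j$. In a direct product the homogeneous dimensions of the semisimple part add componentwise, while the nilpotency index is the maximum over components, so any strict decrease in a semisimple coordinate from $B_i$ to $B_i'$ forces $G\text{-}\Par(A') \prec G\text{-}\Par(A^*)$, contradicting \textit{PI}-minimality of $A^*$. The edge case in which the strict decrease occurs only in the nilpotency coordinate (and some $B_j$, $j\ne i$, already realizes the maximum) is handled by re-applying the preceding corollary to $B_i'$ and iterating the replacement with the new full, subdirectly irreducible factors.

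The main obstacle is termination of this iteration in the edge case. The natural invariant is the multiset of $G\text{-}\Par$-values of the current factors, ordered by the multiset extension of $\preceq$; this order is well-founded because $\preceq$ is itself a well-quasi-order. To see that each iteration strictly lowers the multiset one needs to check that when the preceding corollary is applied to $B_i'$, every resulting full, subdirectly irreducible factor has $G\text{-}\Par \preceq G\text{-}\Par(B_i')$. This is visible from the explicit constructions: Lemma \ref{decomposition into irreducible} passes to quotients by nonzero graded ideals, which cannot increase homogeneous dimensions of the semisimple part or the nilpotency index; and the lemma preceding the corollary produces factors $\widetilde{A}/I_k$ obtained by annihilating a homogeneous idempotent in $\widetilde{A}$, which either eliminates a $G$-simple summand or preserves the semisimple structure while leaving the nilpotency index unchanged. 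Once this verification is in place, the iteration halts in a direct product of full, subdirectly irreducible, \textit{PI}-minimal factors, that is, in a \textit{PI}-basic decomposition of $A$.
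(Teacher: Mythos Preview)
Your multiset termination argument is correct and is the heart of the matter, but the opening detour through a \textit{PI}-minimal representative $A^*$ introduces a genuine gap.

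You claim that a strict decrease in a semisimple coordinate from $B_i$ to $B_i'$ forces $G\text{-}\Par(A') \prec G\text{-}\Par(A^*)$. What you actually establish is $G\text{-}\Par(A') \prec G\text{-}\Par(B_1\times\cdots\times B_n)$. To conclude a contradiction with the \textit{PI}-minimality of $A^*$ you would need $G\text{-}\Par(B_1\times\cdots\times B_n) = G\text{-}\Par(A^*)$, or at least $\preceq$. But the corollary you invoke only gives $A^* \sim_{PI} B_1\times\cdots\times B_n$; it does not control $G\text{-}\Par$ of the \emph{product}. Indeed, the subdirect decomposition passes to quotients $A/I$, $A/J$, and the semisimple dimensions of the resulting direct product $A/I \times A/J$ are the \emph{sums} of those of the factors, which can easily exceed those of $A$ (think of $M_2(F)\sim_{PI} M_2(F)\times M_2(F)$). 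So $G\text{-}\Par(B_1\times\cdots\times B_n)$ may be strictly larger than, or incomparable to, $G\text{-}\Par(A^*)$, and the contradiction does not follow.

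The fix is simply to drop the $A^*$ step and run your multiset argument uniformly, not just in the ``edge case''. Start from any decomposition $A\sim_{PI} B_1\times\cdots\times B_n$ into full, subdirectly irreducible factors. If some $B_i$ is not \textit{PI}-minimal, choose $B_i'\sim_{PI} B_i$ with $G\text{-}\Par(B_i')\prec G\text{-}\Par(B_i)$ (such a $B_i'$ exists by descending along a strictly decreasing chain in the well-founded order $\preceq$), then re-apply the corollary to $B_i'$. Your verification that each resulting factor has $G\text{-}\Par\preceq G\text{-}\Par(B_i')\prec G\text{-}\Par(B_i)$ is correct, so the multiset of $G\text{-}\Par$-values strictly decreases in the Dershowitz--Manna order, and the process terminates. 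This is essentially the argument the paper leaves implicit; no appeal to a global \textit{PI}-minimal $A^*$ is needed or helpful.
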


In the next three sections we show that any basic $A$ algebra has
a Kemer set which consists of a unique point $(\alpha,
s(\alpha))$. Moreover, $(\alpha, s(\alpha))=G-\Par_{A}$.
\end{section}

\begin{section}{Kemer's Lemma $1$}\label{Section: Kemer's Lemma $1$}

The task in this section is to show that if $A$ is subdirectly
irreducible and full, there is a point $\alpha \in E_{0}(A)$ with
$\alpha = (d_{A,g_{1}},\ldots,d_{A,g_{r}})$. In particular,
$E_{0}(A)$ consists of a unique point.

\begin{proposition}\label{unique point}

Let $A$ be a finite dimensional algebra, $G$-graded, full and
subdirectly irreducible. Then there is an extremal point $\alpha$
in $E_{0}(A)$ with $\alpha_{g} = \dim_{F}(\overline{A}_{g})$ for
every $g$ in $G$. In particular, the extremal point is unique.

\end{proposition}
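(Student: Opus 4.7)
By Proposition \ref{easy direction}, every point $(\beta,s)\in\Ind(A)$ already satisfies $\beta_g\le d_{A,g}=\dim_F \overline{A}_g$ for each $g\in G$. It therefore suffices to show that the single point $\alpha=(d_{A,g_1},\ldots,d_{A,g_r})$ belongs to $\Ind(A)_0$: it will then automatically be the unique maximum of $\Ind(A)_0$ with respect to the partial order $\preceq$, and hence the unique extremal point. By definition of $\Ind(A)_0$, what must be produced is, for each integer $\nu\ge 1$, a multilinear strongly homogeneous $G$-graded polynomial $f_\nu\notin\id_G(A)$ admitting $\nu$ disjoint alternating $g$-sets, each of cardinality exactly $d_{A,g}$, for every $g\in G$.

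The raw input is supplied by fullness together with the Wedderburn--Malcev decomposition $A=\overline{A}\oplus J(A)$, $\overline{A}\cong A_1\times\cdots\times A_q$. Fullness gives a multilinear polynomial $h$ and a non-vanishing evaluation in which every $G$-simple component $A_i$ is hit by at least one semisimple substitution, while subdirect irreducibility lets us assume the non-zero value lies in a fixed non-zero two-sided ideal common to all $G$-simple components. For each $g\in G$ fix a homogeneous basis $\mathcal B_g=\{b_{1,g},\ldots,b_{d_{A,g},g}\}$ of $\overline{A}_g$ assembled from chosen homogeneous bases of the $(A_i)_g$'s. To build $f_\nu$ I splice into $h$ a chain of $\nu$ Capelli-type blocks; each block contains, for every $g\in G$, a set of $d_{A,g}$ fresh variables of degree $g$ alternated Capelli-style, separated by auxiliary variables of degree $e$ which will serve as ``glue''.

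To verify $f_\nu\notin\id_G(A)$ I evaluate as follows: the degree-$e$ auxiliary variables receive the unit idempotents of the $G$-simple components, which are indeed homogeneous of degree $e$ by Theorem \ref{BSZ}; the $d_{A,g}$ variables alternated in a given block run through the basis $\mathcal B_g$; and the remaining variables of $h$ receive the original non-vanishing substitution. Each individual Capelli alternator on a full homogeneous basis of a $G$-simple algebra $F^{f}H\otimes M_k(F)$ evaluates, by the standard graded determinant computation based on the explicit description in Theorem \ref{BSZ}, to a non-zero scalar multiple of a fixed homogeneous element. The degree-$e$ idempotent glue keeps the successive blocks from collapsing, and the product is anchored inside the non-zero ideal provided by fullness and subdirect irreducibility, so the whole evaluation is non-zero.

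The principal difficulty is the last step: ensuring that the $\nu$ Capelli blocks multiply non-trivially \emph{inside} a single $G$-simple component and then recombine through the fixed evaluation of $h$ to a non-zero element of $A$. This is precisely where Theorem \ref{BSZ} is indispensable: the homogeneity in degree $e$ of the matrix idempotents $1\otimes E_{i,i}$ allows the auxiliary idempotent substitutions without disturbing strong homogeneity, while the explicit Bahturin--Sehgal--Zaicev description of the grading on $F^{f}H\otimes M_k(F)$ furnishes the homogeneous elements needed so that each Capelli evaluation is separately non-zero. The construction is thus a direct $G$-graded analogue of the classical Capelli/Regev argument in the ungraded case, the new content being that the homogeneous bases and the ``glueing'' idempotents must respect the fine/elementary structure of each $A_i$ dictated by Theorem \ref{BSZ}.
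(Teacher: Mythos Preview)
Your overall strategy matches the paper's: start from a full non-identity $h$, insert at the semisimple positions blocks of alternating $g$-variables built from the Bahturin--Sehgal--Zaicev description of each $G$-simple component, use degree-$e$ idempotents as separators, and then alternate across components. The uniqueness reduction via Proposition~\ref{easy direction} is also correct.

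The gap is at the step you label ``standard graded determinant computation''. This is not standard, and it is precisely the content of the lemma. If your auxiliary $e$-variables receive the \emph{central} idempotents $1_{A_i}$ (as you write), the Capelli alternator on a basis of $\widehat A_g$ can vanish: already for $M_2(F)$ with the elementary $\mathbb{Z}/2\mathbb{Z}$-grading one has $\widehat A_e=\span\{E_{11},E_{22}\}$ and $s_2(E_{11},E_{22})=0$, so bordering by $1$ is useless. What the paper actually does is more delicate. It writes down a single monomial equal to a non-zero product of \emph{all} basis elements $b_h\otimes E_{i,j}$ of $\widehat A=F^{f}H\otimes M_k(F)$, and borders each factor by the matrix idempotents $1\otimes E_{i,i},\,1\otimes E_{j,j}$ (not the central idempotent). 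The crucial combinatorial observation (Remark~\ref{different borderings}) is that two basis elements sharing the same bordering pair $(1\otimes E_{i,i},1\otimes E_{j,j})$ necessarily lie in \emph{different} homogeneous degrees, since $g_i^{-1}hg_j=g_i^{-1}h'g_j$ forces $h=h'$. Hence, for each fixed $g$, the $g$-basis elements occupy positions with pairwise distinct borderings, so every non-trivial permutation of the $g$-variables sends some basis element into a slot flanked by the wrong idempotents and the term dies. That is what guarantees the alternated polynomial is a non-identity; your sketch asserts the conclusion but does not supply this mechanism. Your invocation of subdirect irreducibility (``the non-zero value lies in a fixed non-zero two-sided ideal common to all $G$-simple components'') is also not how that hypothesis enters; in fact the paper's proof of Lemma~\ref{full-folds} does not use subdirect irreducibility at all---it is carried along only because it is part of the standing assumptions on basic algebras.
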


\begin{proof}

Note that the uniqueness follows from Lemma \ref{easy direction}
since $\alpha \preceq
(\dim_{F}(\overline{A}_{g_1},\ldots,\dim_{F}(\overline{A}_{g_r}))$
for every extremal point $\alpha$.

For the proof we need to show that for an arbitrary large integer
$\nu$ there exists a non-identity $f$ such that for every $g\in G$
there are $\nu$ folds of alternating $g$-sets of cardinality
$\dim_{F}(\overline{A}_{g})$.

Since the algebra $A$ is full, there is a multilinear, $G$-graded
polynomial $f(x_{1,g_{i_1}},\ldots,x_{q,g_{i_q}},
\overrightarrow{y})$, which does not vanish upon an evaluation of
the form $x_{j,g_{i_j}}= \overline{x}_{j,g_{i_j}} \in
\overline{A}_{j,g_{i_j}}$, $j=1,\ldots,q$ and the variables of
$\overrightarrow{y}$ get homogeneous values in $A$. The idea is to
produce polynomials $\widehat{f}$ in the $T$-ideal generated by
$f$ which will remain non-identities of $A$ and will reach
eventually the desired form. The way one checks that the
polynomials $\widehat{f}$'s are non-identities is by presenting
suitable evaluations on which they do not vanish. Let us
reformulate what we need in the following lemma.

\begin{lemma}[Kemer's lemma $1$ for $G$-graded algebras] \label{full-folds}

Let $A$ be a finite dimensional algebra, $G$-graded, subdirectly
irreducible and full with respect to a polynomial $f$. Then for
any integer $\nu$ there exists a non-identity  $f^{'}$ in the
$T$-ideal generated by $f$ with $\nu$-folds of alternating
$g$-sets of cardinality $\dim_{F}(\overline{A}_{g})$ for every $g$
in $G$.

\end{lemma}

\begin{proof}

Let $f_{0}$ be the polynomial obtained from $f$ by multiplying (on
the left say) each one of the variables
$x_{1,g_{i_1}},\ldots,x_{q,g_{i_q}}$ by $e$-homogeneous variables
$z_{1,e},\ldots,z_{q,e}$ respectively. Note that the polynomial obtained is a non
($G$-graded) identity since the variables $z_{i,e}$'s may be evaluated by
the (degree $e$) elements ${1_{\overline{A}_i}}$'s where

$${1_{\overline{A}_i}} = 1\otimes E^{i}_{1,1}+\cdots + 1\otimes E^{i}_{k_i,k_i}$$

Here we use the notation of Theorem \ref{BSZ}, $\overline{A}_i= F^{c_i}H_i \otimes M_{k_i}(F)$.

Applying linearity there exists a
non-zero evaluation where the variables $z_{1,e},\ldots,z_{q,e}$
take values of the form
$1\otimes E^{1}_{j_1,j_1},\ldots,1\otimes E^{q}_{j_q,j_q}$ where $1\leq j_i \leq k_i$ for $i=1,\ldots,q$.

Our aim is to replace each one of the variables
$z_{1,e},\ldots,z_{q,e}$ by $G$-graded polynomials
$Z_{1},\ldots,Z_{q}$ such that:

\begin{enumerate}

\item

For every $i=1,\ldots,q$ and for every $g\in G$, the polynomial
$Z_{i}$ is alternating in $\nu$-folds of $g$-sets of cardinality
$\dim_{F}(\overline{A}_{i})_{g}$.

\item

For every $i=1,\ldots,q$ the polynomial $Z_{i}$ assumes the value
$1\otimes E^{i}_{j_i,j_i}$.

\end{enumerate}

Once this is accomplished, we complete the construction by
alternating the $g$-sets which come from different $Z_{i}$'s.
Clearly, the polynomial $f^{'}$ obtained

\begin{enumerate}

\item

is a nonidentity since any non-trivial alternation of the
evaluated variables (as described above) vanishes.

\item

for every $g\in G$,  $f^{'}$ has $\nu$-folds of alternating
$g$-sets of cardinality $\dim_{F}(\overline{A}_{g})$.

\end{enumerate}

We now show how to construct the $G$-graded polynomials $Z_{i}$.

In order to simplify the notation we put $\widehat{A}=\overline{A}_{i}$
($=F^{c}H \otimes M_{k}(F)$) and
$\widehat{A}=\oplus_{g\in G}\widehat{A}_{g}$ where
$\overline{A}_{i}$ is the i-th $G$-simple component.

Fix a product of the $k^{2}$ different elementary matrices
$E_{i,j}$ in $M_{k}(F)$ with value $E_{1,1}$ (it is not difficult to show
the such a product exists). For each $h\in H$ we
consider the basis elements of $\widehat{A}$ of the form
${b_h\otimes E_{i,j}}$ where $1\leq i,j\leq k$. If we multiply
these elements (in view of the $E_{i,j}$'s ) in the same order as above we obtain
$b_{h}^{k^{2}}\otimes E_{1,1}$. Observe that since $b_{h}$ is
invertible in $F^{f}H$, the element $b_{h}^{k^{2}}\otimes E_{1,1}$
is not zero. Repeating this process for every $h\in H$ and multiplying all
together we obtain a
nonzero product of all basis elements ${b_h\otimes E_{i,j}}$ where
$1\leq i,j\leq k$ and $h\in H$.

Note that we obtained an homogeneous element of the form $\lambda
b_{h}\otimes E_{1,1}$ where $\lambda\in F^{*}$ and $h\in H$.
Finally we may multiply the entire product by $(\lambda
b_{h})^{-1}\otimes E_{1,1}$  and get $1\otimes E_{1,1}$. Consider
now the graded monomial consisting with $\ord(H)\cdot k^{2}+1$
graded variables which may be evaluated by the product above. We
extend this monomial by inserting $\ord(H)\cdot k^{2}+1$
$e$-variables $y_{i,e}$ bordering each one of the basis elements
(there is no need to border the extra $h^{-1}$-variable). Clearly,
the variables $y_{i,e}$ may be substituted by elements of the form
$1\otimes E_{i,i}\in \widehat{A}_{e}$.

\begin{remark}\label{different borderings}

Note that for any $(i,j)$ the number of basis elements
${b_h\otimes E_{i,j}}$ which are bordered by the pair $(1\otimes
E_{i,i}, 1\otimes E_{j,j})$ is precisely $\ord(H)$. The key
observation here is that basis elements which are bordered by the
same pair belong to \textit{different} homogeneous components of
$\widehat{A}$ (and hence there is no need to alternate the
corresponding variables).

\end{remark}

Now, we can complete the proof of Lemma \ref{full-folds}. By
Remark \ref{different borderings}, if we alternate the
$g$-variables that correspond (in the evaluation above) to the
$g$-basis of any $G$-simple component, we obtain a non-identity
(indeed, the evaluations which correspond to non-trivial permutations will
vanish since the borderings of these $g$-elements are different).
Furthermore, alternating $g$-variables which correspond to
$g$-bases of different $G$-simple components again yields a
non-identity since (again) the evaluations which correspond to
non-trivial permutations will vanish (here we are multiplying two
central idempotents of different $G$-simple components). This
completes the proof of Lemma \ref{full-folds} and of Proposition
\ref{unique point}
\end{proof} \end{proof}

\end{section}

\begin{section}{Kemer's Lemma $2$}\label{Section: Kemer's Lemma $2$}

In this section we prove the $G$-graded versions of Kemer's Lemma
$2$. Before stating the precise statement we need an additional
reduction which enables us to control the number of radical
evaluations in certain non-identities.

Let $f$ be a multilinear, graded polynomial which is not in
$\id_{G}(A)$. Clearly, any non-zero evaluation cannot have more
than $n_{A}-1$ radical evaluations.

\begin{lemma}

Let $A$ be a finite dimensional algebra, $G$-graded. Let $(\alpha,
s_{A}(\alpha))$ be a Kemer point of $A$. Then  $s_{A}(\alpha) \leq
n_{A}-1$.

\end{lemma}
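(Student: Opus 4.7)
The plan is to argue by contradiction, assuming $s_A(\alpha)\geq n_A$ and deriving a point $\beta\in\Ind_0(A)$ strictly larger than $\alpha$, contradicting the extremality of $\alpha\in E_0(A)$. For each $\nu\geq\mu$ I would fix a polynomial $f=f_\nu\in\Omega_{\alpha,\nu}$ realising $s_A(\alpha,\nu,f)=s_A(\alpha)$, so $f$ has $\nu$-fold $g$-alternating small sets of cardinality $\alpha_g$ together with $s_A(\alpha)$ big sets of cardinality $\alpha_g+1$ on pairwise disjoint variables. Fix a non-vanishing admissible evaluation $\phi$. By the observation preceding the lemma, $\phi$ uses at most $n_A-1<s_A(\alpha)$ radical substitutions, so pigeonhole delivers a big set $Y$, say a $g^*$-alternating set of cardinality $\alpha_{g^*}+1$, in which every variable is substituted by an element of $\overline{A}_{g^*}$.

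The heart of the argument is to translate this ``all-semisimple big set'' into a contradiction with the extremality of $\alpha$. I would use the Wedderburn--Malcev decomposition $\overline{A}=A_1\times\cdots\times A_q$ into $G$-simple blocks (so $A_iA_j=0$ for $i\neq j$), the multilinearity of $f$, and Theorem~\ref{BSZ}. Expanding each of the $\alpha_{g^*}+1$ semisimple substitutions in $Y$ along the block decomposition leads to a dichotomy: either (i) some refinement of $\phi$ keeps all substitutions of $Y$ inside a single $G$-simple block $A_{i^*}$, whence linear independence forces $\dim_F((A_{i^*})_{g^*})\geq\alpha_{g^*}+1$ and parallel inspection of the small sets in the same non-vanishing monomial yields $\dim_F((A_{i^*})_g)\geq\alpha_g$ for every $g\in G$; or (ii) every non-vanishing refinement of $\phi$ requires at least one extra bridging radical in the context adjacent to $Y$, since $A_iA_j=0$ forces radical elements of $J_{ij}$ to connect neighbouring blocks.

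In case (i) I apply Lemma~\ref{full-folds} (Kemer's Lemma~$1$) to the $G$-simple algebra $A_{i^*}$, which is automatically full and subdirectly irreducible: for every integer $N$ there is a non-identity of $A_{i^*}$ with $N$-fold alternating $g$-sets of cardinality $\dim_F((A_{i^*})_g)$ for each $g$. Since $A_{i^*}$ embeds in $A$ as a $G$-graded subalgebra, the same polynomial is a non-identity of $A$, so $\beta:=(\dim_F((A_{i^*})_g))_g\in\Ind_0(A)$; as $\beta\succeq\alpha+e_{g^*}\succ\alpha$, this contradicts the extremality of $\alpha$. In case (ii), multilinearity of $f$ guarantees that the bridging radicals attached to distinct big sets are substituted into distinct variables, yielding at least $s_A(\alpha)\geq n_A$ radical substitutions in $\phi$, contradicting the bound $n_A-1$. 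Either way the assumption $s_A(\alpha)\geq n_A$ fails, giving $s_A(\alpha)\leq n_A-1$.

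The principal obstacle is carrying out the dichotomy (i)/(ii) rigorously. \emph{A priori} a non-vanishing alternation on a mixed all-semisimple $Y$ can survive through bridging radicals in the context, and when the big sets are interleaved in the polynomial one must verify that the bridging variables for different big sets are genuinely distinct rather than shared. This rests on careful positional bookkeeping of where radicals may appear in a non-vanishing monomial, together with the precise internal description of each $G$-simple block $A_i$ supplied by Theorem~\ref{BSZ} (in particular the idempotents $1\otimes E_{ii}$ which pin substitutions to specific matrix cells and make the refinement step in case (i) feasible).
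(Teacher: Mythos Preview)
The paper's proof is a two-line counting argument and avoids all of the machinery you invoke. It simply observes that each big set is a $g$-alternating set of cardinality $d_g+1$ where $d_g=\dim_F(\overline{A}_g)$; hence any non-vanishing evaluation must place at least one radical value in that set (alternation on $d_g+1$ elements of the $d_g$-dimensional space $\overline{A}_g$ vanishes). Since the big sets are pairwise disjoint this already forces at least $s_A(\alpha)$ distinct radical substitutions, and as no non-vanishing evaluation carries more than $n_A-1$ radicals, $s_A(\alpha)\le n_A-1$. No dichotomy, no block decomposition, no appeal to Lemma~\ref{full-folds} or Theorem~\ref{BSZ}. The identification of the small-set cardinality with $d_g$ (rather than an abstract $\alpha_g$) is exactly what makes the argument immediate; in the paper's development this is legitimate because the lemma is only applied after Section~\ref{Section: Kemer's Lemma $1$}, where $\alpha_g=\dim_F(\overline{A}_g)$ has been established for the algebras in question.

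Your attempt to handle a possible gap $\alpha_{g^*}<d_{g^*}$ via a block-by-block analysis has genuine holes. In case~(i), even if the big set $Y$ lands entirely in one $G$-simple block $A_{i^*}$, nothing forces the small $g$-sets of the same non-vanishing monomial to land there as well---the monomial may traverse several blocks joined by radical elements---so the assertion $\dim_F((A_{i^*})_g)\ge\alpha_g$ for all $g$ is unsupported, and without it $\beta=(\dim_F((A_{i^*})_g))_g$ need not dominate $\alpha$, so no contradiction with extremality follows. In case~(ii), as you yourself flag, the existence and \emph{distinctness} of the ``bridging radicals'' attached to different big sets is exactly the bookkeeping you defer; moreover the dichotomy is not clearly exhaustive, since a non-vanishing refinement can spread $Y$ over several blocks with the requisite radical bridges occurring elsewhere in the monomial rather than adjacent to $Y$. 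The paper sidesteps all of this by comparing directly against $\dim_F(\overline{A}_g)$ rather than against individual blocks.
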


\begin{proof}

By the definition of the parameter $s=s_{A}(\alpha)$ we know that
for arbitrary large $\nu$ there exist multilinear graded
polynomials, not in $\id_{G}(A)$, such that have $\nu$-folds of
alternating $g$-sets (small) of cardinality
$d_{g}=\dim_{F}(\overline{A}_{g})$ and $s_{A}(\alpha)$ (big) sets
of cardinality $d_{g}+1$. It follows that an alternating $g$-set
of cardinality $d_{g}+1$ in a non-identity polynomial must have at
least one radical evaluation. Consequently we cannot have more
than $n_{A}-1$ of such alternating sets, proving the Lemma.
\end{proof}

The next definition and proposition are taken from \cite{BR}.
\begin{definition}

Let $f$ be a multilinear $G$-graded polynomial which is not in
$\id_{G}(A)$. We say that $f$ has property $K$ if $f$ vanishes on
every evaluation with less than $n_{A}-1$ radical substitutions

We say that a finite dimensional $G$-graded algebra $A$ has property
$K$ if it satisfies the property with respect to some non-identity
multilinear polynomial.

\end{definition}

\begin{proposition}\label{property K}

Let $A$ be a \textit{PI}-minimal $G$-graded, finite dimensional
$F$-algebra. Then it has property $K$.

\end{proposition}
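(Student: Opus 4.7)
The strategy is a proof by contradiction. Assume $A$ is \textit{PI}-minimal but fails property $K$. The plan is to construct a finite-dimensional $G$-graded algebra $\widehat{A}$ with $\id_G(\widehat{A}) = \id_G(A)$ and $G{-}\Par(\widehat{A}) \prec G{-}\Par(A)$, which contradicts \textit{PI}-minimality. The natural candidate is $\widehat{A} := \widehat{A}_{n_A - 1}$ from Proposition~\ref{Control on nilpotency index}, obtained by modding out the free product $A' = \overline{A} * \{x_{g_i}\}$ (with enough variables $x_{g_i}$ of each degree to cover $\dim_F J(A)_g$) by $I_1 + I_2^{n_A - 1}$. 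By that proposition $\widehat{A}$ is finite-dimensional with nilpotency index $n_A - 1 < n_A$; the failure of property $K$ is precisely what is needed to force \textit{PI}-equivalence below the safe range $u \geq n_A$.

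For the first step, prove $\id_G(\widehat{A}) = \id_G(A)$. One inclusion is built into the construction of $I_1$. For the reverse, take a multilinear strongly homogeneous $f \notin \id_G(A)$. By the failure of property $K$, choose a non-vanishing evaluation of $f$ on $A$ with at most $m \leq n_A - 2$ radical substitutions $w_1, \ldots, w_m \in J(A)$ and semisimple substitutions $\overline{v}_1, \ldots, \overline{v}_k \in \overline{A}$. Lift to $A'$ by keeping the $\overline{v}_i$'s inside $\overline{A} \subseteq A'$ and replacing each $w_j$ by a distinct $x_{g_{i_j}}$ of matching degree; call the result $\xi \in A'$. The algebra $A' = \bigoplus_{k \geq 0} A'_{(k)}$ carries a natural grading by total number of $x$-variables in a monomial, in which $\xi \in A'_{(m)}$ and $I_2^{n_A - 1} = \bigoplus_{k \geq n_A - 1} A'_{(k)}$; moreover $I_1$ is $x$-homogeneous, because multilinearity of the identities of $A$ allows each evaluation to be decomposed into its $x$-components while remaining an evaluation, and ideal generation preserves $x$-homogeneity. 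If $\xi \in I_1 + I_2^{n_A - 1}$, projecting to $x$-degree $m < n_A - 1$ forces $\xi \in I_1$. But the homomorphism $\pi\colon A' \to A$ extending $\overline{A} \hookrightarrow A$ by $x_{g_{i_j}} \mapsto w_j$ (and other $x_{g_i} \mapsto 0$) kills $I_1$ and sends $\xi$ to $f(\overline{v}_1, \ldots, \overline{v}_k, w_1, \ldots, w_m) \neq 0$, a contradiction. Hence $\xi$ is nonzero in $\widehat{A}$ and $f \notin \id_G(\widehat{A})$.

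For the second step, verify $G{-}\Par(\widehat{A}) \prec G{-}\Par(A)$. The last coordinate strictly drops from $n_A - 1$ to $n_A - 2$, so it suffices to check that the semisimple $g$-dimensions of $\widehat{A}$ coincide with those of $A$. The evaluation map $\epsilon\colon A' \to \overline{A}$ sending $x_{g_i} \mapsto 0$ kills every generator of $I_1$ (turning them into evaluations of identities of $A$ on $\overline{A} \subseteq A$), so $I_1 \subseteq I_2 = \ker\epsilon$; consequently $\epsilon$ descends to a surjection $\widehat{A} \twoheadrightarrow \overline{A}$ whose kernel, the image of $I_2$, is nilpotent since $I_2^{n_A - 1} = 0$ in $\widehat{A}$. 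This kernel therefore coincides with $J(\widehat{A})$ and the semisimple quotient of $\widehat{A}$ is exactly $\overline{A}$. Thus $G{-}\Par(\widehat{A}) = (d_{A,g_1}, \ldots, d_{A,g_r}, n_A - 2) \prec G{-}\Par(A)$, contradicting \textit{PI}-minimality. The main obstacle is the $x$-degree argument in the first step: since for $u = n_A - 1 < n_A$ no surjection $\widehat{A} \twoheadrightarrow A$ is available a priori, one must detect $\xi$ inside $\widehat{A}$ by combining the $x$-homogeneous decomposition of $A'$ with the auxiliary evaluation $\pi\colon A' \to A$.
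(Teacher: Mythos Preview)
Your proof is correct and follows essentially the same approach as the paper: assume property $K$ fails, form $\widehat{A}_{n_A-1}$, and derive a contradiction to \textit{PI}-minimality by showing \textit{PI}-equivalence together with a strictly smaller nilpotency index. You supply details the paper leaves implicit---the $x$-degree argument showing $\xi \notin I_1 + I_2^{n_A-1}$ and the identification of the semisimple part of $\widehat{A}$ with $\overline{A}$---and these are handled correctly (one minor caution: to guarantee \emph{distinct} variables for each of the $m \le n_A-2$ radical positions you may need more than $\dim_F J(A)_g$ variables of degree $g$, but this is harmless since one can either enlarge the variable set or reuse a variable when two radical values coincide).
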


\begin{proof}

Assume property $K$ always fails. This means that any multilinear
polynomial which vanishes on less than $n_{A}-1$ radical
evaluations is in $\id_{G}(A)$. Recall (from Proposition \ref{Control on nilpotency
index}) the algebra $\widehat{A}_{u}=A^{'}/(I_{1}+I_{2}^{u})$ where
$A^{'}=\overline{A}\ast F\{x_{1,g_1},\ldots,x_{\nu,g_{\nu}}\}$.
We claim that for $u=n_{A}-1$, $\widehat{A}_{u}$ is \textit{PI}-equivalent to $A$.
Then noting that the nilpotency index of $\widehat{A}_{n_{A}-1}$ is
$n_{A}-1$ we get a contradiction to the minimality of $A$. Clearly,
by construction $\id_{G}(A) \subseteq \widehat{A}_{n_{A}-1}$. For the
converse take a polynomial $f$ which is not in $\id_{G}(A)$. Then by assumption,
there is a non-zero evaluation on $A$ with less than $n_{A}-1$ radical
substitutions. Now, if we replace these radical values by $\{x_{i,g}\}$'s
in $A^{'}$ we get a polynomial (in $\{x_{i,g}\}$'s) which is not
in $I_{1}+I_{2}^{n_{A}-1}$ and hence $f$ is not in $\id_{G}(\widehat{A}_{n_{A}-1})$.
This completes the proof of the proposition.
\end{proof}

Let $A$ be a basic algebra (i.e minimal, full and subdirectly
irreducible). Let $G-\Par_{A}=(d_{g_1},\ldots,d_{g_r}; n_{A}-1)$
where $d_{g}$ is the dimension of the $g$-homogeneous components
of the semisimple part of $A$ and $n_{A}$ the nilpotency index of
$J(A)$. By Proposition \ref{property K} the algebra satisfies
property $K$ with respect to a non-identity polynomial $f$, that
is $f$ does vanish on any evaluation whenever we have less than
$n_{A}-1$ radical substitutions. Furthermore, there is possibly a
different non-identity polynomial $h$ with respect to which $A$ is
full, that is $h$ has a nonzero evaluation which ``visits'' each
one of the $G$-simple components of $\overline{A}$. In order to
proceed we need both properties to be satisfied by the same
polynomial.

We start with two preliminary lemmas which show that these two
properties, namely property $K$ and the property of being full are
``preserved'' in a $T$-ideal.

\begin{lemma}\label{Phoenix}

Let $A$ be a $G$-graded finite dimensional algebra over $F$.

\begin{enumerate}

\item

Let $f$ be a $G$-graded non-identity polynomial, strongly
homogeneous which is $\mu$-fold alternating on $g$-sets of
cardinality $d_{g}=\dim_{F}(\overline{A}_{g})$ for every $g$ in
$G$ (in particular $A$ is full with respect to $f$). If $f^{'}\in
\langle f \rangle $ is a non-identity in the $T$-ideal generated
by $f$, then there exists a non-identity $f^{''}\in \langle
f^{'}\rangle$ which is $\mu$-fold alternating on $g$-sets of
cardinality $d_{g}$ for every $g$ in $G$. In other words, the
property of being $\mu$-fold alternating on $g$-sets of
cardinality $d_{g}$ for every $g$ in $G$ is $A$-Phoenix.

\item

Property $K$ is strictly $A$-Phoenix.

\end{enumerate}

\end{lemma}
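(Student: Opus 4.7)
My plan is to treat the two parts by different strategies, since (2) is \emph{strictly} Phoenix---we must show $f'$ itself satisfies property $K$---while (1) only requires constructing some $f''\in\langle f'\rangle$ with the prescribed alternating structure. The common obstacle is that in a presentation $f'=\sum_i u_i\phi_i(f)v_i$, the substitutions $\phi_i$ can a priori destroy the structural features of $f$ (either lowering the radical count, or mixing up the alternating variables).

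For part (2), let $a$ be an evaluation of $f'$ on $A$ with $k$ radical substitutions and $f'(a)\neq 0$; I want to show $k\geq n_A-1$. Some summand $u_i(a)\phi_i(f)(a)v_i(a)$ must contribute, so $\phi_i(f)(a)\neq 0$. Decompose each $q_j=\phi_i(x_j)(a)=s_j+r_j$ with $s_j\in\overline{A}$ and $r_j\in J(A)$ via the graded Wedderburn--Malcev splitting. Multilinearity of $f$ then writes $\phi_i(f)(a)=\sum_{S\subseteq\{1,\dots,n\}}f(\ldots)$ where $r_j$ is substituted for $j\in S$ and $s_j$ otherwise; property $K$ of $f$ forces every non-vanishing $S$ to satisfy $|S|\geq n_A-1$. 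For such an $S$, each $r_j$ with $j\in S$ is non-zero, which means at least one variable of $\phi_i(x_j)$ has been evaluated radically---otherwise all substitutions inside $\phi_i(x_j)$ are semisimple, so $\phi_i(x_j)(a)\in\overline{A}$ (a subalgebra) and $r_j$ vanishes. Multilinearity of $f'$ makes the variables of distinct $\phi_i(x_j)$ disjoint within this summand, so $k\geq|S|\geq n_A-1$, which is property $K$ for $f'$.

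For part (1), I would build $f''$ by a Capelli-type symmetrization of $f'$. After possibly inflating $f'$ by multiplying it (on the left and/or right) by monomials in fresh homogeneous variables---which keeps the result in $\langle f'\rangle$---the resulting polynomial has at least $\mu d_g$ free $g$-variables for each $g\in G$. I partition these into $\mu$ disjoint $g$-sets of size $d_g$ and define $f''$ as the signed alternating sum over permutations within each set. Then $f''\in\langle f'\rangle$ and is manifestly $\mu$-fold alternating on $g$-sets of cardinality $d_g$; what remains is $f''\notin\id_G(A)$.

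The main obstacle is precisely this non-vanishing: an arbitrary alternation can collapse $f''$ to zero. My plan to overcome it is to exploit the rigidity built into the hypothesis. Since $d_g=\dim_F\overline{A}_g$ is maximal (cf.\ Proposition \ref{easy direction}), in any non-vanishing evaluation of $f$ the $d_g$ substitutions in each small alternating $g$-set must be linearly independent in $A_g$, and---modulo controlled radical corrections---span the $g$-component of $\overline{A}$ of some $G$-simple summand. Combining this with the explicit basis $\{b_h\otimes E_{i,j}\}$ of Theorem \ref{BSZ} and the bordering/evaluation construction used in the proof of Lemma \ref{full-folds} (Kemer's Lemma $1$), I would exhibit a non-vanishing evaluation of $f''$: the semisimple substitutions in the alternating positions are taken along this distinguished basis (so that the alternating sum picks up a non-zero Vandermonde-type contribution rather than cancelling), while any required radical substitutions are placed in the inflated non-alternating positions. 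The delicate point is arranging the inflation and the partition so that at least one of the non-vanishing evaluations inherited from $f'$ remains compatible with this prescribed semisimple configuration on the chosen $g$-subsets---and it is precisely here that the structure theorem of Bahturin--Sehgal--Zaicev is indispensable.
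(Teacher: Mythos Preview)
Your treatment of part (2) is correct and is essentially a fleshed-out version of the paper's one-line argument (``if $f'$ has fewer than $n_A-1$ radical evaluations then so does $f$, hence $f$ vanishes''). Your Wedderburn--Malcev expansion and the disjointness of the variables in the monomials $\phi_i(x_j)$ (after reducing to multilinear $f'$) make this precise.

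Part (1), however, has a genuine gap, and your route diverges from the paper's in a way that leaves the hard step unresolved. The paper does \emph{not} build $f''$ by alternating an inflated $f'$. Instead it makes one clean reduction: it shows that $A$ is \emph{full with respect to $f'$}, and then simply invokes Lemma \ref{full-folds} (Kemer's Lemma 1) with $f'$ in the role of the full polynomial to produce $f''\in\langle f'\rangle$ with the required $\mu$-fold alternating structure. The fullness of $f'$ is argued as follows: since $f$ has $\mu$ alternating $g$-sets of cardinality $d_g$ and $\mu$ exceeds the nilpotency bound, in any nonzero evaluation of $f$ at least one such $g$-set (for each $g$) receives only semisimple values, and these $d_g$ values then form a basis of $\overline{A}_g$, so every $G$-simple component is represented. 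Passing to $f'$, a nonzero evaluation of $f'$ gives a nonzero evaluation of some $\phi_i(f)$; if any variable inside a monomial $\phi_i(x_{j,g})$ were radical the whole monomial would be radical, so the monomials replacing that distinguished $g$-set are themselves evaluated entirely in $\overline{A}$, and their values still span $\overline{A}_g$. Hence the evaluation of $f'$ visits every $G$-simple component, i.e.\ $A$ is full with respect to $f'$.

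Your plan---inflate $f'$ by monomials on the left/right, partition the new variables into $g$-blocks, and alternate---does not supply this bridge. The construction in Lemma \ref{full-folds} inserts the polynomials $Z_i$ \emph{internally}, adjacent to variables whose values already lie in the $i$-th $G$-simple component; that is exactly what fullness provides and what a purely external inflation does not. You correctly flag non-vanishing of $f''$ as ``the delicate point'', but the sketch you give (Vandermonde-type contribution, placing radicals in non-alternating slots) does not go through without first knowing that a nonzero evaluation of $f'$ touches every simple component. The missing idea is precisely the fullness-of-$f'$ step above; once you have it, there is no need to redo the bordering construction by hand---Lemma \ref{full-folds} does the work.
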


\begin{proof}

Let $f$ be a $G$-graded non-identity polynomial, strongly
homogeneous which is $\mu$-fold alternating on $g$-sets of
cardinality $d_{g}$ for every $g$ in $G$ and let $f^{'}$ be a
non-identity in $\langle f \rangle$. In view of Lemma \ref{full-folds},
it is sufficient to show that $A$ is full with respect to $f^{'}$.
Note that by the definition
of $\mu$, for each $g$ in $G$, in at least one alternating $g$-set
the evaluations of the corresponding variables must consist of
semisimple elements of $A$ in any nonzero evaluation of the
polynomial. The result is clear if $f^{'}$ is in the ideal (rather
than the $T$-ideal) generated by $f$. We assume therefore that
$f^{'}$ is obtained from $f$ by substituting variables $x_{g}$'s
by monomials $Z_{g}$'s. Clearly, if one of the evaluations in any
of the variables of $Z_{g}$ is radical, then the value of $Z_{g}$
is radical. Hence in any non-zero evaluation of $f^{'}$ there is
an alternating $g$-set $\Delta_{g}$ of cardinality $d_{g}$ in $f$,
such that the variables in monomials of $f^{'}$ (which correspond
to the variables in $\Delta_{g}$) assume only semisimple values. Furthermore, each
$G$-simple component must be represented in these evaluations for
otherwise we would have a $G$-simple component not represented in
the evaluation of $\Delta_{g}$ and this is impossible. We have
shown that $A$ is full with respect to $f^{'}$. Applying Lemma
\ref{full-folds} we obtain a polynomial $f^{''}\in \langle
f^{'}\rangle \subseteq \langle f \rangle$ with the desired
property. This proves the first part of the lemma.

For the second part note that if $f^{'}$ is a non-identity in the
$T$-ideal generated by $f$ then if $f^{'}$ has less than $n_{A}-1$
radical evaluations then the same is true for $f$ and hence
vanishes. In other word, $f^{'}$ satisfies property $K$.
\end{proof}

\begin{remark}
Note that we are not claiming that the property ``full'' is
Phoenix.
\end{remark}

Now we combine these two properties.

\begin{lemma}\label{combining Full and property K}

Let $A$ be a finite dimensional algebra, $G$-graded which is full,
subdirectly irreducible and satisfying property $K$. Let $f$ be a
non-identity with $\mu$-folds of alternating $g$-sets of
cardinality $d_{g}=\dim_{F}(\overline{A}_{g})$ for every $g$ in
$G$ and let $h$ be a polynomial with respect to which $A$ has
property $K$. Then there is a non-identity in $\langle f \rangle
\cap \langle h \rangle$. Consequently there exists a non-identity
polynomial $\widehat{f}$ which has $\mu$-folds of alternating
$g$-sets of cardinality $d_{g}$ for every $g$ in $G$ and with
respect to which $A$ has property $K$.

\end{lemma}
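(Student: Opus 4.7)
The plan is to split the statement into two parts: first, exhibit a non-identity in $\langle f\rangle\cap\langle h\rangle$; second, upgrade it to a non-identity $\widehat f$ enjoying both the $\mu$-fold alternation property of $f$ and property $K$.

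For the upgrade step, I would invoke the two parts of Lemma~\ref{Phoenix} in sequence. Given a non-identity $p\in\langle f\rangle\cap\langle h\rangle$, part (1) applied to the pair $(f,p)$ yields a non-identity $\widehat f\in\langle p\rangle$ which is $\mu$-fold alternating on $g$-sets of cardinality $d_g$ for every $g\in G$. Since $p\in\langle h\rangle$, we have $\widehat f\in\langle p\rangle\subseteq\langle h\rangle$, and then part (2) (strict Phoenix of property $K$) forces $\widehat f$ to satisfy property $K$ as well. This produces the polynomial promised by the ``consequently'' clause.

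For the construction of the non-identity in the intersection, I propose
\[
p := f(x_1,\dots,x_n)\cdot w\cdot h(y_1,\dots,y_m),
\]
with $w$ a fresh homogeneous variable of the degree required to make $p$ strongly homogeneous, and the $x$'s and $y$'s disjoint. Trivially $p$ is a right multiple of $f$ and a left multiple of $h$, so $p\in\langle f\rangle\cap\langle h\rangle$. To see $p\notin\id_G(A)$, fix a non-zero value $b=\phi_h(h)$; by property $K$, $b$ uses exactly $n_A-1$ radical substitutions, hence $b\in J(A)^{n_A-1}$. In particular $J\cdot b=b\cdot J=0$, so in the Wedderburn--Malcev decomposition $A=\overline A\oplus J(A)$ one has $aub=\bar a\,\bar u\,b$ for every $a,u\in A$. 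We may assume $A$ is unital (by the reduction to basic algebras), so $\overline A\cdot b\ni 1\cdot b=b\neq 0$, whence there exists a $G$-simple component $\overline A_i$ and an element $x_i\in\overline A_i$ with $x_i b\neq 0$. Now we apply Kemer's Lemma~1 (Lemma~\ref{full-folds}): the explicit construction from its proof, based on the structure of $G$-simple algebras given by Theorem~\ref{BSZ}, produces a non-identity $f'\in\langle f\rangle$ admitting a non-zero evaluation whose semisimple part is a matrix unit $1\otimes E^i_{j,j}$ in $\overline A_i\cong F^{c_i}H_i\otimes M_{k_i}(F)$, with freedom in the choice of $i$ and $j$; picking $j$ so that $1\otimes E^i_{j,j}\cdot x_i b\neq 0$ and substituting $f'$ for $f$ in $p$ with $w=x_i$ yields a non-zero evaluation.

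The hard part is precisely this last step. Subdirect irreducibility of $A$, together with non-vanishing of $f$ and $h$, only gives that the graded ideals $I_f,I_h\subseteq A$ generated by evaluations of $f,h$ both contain the unique minimal graded ideal $M$ of $A$, and this is \emph{not} enough to force $I_f\cdot A\cdot I_h\neq 0$: when $M\subseteq J(A)$, which is possible here, the minimality of $M$ combined with the nilpotency of $J$ forces $M^2=0$, so two non-zero elements of $M$ may satisfy $aAb=0$. The way around this obstruction is fullness of $A$ with respect to $f$: Kemer's Lemma~1 together with Theorem~\ref{BSZ} provides enough control over evaluations of $f$ to match its semisimple part to the specific $G$-simple component acting non-trivially on the given value of $h$.
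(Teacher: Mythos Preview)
Your ``consequently'' step, invoking the two parts of Lemma~\ref{Phoenix} in sequence, is exactly what the paper does.

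For the existence of a non-identity in $\langle f\rangle\cap\langle h\rangle$, however, the paper's route is entirely different and far shorter: it argues by contradiction using only subdirect irreducibility. Assuming $\langle f\rangle\cap\langle h\rangle\subseteq\id_G(A)$, one lets $\overline I,\overline J\subseteq A$ be the graded ideals generated by all evaluations on $A$ of polynomials from $\langle f\rangle$ and $\langle h\rangle$; since $f$ and $h$ are non-identities these ideals are nonzero, and the paper asserts that their intersection is zero, contradicting subdirect irreducibility. Fullness, property~$K$, Kemer's Lemma~1, and Theorem~\ref{BSZ} play no role whatsoever in this step.

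Your explicit construction $p=f\cdot w\cdot h$ is more concrete but carries real gaps. First, you assume $A$ is unital ``by the reduction to basic algebras,'' yet the hypotheses here are only full, subdirectly irreducible, and property~$K$; \textit{PI}-minimality is not assumed, and even basic algebras may lack a unit (cf.\ the case split in the proof of Lemma~\ref{Kemer's Lemma 2}). Without a unit your step $1\cdot b=b$ is unavailable, and nothing rules out $\overline A\,b=b\,\overline A=0$. Second, your appeal to the construction inside Lemma~\ref{full-folds} overstates what it delivers: the polynomials $Z_i$ built there assume idempotent values $1\otimes E^i_{j,j}$, but they are inserted \emph{into} $f$ adjacent to the variables $x_{j,g_{i_j}}$, so the resulting $f'$ has the same nonzero value as the original evaluation of $f$, not a prescribed idempotent. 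You therefore have no mechanism to force a value of something in $\langle f\rangle$ into the specific $G$-simple component that acts nontrivially on $b$.

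Your closing paragraph correctly identifies a subtlety --- that the \emph{product} of two nonzero ideals in a subdirectly irreducible algebra can vanish when the minimal ideal squares to zero --- but the paper's contradiction is phrased in terms of the \emph{intersection} $\overline I\cap\overline J$, and subdirect irreducibility speaks precisely to intersections of nonzero ideals.
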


\begin{proof}
Suppose this is false, that is the intersection is contained in
$\id_{G}(A)$. Consider the ideals $\overline{I}$ and
$\overline{J}$ generated by all evaluations on $A$ of the
polynomials in the $T$-ideals $I=\langle f \rangle$ and $J=\langle
h \rangle$ respectively. Since the ideals $I$ and $J$ are not
contained in $\id_{G}(A)$, the ideals $\overline{I}$ and
$\overline{J}$ are nonzero. On the other hand, by construction,
their intersection is zero and we get a contradiction to the
subdirectly irreducibility of $A$. Take a non-identity $f^{'} \in
I\cap J$. By the first part of Lemma \ref{Phoenix} there is a
non-identity $\widehat{f}\in \langle f^{'} \rangle \subseteq I\cap
J$ which has $\mu$-folds of alternating $g$-sets of cardinality
$d_{g}$ for every $g$ in $G$. By the second part of the Lemma
$\widehat{f}$ has property $K$.
\end{proof}

We can now state and prove Kemer's lemma $2$ for $G$-graded
algebras.

\begin{lemma} [Kemer's lemma $2$ for $G$-graded algebras] \label{Kemer's Lemma 2}

Let $A$ be a finite dimensional $G$-graded algebra. Assume $A$ is
basic. Let $(d_{g_1},\ldots,d_{g_r})$ be the dimensions of the
$g_{i}$-homogeneous components of $\overline{A}$ (the semisimple
part of $A$) and $n_{A}$ be the nilpotency index of $J(A)$. Then for
any integer $\nu$ there exists a $G$-graded, multilinear,
non-identity polynomial $f$ such that for every $g$ in $G$, it has
$\nu$-folds of $g$-sets of alternating variables of cardinality
$d_{g}$ and a total of $n_{A}-1$ sets of variables which are
$g$-homogeneous and of cardinality $d_{g}+1$ for some $g$ in $G$.

\end{lemma}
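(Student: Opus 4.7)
The plan is to assemble the machinery already in place: since $A$ is basic (minimal, full, subdirectly irreducible), Proposition \ref{property K} supplies property $K$, while Proposition \ref{unique point} together with Lemma \ref{full-folds} yields arbitrarily many small alternating $g$-sets of cardinality $d_g$ for each $g\in G$; Lemma \ref{combining Full and property K} then lets one realise both conditions in a single non-identity polynomial. Applied with the fold parameter $\mu$ chosen so that $\mu\geq \nu+n_A-1$, that lemma will produce a multilinear strongly homogeneous non-identity $\widehat{f}$ which is $\mu$-fold alternating in $g$-sets of size $d_g$ for every $g\in G$ and for which $A$ has property $K$.

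Next I would fix an admissible evaluation $\varphi$ of $\widehat{f}$ on $A$ with $\varphi(\widehat{f})\neq 0$. By property $K$ exactly $n_A-1$ variables of $\widehat{f}$, say $z_1,\ldots,z_{n_A-1}$ of homogeneous degrees $h_1,\ldots,h_{n_A-1}\in G$ (repetitions allowed), are specialised to homogeneous radical elements $w_1,\ldots,w_{n_A-1}\in J(A)$. For each $i$, among the $\mu$ small alternating $h_i$-sets of $\widehat{f}$, I would choose pairwise disjoint sets $X_i$ whose variables are evaluated under $\varphi$ to a homogeneous basis $v_1^{(i)},\ldots,v_{d_{h_i}}^{(i)}$ of $\overline{A}_{h_i}$; this is possible because, for every $g\in G$, the number of indices $i$ with $h_i=g$ is at most $n_A-1\leq\mu$. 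Then I would form $\widetilde{f}$ by extending, for each $i$, the alternation over $X_i$ to an alternation over $X_i^{\prime}:=X_i\cup\{z_i\}$. By construction $\widetilde{f}$ carries $n_A-1$ big $h_i$-alternating sets of cardinality $d_{h_i}+1$, and retains at least $\mu-(n_A-1)\geq\nu$ small $g$-alternating sets of cardinality $d_g$ for every $g\in G$.

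The hard part is to show that $\widetilde{f}$ is still a non-identity of $A$. Evaluating $\widetilde{f}$ at $\varphi$ expands as a signed sum over $\prod_i \mathrm{Sym}(X_i^{\prime})$: the identity permutation contributes a nonzero multiple of $\varphi(\widehat{f})$ (via the antisymmetry already present in $\widehat{f}$), and cancellation with the remaining terms must be ruled out. Two ingredients make this possible. First, for each $i$ the $d_{h_i}+1$ values $v_1^{(i)},\ldots,v_{d_{h_i}}^{(i)},w_i$ are linearly independent in $A_{h_i}$, since the $v_j^{(i)}$ form a basis of $\overline{A}_{h_i}$ while $w_i\in J(A)_{h_i}$. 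Second, since $A$ is full and subdirectly irreducible, Theorem \ref{BSZ} provides enough freedom to choose $\varphi$ so that each $v_j^{(i)}$ has the concrete shape $b_h\otimes E_{p,q}$ and each $w_i$ is bordered by prescribed homogeneous idempotents from its $G$-simple component; in such a set-up the bulk of the non-identity permutations of an enlarged set produce degree-mismatches or idempotent-orthogonality inside the monomials of $\widehat{f}$ (in the spirit of Remark \ref{different borderings}) and therefore vanish in the evaluation, leaving a non-cancelling residue. This is the analogue of the corresponding step of the ungraded treatment in \cite{BR} and it is the technical core of the argument.

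Once this non-vanishing is established, $\widetilde{f}$ is a $G$-graded multilinear non-identity of $A$ with $\nu$-folds of alternating $g$-sets of cardinality $d_g$ for every $g\in G$ and with $n_A-1$ homogeneous big alternating sets of cardinality $d_g+1$ for appropriate $g$'s, which is exactly the conclusion of the lemma.
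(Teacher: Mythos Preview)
Your plan correctly identifies the ingredients (property $K$, Lemma \ref{full-folds}, Lemma \ref{combining Full and property K}) and the overall shape of the argument, but there is a genuine gap at the step you yourself flag as ``the hard part'': showing that $\widetilde{f}$ remains a non-identity after you enlarge each $X_i$ to $X_i'=X_i\cup\{z_i\}$.

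The difficulty is this. When you swap $z_i$ with some $x_j\in X_i$, the evaluation $\varphi$ still assigns exactly $n_A-1$ radical values to the variables of $\widehat{f}$: the position $z_i$ now carries the semisimple element $v_j^{(i)}$, but the position $x_j$ now carries the radical element $w_i$, so the radical count is unchanged. Hence property $K$ for $\widehat{f}$ gives you nothing. Your appeal to ``idempotent-orthogonality in the spirit of Remark \ref{different borderings}'' does not rescue the argument either: in your setup the small sets $X_i$ are already baked into $\widehat{f}$ via Lemma \ref{combining Full and property K}, and you have no control over how those small-set variables sit relative to the radical variables in the monomials of $\widehat{f}$; there is no reason the swapped term should be killed by bordering idempotents.

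The paper's proof avoids this by \emph{not} using pre-existing small sets. It begins instead with a non-identity $f$ that is merely full and has property $K$, fixes a nonzero evaluation, and analyses the borderings of the radical values by idempotents of the $G$-simple components (this forces the case split: $q>1$ or $q=1$, $A$ with or without identity). It then inserts fresh $e$-variables $y_{t,e}$ \emph{adjacent to radical variables} $w_{j_t}$ (so that $w_{j_t}$ is replaced by $y_{t,e}w_{j_t}$) and only at that stage invokes Lemma \ref{full-folds} to expand each $y_{t,e}$ into a polynomial $Z_t$ carrying the $\nu$ small sets. The point is that the small sets now live inside a factor $Z_t$ which is multiplied into an already-radical position of $f$. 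Consequently, when a radical variable $w$ (bordered by a same-component pair) is swapped with a small-set variable inside $Z_t$, the position of $w$ in $f$ becomes semisimple while the position of $w_{j_t}$ in $f$ stays radical (the product $Z_tw_{j_t}$ is radical regardless), so the radical count \emph{in $f$} drops to $n_A-2$ and property $K$ for $f$ forces vanishing. For the radicals bordered by idempotents from \emph{different} $G$-simple components, the bordering mismatch kills the swapped terms directly. This positioning of the small sets relative to the radical variables is exactly the idea your plan is missing.
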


\begin{note}

Any nonzero evaluation of such $f$ must consists only of
semisimple evaluations in the $\nu$-folds and each one of the big
sets (namely the sets of cardinality $d_{g}+1$)  must have exactly
one radical evaluation.

\end{note}

\begin{proof}

By the preceding Lemma we take a multilinear (strongly
homogeneous) non-identity polynomial $f$, with respect to which $A$ is
full and has property $K$. Let us fix a non-zero evaluation $x_{g}
\longmapsto \widehat{x}_{g}$. We will consider four cases. These correspond to whether $A$
has or does not have an identity and whether $q$ (the number of $G$-simple components) $>1$
or $q=1$.

{\bf Case $(1,1)$} ($A$ has identity and $q>1$).

Fix a nonzero evaluation of $f$ with respect to which $A$ is full
(i.e. ``visits'' in every $G$-simple component) and has precisely
$n_{A}-1$ radical substitutions. Moreover any evaluation with
fewer radical substitutions vanishes. We choose a monomial $X$ in $f$
which does not vanish upon the above evaluation.

Notice that in the monomial $X$, the variables which get semisimple
evaluations from different $G$-simple component must be separated by
variables with radical values.
Let us denote by $w_{1,g_{i_1}},\ldots,w_{n_{a}-1,g_{i_{n_{A}-1}}}$ the variables
which get radical values and by $\widehat{w}_{1,g_{i_1}},\ldots,\widehat{w}_{n_{a}-1,g_{i_{n_{A}-1}}}$
their values ($g_{i_j}$ is the $G$-degree of $w_{j,g_{i_j}})$.

By Theorem \ref{BSZ} (and linearity), each $\widehat{w}_{i,g_i}$ may be
bordered (i.e. multiplied from left and right) by elements of the
form $1\otimes E^{i}_{k_i,k_i}\in A_{i}$ and still giving a nonzero value.
We refer to any two such elements of the form $1\otimes
E^{i}_{k_i,k_i}$ which border a radical evaluation as
\textit{partners}.

\begin{claim}

The elements $1\otimes E^{i}_{k_i,k_i}$ which appear in the
borderings above, represent all the $G$-simple components of
$\overline{A}$.

\end{claim}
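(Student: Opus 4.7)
The plan is to use the rigid product-vanishing between distinct $G$-simple components together with the hypothesis that the chosen nonzero evaluation of the monomial $X$ visits every simple component $A_1,\ldots,A_q$. First I formalize the structure: under the evaluation, each variable of $X$ takes either a semisimple value lying in some $A_i$ or one of the radical values $\widehat{w}_1,\ldots,\widehat{w}_{n_A-1}$. By Theorem \ref{BSZ} the identity elements of distinct $G$-simple components are orthogonal central (degree $e$) idempotents of $\overline{A}$, so any two consecutive variables of $X$ with semisimple values in different $A_i$'s would force the monomial's value to be zero. Consequently, any maximal run of consecutive variables of $X$ evaluated in a fixed component $A_t$ is flanked, on every side on which it is not at the end of $X$, by a radical variable $\widehat{w}_j$.

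Next I invoke fullness of $f$: every $A_t$ is visited for $t=1,\ldots,q$, and since $q>1$ the $A_t$-run cannot be all of $X$, so it has at least one neighbor inside $X$, which must be some $\widehat{w}_{j(t)}$ by the previous paragraph. On the side of $\widehat{w}_{j(t)}$ facing the $A_t$-run, the adjacent semisimple value lies in $A_t$; writing the identity element as $1_{A_t}=\sum_{k=1}^{k_t}1\otimes E^{t}_{k,k}$, a sum of the homogeneous degree $e$ matrix idempotents provided by Theorem \ref{BSZ}, inserting $1_{A_t}$ between $\widehat{w}_{j(t)}$ and its $A_t$-neighbor does not alter the value of $X$. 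Expanding by linearity, at least one term of the resulting sum is nonzero, and in that term $\widehat{w}_{j(t)}$ is bordered on the relevant side by a single idempotent $1\otimes E^{t}_{k,k}\in A_t$ for some $k$, as required.

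Running this for every $t=1,\ldots,q$ exhibits, collectively over the $n_A-1$ radicals, borderings that realize an idempotent from every $G$-simple component $A_t$, which proves the claim. The main subtlety I expect is bookkeeping coherence: a single $\widehat{w}_j$ may need to be bordered on one side by an idempotent of $A_t$ and on the other side by an idempotent of $A_{t'}$ with $t\neq t'$, and the borderings of different radicals have to be compatible with a single nonzero evaluation of $X$. This is handled because each insertion is the insertion of an identity element $1_{A_{t}}$, which does not change the value of $X$, and the selection of a particular matrix idempotent on each side is an independent multilinearity choice; using that the radical values $\widehat{w}_j$ are assumed nonzero in the original evaluation, an appropriate choice of row and column indices in the two-sided expansion leaves some term nonzero, so the borderings on all radicals can be fixed simultaneously.
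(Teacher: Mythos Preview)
Your argument is correct and follows essentially the same route as the paper: both exploit that a maximal run of semisimple values from a fixed $A_t$ must be flanked by a radical variable, and that the bordering idempotent on the side facing that run necessarily lies in $A_t$ for the product to survive. The paper phrases this as a short contradiction (assume $A_1$ is missing, locate $u_g\in A_1$, move along $X$ to the nearest bordering idempotent and observe it must lie in $A_1$), whereas you give the direct version; your extra coherence paragraph is not needed in the paper's setup because all borderings are fixed simultaneously at the outset by inserting the identity of $A$ on both sides of every radical and choosing a single nonzero term by multilinearity.
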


Indeed, suppose that the $G$-component $A_{1}$ (say) is not
represented among the $1\otimes E^{i}_{k_i,k_i}$'s. Since our
original evaluation was full there is a variable which is
evaluated by an element $u_g$ of $A_{1}$.
``Moving'' along the monomial $X$ to the left or right of $u_{g}$ we
will hit a bordering value of the form $1\otimes E^{i}_{k_i,k_i}$ before we
hit any radical evaluation. But this is possible only if both
$u_g$ and $1\otimes E^{i}_{k_i,k_i}$ belong to the same $G$-simple
component. This proves the claim.

But we need more: Consider the radical evaluations which are
bordered by pairs of elements $1\otimes E^{i}_{k_i,k_i}, 1\otimes
E^{j}_{k^{'}_j,k^{'}_j}$ that belong to $G$-simple
components $A_{i}$ and $A_{j}$ where $i\neq j$.

\begin{claim}

Every $G$-simple component is represented by one of the elements
in these pairs.

\end{claim}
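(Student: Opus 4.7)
My plan is to argue by contradiction, using two basic structural facts: (i) elements from distinct $G$-simple components of $\overline{A}$ annihilate each other, so any nonzero product of semisimple basis values must lie entirely within one $G$-simple component; and (ii) by Theorem \ref{BSZ}, each idempotent $1\otimes E^{i}_{k,k}$ used as a partner is homogeneous of degree $e$ and sits in a unique $G$-simple component $A_{i}$. Suppose, toward contradiction, that some component $A_\ell$ fails to appear among the mixed partner pairs, i.e.\ among pairs $(1\otimes E^{i}_{k_i,k_i},\,1\otimes E^{j}_{k'_j,k'_j})$ with $i\neq j$.

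Read the monomial $X$ from left to right and separate it at the $n_A-1$ radical positions:
$$
X \;=\; C_0 \cdot \widehat{w}_{1}\cdot C_1 \cdot \widehat{w}_{2}\cdots \widehat{w}_{n_A-1}\cdot C_{n_A-1},
$$
where each $C_m$ is a (possibly empty) product of semisimple evaluations. By (i), every non-empty $C_m$ lies inside a single $G$-simple component $A_{j(m)}$, and by the bordering construction of the previous paragraph, the partner pair attached to $\widehat{w}_t$ is precisely $(A_{j(t-1)},A_{j(t)})$. Our assumption that $A_\ell$ never appears in a mixed pair therefore says: whenever $C_m$ lies in $A_\ell$, the adjacent chunk $C_{m\pm 1}$ (when it exists) must also lie in $A_\ell$.

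By fullness together with the first claim, the component $A_\ell$ is visited by some variable of the chosen evaluation; since $X$ does not vanish, this variable contributes to some chunk $C_{m_0}\subseteq A_\ell$. Propagating the above observation inductively in both directions forces $C_m\subseteq A_\ell$ for every $m$, so the monomial visits no $G$-simple component other than $A_\ell$. This contradicts fullness, since the case $q>1$ under consideration requires at least two distinct $G$-simple components to be visited. The main subtlety I anticipate is handling empty chunks (two radicals adjacent in $X$) and chunks at the two ends of $X$; these are absorbed by the same bordering freedom used just before the claim, namely that each $\widehat{w}_t$ can be multiplied on each side by an idempotent $1\otimes E^{i}_{k,k}$ from whichever $A_i$ supports it without changing the value.
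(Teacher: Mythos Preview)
Your argument is correct and is essentially the same as the paper's. The paper phrases it as picking a radical position whose bordering pair lies entirely in $A_1$ (which exists by the first claim) and then ``moving along the monomial $X$'' to the left or right until one reaches a value from a different $G$-simple component; since semisimple values from distinct components annihilate each other, a radical must intervene first, and its bordering pair is then mixed with one partner in $A_1$, a contradiction. Your chunk decomposition $X=C_0\,\widehat{w}_1\,C_1\cdots \widehat{w}_{n_A-1}\,C_{n_A-1}$ together with the propagation ``$C_m\subseteq A_\ell \Rightarrow C_{m\pm 1}\subseteq A_\ell$'' is exactly the same observation, just made global rather than local, and your handling of empty chunks (adjacent radicals force the two neighboring partner idempotents to lie in the same component) is the right way to make the informal ``moving along'' precise.
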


Again, assume that $A_1$ is not represented among these
pairs. By the preceding claim $A_1$ is represented, so it
must be represented by both partners in each pair it appears. Take
such a pair: $1\otimes E^{1}_{k_1,k_1}, 1\otimes
E^{1}_{k^{'}_1,k^{'}_1}$. Moving along the monomial $X$ to the left of
$1\otimes E^{1}_{k_1,k_1}$ or to the right of $1\otimes
E^{1}_{k^{'}_1,k^{'}_1}$, we will hit a value in a different
$G$-simple component. But before that we must hit a radical
evaluation which is bordered by a pair where one of the partners is
from $A_1$ and the other from a different $G$-simple
component. This contradicts our assumption and hence the claim is
proved.

Let $y_{1,e},\ldots,y_{q,e}$ $e$-variables (each will correspond to a $G$-simple component). For $t=1,\ldots,q$ we choose a variable $w_{j_t,g_{i_{j_t}}}$ whose radical value $\widehat{w}_{j_t,g_{i_{j_t}}}$ is bordered
by partners which

\begin{enumerate}

\item

belong to different $G$-simple components.

\item

one of them is an idempotent in the $t$-th $G$-simple component.

\end{enumerate}

We replace now the variable $w_{j_t,g_{i_{j_t}}}$ by the product $y_{t,e}w_{j_t,g_{i_{j_t}}}$ or
$w_{j_t,g_{i_{j_t}}}y_{t,e}$ (according to the position of the bordering). Clearly we obtained a non-identity.

Applying Lemma \ref{full-folds} we can insert in the
$y_{j,e}$'s suitable graded polynomials and obtain a
$G$-graded polynomial with $\nu$-folds of (small) $g$-sets of alternating
variables where each $g$-set is of cardinality
$\dim(\overline{A}_{g})$.

Consider the variables with radical
evaluations which are bordered by $e$-variables with evaluations
from different $G$-simple components (these include the variables
which are bordered by the $y_{j,e}$). Let $z_{g}$ be such a
variable (assume it is homogeneous of degree $g$). We attach it to
a (small) $g$-alternating set. We claim that if we alternate this set (of
cardinality $d_{g}+1$) we obtain a non-identity. Indeed, all
$g$-variables in the small set are bordered by $e$-variables which
are evaluated with elements from the \textit{same} $G$-simple component
whereas the radical element is bordered with elements of \textit{different}
$G$-simple components. Consequently any non-trivial permutation of
the evaluated monomials vanishes. At this point we have
constructed the desired number of small sets and \textit{some} of the
big sets. We still need to attach the radical variables which are
bordered by $e$-variables from the same $G$-simple component. We
attach them as well to (small) $g$-sets. We claim also here that if we
alternate this set (of cardinality $d_{g}+1$) we obtain a
non-identity. Indeed, any non-trivial permutation represents an
evaluation with fewer radical evaluations in the original
polynomial which must vanish by property $K$. This completes the
proof where $q$, the number of $G$-simple components, is $> 1$.

\smallskip

{\bf Case $(1,2)$} ($A$ has identity and $q=1$). We start with a
non-identity $f$ which satisfies property $K$. Clearly we may
multiply $f$ by a variable $x_{e}$ and get a non-identity (since
$x_{e}$ may be evaluated by $1$). Again by Lemma \ref{full-folds}
we may replace $x_{e}$ by a polynomial $h$ with $\nu$-folds of
$g$-sets of alternating variables of cardinality $d_{g}$. Consider
the polynomial $hf$. We attach the radical variables of $f$ to
some of the small sets in $h$. Any non-trivial permutation
vanishes because $f$ satisfies property $K$. This completes the
proof of the Lemma \ref{Kemer's Lemma 2} in case $A$ has an
identity.

\smallskip

{\bf Case $(2,1)$}. Suppose now $A$ has no identity and $q>1$. The
proof in this case is basically the same as in the case where $A$
has identity. Let $e_{0}=1-1_{A_1}-1_{A_2}-\cdots -1_{A_q}$ and
include $e_{0}$ to the set of elements which border the radical
values $\widehat{w}_{j,g_{i_j}})$. A similar argument shows that
also here every $G$-simple component ($A_{1},\ldots,A_{q}$) is
represented in one of the bordering pairs where the partners are
\textit{different}. The point is one of the partners (among these
pairs) may be $e_{0}$. Now we complete the proof exactly as in
case $(1,1)$.

\smallskip

{\bf Case $(2,2)$}. In order to complete the proof of the lemma we
consider the case where $A$ has no identity and $q=1$. The
argument in this case is somewhat different. For simplicity we
denote by $e_1=1_{A_1}$ and $e_0=1-e_1$. Let
$f(x_{1,g_{i_1}},\ldots,x_{n,g_{n_1}})$ be a non-identity of $A$
which satisfies property $K$ and let
$f(\widehat{x}_{1,g_{i_1}},\ldots,\widehat{x}_{n,g_{i_n}})$ be a
non-zero evaluation. If
$e_{1}f(\widehat{x}_{1,g_{i_1}},\ldots,\widehat{x}_{n,g_{i_n}})
\neq 0$ (or
$f(\widehat{x}_{1,g_{i_1}},\ldots,\widehat{x}_{n,g_{i_n}})e_{1}$)
we proceed as in case $(1,2)$. To treat the remaining case we may
assume that:

\begin{enumerate}
\item
$f$ is a non-identity and satisfies property $K$

\item

$A$ is full with respect to $f$.

\item

$e_{0}f(\widehat{x}_{1,g_{i_1}},\ldots,\widehat{x}_{n,g_{i_n}})e_{0}\neq 0$.

\end{enumerate}

First note that if one of the radical values (say $\widehat{w}_{g}$) in $f(\widehat{x}_{1,g_{i_1}},\ldots,\widehat{x}_{n,g_{i_n}}$) allows a bordering by the pair $(e_{0},e_{1})$ (and remains nonzero), then replacing $w_{g}$ by $w_{g}y_{e}$ where $y_{e}$ is an $e$-variable, yields a non-identity. Invoking Lemma \ref{full-folds} we may replace the variable $y_{e}$ by a $G$-graded polynomial $h$ with $\nu$-folds of alternating (small) $g$-sets of cardinality $\dim_{F}(\overline{A}_{g})=\dim_{F}((A_{1})_{g})$ for every $g$ in $G$. Then we attach the radical variable $w_{g}$ to a suitable small set (same $G$-degree). Clearly, the value of any alternation of this (big) set is zero since the borderings are different. Finally we attach the remaining radical variables to suitable small sets in $h$. Again any alternation vanishes because of property $K$. This settles this case. Obviously, the same holds if the bordering pair above is $(e_{1},e_{0})$. The outcome is that we may assume that all radical values may be bordered by either $(e_{0},e_{0})$ or $(e_{1},e_{1})$.

\begin{claim}
Under the above assumption, all pairs that border radical values are equal.
\end{claim}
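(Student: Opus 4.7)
The approach is to argue by contradiction using the four-corner decomposition $A = e_0 A e_0 \oplus e_0 A e_1 \oplus e_1 A e_0 \oplus e_1 A e_1$ induced by the orthogonal idempotents $e_0, e_1$ of the unitalization of $A$. Each radical value decomposes accordingly as $\widehat{w}_k = \sum_{i,j \in \{0,1\}} \widehat{w}_k^{(i,j)}$ with $\widehat{w}_k^{(i,j)} := e_i \widehat{w}_k e_j$.

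First I would translate the standing hypothesis (no radical in $X$ admits an $(e_0,e_1)$ or $(e_1,e_0)$ bordering) into the linear identities
\[
v_1 \cdots v_{k-1}\, \widehat{w}_k^{(0,1)}\, v_{k+1} \cdots v_n \;=\; v_1 \cdots v_{k-1}\, \widehat{w}_k^{(1,0)}\, v_{k+1} \cdots v_n \;=\; 0
\]
for every radical $\widehat{w}_k$ in the chosen nonzero monomial $X = v_1 \cdots v_n$ of $f(\widehat{x}_1,\ldots,\widehat{x}_n)$. Hence $X = X_{(0,0),k} + X_{(1,1),k}$ for each such $k$, and only the ``diagonal'' corners of each radical contribute to $X$.

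I then argue by contradiction. Suppose the claim fails; after elementary reduction one may assume that there exist radicals $\widehat{w}_p$ and $\widehat{w}_q$ in $X$ (say $p < q$) with $X = X_{(1,1),p}$ (so $X_{(0,0),p} = 0$) and simultaneously $X = X_{(0,0),q}$ (so $X_{(1,1),q} = 0$). Combining both identities gives
\[
X \;=\; v_1 \cdots v_{p-1}\, \widehat{w}_p^{(1,1)}\, v_{p+1} \cdots v_{q-1}\, \widehat{w}_q^{(0,0)}\, v_{q+1} \cdots v_n,
\]
so the segment between $\widehat{w}_p^{(1,1)}$ and $\widehat{w}_q^{(0,0)}$ must transport an element ending in $e_1$ to one beginning with $e_0$. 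Semisimple values lie in $e_1 A_1 e_1$ and are annihilated by right-multiplication with $e_0$, so at least one intermediate radical $\widehat{w}_r$ must effect the transition through its $(1,0)$ corner.

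The final step shows this cannot happen. By induction on the number of radicals strictly between $\widehat{w}_p$ and $\widehat{w}_q$, I expand each intermediate $\widehat{w}_r$ into its four corners; the compatibility $e_b e_c \neq 0 \iff b = c$ between consecutive factors drastically restricts the surviving corner choices, and the identities from the first step, applied repeatedly to the remaining free positions, force each surviving summand to vanish. The base case (no intermediate radicals, a purely semisimple middle) is immediate from $e_1 A_1 e_1 \cdot e_0 = 0$. The induction collapses the right-hand side of the displayed equation to zero, contradicting $X \neq 0$, and thus forces all bordering pairs of radicals in $X$ to coincide. The main obstacle is precisely this propagation: the hypothesis provides only the vanishing of \emph{sums} of corner contributions over the free positions rather than the vanishing of individual corner substitutions. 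The rigidity imposed by the semisimple blocks between consecutive radicals (which force adjacent corners to meet at $e_1$) is what keeps the inductive step tight enough to collapse each surviving summand and complete the contradiction.
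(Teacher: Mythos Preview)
Your approach shares the paper's core idea---semisimple values can only be bordered by $(e_1,e_1)$, so any transition between a radical bordered by $(e_0,e_0)$ and one bordered by $(e_1,e_1)$ must pass through a radical with a mixed bordering---but you over-elaborate considerably. The paper dispatches the claim in one sentence: ``if we have of both kinds, we must have a radical value which is bordered by a mixed pair since the semisimple variable can be bordered only the pair $(e_1,e_1)$.'' Your four-corner decomposition and proposed induction are a formalization of exactly this observation.

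There is, however, a genuine slip in your setup. The negation of ``all pairs that border radical values are equal'' (under the standing hypothesis that mixed pairs never occur) is merely that \emph{some} radical $p$ admits the $(e_0,e_0)$ bordering (i.e.\ $X_{(0,0),p}\neq 0$) and \emph{some} radical $q$ admits the $(e_1,e_1)$ bordering (i.e.\ $X_{(1,1),q}\neq 0$). Your ``elementary reduction'' instead assumes $X_{(0,0),p}=0$ and $X_{(1,1),q}=0$, which is strictly stronger and not justified: a radical could a priori admit both diagonal borderings simultaneously, and the claim must rule that out as well. The paper's one-line argument handles the correct negation directly: once $X_{(0,0),p}\neq 0$, the neighbors of $\widehat w_p$ in the monomial cannot be semisimple (since $e_0$ annihilates $A_1$), and walking toward any semisimple value forces a mixed bordering on some intermediate radical.

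The obstacle you flag in your last paragraph---that the hypothesis gives only vanishing of \emph{sums} rather than individual corner terms---is real, and your inductive sketch does not actually close it. But the resolution is simpler than an induction: the vanishing $X_{(0,0),p}=0$ together with $X_{(0,1),p}=0$ gives $X$ with $e_0$ inserted immediately to the left of $v_p$ equal to zero; this is precisely $X_{(\cdot,0),p-1}$, and combined with $X_{(1,0),p-1}=0$ yields $X_{(0,0),p-1}=0$. Propagating this from any radical adjacent to a semisimple (where $X_{(0,0)}=0$ is automatic) reaches every radical. This is morally what the paper's sentence encodes; your inductive framework is pointed in the right direction but the reduction step and the treatment of the obstacle need to be corrected along these lines.
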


Indeed, if we have of both kinds, we must have a radical value which is bordered by a mixed pair since the semisimple variable can be bordered only the pair $(e_{1},e_{1})$.

Now, assume all the bordering pairs of the radical values are $(e_{1},e_{1})$. Since also the semisimple values can be bordered (only) by that pair it follows that the entire value of the polynomial, namely $f(\widehat{x}_{1,g_{i_1}},\ldots,\widehat{x}_{n,g_{i_n}})$, may be multiplied by $(e_{1},e_{1})$ but this case was already taken care of.

The last case to consider, is the case where the all bordering pairs of the radical values are $(e_{0},e_{0})$. Here we use the fact that the polynomial is full (rather than satisfying property $K$ as in previous cases) and replace one of the semisimple variables (say $x_g$) by $x_{g}y_{e}$. Then as above we replace $y_{e}$ by $G$-graded polynomial $h$ with $\nu$-folds of alternating (small) $g$-sets of cardinality $\dim_{F}(\overline{A}_{g})=\dim_{F}((A_{1})_{g})$ for every $g$ in $G$. The point in this case is that we may attach all radical variables to suitable small sets from $h$. Clearly, since the borderings are different ($(e_{0},e_{0})$ for the radical values and $(e_{1},e_{1})$ for the semisimple ones) any non-trivial alternation will vanish. This completes the proof of the Lemma.

\end{proof}

\begin{corollary}

If $A$ is basic then its Kemer set consists of precisely one point
$(\alpha, s(\alpha))=(\alpha_{g_1},\ldots,\alpha_{g_r}; s(\alpha))=
(d_{g_1},\ldots,d_{g_r}; n_{A}-1)$.

\end{corollary}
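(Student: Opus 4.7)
The corollary is a clean synthesis of the three results proved in the previous sections, so I would prove it by simply assembling them in the right order.

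First I would extract uniqueness of the first coordinate. Since $A$ is basic it is, in particular, full and subdirectly irreducible, so Proposition \ref{unique point} applies and gives that the set $E_{0}(A)$ of extremal points consists of a single element
\[
\alpha = (d_{A,g_{1}}, d_{A,g_{2}}, \ldots, d_{A,g_{r}}), \qquad d_{A,g} = \dim_{F}(\overline{A}_{g}).
\]
By the very definition of $\Kemer(A)$, the first coordinate of every Kemer point lies in $E_{0}(A)$, and for each extremal $\beta$ the second coordinate $s_{A}(\beta)$ is determined (as the stabilised value of the sequence $s_{A}(\beta,\nu)$). Hence $\Kemer(A)$ has at most one element, namely $(\alpha, s(\alpha))$.

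Next I would pin down $s(\alpha)$ by a two-sided estimate. The upper bound $s(\alpha) \leq n_{A}-1$ is immediate from Proposition \ref{easy direction}, which asserts that every Kemer point satisfies $(\alpha,s(\alpha)) \preceq G-\Par(A) = (d_{A,g_{1}},\ldots,d_{A,g_{r}};\, n_{A}-1)$. For the reverse inequality I would invoke Kemer's Lemma $2$ for $G$-graded algebras (Lemma \ref{Kemer's Lemma 2}): for every integer $\nu$ it produces a multilinear $G$-graded non-identity of $A$ carrying $\nu$ folds of alternating $g$-sets of cardinality $d_{A,g}$ together with exactly $n_{A}-1$ further alternating homogeneous sets of cardinality $d_{A,g}+1$. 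Reading off the definition of $s_{\Gamma}(\alpha,\nu)$ with $\Gamma = \id_{G}(A)$ gives $s_{A}(\alpha,\nu) \geq n_{A}-1$ for every $\nu$, and therefore $s(\alpha) = \lim_{\nu \to \infty} s_{A}(\alpha,\nu) \geq n_{A}-1$.

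Combining the two bounds yields $s(\alpha) = n_{A}-1$, so $\Kemer(A) = \{(d_{A,g_{1}}, \ldots, d_{A,g_{r}};\, n_{A}-1)\} = \{G-\Par(A)\}$, as claimed. There is no real obstacle in this step: the mathematical work is already contained in Proposition \ref{easy direction} (which controls the extremal coordinates from above), Proposition \ref{unique point} (which supplies the unique extremal $\alpha$ for full, subdirectly irreducible algebras), and Lemma \ref{Kemer's Lemma 2} (which, using fullness, property $K$ and the Bahturin–Sehgal–Zaicev structure theorem, realises the maximal number $n_{A}-1$ of big sets). The corollary just glues these three facts together to obtain the precise match between the Kemer set of a basic algebra and its numerical invariants.
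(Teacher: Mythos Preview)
Your proposal is correct and is exactly the argument the paper intends: the corollary is stated immediately after Lemma \ref{Kemer's Lemma 2} without proof, and the reader is expected to combine Proposition \ref{unique point} (uniqueness of the extremal point $\alpha=(d_{g_1},\ldots,d_{g_r})$), Proposition \ref{easy direction} (the upper bound $s(\alpha)\leq n_A-1$), and Lemma \ref{Kemer's Lemma 2} (the lower bound $s(\alpha)\geq n_A-1$) precisely as you do.
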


\begin{corollary}\label{Kemer polynomials of A are Phoenix}
\begin{enumerate}

\item

Let $A$ be a basic algebra and let $f$ be a Kemer polynomial of
$A$ i.e. a Kemer polynomial of its unique Kemer point $(\alpha,
s(\alpha))$. Then it satisfies the $A$-Phoenix property.

\item
More generally:
let $A$ be a finite dimensional algebra $A$ and let $f$ be a Kemer
polynomial of a Kemer point of $A$. Then it satisfies the
$A$-Phoenix property.

\end{enumerate}
\end{corollary}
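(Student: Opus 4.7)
My plan is to handle part (1) for basic $A$ first and then reduce part (2) to it via the basic decomposition. By the corollary preceding the statement, a basic $A$ has the unique Kemer point $(d_{g_1},\ldots,d_{g_r};n_A-1)$, so a Kemer polynomial $f$ has $\mu$-folds of alternating $g$-sets of cardinality $d_g=\dim_F\overline{A}_g$ (for every $g\in G$) and $n_A-1$ big sets of cardinality $d_g+1$. The key initial observation is that any such $f$ automatically satisfies property $K$: each big set must contain at least one radical substitution in any non-vanishing evaluation (an all-semisimple alternation on $d_g+1>\dim_F\overline{A}_g$ vectors vanishes by pigeonhole), which forces at least $n_A-1$ radical substitutions overall.

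Given any non-identity $f'\in\langle f\rangle$ of $A$, Lemma~\ref{Phoenix}(2) (property $K$ is strictly $A$-Phoenix) shows that $f'$ itself has property $K$; Lemma~\ref{Phoenix}(1) then produces a non-identity $f_1\in\langle f'\rangle$ with $\mu$-fold alternating small $g$-sets of cardinality $d_g$, and a second application of Lemma~\ref{Phoenix}(2) shows that $f_1$ inherits property $K$. At this stage $f_1$ realizes the hypotheses of Lemma~\ref{combining Full and property K}, so the combinatorial procedure used in the proof of Lemma~\ref{Kemer's Lemma 2} applies to $f_1$: bordering its radical values by $e$-variables, expanding those $e$-variables using the polynomial furnished by Lemma~\ref{full-folds}, and attaching each of the $n_A-1$ radical variables of $f_1$ to a suitable small set (so that non-trivial alternations of the resulting big set vanish, either by the distinct borderings supplied by Theorem~\ref{BSZ} or by property $K$) produces $f''\in\langle f_1\rangle\subseteq\langle f'\rangle$ which is Kemer for $A$, completing part~(1).

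For part (2), use Proposition~\ref{basic algebras} to write $A$ as PI-equivalent to a direct product $A_1\times\cdots\times A_t$ of basic algebras, so $\id_G(A)=\bigcap_j\id_G(A_j)$. If $(\alpha,s)$ is a Kemer point of $A$ and $f$ is a Kemer polynomial at $(\alpha,s)$, then for any basic component $A_j$ with $f\notin\id_G(A_j)$, Proposition~\ref{easy direction} applied to $A_j$ together with the alternating structure of $f$ forces $(\alpha,s)\preceq G-\Par(A_j)$, and the extremality of $(\alpha,s)$ in $\Ind(A)\supseteq\Ind(A_j)$ upgrades this to equality, so $f$ is a Kemer polynomial of the basic $A_j$ itself. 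Now given a non-identity $f'\in\langle f\rangle$ of $A$, pick a basic $A_k$ with $f'\notin\id_G(A_k)$; since $\langle f\rangle$ is a $T$-ideal, $f\notin\id_G(A_k)$ as well, so $f$ is Kemer of $A_k$ and part~(1) delivers $f''\in\langle f'\rangle$ Kemer of $A_k$, which is then a non-identity of $A$ carrying exactly the alternating structure of $(\alpha,s)$, hence Kemer of $A$.

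The main obstacle I anticipate lies in the very last step of part~(1): verifying that the combinatorial construction of Lemma~\ref{Kemer's Lemma 2}, when run on our $f_1$, really produces an element of the $T$-ideal $\langle f_1\rangle$ and not merely a polynomial with the same formal shape. This is where the precise bordering/partner combinatorics afforded by Theorem~\ref{BSZ}, together with the strict Phoenix character of property $K$ which guarantees that the radical variables of $f_1$ can be attached to small sets without losing non-vanishing, must be used essentially.
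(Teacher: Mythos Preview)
Your approach is essentially the paper's: the authors simply write that part~(1) ``follows from Lemmas~\ref{Phoenix}, \ref{combining Full and property K} and~\ref{Kemer's Lemma 2}'' after observing that a Kemer polynomial witnesses both fullness and property~$K$, and that part~(2) ``follows at once from the first''. Your unpacking of this is correct; the worry you flag at the end is not a real obstacle, since every step in the proof of Lemma~\ref{Kemer's Lemma 2} (bordering by fresh variables, substituting the $y_{t,e}$ by the polynomials from Lemma~\ref{full-folds}, attaching radical variables to small sets and alternating) is a $T$-ideal operation, so the output automatically lies in $\langle f_1\rangle$.

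Two small comments. First, your invocation of Lemma~\ref{combining Full and property K} is superfluous: once $f_1$ already carries both $\mu$-fold small-set alternation and property~$K$, you feed it directly into the construction of Lemma~\ref{Kemer's Lemma 2}; the role of Lemma~\ref{combining Full and property K} in the paper is only to manufacture such a polynomial when fullness and property~$K$ are witnessed by \emph{different} non-identities. Second, in part~(2) your line ``Proposition~\ref{easy direction} applied to $A_j$'' is a slight mis-citation: that proposition bounds Kemer points of $A_j$, whereas $(\alpha,s)$ is a priori only a Kemer point of $A$. What you actually use is the direct pigeonhole argument (the same one proving Proposition~\ref{easy direction}) applied to the non-identity $f$ on $A_k$, which gives $(\alpha,s)\preceq G\text{-}\Par(A_k)$; then extremality of $\alpha$ in $\Ind(A)_0$ together with $G\text{-}\Par(A_k)\in\Ind(A)$ (Remark~\ref{comparison}) forces equality. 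This is exactly the content of Proposition~9.1, which the paper proves later but whose argument you have reproduced here.
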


\begin{proof}

Clearly if $f$ is Kemer then $A$ is Full and satisfies property
$K$ with respect to $f$. The first part of the Corollary now
follows from Lemmas \ref{Phoenix}, \ref{combining Full and
property K} and \ref{Kemer's Lemma 2}.

The second part follows at once from the first.
\end{proof}

\end{section}

\begin{section}{More tools}

In this section we present several concepts and results which will
be essential for the proof of the main theorems. These concepts
are borrowed from the classical \textit{PI}-theory.

\begin{subsection}{Finite generation of the relatively free
algebra}\label{Finite generation of the relatively free algebra} It
is well known that if $\mathcal{W}$ is a relatively free algebra
over a field of characteristic zero which satisfies the Capelli
identity $c_{n}$, then it has basic rank $ < n$ (i.e. any identity
of $\mathcal{W}$ is equivalent to an identity with less than $n$
variables). Indeed, any non-zero multilinear polynomial with $m$
variables, generating an irreducible $S_{m}$-module corresponds to a
Young tableau with strictly less than $n$-rows and hence is
equivalent (via linearization) to a homogeneous polynomial with less
than $n$-variables (see \cite{Kemer1}, section 1). The same holds
for $G$-graded polynomials, i.e. a polynomial with $m_{i}$
$g_{i}$-variables, $i=1,\dots,r$. Here one considers the action of
the group $S_{m_1} \times \cdots \times S_{m_r}$ on the set of
multilinear polynomial with $m_{1}+\cdots+m_{r}=m$ variables (each
Symmetric group acts on the corresponding variables) and shows that
such a polynomial is equivalent to an homogeneous polynomial with
less than $n$ variables of each type (i.e. $< rn)$. This gives:

\begin{corollary} \label{reduction to affine G-grading}

Let $W$ be a $G$-graded algebra as above. Then there exists a
relatively free affine $G$-graded algebra $\mathcal{W}_{affine}$
with $\id_{G}(W)=\id_{G}(\mathcal{W}_{affine})$.

\end{corollary}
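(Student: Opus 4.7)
The plan is to combine the Capelli-type bound from Section~\ref{the index of $G$-graded $T$-ideals} with the representation-theoretic discussion immediately preceding the corollary. Since $F$ has characteristic zero, the $T$-ideal $\id_G(W)$ is generated by its multilinear strongly homogeneous elements, so it suffices to control such elements. By the lemma in Section~\ref{the index of $G$-graded $T$-ideals}, for every $g \in G$ there is an integer $n_g$ with $c_{n_g,g} \in \id_G(W)$; set $N_g := n_g - 1$ and $N := \sum_{g \in G} N_g$.

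Next, fix a $G$-multidegree $(m_{g_1},\dots,m_{g_r})$ and consider the space of multilinear $G$-graded polynomials of this multidegree, on which the product group $\Sym(m_{g_1})\times\cdots\times\Sym(m_{g_r})$ acts by permuting variables within each color class. This action preserves $\id_G(W)$, and as a representation the quotient by $\id_G(W)$ decomposes into a sum of irreducibles indexed by $r$-tuples of Young tableaux $(T_{g_1},\dots,T_{g_r})$. Exactly as recalled before the statement, if some $T_g$ has more than $N_g$ rows, then the associated Young symmetrizer antisymmetrizes a set of $n_g$ variables of color $g$, and the $g$-Capelli identity $c_{n_g,g} \in \id_G(W)$ forces the entire isotypic component into $\id_G(W)$. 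Hence, modulo $\id_G(W)$, only tableaux with at most $N_g$ rows of color $g$ survive, and via the standard linearization/delinearization equivalence each such multilinear polynomial is equivalent to a homogeneous polynomial involving at most $N_g$ distinct variables of color $g$, i.e.\ at most $N$ variables in total.

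Now let $X^{\mathrm{fin}}_G$ be a finite alphabet consisting of $N_g$ variables of degree $g$ for each $g \in G$, let $\Omega^{\mathrm{fin}}_{F,G} := F\langle X^{\mathrm{fin}}_G \rangle$, and put
$$\Gamma^{\mathrm{fin}} := \id_G(W) \cap \Omega^{\mathrm{fin}}_{F,G}, \qquad \mathcal{W}_{\mathrm{affine}} := \Omega^{\mathrm{fin}}_{F,G} / \Gamma^{\mathrm{fin}}.$$
By construction $\mathcal{W}_{\mathrm{affine}}$ is a relatively free affine $G$-graded algebra. The inclusion $\id_G(W) \subseteq \id_G(\mathcal{W}_{\mathrm{affine}})$ is immediate, since any element of $\id_G(W)$ vanishes on specializations to $\mathcal{W}_{\mathrm{affine}}$. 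For the reverse inclusion, take $f \in \id_G(\mathcal{W}_{\mathrm{affine}})$ and reduce to the multilinear case; by the representation-theoretic argument above, each multilinear component of $f$ is $T$-equivalent, modulo $\id_G(W)$, to a polynomial in $\Omega^{\mathrm{fin}}_{F,G}$. That polynomial vanishes on $\mathcal{W}_{\mathrm{affine}}$, hence lies in $\Gamma^{\mathrm{fin}} \subseteq \id_G(W)$, giving $f \in \id_G(W)$.

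The main technical point to verify carefully is the $G$-graded analogue of the classical fact that Capelli identities bound the row lengths of the Young tableaux appearing in the multilinear decomposition; this requires working with the product of symmetric groups (one per color) rather than a single $\Sym(m)$, but the argument with Young symmetrizers goes through essentially unchanged because the $g$-Capelli polynomials act independently on each color. Everything else is bookkeeping: reducing to multilinears, and the standard passage between a $T$-ideal and its restriction to a free algebra on sufficiently many variables.
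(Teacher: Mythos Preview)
Your proposal is correct and follows essentially the same route as the paper: the paper's argument is precisely the representation-theoretic discussion immediately preceding the corollary (Capelli identities bound the number of rows in the Young tableaux appearing in the $S_{m_1}\times\cdots\times S_{m_r}$-decomposition, so every multilinear identity is $T$-equivalent to one in boundedly many variables of each color). The only cosmetic difference is that the paper works with a single bound $n$ coming from the ungraded Capelli identity (yielding fewer than $rn$ variables total), whereas you use the sharper color-by-color bounds $n_g$ from the $g$-Capelli lemma; and you spell out the construction of $\mathcal{W}_{\mathrm{affine}}$ and the two inclusions explicitly, which the paper leaves implicit in the phrase ``This gives.''
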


\begin{corollary}\label{finite number of variables in Kemer
polynomials} All $G$-graded Kemer polynomial of $W$ are obtained
(via linearization) from Kemer polynomials with a bounded number of
variables.

\end{corollary}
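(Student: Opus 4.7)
The plan is to apply the Capelli-based Young tableau reduction that already proves Corollary \ref{reduction to affine G-grading}. For each $g \in G$ there is a bounded integer $n_g$ with $c_{n_g,g} \in \id_G(W)$. Therefore, any multilinear $G$-graded polynomial in $m_g$ variables of each type $g$, viewed as an element of the $S_{m_{g_1}} \times \cdots \times S_{m_{g_r}}$-module under the natural action, decomposes modulo $\id_G(W)$ as a sum of polynomials each generating an irreducible module indexed by a tuple of Young tableaux $(T_{g_1},\ldots,T_{g_r})$ in which $T_g$ has strictly fewer than $n_g$ rows.

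First I would take an arbitrary Kemer polynomial $f$ of $W$ and decompose it into its isotypic components under this product of symmetric groups, selecting a component that remains a non-identity of $W$. Then by the standard polarization/linearization correspondence, this component is the full multilinearization of a (multi-)homogeneous polynomial $\tilde f$ with $k_g \leq n_g - 1$ distinct variables of type $g$; in total $\tilde f$ has at most $\sum_{g \in G} (n_g - 1)$ variables, a bound depending only on $W$.

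The essential observation is that the Kemer structure of $f$ (the $\mu$-folds of small alternating $g$-sets of cardinality $\alpha_g$ together with the $s$ big alternating sets of cardinality $\alpha_g + 1$) is faithfully encoded in the column structure of the tableaux $T_g$, so $\tilde f$ inherits a corresponding Kemer shape with higher-degree variables, and $f$ is recovered modulo $\id_G(W)$ by multilinearizing $\tilde f$. Since the Kemer points of $W$ form a finite set and each of the $\alpha_g$ is already bounded by $n_g - 1$, the collection of possible tableau shapes, and hence of templates $\tilde f$ up to equivalence modulo $\id_G(W)$, is finite.

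The main obstacle I anticipate is the bookkeeping needed to ensure that the folding genuinely preserves the Kemer shape: one must verify that collapsing the $\mu \alpha_g$ small-set variables into $\alpha_g$ variables of higher degree, and doing the analogous collapse for the big-set variables, produces a polynomial still outside $\id_G(W)$ whose full multilinearization returns $f$ modulo $\id_G(W)$, all while keeping track of the $G$-degrees and of which blocks are alternating versus merely homogeneous. This is the standard linearization/polarization duality already invoked in Corollary \ref{reduction to affine G-grading}, but here it must be carried out in a way that respects the finer alternating structure that qualifies a polynomial as Kemer.
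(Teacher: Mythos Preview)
Your approach is correct and matches the paper's: the corollary is stated immediately after the Young-tableau discussion that proves Corollary~\ref{reduction to affine G-grading}, and the paper gives no separate argument, treating it as a direct consequence of that same $S_{m_{g_1}}\times\cdots\times S_{m_{g_r}}$ decomposition (with the remark that it also follows from Berele--Bergen). Your outline supplies exactly the details the paper omits, and your identification of the main bookkeeping issue---that the column structure of the surviving tableaux must record the alternating small and big sets so that the folded polynomial $\tilde f$ is again Kemer---is the right point to check, though it is routine once one notes that the antisymmetrizers on the Kemer sets already force the relevant column lengths in any non-vanishing isotypic component.
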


\begin{remark} We could obtain the corollaries above from Berele
and Bergen result (see \cite{BB}, Lemma 1).

\end{remark}

\end{subsection}

\begin{subsection}{The $G$-graded generic algebra}
We start with an alternative description of the relatively free
algebra $\mathcal{A}=F\langle X_{G} = \cup X_{g}\rangle/
\id_{G}(A)$ of a finite dimensional algebra $A$. Note that by the virtue of
Corollary \ref{reduction to affine G-grading} we may (and will) assume that the set of
$g$-variables is finite, say $m_{g}$, for every
$g\in G$.

Let $\{b_{1,g},b_{2,g},\ldots,b_{t_g,g}\}$ be a basis of the
$g$-component of $A$ and let $\Lambda_{G} = \{\lambda_{i,j,g} \mid
i=1,\ldots,m$, $j=1,\ldots,t_g$ , $g \in G$ $\}$ be a set of
commuting variables which centralize the elements of $A$. For any
$g$ in $G$, consider the elements

$y_{i,g}= \sum_{j} b_{j,g}\lambda_{i,j,g}$ for $i=1,\ldots,m_{g}$

and consider the subalgebra $\widetilde{\mathcal{A}}$ they
generate in the polynomial algebra $A[\Lambda_{G}]$. The proof of
the following lemma is identical to the proof in \cite{BR},
section 3.3.1 and is omitted.

\begin{lemma} \label{generic}

The map $\pi: \mathcal{A} \longrightarrow \widetilde{\mathcal{A}}$
defined by $\pi(x_{i,g})= y_{i,g}$ is a $G$-graded isomorphism.

\end{lemma}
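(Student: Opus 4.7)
The plan is to verify separately that $\pi$ is well-defined, $G$-graded, surjective, and injective. For well-definedness, I would observe that $A[\Lambda_G] = A\otimes_F F[\Lambda_G]$ inherits a $G$-grading in which the commuting indeterminates $\lambda_{i,j,g}$ are central of trivial degree, so each $y_{i,g}=\sum_j b_{j,g}\lambda_{i,j,g}$ lies in $(A[\Lambda_G])_g$. Because $F$ has characteristic zero, $\id_G(A)=\id_G(A\otimes_F L)$ for any field extension $L$ of $F$ (and this extends to polynomial extensions), so every $f\in\id_G(A)$ vanishes on every admissible $G$-graded substitution in $A[\Lambda_G]$. In particular $f(y_{i_1,g_1},\ldots,y_{i_n,g_n})=0$, so $x_{i,g}\mapsto y_{i,g}$ descends to a $G$-graded homomorphism $\pi:\mathcal{A}\to\widetilde{\mathcal{A}}$. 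Surjectivity is immediate since $\widetilde{\mathcal{A}}$ is by definition generated by the $y_{i,g}$.

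For injectivity, suppose $\pi(\bar f)=0$ for some $\bar f\in\mathcal{A}$ with representative $f\in F\langle X_G\rangle$. Since $\Lambda_G$ commutes with $A$, different multi-degrees in the $x_{i,g}$ produce disjoint collections of $\lambda$-monomials in the image, so each multi-homogeneous component of $f$ maps to zero independently, and I may assume $f$ is multi-homogeneous. Standard multilinearization in characteristic zero (available because $\id_G(A)$ is determined by its multilinear components) further reduces to the case where $f(x_{i_1,g_1},\ldots,x_{i_n,g_n})$ is multilinear. Then
$$
\pi(f) \;=\; \sum_{j_1,\ldots,j_n}\lambda_{i_1,j_1,g_1}\cdots\lambda_{i_n,j_n,g_n}\, f(b_{j_1,g_1},\ldots,b_{j_n,g_n}) \;=\; 0
$$
in $A[\Lambda_G]$, and the $F$-linear independence of the distinct monomials in $\Lambda_G$ (together with centrality of the $\lambda$'s over $A$) forces $f(b_{j_1,g_1},\ldots,b_{j_n,g_n})=0$ in $A$ for every choice of indices $j_1,\ldots,j_n$.

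Since the $b_{j,g}$ form a \emph{homogeneous} $F$-basis of $A_g$, this last vanishing precisely says that $f$ vanishes on every admissible $G$-graded evaluation on $A$, i.e. $f\in\id_G(A)$, so $\bar f=0$ in $\mathcal{A}$. Finally, $\pi$ respects the $G$-grading by construction, since $x_{i,g}$ and $y_{i,g}$ both carry degree $g$. The only delicate step is the passage to multilinear polynomials, which rests on the usual characteristic-zero reductions together with the separation of multi-degrees by the $\lambda$-side; once those reductions are in place the argument becomes bare-hands linear independence of monomials in the $\lambda_{i,j,g}$, and this is the heart of the generic construction.
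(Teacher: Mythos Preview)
The paper omits this proof entirely, referring to \cite{BR}, section~3.3.1, so there is no line-by-line comparison to make. Your argument is essentially the standard one and is correct in spirit, but the multilinearization step in the injectivity proof is not properly justified and is in fact unnecessary.

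The issue is this: after reducing to a multi-homogeneous $f$ with $\pi(f)=0$, you assert that ``standard multilinearization'' lets you assume $f$ is multilinear. For that reduction you would need to know that the preimage of $\ker\pi$ in $F\langle X_G\rangle$ is closed under partial linearizations, i.e.\ that $f(y_{i,g})=0$ forces $f(y_{i,g}+y_{i',g},\ldots)=0$ for a fresh variable. This is not automatic: $\ker\pi$ is the kernel of evaluation at the \emph{specific} tuple $(y_{i,g})$, not a $T$-ideal, and the linearized polynomial involves more variables than $X_G$ may provide (recall the paper fixes only finitely many $m_g$ variables of each degree). The appeal to ``$\id_G(A)$ is determined by its multilinear part'' does not help, since at this stage you do not yet know $f\in\id_G(A)$.

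The clean fix, which is also the argument in \cite{BR}, bypasses multilinearization entirely. Once $f(y_{i,g})=0$ in $A[\Lambda_G]$, simply note that for any admissible tuple $(a_{i,g})$ with $a_{i,g}\in A_g$ one may write $a_{i,g}=\sum_j c_{i,j}\,b_{j,g}$ with $c_{i,j}\in F$; the $F$-algebra map $A[\Lambda_G]\to A$ sending $\lambda_{i,j,g}\mapsto c_{i,j}$ carries $y_{i,g}$ to $a_{i,g}$ and hence $0=f(y_{i,g})\mapsto f(a_{i,g})$. Thus $f\in\id_G(A)$ directly, with no reduction to the multilinear case needed. Your multilinear computation with the $\lambda$-monomials is then just the special case of this specialization argument and can be dropped.
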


In particular, the relatively free algebra of a finite dimensional
algebra $A$ is representable i.e. it can be $G$-graded embedded in a
finite dimensional algebra. The next claim is well known.

\begin{claim}
Any $G$-graded finite dimensional algebra $A$ over a field $K$ can
be $G$-embedded in a $G$-graded matrix algebra.
\end{claim}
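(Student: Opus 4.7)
\smallskip

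The plan is to use the left regular representation, adjusting for the possible absence of an identity element. First I would set $\widehat{A} = A$ if $A$ has a unit, and otherwise $\widehat{A} = A \oplus K \cdot 1$, where the adjoined identity is declared to be of degree $e \in G$. In either case $\widehat{A}$ is a finite dimensional $G$-graded $K$-algebra with $1 \in \widehat{A}_{e}$, and $A$ is a $G$-graded subalgebra of $\widehat{A}$.

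Next, choose a $K$-basis $\{v_1, \ldots, v_n\}$ of $\widehat{A}$ consisting of homogeneous elements, and let $h_i \in G$ denote the degree of $v_i$. Identifying $\End_K(\widehat{A})$ with $M_n(K)$ via this basis, define a $G$-grading on $M_n(K)$ by declaring the elementary matrix $E_{ij}$ (which sends $v_j \mapsto v_i$ and kills the remaining basis vectors) to be homogeneous of degree $h_i h_j^{-1}$. This is the ``elementary'' $G$-grading on $M_n(K)$ associated to the tuple $(h_1,\ldots,h_n)$; a routine check shows that $E_{ij}E_{k\ell} = \delta_{jk} E_{i\ell}$ is compatible with the grading, since $(h_i h_j^{-1})(h_k h_\ell^{-1}) = h_i h_\ell^{-1}$ when $j=k$.

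Now consider the left regular representation $L \colon A \to \End_K(\widehat{A}) \cong M_n(K)$ given by $L_a(x) = ax$. It is injective because $\widehat{A}$ has an identity: $L_a = 0$ forces $a = L_a(1) = 0$. I would check that $L$ is $G$-graded as follows: for $a \in A_g$ and a basis element $v_j$ of degree $h_j$, one has $L_a(v_j) = a v_j \in A_g \cdot \widehat{A}_{h_j} \subseteq \widehat{A}_{g h_j}$, so $L_a$ sends the $h_j$-component into the $(gh_j)$-component for every $j$, which is precisely the condition for $L_a$ to be homogeneous of degree $g$ relative to the elementary grading on $M_n(K)$.

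There is essentially no obstacle here; the only subtle point is the possible absence of a unit element, handled by the adjunction step, and the observation that declaring the adjoined $1$ to be of degree $e$ keeps $\widehat{A}$ $G$-graded and the inclusion $A \hookrightarrow \widehat{A}$ degree-preserving. Combined with Lemma \ref{generic}, this claim shows that the relatively free algebra $\mathcal{A}$ of a finite dimensional $G$-graded algebra $A$ is $G$-graded embeddable into a $G$-graded matrix algebra over a suitable commutative ring (polynomial ring over $K$), which is the representability statement needed in the sequel.
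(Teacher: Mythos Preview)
Your proof is correct and follows essentially the same idea as the paper: embed $A$ into $\End_K$ of (a unitization of) $A$ via the left regular representation, and equip the endomorphism algebra with the natural $G$-grading for which left multiplication by a $g$-homogeneous element is $g$-homogeneous. The paper phrases the grading abstractly as $M_g=\{\varphi:\varphi(A_h)\subseteq A_{gh}\}$ rather than via an elementary grading on a homogeneous basis, but these are the same grading; your version is just more explicit, and your unitization step $\widehat{A}=A\oplus K\cdot 1$ (with $1$ in degree $e$) handles the possible lack of an identity more carefully than the paper does.
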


\begin{proof}

Let $n=\dim_{K}(A)$ and let $M=\End_{K}(A) \cong M_{n}(K)$ be the
algebra of all endomorphisms of $A$. We may introduce a
$G$-grading on $\End_{K}(A)$ by setting $M_{g}= \{\varphi \in M$
such that $\varphi(A_{h})\subseteq A_{gh}\}$. Let us show that any
endomorphism of $A$ can be written as a sum of homogeneous
elements $\varphi_{g}$. Indeed, if $\varphi$ is in $M$ and $h \in
G$ we define $\varphi_{g}$ on $A_{h}$ by $\varphi_{g}=P_{gh} \circ
\varphi$ where $P_{gh}$ is the projection of $A$ onto $A_{gh}$.
Taking $a_{h} \in A_{h}$ we have $\oplus_{g} \varphi_{g}(a_{h})=
\oplus_{g} P_{gh} \circ \varphi (a_{h})= \varphi (a_{h})$. Since
this is for every $h$ in $G$ the result follows.
\end{proof}

\end{subsection}

\begin{subsection}{Shirshov (essential) base}

For the reader convenience we recall the definition from classical
\textit{PI}-theory (i.e. ungraded).

\begin{definition}

Let $W$ be an affine \textit{PI}-algebra over $F$. Let
$\{a_1,\ldots,a_s\}$ be a set of generators of $W$. Let $m$ be a
positive integer and let $Y$ be the set of all words in
$\{a_1,\ldots,a_s\}$ of length $\leq m$. We say that $W$ has
\textit{Shirshov} base of length $m$ and of height $h$ if elements
of the form $y_{i_1}^{k_1}\cdots y_{i_l}^{k_l}$ where $y_{i_i} \in
Y$ and $l\leq h$, span $W$ as a vector space over $F$.

\end{definition}

\begin{theorem}

If $W$ is an affine \textit{PI}-algebra, then it has a Shirshov
base for some $m$ and $h$. More precisely, suppose $W$ is
generated by a set of elements of cardinality $s$ and suppose it
has \textit{PI}-degree $m$ (i.e. there exists an identity of
degree $m$ and $m$ is minimal) then $W$ has a Shirshov base of
length $m$ and of height $h$ where $h=h(m,s)$.

\end{theorem}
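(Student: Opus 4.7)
The plan is to follow the classical route due to Shirshov, separating the combinatorial content from the use of the identity. First, I would multilinearize the polynomial identity of degree $m$ that $W$ satisfies. Since $F$ has characteristic zero, we may assume the identity is multilinear and, after rescaling, has the form
\[
x_{1}x_{2}\cdots x_{m}=\sum_{\sigma\neq \id}\alpha_{\sigma}\,x_{\sigma(1)}x_{\sigma(2)}\cdots x_{\sigma(m)}.
\]
This identity remains valid if each $x_{i}$ is substituted by a monomial in the generators $a_{1},\ldots,a_{s}$, so it lets us rewrite a word that contains $m$ distinguished consecutive subwords in one order as a linear combination of words in which those subwords appear in a different order.

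Next, I would fix a total ordering on the alphabet $\{a_{1},\ldots,a_{s}\}$ and extend it to the length--lexicographic order on words. Call a word \emph{$h$-reduced} if it can be written as $y_{i_{1}}^{k_{1}}\cdots y_{i_{l}}^{k_{l}}$ with $l\leq h$ and each $y_{i_{j}}$ of length $\leq m$. I would prove by induction on the length--lex order that every word can be written modulo lower words as a linear combination of $h$-reduced words, which immediately yields the theorem (by then pushing the statement down through descending induction on the lex order).

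The key combinatorial input is Shirshov's lemma on words: there is an integer $N=N(s,m,h)$ such that any word of length $\geq N$ over an alphabet of size $s$ contains either (i) a subword of the form $v^{h+1}$ for some nonempty $v$ of length $\leq m$, or (ii) $m$ non-overlapping consecutive subwords $w_{1}<w_{2}<\cdots<w_{m}$, each of length $\leq m$, arranged in strictly increasing lex order inside the big word. Given such a word $w$, case (i) produces a $y^{k}$ factor that can be absorbed into the $h$-reduced form (reducing the number of blocks), while in case (ii) the multilinear identity applied to the $m$ distinguished subwords rewrites $w$ as a linear combination of words that are strictly \emph{larger} than $w$ in the length--lex order. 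This is the standard "straightening" step and is precisely where the PI of degree $m$ is used.

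The hard part will be the proof of Shirshov's combinatorial lemma itself: one needs a careful double induction on $m$ and $h$, roughly arguing that if no short subword repeats many times then long enough words must produce long lex-monotone chains by a pigeonhole and Ramsey-type counting on factorizations. Once this lemma is in place, the rest of the argument is a clean descending induction on the length--lex order, and the resulting height bound $h=h(m,s)$ is the one coming from $N(s,m,h)$. I would emphasize that this yields \emph{existence} of a Shirshov base for some $m$ and $h$ depending only on the PI-degree and the number of generators, which is precisely what is required for the applications to the $G$-graded generic algebra developed in the previous subsections.
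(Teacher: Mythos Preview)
The paper does not prove this theorem at all: it is the classical Shirshov height theorem, simply recalled from the literature as background for the trace-ring argument that follows. So there is no ``paper's own proof'' to compare against; what you have written is a reasonable sketch of the standard proof.

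That said, there is one genuine imprecision in your outline that would cause the argument to break if left as stated. In case~(ii) you require $m$ consecutive subwords $w_{1}<w_{2}<\cdots<w_{m}$ in the lex order and then assert that any nontrivial permutation of these blocks yields a word strictly larger than the original. This is not true in general: take the alphabet $\{a\}$ enlarged trivially, $w_{1}=a$, $w_{2}=aa$; then $w_{1}<w_{2}$ but $w_{1}w_{2}=aaa=w_{2}w_{1}$, so the transposition produces the \emph{same} word and the induction stalls. More generally, whenever one block is a prefix of another the comparison of the permuted words cannot be read off from the order on the blocks alone. The correct hypothesis in Shirshov's lemma is $m$-\emph{decomposability}: one asks for a factorization $w=v_{1}v_{2}\cdots v_{m}$ such that the suffixes $v_{i}v_{i+1}\cdots v_{m}$ are strictly monotone in the lex order (equivalently, every nontrivial permutation of the $v_{i}$ strictly moves the whole word in one fixed direction). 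With that formulation your ``straightening'' step is valid, the induction (on the finite set of words of a fixed length, ordered lexicographically) terminates, and the rest of your plan --- the dichotomy between a high power of a short word and an $m$-decomposable subword, followed by induction --- is exactly the classical argument.
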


In fact we will need a weaker condition (see \cite{BBL})

\begin{definition}

Let $W$ be an affine \textit{PI}-algebra. We say that a set $Y$ as
above is an \textit{essential Shirshov} base of $W$  (of length
$m$ and of height $h$) if there exists a finite set $D(W)$ such
that the elements of the form $d_{i_1}y_{i_1}^{k_1}d_{i_2} \cdots
d_{i_l}y_{i_l}^{k_l}d_{i_{l+1}}$ where $d_{i_j}\in D(W)$,
$y_{i_j}\in Y$ and $l\leq h$ span $W$.
\end{definition}

An essential Shirshov's base gives

\begin{theorem}\label{finite module}

Let $C$ be a commutative ring and let $W=C\langle
\{a_1,\ldots,a_s\}\rangle$ be an affine algebra over $C$. If $W$ has
an essential Shirshov base (in particular, if $W$ has a Shirshov
base) whose elements are integral over $C$, then it is a finite
module over $C$.
\end{theorem}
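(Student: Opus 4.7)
The plan is to use the integrality hypothesis to bound, uniformly, the exponents $k_j$ appearing in the essential Shirshov expansions, and then observe that the remaining data (the choices of $d_{i_j}\in D(W)$, of $y_{i_j}\in Y$, of $l\le h$, and of $k_j$ in a bounded range) range over a finite set, yielding a finite $C$-spanning set.

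First I would fix the essential Shirshov base $Y=\{y_1,\dots,y_r\}$ of height $h$ together with the finite set $D(W)$, so that every element of $W$ is a $C$-linear combination of monomials of the form
\[
d_{i_1}y_{i_1}^{k_1}d_{i_2}y_{i_2}^{k_2}\cdots d_{i_l}y_{i_l}^{k_l}d_{i_{l+1}},
\qquad d_{i_j}\in D(W),\ y_{i_j}\in Y,\ l\le h.
\]
By hypothesis each $y_i\in Y$ is integral over $C$, so there is a monic polynomial $p_i(t)\in C[t]$ of some degree $n_i$ with $p_i(y_i)=0$. Setting $N=\max_i n_i$, any power $y_i^{k}$ with $k\ge N$ can be rewritten as a $C$-linear combination of $1,y_i,\dots,y_i^{N-1}$. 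Substituting such rewritings into a Shirshov monomial does not change the form of the monomial (the positions of the $d$'s and $y$'s are preserved) and only replaces each exponent $k_j$ by a value strictly less than $N$.

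Hence $W$ is $C$-spanned by the monomials
\[
d_{i_1}y_{i_1}^{k_1}d_{i_2}y_{i_2}^{k_2}\cdots d_{i_l}y_{i_l}^{k_l}d_{i_{l+1}}
\]
with the additional constraint $0\le k_j<N$ for every $j$. Since $|D(W)|<\infty$, $|Y|<\infty$, $l\le h$, and each $k_j$ takes at most $N$ values, the number of such monomials is bounded by $(|D(W)|\cdot|Y|\cdot N)^{h}\cdot |D(W)|$, which is finite. This finite set is a $C$-generating set for $W$, so $W$ is a finitely generated $C$-module.

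The only subtle point — and the one I would be most careful about — is verifying that replacing each $y_i^{k_j}$ by a $C$-linear expression in lower powers really does preserve the essential-Shirshov form rather than producing something that has to be re-expanded recursively with potentially unbounded complexity. This works because the rewriting happens inside a single block $y_{i_j}^{k_j}$ and only lowers the exponent; the number of blocks ($l+1$ factors $d$, $l$ factors $y^{k}$) and their positions are untouched. In particular one does not need to re-invoke the essential-Shirshov spanning property after the reduction — the integral reduction is purely local to each $y^k$-block, which is why the height bound $h$ suffices to conclude finiteness.
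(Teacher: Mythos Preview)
Your argument is correct and is precisely the standard one: integrality of each $y_i$ over $C$ lets you bound every exponent $k_j$ by some fixed $N$, and then the remaining choices (the $d$'s from the finite set $D(W)$, the $y$'s from the finite set $Y$, the length $l\le h$, and the exponents $k_j<N$) are finite in number, yielding a finite $C$-spanning set. Your remark that the rewriting is local to each $y^{k}$-block and does not disturb the height bound $h$ is the right observation; even when some $k_j$ becomes $0$ the resulting product is still a specific element drawn from the same finite list, so no recursive re-expansion is needed.

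As for comparison with the paper: the paper does not supply a proof of this theorem at all. It is stated as a known consequence of the Shirshov height theorem (the notion of essential Shirshov base is introduced with a reference to \cite{BBL}), and is used as a black box in Lemma~\ref{traces}. So there is no alternative approach in the paper to compare against; your write-up is exactly the argument one has in mind when invoking this result.
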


Returning to $G$ graded algebras we have

\begin{proposition}\label{Essential Shirshov base in the
e-component}

Let $W$ be an affine, \textit{PI}, $G$-graded algebra. Then it has
an essential $G$-graded Shirshov base of elements in $W_{e}$.

\end{proposition}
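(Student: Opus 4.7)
The plan is to bootstrap from the classical (ungraded) Shirshov height theorem by making a $G$-homogeneous choice of Shirshov base and then raising each base element to the power $|G|$ to force it into the identity component. First I would replace the given finite generating set of $W$ by the set of $G$-homogeneous components of its elements; this is still finite and still generates $W$ as an algebra. Call the resulting homogeneous generators $a_1,\ldots,a_s$. Applying the ungraded Shirshov height theorem to $W$ (which is affine and PI) then yields integers $m,h$ and a finite set $Z=\{z_1,\ldots,z_t\}$ of words of length $\leq m$ in $a_1,\ldots,a_s$ such that products $z_{i_1}^{k_1}\cdots z_{i_l}^{k_l}$ with $l\leq h$ span $W$. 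Because each $z_j$ is a word in homogeneous elements, $z_j$ is itself homogeneous, say of degree $g_j\in G$.

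Next, setting $n=|G|$, Lagrange's theorem gives $g_j^{n}=e$ for every $j$, so $y_j:=z_j^{n}\in W_e$. I would take
\[
Y=\{y_1,\ldots,y_t\}\subseteq W_e, \qquad D(W)=\{z_j^{r}\mid 1\leq j\leq t,\ 0\leq r<n\}\cup\{1\},
\]
both finite, with $D(W)$ consisting of $G$-homogeneous elements of $W$. For any exponent $k_j$ appearing in a Shirshov-base monomial, write $k_j=q_jn+r_j$ with $0\leq r_j<n$, so that $z_{i_j}^{k_j}=z_{i_j}^{r_j}y_{i_j}^{q_j}$. Substituting into the spanning products gives
\[
z_{i_1}^{k_1}\cdots z_{i_l}^{k_l}=z_{i_1}^{r_1}y_{i_1}^{q_1}z_{i_2}^{r_2}y_{i_2}^{q_2}\cdots z_{i_l}^{r_l}y_{i_l}^{q_l},
\]
which already has the essential Shirshov shape $d_1y_{i_1}^{q_1}d_2\cdots d_ly_{i_l}^{q_l}d_{l+1}$ (with $d_{l+1}=1$), of height $\leq h$, with $d_j\in D(W)$ and $y_{i_j}\in Y\subseteq W_e$. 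Since such products span $W$, this exhibits $Y$ as an essential $G$-graded Shirshov base of $W$ lying in $W_e$.

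There is no serious obstacle here; the only subtle point---and the only place where the hypothesis on $G$ is used---is the finiteness of $G$. It is needed twice: once to ensure that the rewriting set $D(W)$, which absorbs the remainders $z_j^{r}$ for $0\leq r<n$, is actually finite, and once (via Lagrange) to guarantee that raising a $G$-homogeneous element to the power $|G|$ produces an element of $W_e$. This is consistent with the assumption, already essential in Section~\ref{Start}, that $G$ is finite; the argument genuinely breaks down for infinite $G$.
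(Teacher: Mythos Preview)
Your proposal is correct and follows essentially the same route as the paper: take homogeneous generators, apply the ungraded Shirshov height theorem to obtain a homogeneous Shirshov base, and then raise each base word to a power that lands it in $W_e$, absorbing the remainders into the finite set $D(W)$. The only cosmetic difference is that the paper raises each word $z$ of degree $g$ to the power $\ord(g)$ rather than uniformly to $|G|$, but this changes nothing of substance.
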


\begin{proof}

$W$ is affine so it is generated by a finite set of elements
$\{a_1,\ldots,a_s\}$ which can be assumed to be homogeneous. We
form the set $Y$ of words of length $\leq m $ in the $a$'s. Then
$Y$ provides a Shirshov base of $W$. Now each element $y$ of $Y$
corresponds to an homogeneous component, say $g$. Hence, raised to
the order of $g$ in $G$ it represents an element in $W_{e}$. Let
$Y_{e}$ be the subset of $W_{e}$ consisting of elements
$y^{\ord(g)}$ where $y\in Y$ of degree $g$ and let $D(W)$ be the
set consisting of all elements of the form
$(1,y,y^{2},\ldots,y^{\ord(g)-1})$. Clearly, $Y_{e}$ is an
essential Shirshov base of $W$.
\end{proof}

\end{subsection}

\begin{subsection}{The trace ring}

Let $A$ be a finite dimensional $G$-graded algebra over $F$ and
let $\mathcal{A}$ be the corresponding relatively free algebra. By
Lemma \ref{generic}, $\mathcal{A}$ is representable, i.e. can be
embedded (as a $G$-graded algebra) into a matrix algebra $M$ over
a suitable field $K$. For every element $x_{e}\in \mathcal{A}_{e}$
(viewed in $M$) we consider its trace $\Tr(x_{e})\in K$. We denote
by $R_{e}=F[\{\Tr(x_{s,e})\}]$ the $F$-algebra generated by the
trace elements of $\mathcal{A}_{e}$. Note that $R_{e}$ centralizes
$\mathcal{A}$ and hence we may consider the extension
$\mathcal{A}_{R_{e}} = R_{e}\otimes_{F} \mathcal{A}$. We refer to
$\mathcal{A}_{R_{e}}$ as the extension of $\mathcal{A}$ by traces
(of $\mathcal{A}_{e}$). In particular we may consider
$(\mathcal{A}_{e})_{R_{e}}$, namely the extension of
$\mathcal{A}_{e}$ by traces.

\begin{remark}

$\mathcal{A}_{e}=0$ if and only if $A_{e}=0$. In that case
$\mathcal{A}_{R_{e}}= \mathcal{A}$.

\end{remark}

\begin{lemma}\label{traces}

The algebras $\mathcal{A}_{R_{e}}$ and $(\mathcal{A}_{e})_{R_{e}}$
are finite modules over $R_{e}$.

\end{lemma}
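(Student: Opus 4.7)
The plan is to exploit the essential Shirshov base supplied by Proposition \ref{Essential Shirshov base in the e-component} together with the integrality of matrices to apply Theorem \ref{finite module}, then extract the $e$-component by a grading argument.

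First, I would recall the set up. By Lemma \ref{generic} the relatively free algebra $\mathcal{A}$ is $G$-graded embedded into a matrix algebra $M$ over a field $K$; in particular every element of $\mathcal{A}_{e}$ is a matrix over $K$ and has a well-defined trace. Since we reduced to the affine situation in Corollary \ref{reduction to affine G-grading}, $\mathcal{A}$ is generated over $F$ by a finite set of homogeneous variables, so $\mathcal{A}_{R_{e}} = R_{e}\otimes_{F} \mathcal{A}$ is an affine algebra over the commutative ring $R_{e}$.

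Next, apply Proposition \ref{Essential Shirshov base in the e-component} to produce an essential Shirshov base $Y_{e}\subset \mathcal{A}_{e}$ of $\mathcal{A}$, consisting of powers $y^{\ord(g)}$ of homogeneous monomials $y$ of $G$-degree $g$. This set is still an essential Shirshov base of $\mathcal{A}_{R_{e}}$ viewed as an affine algebra over $R_{e}$ (the spanning is over $F$, hence a fortiori over $R_{e}$). The key point is that each $y\in Y_{e}$, viewed inside $M$, is an ordinary matrix and therefore satisfies its characteristic polynomial; in characteristic zero the Newton identities express the coefficients of that polynomial as polynomials in $\Tr(y),\Tr(y^{2}),\dots$. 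Since $y^{k}\in \mathcal{A}_{e}$ for every $k\ge 1$, these traces all lie in $R_{e}$, so $y$ is integral over $R_{e}$. Theorem \ref{finite module} now yields that $\mathcal{A}_{R_{e}}$ is finitely generated as an $R_{e}$-module.

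To finish, I would pass to the $e$-component. Because $R_{e}\subset K$ is scalar and commutes with everything, we may declare $R_{e}$ to sit in degree $e$, so that $\mathcal{A}_{R_{e}} = R_{e}\otimes_{F} \mathcal{A}$ inherits a $G$-grading with
\[
(\mathcal{A}_{R_{e}})_{g} \;=\; R_{e}\otimes_{F} \mathcal{A}_{g},\qquad g\in G.
\]
In particular $(\mathcal{A}_{e})_{R_{e}} = (\mathcal{A}_{R_{e}})_{e}$ is an $R_{e}$-module direct summand of $\mathcal{A}_{R_{e}}$. A direct summand of a finitely generated module over any commutative ring is again finitely generated, so $(\mathcal{A}_{e})_{R_{e}}$ is a finite $R_{e}$-module as well.

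The only non-routine step is the integrality claim: one must be sure that the essential Shirshov base can be chosen inside the $e$-component (so that traces of its powers actually belong to $R_{e}$), and this is exactly the content of Proposition \ref{Essential Shirshov base in the e-component}. Everything else is a standard Shirshov–Cayley–Hamilton package transferred verbatim from the ungraded setting.
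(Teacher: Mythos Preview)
Your argument is correct and uses the same ingredients as the paper: the essential Shirshov base in the $e$-component (Proposition \ref{Essential Shirshov base in the e-component}), integrality via the characteristic polynomial with trace coefficients, and Theorem \ref{finite module}. The only cosmetic difference is the order: the paper first handles $(\mathcal{A}_{e})_{R_{e}}$ directly and then $\mathcal{A}_{R_{e}}$, whereas you establish $\mathcal{A}_{R_{e}}$ first and recover $(\mathcal{A}_{e})_{R_{e}}$ as a graded direct summand; both routes are valid and essentially equivalent.
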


\begin{proof}

By construction any element in $(\mathcal{A}_{e})_{R_{e}}$ is
integral over $R_{e}$ and hence it has a Shirshov base consisting
of elements which are integral over $R_{e}$. It follows by Theorem
\ref{finite module} that $(\mathcal{A}_{e})_{R_{e}}$ is a finite
module over $R_{e}$. Now, as noted above, since $G$ is finite,
$\mathcal{A}_{R_{e}}$ has an essential Shirshov base ($\subset
(\mathcal{A}_{e})_{R_{e}}$) whose elements are integral over
$R_{e}$. Applying Theorem \ref{finite module} the result follows.
\end{proof}

\end{subsection}

\end{section}

\begin{section}{Kemer polynomials, emulation of traces and representability}\label{Kemer polynomials, emulation of traces and representability}

Let $A$ be a basic $G$-graded algebra and let $\mathcal{A}$ be the corresponding relatively free algebra.

\begin{lemma} \label{representable}

Let $I$ be a $G$-graded $T$-ideal of $\mathcal{A}$ which is closed
under traces (i.e. traces multiplication). Then $\mathcal{A}/I$ is
representable. (This is abuse of language: we should have said $I$
is the $G$-graded ideal of $\mathcal{A}$ generated by the
evaluations (on $\mathcal{A}$) of a $G$-graded $T$-ideal).

\end{lemma}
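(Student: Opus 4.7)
The plan is to use the generic model of $\mathcal{A}$ from Lemma \ref{generic} together with the trace ring machinery from Lemma \ref{traces}. By Lemma \ref{generic} we have a $G$-graded embedding $\mathcal{A} \hookrightarrow A[\Lambda_G]$, so in particular $\mathcal{A}$ is a subalgebra of a matrix algebra over a commutative ring. The goal is to produce, out of the trace ring $R_e = F[\{\Tr(x_{s,e})\}]$, a field $K$ and a $G$-graded injection $\mathcal{A}/I \hookrightarrow M_n(K)$.

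First I would form the extension $\mathcal{A}_{R_e} = R_e \otimes_F \mathcal{A}$, which by Lemma \ref{traces} is a finite $R_e$-module and hence Noetherian. The trace-closure hypothesis on $I$ is used to show that $I_{R_e} := I \cdot R_e$ is a $G$-graded two-sided ideal of $\mathcal{A}_{R_e}$. The crucial claim, and I expect the main technical step, is
\[
\mathcal{A} \cap I_{R_e} \;=\; I,
\]
so that the natural map $\mathcal{A}/I \hookrightarrow \mathcal{A}_{R_e}/I_{R_e}$ is an injection of $G$-graded algebras. The containment $\supseteq$ is the content; here one argues that any relation $a = \sum r_j i_j$ with $a \in \mathcal{A}$, $r_j \in R_e$ and $i_j \in I$ can be rewritten using the fact that $R_e$ is generated by traces $\Tr(x_e)$ and that by trace closure each product $\Tr(x_e) \cdot i$ for $i \in I$ lies in $I$, so the right-hand side is already in $I$.

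Next, set $C := R_e / (R_e \cap I_{R_e})$. Then $\mathcal{A}_{R_e}/I_{R_e}$ is a finitely generated $C$-module. I would then pick a minimal prime $\mathfrak{p}$ of $C$ for which the composition
\[
\mathcal{A}/I \;\hookrightarrow\; \mathcal{A}_{R_e}/I_{R_e} \;\longrightarrow\; (\mathcal{A}_{R_e}/I_{R_e}) \otimes_C (C/\mathfrak{p})
\]
remains injective; such a $\mathfrak{p}$ exists because the intersection of all minimal primes is the nilradical of $C$ and $\mathcal{A}/I$ has no $C$-torsion that survives every reduction (using Noetherianness and the fact that the annihilator of any nonzero element of $\mathcal{A}/I$ is a proper ideal of $C$, hence contained in some minimal prime complement). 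Let $K$ be the fraction field of $C/\mathfrak{p}$; tensoring with $K$ gives a finite-dimensional $K$-algebra $B = (\mathcal{A}_{R_e}/I_{R_e}) \otimes_C K$, still $G$-graded, into which $\mathcal{A}/I$ embeds.

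Finally, any $G$-graded finite-dimensional $K$-algebra embeds $G$-gradedly into a $G$-graded matrix algebra over $K$ — this is precisely the claim proved right after Lemma \ref{generic} (applied to $B$ via the regular representation, with the grading on $\End_K(B)$ defined using the projections onto the homogeneous components). Composing the embeddings produces the required $G$-graded embedding $\mathcal{A}/I \hookrightarrow M_n(K)$, which proves the representability of $\mathcal{A}/I$. The hard part, as indicated, is the equality $\mathcal{A} \cap I_{R_e} = I$; everything else is an assembly of the tools already established in the preceding sections.
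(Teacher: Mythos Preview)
Your overall strategy matches the paper's: use trace-closure to see that $I_{R_e}=I$ (equivalently $\mathcal{A}\cap I_{R_e}=I$), so that $\mathcal{A}/I$ embeds in $\mathcal{A}_{R_e}/I$, which is a finite module over the commutative Noetherian ring $R_e$. That part is fine and is exactly what the paper does.

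The gap is in the second half. You try to produce a \emph{single} field $K$ by choosing one minimal prime $\mathfrak{p}$ of $C$ for which $\mathcal{A}/I \to (\mathcal{A}_{R_e}/I_{R_e})\otimes_C(C/\mathfrak{p})$ is injective. Your justification (``the annihilator of any nonzero element is proper, hence contained in some minimal prime complement'') only shows that for each nonzero $a$ there is \emph{some} prime $\mathfrak{p}_a$ at which $a$ survives; it does not produce a single prime that works for all $a$ simultaneously. In general no such prime exists: already for $C=F[x,y]/(xy)$ with its two minimal primes $(x)$ and $(y)$, neither reduction is injective on $C$ itself, and there is no reason the subalgebra $\mathcal{A}/I$ should avoid this phenomenon. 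What one actually gets is an embedding into a finite \emph{product} of finite-dimensional algebras over various fields (one for each associated prime, after localizing), and this is precisely the content of Beidar's theorem \cite{Bei}, which the paper simply cites at this point. So the fix is either to invoke \cite{Bei} directly, or to replace your single-prime argument by the product over the (finitely many) associated primes of the $R_e$-module $\mathcal{A}_{R_e}/I$.
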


\begin{proof}
Indeed, by Lemma \ref{traces}, $\mathcal{A}_{R_{e}}$ is a finite
module over $R_{e}$. Hence $\mathcal{A}_{R_{e}}/I_{R_{e}}$ is a
finite module as well. Our assumption on $I$ says $I_{R_{e}}=I$
and hence $\mathcal{A}_{R_{e}}/I$ is a finite module over $R_{e}$.
Now, $R_{e}$ is a commutative Noetherian ring and hence applying
\cite{Bei} we have that $\mathcal{A}_{R_{e}}/I$ is representable.
Since $\mathcal{A}/I \subseteq \mathcal{A}_{R_{e}}/I$,
$\mathcal{A}/I$ is representable as well.
\end{proof}

A key property of Kemer polynomials is \textit{emulation of traces}.
This implies that $T$-ideals generated by Kemer polynomials are
closed under traces. Here is the precise statement.

\begin{proposition}\label{product}

Let $A=A_{1}\times \cdots\times A_{u}$ where the $A_{i}$'s are
basic, $G$-graded algebras. Here $A$ is $G$-graded in the obvious
way. Assume the algebras $A_{i}$ have the same Kemer point. Let
$\mathcal{A}_{i}$ be the relatively free algebra of $A_{i}$ and
set $\widehat{A}= \mathcal{A}_{1}\times\cdots\times
\mathcal{A}_{u}$. Let $S$ be the $T$-ideal generated by a set of
Kemer polynomials of some of the $A_{i}$'s and let
$S_{\widehat{A}}$ be the ideal of $\widehat{A}$ generated by all
evaluation of $S$ on $\widehat{A}$. Then $S_{\widehat{A}}$ is
closed under traces (of $\widehat{A}_{e}$).

\end{proposition}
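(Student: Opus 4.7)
The plan is to invoke a Razmyslov--Zubrilin style trace emulation, using the saturated alternating structure of Kemer polynomials. The key observation is that a Kemer polynomial $f$ of some $A_{i_0}$ contains many alternating $e$-sets of cardinality exactly $d_e := \dim_F((\overline{A_{i_0}})_e)$, and by the common Kemer point hypothesis this $d_e$ agrees with $\dim_F((\overline{A_i})_e)$ for every $i$. Thus a single alternating $e$-set in $f$ will serve as an ``alternating frame'' filling the $e$-homogeneous semisimple quotient of each component simultaneously, which is precisely the setup in which trace multiplication by an element of $\widehat{A}_e$ can be realized by substitutions inside $f$.

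I first reduce to individual Kemer polynomials. Since $S_{\widehat{A}}$ is a two-sided ideal generated by evaluations of $S$ on $\widehat{A}$, and $S$ itself is a $T$-ideal generated by Kemer polynomials, it is enough to check that for every Kemer polynomial $f \in S$ of some $A_{i_0}$, every admissible evaluation $\widehat{f}$ of $f$ on $\widehat{A}$, and every $c \in \widehat{A}_e$, one has $\Tr(c)\cdot \widehat{f} \in S_{\widehat{A}}$. Here $\Tr$ is computed in the ambient $G$-graded matrix algebra into which $\widehat{A}$ embeds (Lemma \ref{generic}), and the $e$-homogeneous central idempotents of the $A_i$ (Theorem \ref{BSZ}) allow one to split $c = c_1 + \cdots + c_u$ according to the product decomposition of $\widehat{A}$.

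The central device will be the following Razmyslov substitution. Fix an alternating $e$-set $Y = \{y_1,\ldots,y_{d_e}\}$ inside $f$ and consider
\[
  T_c \widehat{f} \;=\; \sum_{j=1}^{d_e} f\bigl(\widehat{y}_1,\ldots, c\widehat{y}_j,\ldots, \widehat{y}_{d_e};\; \mathrm{rest}\bigr).
\]
By construction each summand is an admissible evaluation of $f$ on $\widehat{A}$ (since $c\widehat{y}_j$ retains the $e$-degree of $\widehat{y}_j$), so $T_c \widehat{f} \in S_{\widehat{A}}$ for free. The Zubrilin--Razmyslov trace identity, applied componentwise in each $\mathcal{A}_i$ (where the alternating set saturates the $e$-part of the semisimple quotient) and suitably normalised using the common Kemer point, will identify $T_c \widehat{f}$ with $\Tr(c)\cdot \widehat{f}$ modulo elements that are already in $S_{\widehat{A}}$. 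This yields $\Tr(c)\cdot \widehat{f} \in S_{\widehat{A}}$ and completes the proof.

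\textbf{Main obstacle.} The delicate step will be executing the trace identity at the level of the relatively free algebras $\mathcal{A}_i$: the generic $e$-variables are not semisimple but carry radical contributions, so trace emulation via alternation is not a clean Cayley--Hamilton computation. The correction terms arising from these radical contributions must be shown to lie in $S_{\widehat{A}}$. I expect this to be handled by combining Cayley--Hamilton in the ambient matrix algebra with the Phoenix property of Kemer polynomials (Corollary \ref{Kemer polynomials of A are Phoenix}), which ensures that any nonvanishing correction term itself comes from a Kemer polynomial and therefore belongs to $S_{\widehat{A}}$. A second, more subtle point is verifying that the scalar produced by the emulation in each component $\mathcal{A}_i$ is literally $\Tr(c_i)$, rather than some unrelated element of the local trace ring $R_e^{(i)}$; this is where the hypothesis of a common Kemer point plays its decisive role, guaranteeing that the normalisation is uniform across all components and can be assembled into the single scalar $\Tr(c)$.
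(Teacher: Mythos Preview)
Your overall strategy---the Razmyslov substitution $T_c\widehat{f}=\sum_j f(\ldots,c\widehat{y}_j,\ldots)$ in a small alternating $e$-set, observing that each summand is an evaluation of $f$ and hence lies in $S_{\widehat{A}}$---is exactly what the paper does. Where you diverge is in anticipating that radical contributions in the generic $e$-variables will produce ``correction terms'' requiring the Phoenix property, and that assembling the scalar across components will be subtle. Neither complication arises in the paper's argument.

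The paper simply quotes the following result (stated there as Theorem~J, from \cite{BR}): if $f(x_1,\ldots,x_t;\vec{y})$ is alternating in the $x$'s, $V$ is a $t$-dimensional subspace with basis $a_1,\ldots,a_t$, and $T:V\to V$ is linear, then
\[
\Tr(T)\,f(a_1,\ldots,a_t;\vec{b})=\sum_{k=1}^{t}f(a_1,\ldots,Ta_k,\ldots,a_t;\vec{b}).
\]
The paper then makes one structural observation that replaces your entire ``correction term'' discussion: because $f$ is a Kemer polynomial, in any \emph{nonzero} evaluation on $B=K\otimes\mathcal{A}_i$ the variables $x_{e,1},\ldots,x_{e,d}$ of the chosen small $e$-set are forced to take values forming a basis of $V=(\overline{B})_e$. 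With this in hand, Theorem~J applies verbatim with $t=d=\alpha_e$, and the identity $T_{z_e}\widehat{f}=\Tr(z_e)\widehat{f}$ is exact, with no remainder. The common-Kemer-point hypothesis enters only to guarantee that $d=\dim_K(\overline{B})_e$ is the same in every factor $\mathcal{A}_i$, so the same alternating set works componentwise.

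Your proposed use of Phoenix is also misdirected: the Phoenix property (Corollary~\ref{Kemer polynomials of A are Phoenix}) says that a non-identity in $\langle f\rangle$ has a Kemer \emph{consequence}; it does not place any given element into $S_{\widehat{A}}$, which is what you would need. So even if one insisted on tracking radical corrections explicitly, Phoenix is not the right tool. The cleaner route is to recognise the ``key observation'' above and apply Theorem~J directly.
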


\begin{proof}

Let $(\alpha_{g_1},\ldots,\alpha_{g_r}, n_{A}-1)$ be the Kemer
point which corresponds to $A$. Recall that if $\alpha_{g_1}=0$
(i.e. $\overline{A}_{e}$, the semisimple part of $e$-component of
$A$ is zero) then $R_{e}=F$ and the proposition is clear. We
assume therefore that $\overline{A}_{e}\neq 0$. Let $z_{e}$ be in
$\mathcal{A}_{e}$ and $f$ in $S$. We need to show that
$\Tr(z_{e})f$ evaluated on $\mathcal{A}$ is in $S_{\widehat{A}}$.
Clearly, we may assume that $f$ is a Kemer polynomial of $A_{1}$.
To simplify the notation we put $d=\alpha_{g_1}$ and write
$f=f(x_{e,1},\ldots,x_{e,d}, \overrightarrow{y})$ where the
variables $x_{e,i}$'s alternate. Let us recall the following
important result on alternating (ungraded) polynomials from
\cite{BR}:

\begin{theorem}[See \cite{BR}, Theorem J]\label{theorem J}

Suppose $B\subseteq M_{n}(K)$ an algebra over $K$, and let $V$ be
a $t$-dimensional $F$-subspace of $M_{n}(K)$ with a base
$a_{1},\dots,a_{t}$ of elements of $B$. Let $f(x_{1},\dots,x_{t};
\overrightarrow{y})$ be an alternating polynomial in the $x$'s. If
$T$ is a $C$-linear map ($C=Z(B)$) $T:V \rightarrow V$, then

$$\Tr(T)f(a_{1},\dots,a_t; \overrightarrow{b})= \sum_{k=1}^{t}f(a_{1},\dots,a_{k-1},Ta_{k},a_{k+1},\dots,a_{t};
\overrightarrow{b})$$

\end{theorem}

First note that the same result (with the same proof) holds for a $G$-graded polynomial $f$ where the $x$'s are
$e$-variables and the space $V$ is contained in the $e$-component of $B$. For our
purposes we consider $\mathcal{A}_{1}$ to be the relatively free algebra of $A_{1}$. Extending scalars to $K$
we have $B=K\otimes \mathcal{A}_{1}$. Then we take $V$ to be the $e$-component
of the semisimple part of $B$. The key observation here is that
since $f$ is a Kemer polynomial, on any nonzero evaluation,
the variables $x_{e,1},\ldots,x_{e,d}$ may assume only values
which form a basis of $V$ and hence the result follows from $G$-graded
version of Theorem \ref{theorem J}. This completes the proof of Theorem \ref{product}.
\end{proof}

Combining Lemma \ref{representable} and Proposition \ref{product}
we obtain:

\begin{corollary}\label{representability}

With the above notation $\widehat{A}/S_{\widehat{A}}$ is
representable.

\end{corollary}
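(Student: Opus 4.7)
The plan is to combine Lemma \ref{representable} (representability via closure under traces) with Proposition \ref{product} (the emulation of traces by Kemer polynomials) in the setting of the product algebra $\widehat{A}=\mathcal{A}_{1}\times\cdots\times\mathcal{A}_{u}$. The first thing to observe is that although Lemma \ref{representable} was stated for the relatively free algebra of a single basic algebra, its proof only used two facts: that the algebra in question is $G$-graded representable (so that traces of elements of the $e$-component make sense inside a matrix algebra) and that the extension by traces of its $e$-component is a finite module over the trace ring $R_{e}$. Both of these carry over to $\widehat{A}$: each factor $\mathcal{A}_{i}$ is representable by Lemma \ref{generic}, and a direct product of finitely many representable $G$-graded algebras is again $G$-graded representable (embed each factor in a graded matrix algebra over a common field extension $K$ and take the block-diagonal embedding).

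First I would fix such a graded embedding $\widehat{A}\hookrightarrow M$ where $M$ is a $G$-graded matrix algebra over $K$, and let $R_{e}$ be the $F$-subalgebra of $K$ generated by the traces $\Tr(z_{e})$ of the elements $z_{e}\in\widehat{A}_{e}$. By the argument of Lemma \ref{traces}, which relies only on the existence of an essential Shirshov base of $\widehat{A}$ sitting in the $e$-component and consisting of elements integral over $R_{e}$ (a consequence of Proposition \ref{Essential Shirshov base in the e-component} applied to $\widehat{A}$, together with $G$ being finite), the extension $\widehat{A}_{R_{e}}=R_{e}\otimes_{F}\widehat{A}$ is a finite module over the commutative Noetherian $F$-algebra $R_{e}$.

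Next I would invoke Proposition \ref{product} to conclude that $S_{\widehat{A}}$ is stable under multiplication by traces of elements of $\widehat{A}_{e}$, i.e.\ $(S_{\widehat{A}})_{R_{e}}=S_{\widehat{A}}$ inside $\widehat{A}_{R_{e}}$. Consequently $\widehat{A}_{R_{e}}/S_{\widehat{A}}$ is a finitely generated module over the commutative Noetherian ring $R_{e}$, hence by the result of Beidar \cite{Bei} it is representable as a $G$-graded algebra. Since $\widehat{A}/S_{\widehat{A}}$ embeds naturally as a $G$-graded subalgebra of $\widehat{A}_{R_{e}}/S_{\widehat{A}}$ (the embedding $\widehat{A}\hookrightarrow\widehat{A}_{R_{e}}$ is graded and identifies $S_{\widehat{A}}$ with its image), we conclude that $\widehat{A}/S_{\widehat{A}}$ itself is representable.

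The only substantive point requiring care is the extension of Lemma \ref{traces} from a single relatively free algebra $\mathcal{A}$ to the product $\widehat{A}$: one must check that an essential Shirshov base of $\widehat{A}$ lying in $\widehat{A}_{e}$ exists and that its elements are integral over the trace ring of $\widehat{A}_{e}$ (not merely over the trace rings of the individual factors $\mathcal{A}_{i,e}$). This is where Proposition \ref{Essential Shirshov base in the e-component} combined with the Cayley--Hamilton relation in the ambient matrix algebra $M$ is used; everything else is bookkeeping that follows the template of Lemma \ref{representable} verbatim.
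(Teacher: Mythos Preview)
Your proposal is correct and follows exactly the approach the paper takes: the corollary is obtained simply by combining Lemma \ref{representable} with Proposition \ref{product}. You have in fact supplied more detail than the paper does, since you explicitly address the passage from a single relatively free algebra $\mathcal{A}$ to the finite product $\widehat{A}=\mathcal{A}_{1}\times\cdots\times\mathcal{A}_{u}$, a point the paper leaves implicit.
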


\end{section}

\begin{section}{$\Gamma$-Phoenix property}

Let $A$ be a finite dimensional $G$-graded algebra. Recall that by
Proposition \ref{basic algebras} $A$ is \textit{PI}-equivalent to
a direct product of basic algebras.

Let $A \sim A_{1}\times \cdots\times A_{s}$ where $A_{i}$ are
basic. For each $A_{i}$ we consider its Kemer point
$(\alpha_{A_i}, s(\alpha_{A_i})) = ((d_{A_i, g_1},d_{A_i,
g_2},\ldots,d_{A_i, g_r}), n_{A_i}-1)$. Let $(\alpha_{A_1},
s(\alpha_{A_1})),\dots, (\alpha_{A_t}, s(\alpha_{A_t}))$ be the
Kemer points which are maximal among the Kemer points
$(\alpha_{A_1}, s(\alpha_{A_1})),\dots, (\alpha_{A_s},
s(\alpha_{A_s}))$ (after renumbering if necessary).

\begin{proposition}

$\Kemer(A)= \cup_{1\leq i\leq t}\Kemer(A_{i})$. Furthermore, a
polynomial $f$ is Kemer of $A$ if and only if is Kemer of one of
the $A_{i}, i=1,\ldots,t$.
\end{proposition}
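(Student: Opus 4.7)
The plan is to exploit two ingredients: the equality $\id_G(A)=\bigcap_{i=1}^{s}\id_G(A_i)$ coming from $A\sim A_1\times\cdots\times A_s$, so that a multilinear polynomial is a non-identity of $A$ iff it is a non-identity of some $A_i$; and the fact, proved in the corollary to Kemer's Lemma~2, that each basic $A_i$ has a unique Kemer point, namely $(\alpha_{A_i},s(\alpha_{A_i}))=((d_{A_i,g_1},\ldots,d_{A_i,g_r}),n_{A_i}-1)$. A standard pigeonhole trick will then translate arbitrary-$\nu$ witnesses for $A$ into arbitrary-$\nu$ witnesses for a single $A_j$, and the maximality of the points in the truncated list $(\alpha_{A_1},s(\alpha_{A_1})),\ldots,(\alpha_{A_t},s(\alpha_{A_t}))$ will pin down which $j$ can arise.

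For the inclusion $\bigcup_{i\leq t}\Kemer(A_i)\subseteq \Kemer(A)$, I would fix $1\leq i\leq t$ and verify directly that $(\alpha_{A_i},n_{A_i}-1)$ lies in $\Kemer(A)$. Membership in $\Ind(A)_0$ follows from Remark~\ref{comparison} applied to $\id_G(A)\subseteq \id_G(A_i)$, and Kemer polynomials for $A_i$ produced by Lemma~\ref{Kemer's Lemma 2} automatically show $s_A(\alpha_{A_i},\nu)\geq n_{A_i}-1$ for every $\nu$. For extremality, suppose $\beta\succeq \alpha_{A_i}$ lies in $\Ind(A)_0$; pigeonhole on the sequence of witnesses $f_\nu$ (using finiteness of $s$) yields a single index $j$ with $\beta\in\Ind(A_j)_0$, hence $\beta\preceq \alpha_{A_j}$. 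The chain $\alpha_{A_i}\preceq\beta\preceq\alpha_{A_j}$, combined with maximality of $(\alpha_{A_i},s(\alpha_{A_i}))$ in the list, forces $\beta=\alpha_{A_i}$. The same pigeonhole, applied to non-identities with $s$ big sets, gives $s_A(\alpha_{A_i})\leq n_{A_j}-1\leq n_{A_i}-1$, so $s_A(\alpha_{A_i})=n_{A_i}-1$ as required.

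For the reverse inclusion $\Kemer(A)\subseteq \bigcup_{i\leq t}\Kemer(A_i)$, take $(\alpha,s)\in \Kemer(A)$. Pigeonhole applied to non-identities of $A$ realizing the $(\alpha,s)$-data for $\nu\to\infty$ picks out an index $j$ with the same data realized in the complement of $\id_G(A_j)$, so $\alpha\preceq\alpha_{A_j}$; extremality of $\alpha$ in $\Ind(A)_0$ together with $\alpha_{A_j}\in\Ind(A)_0$ then forces $\alpha=\alpha_{A_j}$. Choosing $j$ so as to maximize $n_{A_j}$ among all indices with $\alpha_{A_j}=\alpha$ combines with the upper bound of the previous paragraph to give $s=n_{A_j}-1$, i.e.\ $(\alpha,s)=(\alpha_{A_j},s(\alpha_{A_j}))$; the maximality of this point in the original list then forces $j\in\{1,\ldots,t\}$. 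The ``furthermore'' statement on Kemer polynomials follows along the same lines: if $f$ is Kemer of $A$ for $(\alpha_{A_i},s(\alpha_{A_i}))$, the pigeonhole sends $f$ to a single $A_j$ whose unique Kemer point must agree with the alternation data of $f$ by the maximality argument above, making $f$ Kemer of $A_j$; conversely a Kemer polynomial of $A_i$ for $i\leq t$ has by definition the alternation structure of the Kemer point $(\alpha_{A_i},s(\alpha_{A_i}))$ of $A$ and is outside $\id_G(A_i)\supseteq \id_G(A)$.

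The main friction point I anticipate is the bookkeeping that combines the two coordinates of the partial order with maximality in the truncated list: one must be sure that the index $j$ produced by pigeonhole matches the given Kemer point of $A$ \emph{exactly} in both the $\alpha$-data and the $s$-data, not merely in the first coordinate. This is handled by always choosing $j$ to maximize $n_{A_j}$ among the candidates with $\alpha_{A_j}=\alpha$ and then invoking the maximality of $(\alpha_{A_j},n_{A_j}-1)$ in the list of $s$ Kemer points to place $j$ in $\{1,\ldots,t\}$.
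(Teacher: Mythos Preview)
Your argument is correct. The paper's proof takes a somewhat different, more compressed route: instead of verifying the two inclusions separately with explicit pigeonhole bookkeeping, it establishes that each of the two finite sets $\Kemer(A)$ and $\bigcup_{i\leq t}\Kemer(A_i)$ is dominated pointwise by the other (using Remark~\ref{comparison} for one direction, and the fact that a Kemer polynomial of $A$ is a non-identity of some $A_j$ for the other), and then invokes the elementary observation that two finite antichains in a poset which mutually dominate each other must coincide. Your approach unpacks essentially the same ingredients but checks extremality of $\alpha_{A_i}$ in $E_0(A)$ and the value $s_A(\alpha_{A_i})$ by hand, making the role of pigeonhole and of maximality in the list $\{(\alpha_{A_j},s(\alpha_{A_j}))\}_{j\leq s}$ more explicit. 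The paper's version is slicker; yours is more transparent about where each hypothesis enters.

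One minor remark on your ``furthermore'' direction: for a \emph{single} Kemer polynomial $f$ of $A$, no pigeonhole is needed---$f\notin\id_G(A)=\bigcap_j\id_G(A_j)$ already yields some $j$ with $f\notin\id_G(A_j)$. The remaining task (which you handle correctly) is to argue that the Kemer point of this particular $A_j$ coincides with that of $f$: extremality of $\beta$ in $E_0(A)$ together with $\alpha_{A_j}\in\Ind(A)_0$ forces $\alpha_{A_j}=\beta$, and then the fact that $f$ carries $s(\beta)$ big sets of cardinality $\alpha_{A_j,g}+1$ yet is a non-identity of $A_j$ forces $s(\beta)\leq n_{A_j}-1$, so $(\alpha_{A_j},n_{A_j}-1)$ is maximal and $j\in\{1,\ldots,t\}$.
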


\begin{proof}

Since for any $i=1,\dots,t$, $\id_{G}(A_{i}) \supseteq
\id_{G}(A)$, there exists a Kemer point $(\alpha, s(\alpha))$ of
$A$ with $(\alpha_{A_i}, s(\alpha_{A_i})) \preceq (\alpha,
s(\alpha))$. On the other hand if $(\beta, s(\beta))$ is a Kemer
point of $A$ and $f$ is a Kemer polynomial of $A$ which
corresponds to $(\beta, s(\beta))$, $f$ is not an identity of $A$
and hence not an identity of $A_{j}$ for some $j=1,\dots,s$. It
follows that $(\beta, s(\beta)) \preceq (\alpha_{A_j},
s(\alpha_{A_j}))$. Thus we have two subsets of finite points,
namely $\Kemer(A)$ and $\cup_{1\leq i\leq s} \Kemer(A_{i})$ in a
partially ordered set in $(\mathbb{Z}^{+})^{r}\times
(\mathbb{Z}^{+} \cup \infty)$ such that for any point $(u,s(u))$
in any subset there is a point $(v,s(v))$ in the other subset with
$(u,s(u))\preceq (v,s(v))$. Since $\Kemer(A)$ and $\cup_{1\leq
i\leq t}\Kemer(A_{i})$ are maximal, they must coincide. In
particular, note that the polynomial $f$ above must be a
non-identity (and hence Kemer) of $A_{j}$ for some $j=1,\dots,t$.
It remains to show that a Kemer polynomial of $A_{j}$ for
$j=1,\dots,t$ is a Kemer polynomial of $A$, but this is clear.
\end{proof}

Thus our $T$-ideal $\Gamma$ (the $T$-ideal of identities of a
$G$-graded affine algebra) contains $\id(A)=\id(A_{1}\times
\cdots\times A_{s})$ where $A_i$ are basic algebras.

As noted in Remark \ref{comparison}, $\Ind({\Gamma}) \subseteq
\Ind({A})$ and if a $\alpha$ is a point in $E_{0}(\Gamma)\cap
E_{0}(A)$ (i.e. is extremal for both ideals) then
$s_{\Gamma}(\alpha)\leq s_{A}(\alpha)$.

Our aim now (roughly speaking) is to replace the finite
dimensional algebra $A$ with a finite dimensional algebra $A^{'}$
with $\Gamma \supseteq \id_{G}(A^{'})$ but \textit{PI} ``closer''
to $\Gamma$.

Here is the precise statement (see \cite{BR} for the ungraded
version).

\begin{proposition}\label{same Kemer polynomials}

Let $\Gamma$ and $A$ be as above. Then there exists a $G$-graded
finite dimensional algebra $A^{'}$ with the following properties:

\begin{enumerate}

\item

$\Gamma \supseteq \id_{G}(A^{'})$

\item

The Kemer points of $\Gamma$ coincide with the Kemer points of
$A^{'}$.

\item

Any Kemer polynomial of $A^{'}$ (i.e. a Kemer polynomial which
corresponds to a Kemer point of $A^{'}$) is not in $\Gamma$ (i.e.
$\Gamma$ and $A^{'}$ have the same Kemer polynomials).

\end{enumerate}

\end{proposition}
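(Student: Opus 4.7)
The plan is to construct $A'$ by adding to $\id_G(A)$ the Kemer polynomials of $A$ that happen to lie in $\Gamma$, using the representability mechanism of Lemma \ref{representable} to realize the resulting $T$-ideal as the identities of a finite-dimensional algebra. If conditions (2) and (3) still fail, I iterate; the Phoenix property of Kemer polynomials for basic algebras (Corollary \ref{Kemer polynomials of A are Phoenix}) will guarantee that the multiset of Kemer points strictly decreases at each step, so the process must terminate.

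By Proposition \ref{basic algebras} I may assume $A \sim A_1 \times \cdots \times A_s$ with each $A_i$ basic, and by the preceding proposition $\Kemer(A) = \{(\alpha_{A_i}, s(\alpha_{A_i})) : 1 \leq i \leq t\}$ (after renumbering), with every Kemer polynomial of $A$ being Kemer for some maximal $A_i$. For each such $A_i$ one checks, using Remark \ref{comparison} together with the existence of a Kemer polynomial outside $\Gamma$, that if some Kemer polynomial of $A_i$ does not lie in $\Gamma$, then $(\alpha_{A_i}, s(\alpha_{A_i}))$ is automatically a Kemer point of $\Gamma$ (the first coordinate remains extremal by the contrapositive of Remark \ref{comparison}, and equality of the $s$-coordinates follows from the simultaneous inequalities $s_\Gamma(\alpha_{A_i}) \leq s_A(\alpha_{A_i})$ and $s_\Gamma(\alpha_{A_i}) \geq s(\alpha_{A_i})$). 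I call $A_i$ \emph{good} in this case and \emph{bad} otherwise.

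Let $S$ be the $T$-ideal generated by the Kemer polynomials of the bad $A_i$'s; by construction $S \subseteq \Gamma$. Setting $\widehat{A} = \mathcal{A}_1 \times \cdots \times \mathcal{A}_s$ (so $\id_G(\widehat{A}) = \id_G(A)$), Proposition \ref{product} shows that $S_{\widehat{A}}$ is closed under trace multiplication, and Corollary \ref{representability} gives that $\widehat{A}/S_{\widehat{A}}$ is $G$-graded representable in some $M_n(K)$; taking the $K$-span of the image in $M_n(K)$ produces a $G$-graded, $K$-finite-dimensional algebra $\tilde{A}$ whose $G$-graded $F$-identities coincide with $\id_G(A) + S \subseteq \Gamma$ (one direction is clear; the other uses that graded identities are preserved under scalar extension). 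Using the Phoenix property, I show that no bad Kemer point $(\alpha_{A_i}, s(\alpha_{A_i}))$ can remain in $\Kemer(\tilde{A})$: a polynomial witnessing it (outside $\id_G(\tilde{A}) \supseteq \id_G(A)$, hence outside $\id_G(A)$) would be Kemer for some maximal $A_j$, forcing $j = i$ by the maximality and pairwise incomparability of the top Kemer points, and thus lying in $S \subseteq \id_G(\tilde{A})$, a contradiction. The good Kemer points persist, being realized by their surviving non-$\Gamma$ Kemer polynomials. Hence the multiset of Kemer points of $\tilde{A}$ is strictly smaller than that of $A$.

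Replacing $A$ by $\tilde{A}$ and iterating, the Kemer-point multisets form a strictly decreasing sequence in a bounded, well-founded partial order on $(\mathbb{Z}^{+})^r \times \mathbb{Z}^{+}$, so the iteration terminates at an $A'$ with no bad components. At that stage conditions (1)--(3) are verified as follows: (1) is immediate since only polynomials in $\Gamma$ have been adjoined to $\id_G$; (3) holds because any Kemer polynomial of $A'$ at a Kemer point of $A'$ has by goodness a witnessing Kemer polynomial outside $\Gamma$, and the contrapositive extremality argument shows that a polynomial outside $\id_G(A')$ with the right alternating structure cannot lie in $\Gamma$; and (2) follows by combining this with the matching of good-component Kemer points with those of $\Gamma$, where the reverse inclusion uses that any Kemer point of $\Gamma$ is extremal in $\Ind(A')_0 \subseteq \Ind(A)_0$ via the same contrapositive argument. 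The main obstacle is precisely the strict decrease of the Kemer set at each step: it is the point at which all the heavy machinery of the paper---the classification Theorem \ref{BSZ}, the two Kemer lemmas, the Phoenix property, and the trace-closure Proposition \ref{product}---must be brought simultaneously to bear.
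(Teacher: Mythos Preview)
Your overall strategy---iteratively adjoining to $\id_G(A)$ the Kemer polynomials of $A$ that lie in $\Gamma$ and passing to a representable quotient---is the paper's approach. However, there are two genuine gaps.

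First, you invoke Proposition \ref{product} on $\widehat{A} = \mathcal{A}_1 \times \cdots \times \mathcal{A}_s$, but that proposition explicitly requires all factors to share the \emph{same} Kemer point; your $\widehat{A}$ contains all components, good and bad, with possibly different maximal Kemer points. The trace-emulation argument underlying Proposition \ref{product} needs the alternating $e$-set in the Kemer polynomial to have cardinality exactly $\dim_F(\overline{A}_{i,e})$, and this fails across components with different $\alpha_e$. The paper avoids this by treating one Kemer point $(\alpha, s(\alpha))$ at a time: it replaces only the components $A_1, \ldots, A_u$ sharing that point by $\mathcal{A}_i/S_{\mathcal{A}_i}$, keeps $A_{u+1}, \ldots, A_s$ untouched, and then separately argues (a step you have elided) that the resulting product still has $\id_G$ contained in $\Gamma$, using that a Kemer polynomial for a \emph{maximal} point is an identity of every component with a different Kemer point.

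Second, and more seriously, your termination criterion does not deliver condition (3). You declare $A_i$ \emph{good} when \emph{some} Kemer polynomial of $A_i$ lies outside $\Gamma$, and you stop once every maximal component is good. But (3) demands that \emph{every} Kemer polynomial of $A'$ lie outside $\Gamma$. A good $A_i$ may still have individual Kemer polynomials inside $\Gamma$, and your ``contrapositive extremality argument'' does not exclude this: knowing that $(\alpha_{A_i}, s(\alpha_{A_i})) \in \Kemer(\Gamma)$ only guarantees the existence of \emph{some} witnessing polynomial outside $\Gamma$, not that an arbitrary one with the right alternating structure is outside $\Gamma$. The paper handles this with an additional pass in the final paragraph of its proof: after the Kemer sets have been matched, it lets $S$ be the $T$-ideal generated by those Kemer polynomials of the relevant components that \emph{do} lie in $\Gamma$, and repeats the quotient construction. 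Your sketch omits this step entirely, so as written the iteration can terminate at an $A'$ satisfying (1) and (2) but not (3).
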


\begin{remark} The proof is similar but not identical to the proof in \cite{BR}.
For the reader convenience we give a complete proof here.
\end{remark}

\begin{proof}

Let $(\alpha, s(\alpha))$ be a Kemer point of $A$ (i.e. it
corresponds to some of the basic components of $A$). After
renumbering the components we can assume that $(\alpha,
s(\alpha))$ is the Kemer point of $A_{1}$,\ldots, $A_{u}$ and not
of $A_{u+1}$,\ldots, $A_{s}$ . Suppose that $(\alpha, s(\alpha))$
is not a Kemer point of $\Gamma$. Note that since $\Gamma
\supseteq \id_{G}(A)$, there is no Kemer point $(\delta,
s(\delta))$ of $\Gamma$ with $(\delta, s(\delta)) \succeq (\alpha,
s(\alpha))$ and hence any Kemer polynomial of $A$ which
corresponds to the point $(\alpha, s(\alpha))$ is in $\Gamma$. Now
for $i=1,\ldots,u$, let $\mathcal{A}_{i}$ be the relatively free
algebra of $A_i$. For the same indices let $S_{i}$ be the
$T$-ideal generated by all Kemer polynomials of $A_{i}$ and let
$S_{\mathcal{A}_{i}}$ be the ideal of $\mathcal{A}_{i}$ generated
by the evaluations of $S_{i}$ on $\mathcal{A}_{i}$. By Corollary
\ref{representability} we have that ${\mathcal{A}_{i}
}/S_{\mathcal{A}_{i}}$ is representable.

\begin{claim} For any $i=1,\ldots,u$, if $(\beta, s(\beta))$ is any Kemer point of
${\mathcal{A}_{i} }/S_{\mathcal{A}_{i}}$, then $(\beta, s(\beta))
\prec (\alpha_{A_i}, s(\alpha_{A_i}))$.
\end{claim}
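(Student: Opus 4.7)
The plan is to argue by contradiction, exploiting the very definition of $S_i$ as the $T$-ideal generated by the Kemer polynomials of $A_i$. The point is that any Kemer polynomial of $A_i$ has all of its evaluations on $\mathcal{A}_i$ landing in $S_{\mathcal{A}_i}$, so such a polynomial automatically becomes a $G$-graded identity of the quotient $\mathcal{A}_i/S_{\mathcal{A}_i}$. If the Kemer point $(\alpha_{A_i},s(\alpha_{A_i}))$ persisted as a Kemer point of the quotient, a witnessing Kemer polynomial of the quotient would have to be a Kemer polynomial of $A_i$ as well, producing a contradiction.

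First, I would record the easy half: since the projection $\mathcal{A}_i \to \mathcal{A}_i/S_{\mathcal{A}_i}$ is a $G$-graded homomorphism and $\id_G(\mathcal{A}_i) = \id_G(A_i)$, we have $\id_G(\mathcal{A}_i/S_{\mathcal{A}_i}) \supseteq \id_G(A_i)$. By Remark \ref{comparison}, together with the fact established in Section \ref{Section: Kemer's Lemma $2$} that $A_i$ (being basic) has the unique Kemer point $(\alpha_{A_i},s(\alpha_{A_i}))$, every Kemer point $(\beta,s(\beta))$ of $\mathcal{A}_i/S_{\mathcal{A}_i}$ satisfies $(\beta,s(\beta)) \preceq (\alpha_{A_i},s(\alpha_{A_i}))$. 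Note that Kemer points of $\mathcal{A}_i/S_{\mathcal{A}_i}$ are defined at all because Corollary \ref{representability} supplies a finite dimensional algebra whose graded identities are contained in $\id_G(\mathcal{A}_i/S_{\mathcal{A}_i})$.

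Next, I would suppose for contradiction that $(\beta,s(\beta)) = (\alpha_{A_i},s(\alpha_{A_i}))$. Let $\mu^{\ast}$ be the maximum of the two stabilization parameters attached to this point for the $T$-ideals $\id_G(\mathcal{A}_i/S_{\mathcal{A}_i})$ and $\id_G(A_i)$. Because $(\beta,s(\beta))$ is extremal in the Kemer index of the quotient, the definition of the stabilization parameter lets me choose a multilinear $G$-graded polynomial $f$ which is not a $G$-graded identity of $\mathcal{A}_i/S_{\mathcal{A}_i}$, has at least $\mu^{\ast}$ alternating small $g$-sets of cardinality $\alpha_{A_i,g}$ for every $g\in G$, and has exactly $s(\alpha_{A_i})$ big sets of cardinality $\alpha_{A_i,g}+1$. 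Since $\id_G(\mathcal{A}_i/S_{\mathcal{A}_i}) \supseteq \id_G(A_i)$, such an $f$ is a non-identity of $A_i$, and by the shape of its alternating sets it qualifies as a Kemer polynomial of $A_i$ for its unique Kemer point.

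The contradiction now runs as follows: by the very definition of $S_i$ we have $f \in S_i$, so every evaluation of $f$ on $\mathcal{A}_i$ lies in $S_{\mathcal{A}_i}$, which means $f \in \id_G(\mathcal{A}_i/S_{\mathcal{A}_i})$, contradicting that $f$ is Kemer (hence a non-identity) for the quotient. The one delicate point, and essentially the only obstacle, is the synchronization of the two stabilization parameters: one needs a single polynomial to simultaneously witness the Kemer property for both the quotient and $A_i$. This is resolved by taking enough alternating folds, which is permissible since the Kemer polynomial condition asks only for \emph{at least} $\mu$ folds.
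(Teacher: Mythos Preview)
Your proposal is correct and follows essentially the same route as the paper's proof: reduce to the case $(\beta,s(\beta))=(\alpha_{A_i},s(\alpha_{A_i}))$, take a Kemer polynomial $f$ of the quotient at this point, observe that $f\notin\id_G(A_i)$ forces $f$ to be a Kemer polynomial of $A_i$, hence $f\in S_i$, hence $f\in\id_G(\mathcal{A}_i/S_{\mathcal{A}_i})$, a contradiction. Your use of Remark~\ref{comparison} to dispose of the non-comparable and strictly-larger cases in one stroke is slightly cleaner than the paper's explicit case analysis, and your attention to synchronizing the stabilization parameters $\mu$ (by taking $\mu^{\ast}$ large enough for both $T$-ideals) makes explicit a detail the paper leaves implicit, but neither constitutes a genuinely different argument.
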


(In this claim one may ignore our assumption above that
$(\alpha_{A_i}, s(\alpha_{A_i}))=(\alpha, s(\alpha))$ for
$i=1,\ldots,u$).

Assume the claim is false. This means
that ${\mathcal{A}_{i} }/S_{\mathcal{A}_{i}}$ has a Kemer point
$(\beta, s(\beta))$ for which $(\beta, s(\beta))$ and
$(\alpha_{A_i}, s(\alpha_{A_i}))$ are either not comparable or
$(\beta, s(\beta)) \succeq (\alpha_{A_i}, s(\alpha_{A_i}))$.

If $(\beta, s(\beta))$ and $(\alpha_{A_i}, s(\alpha_{A_i}))$ are
not comparable then there is an element $g$ in $G$ with $\beta_{g}
> \alpha_{A_{i},g}$. But this contradicts $\id_{G}(A_i)\subset
\id_{G}({\mathcal{A}_{i} }/S_{\mathcal{A}_{i}})$ for if $f$ is a
Kemer polynomial for the Kemer point $(\beta, s(\beta))$ of
${\mathcal{A}_{i} }/S_{\mathcal{A}_{i}}$, it must vanish on
$A_{i}$ and hence is in $\id_{G}(A_i)$. The same argument yields a
contradiction in case $(\beta, s(\beta)) \succ (\alpha_{A_i},
s(\alpha_{A_i}))$.

Assume now $(\beta, s(\beta)) = (\alpha_{A_i}, s(\alpha_{A_i}))$
and let $f$ be a Kemer polynomial of the Kemer point $(\beta,
s(\beta))$ of ${\mathcal{A}_{i} }/S_{\mathcal{A}_{i}}$. The
polynomial $f$ is not in $\id_{G}(\mathcal{A}_{i}
/S_{\mathcal{A}_{i}})$ and hence is not in $\id_{G}(A_i)$. Hence
$f$ is a Kemer polynomial of $A_{i}$ and therefore, by
construction, it is in $\id_{G}({\mathcal{A}_{i}
}/S_{\mathcal{A}_{i}})$. This is a contradiction and the claim is
proved.

\bigskip

We replace now the algebras $A_{1}\times \cdots\times A_{u}$ by
${\mathcal{A}_{i} }/S_{\mathcal{A}_{i}}$ (in the product $A=
A_{1}\times \cdots\times A_{s}$). Clearly, the set of Kemer points
of the algebra ${\mathcal{A}_{1} }/S_{\mathcal{A}_{1}} \times
\cdots\times {\mathcal{A}_{u} }/S_{\mathcal{A}_{u}}\times
A_{u+1}\times\cdots\times A_{s}$ is strictly contained in the set
of Kemer points of $A_{1}\times \cdots\times A_{u}\times
A_{u+1}\times\cdots\times A_{s}$ so part $1$ and $2$ of the
proposition will follow by induction if we show that
$\id_{G}({{\mathcal{A}_{1} }/S_{\mathcal{A}_{1}} \times
\cdots\times {\mathcal{A}_{u} }/S_{\mathcal{A}_{u}}\times
A_{u+1}\times\cdots\times A_{s}}) \subseteq \Gamma$. To see this
note that ${\mathcal{A}_{1}}/S_{\mathcal{A}_{1}} \times
\cdots\times {\mathcal{A}_{u} }/S_{\mathcal{A}_{u}} = B/S_{B}$
where $B= \mathcal{A}_{1}\times\cdots\times \mathcal{A}_{u}$,
$S_{B}$ is the ideal of $B$ generated by all evaluations of $S$ on
$B$ and $S$ is the $T$-ideal generated by all polynomial which are
Kemer with respect to $A_{i}$ for some $i=1,\ldots,u$.

Let $z\in \id_{G}({\mathcal{A}_{1} }/S_{\mathcal{A}_{1}} \times
\cdots\times {\mathcal{A}_{u} }/S_{\mathcal{A}_{u}}\times
A_{u+1}\times\cdots\times A_{s})= \langle
\id_{G}({\mathcal{A}_{1}\times\cdots\times \mathcal{A}_{u}}) + S
\rangle_{T} \cap \id_{G}(A_{u+1})\cap\cdots\cap \id_{G}(A_{s})$
and write $z= h + f$ where $h \in
\id_{G}({\mathcal{A}_{1}\times\cdots\times \mathcal{A}_{u}})$ and
$f\in S$. Clearly, we may assume that $f$ is a Kemer polynomial of
$A_{i}$ for some $1\leq i \leq u$. Now since the Kemer point of
$A_{i}$, $i=1,\ldots,u$, is maximal among the Kemer points of $A$,
$f$ and hence $h=z-f$ are in $\id_{G}({A_{u+1}\times\cdots\times
A_{s}})$. It follows that $h \in
\id_{G}({\mathcal{A}_{1}\times\cdots\times \mathcal{A}_{u}}) \cap
\id_{G}({A_{u+1}\times\cdots\times
A_{s}})=\id_{G}(A_{1}\times\cdots\times A_{s}) \subseteq \Gamma$.
But $S \subseteq \Gamma$ (since the point $(\alpha, s(\alpha))$ is
not a Kemer point of $\Gamma$) and hence $z = h+f \in \Gamma$ as
desired.

Now for the proof of (3) we may assume that $\Gamma$ and $A$ have
the same Kemer points. Let $(\alpha,s(\alpha))$ be such a Kemer
point and assume that some Kemer polynomials which correspond to
$(\alpha,s(\alpha))$ are in $\Gamma$. After renumbering the basic
components of $A$ we may assume that $(\alpha,s(\alpha))$ is the
Kemer point of $A_{i}$, $i=1,\ldots,u$. We repeat the argument above
but now instead of taking the set of all Kemer polynomials of the
point $(\alpha,s(\alpha))$ we take only the set of Kemer polynomial
of $(\alpha,s(\alpha))$ which are contained in $\Gamma$. This
completes the proof of the proposition.
\end{proof}

\begin{remark} The proof of Proposition \ref{same Kemer polynomials}
can be sketched as follows: We start with $\Gamma \supseteq
\id_{G}(A)$ where $A$ is finite dimensional and $G$-graded. Let
$\mathcal{A}$ be the relatively free algebra of $A$. By Lemma
\ref{generic} $\mathcal{A}$ is representable and hence we may
consider the trace values of elements of $\mathcal{A}_{e}$. Let
$\Gamma_{0}\subseteq \Gamma$ be the maximal $T$-ideal in $\Gamma$
which is closed under trace multiplication. By Proposition
\ref{product}, $\Gamma_{0}$ \underline{contains the}
$T$-\underline{ideal} \underline{generated by Kemer polynomials
of} $A$ \underline{which are contained in} $\Gamma$. Now, it
follows from \cite{Bei}, that $\mathcal{A}/I_{\Gamma_{0}}$ is
representable, where $I_{\Gamma_{0}}$ is the ideal generated by
all evaluations of $\Gamma_{0}$ on $\mathcal{A}$. Hence
$\mathcal{A}/I_{\Gamma_{0}}$ is \textit{PI}-equivalent to some
finite dimensional algebra $A^{'}$ with $\id_{G}(A^{'})\subseteq
\Gamma$. Finally, one sees that either the Kemer points of $A^{'}$
are smaller comparing to those of $A$ or else the intersection of
$\Gamma$ with Kemer polynomials of $A^{'}$ is zero (it's preimage
in $\mathcal{A}$ must be in $\Gamma_{0}$).
\end{remark}

\begin{corollary}[$\Gamma$-Phoenix property]\label{Phoenix property for Kemer polynomials}

Let $\Gamma$ be a $T$-ideal as above and let $f$ be a Kemer
polynomial of $\Gamma$. Then it satisfies the $\Gamma$-Phoenix
property.

\end{corollary}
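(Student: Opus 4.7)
The plan is to deduce the $\Gamma$-Phoenix property directly from the $A$-Phoenix property for Kemer polynomials, which was already established in Corollary \ref{Kemer polynomials of A are Phoenix}, by using Proposition \ref{same Kemer polynomials} as a bridge between the $T$-ideal $\Gamma$ and a suitable finite dimensional $G$-graded algebra. Concretely, I would start by invoking Proposition \ref{same Kemer polynomials} to produce a finite dimensional $G$-graded algebra $A^{'}$ with (i) $\id_{G}(A^{'}) \subseteq \Gamma$, (ii) $\Kemer(\Gamma) = \Kemer(A^{'})$, and (iii) the Kemer polynomials of $\Gamma$ and of $A^{'}$ coincide. Property (iii) is the crucial one: it ensures that ``being Kemer for $\Gamma$'' and ``being Kemer for $A^{'}$'' are literally the same condition on polynomials, so the Phoenix property for one transfers verbatim to the other.

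Next, given a Kemer polynomial $f$ of $\Gamma$ and an arbitrary $f^{'} \in \langle f \rangle$ with $f^{'} \notin \Gamma$, I would argue as follows. By (iii), $f$ is a Kemer polynomial of $A^{'}$, and by (i) the hypothesis $f^{'} \notin \Gamma$ implies the stronger statement $f^{'} \notin \id_{G}(A^{'})$. At this point Corollary \ref{Kemer polynomials of A are Phoenix}, in its general form for finite dimensional algebras, applies to $A^{'}$ and yields a polynomial $f^{''} \in \langle f^{'} \rangle$ such that $f^{''} \notin \id_{G}(A^{'})$ and $f^{''}$ is a Kemer polynomial of $A^{'}$. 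Using (iii) once more, $f^{''}$ is then a Kemer polynomial of $\Gamma$; in particular $f^{''} \notin \Gamma$ and it satisfies the prescribed property $P = $ ``being a Kemer polynomial of $\Gamma$''. This is exactly the conclusion of the $\Gamma$-Phoenix property applied to $f$, so the corollary follows.

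There is essentially no new obstacle in this argument: the entire substance of the proof has already been packed into the preceding results. The $A^{'}$-Phoenix property for Kemer polynomials required the full strength of Kemer's Lemmas 1 and 2 together with the combination of ``fullness'' and property $K$ (Lemma \ref{combining Full and property K}), and the existence of a finite dimensional model $A^{'}$ adequately matching $\Gamma$ required the trace-closure arguments of Section \ref{Kemer polynomials, emulation of traces and representability} together with Proposition \ref{same Kemer polynomials}. Once both are available, the corollary is purely formal, so the hard part was done upstream; here it is only a matter of checking that the containment $\id_{G}(A^{'}) \subseteq \Gamma$ lets one translate non-membership in $\Gamma$ into non-membership in $\id_{G}(A^{'})$, and that the matching of Kemer polynomials lets one translate the output of Corollary \ref{Kemer polynomials of A are Phoenix} back into a statement about $\Gamma$.
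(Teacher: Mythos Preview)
Your proposal is correct and follows essentially the same route as the paper's own proof: invoke Proposition~\ref{same Kemer polynomials} to obtain a finite dimensional $A'$ whose Kemer polynomials coincide with those of $\Gamma$, apply Corollary~\ref{Kemer polynomials of A are Phoenix} to get the $A'$-Phoenix property, and translate back via Proposition~\ref{same Kemer polynomials}. Your write-up is in fact more careful than the paper's, since you make explicit the step that $f'\notin\Gamma$ together with $\id_G(A')\subseteq\Gamma$ gives $f'\notin\id_G(A')$, which is needed to invoke the $A'$-Phoenix property.
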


\begin{proof}

By Proposition \ref{same Kemer polynomials} $f$ is a Kemer
polynomial of a Kemer point of a finite dimensional algebra $A$.
By Corollary \ref{Kemer polynomials of A are Phoenix} for every
polynomial $f^{'}\in \langle f \rangle$ there is a polynomial
$f^{''}\in \langle f^{'} \rangle$ which is Kemer for $A$. Applying
once again Proposition \ref{same Kemer polynomials} the result
follows.
\end{proof}
\end{section}

\begin{section}{Zubrilin-Razmyslov traces and representable spaces}

As explained in the introduction, the proof of Representability of
$G$-graded affine algebras (Theorem \ref{PI-equivalence-affine})
has two main ingredients. One is the \textit{Phoenix property} of
Kemer polynomials (which is the final statement of the last
section) and the other one (which is our goal in this section) is
the construction of a \textit{representable} algebra which we
denoted there by $B_{(\alpha, s)}$.

Choose a Kemer point $(\alpha, s(\alpha))$ of $\Gamma$ and let
$S_{(\alpha, s(\alpha))}$ be the $T$-ideal generated by all Kemer
polynomial which correspond to the point $(\alpha, s(\alpha))$
with at least $\mu$-folds of small sets. Note that by Remark
\ref{finite number of variables in Kemer polynomials} we may
assume that the total number of variables in these polynomials is
bounded. Let $\mathcal{W}_{\Gamma}$ be the relatively free algebra
of $\Gamma$. In what follows it will be important to assume (as we
may by Corollary \ref{reduction to affine G-grading}) that
$\mathcal{W}_{\Gamma}$ is affine. Since we will not need to refer
explicitly to the variables in the construction of
$\mathcal{W}_{\Gamma}$ we keep the notation $X_{G}$ of $G$-graded
variables for a different purpose. Let $X_{G}= \cup X_{g}$ be a
set of $G$-graded variables where $X_{g}$ has cardinality $\mu
\alpha_{g}+ s(\alpha)(\alpha_{g}+1)$ (i.e. enough $g$-variables to
support Kemer polynomials with $\mu$ small sets and possibly
$s(\alpha)$ big sets which are $g$-homogeneous). Let
$\mathcal{W}^{'}_{\Gamma}=\mathcal{W}_{\Gamma}\ast F\{X_{G} \}$
($G$-graded) and
$\mathcal{U}_{\Gamma}=\mathcal{W}^{'}_{\Gamma}/I_{1}$ where
$I_{1}$ is the ideal generated by all evaluations of $\Gamma$ on
$\mathcal{W}^{'}_{\Gamma}$. Clearly $\id_{G}(\mathcal{U}_{\Gamma})=\Gamma$.

Consider all possible evaluations in $\mathcal{W}_{\Gamma}^{'}$ of
the Kemer polynomials in $S_{(\alpha, s(\alpha))}$ in such a way
that \textit{precisely} $\mu$ folds of small sets and all big sets
(and no other variables) are evaluated on different variables of
$X_{G}$. Denote by $S_{0}$ the space generated by these
evaluations. Note that every nonzero polynomial in $S_{0}$ has an
evaluation of that kind which is nonzero in $\mathcal{U}_{G}$. In
other words $S_{0} \cap I_{1} =0$.

Our aim is to construct a representable algebra $B_{(\alpha,
s(\alpha))}$ and a $G$-graded epimorphism $\varphi:
\mathcal{U}_{G}\longrightarrow B_{(\alpha, s(\alpha))}$ (in particular
$\Gamma \subseteq \id_{G}(B_{(\alpha, s(\alpha))})$, such that
$\varphi$ maps the space $S_{0}$ isomorphically into $B_{(\alpha,
s(\alpha))}$. Let us introduce the following general
terminology.

\begin{definition}

Let $W$ be a $G$-graded algebra over a field $F$. Let $S$ be an $F$-subspace of $W$.
We say that $S$ is a \textit{representable space} of $W$ if there exists a $G$-graded representable
algebra $B$ and a $G$-graded epimorphism
$$\phi:W\longrightarrow B$$

such that $\phi$ maps $S$ isomorphically into $B$.

\end{definition}

We can now state the main result of this section.

\begin{theorem}[Representable Space]\label{Representable Space}
With the above notation, there exists a representable algebra
$B_{(\alpha, s(\alpha))}$ and a $G$-graded surjective homomorphism
$\varphi: \mathcal{U}_{G}\longrightarrow B_{(\alpha, s(\alpha))}$
(hence $\Gamma \subseteq \id_{G}(B_{(\alpha, s(\alpha))})$) and such
that $\varphi$ maps the space $S_{0}$ isomorphically into
$B_{(\alpha, s(\alpha))}$. In particular the space $S_{0}$ is
representable.
\end{theorem}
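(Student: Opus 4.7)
The overall strategy is to build $B_{(\alpha, s(\alpha))}$ as a representable quotient of a product of relatively free algebras of basic $G$-graded algebras, and to use the trace-closedness of Kemer-generated ideals (Proposition \ref{product}) together with the $\Gamma$-Phoenix property (Corollary \ref{Phoenix property for Kemer polynomials}) to ensure that $S_0$ does not collapse. First I would invoke Proposition \ref{same Kemer polynomials} to replace the original finite dimensional approximation by an algebra $A$ with $\id_G(A)\subseteq\Gamma$ that shares all Kemer points and Kemer polynomials with $\Gamma$. I then decompose $A$ via Proposition \ref{basic algebras} into a product $A_1\times\cdots\times A_s$ of basic algebras and renumber so that $A_1,\ldots,A_u$ are precisely those components whose unique Kemer point equals $(\alpha, s(\alpha))$.

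Next, for $i=1,\ldots,u$ let $\mathcal{A}_i$ be the relatively free algebra of $A_i$ in a set of graded variables large enough to include copies of the generators of $\mathcal{W}_\Gamma$ and of $X_G$. Each $\mathcal{A}_i$ is representable by Lemma \ref{generic}, hence so is the product $\widehat{A}=\mathcal{A}_1\times\cdots\times\mathcal{A}_u$. The universal property yields a $G$-graded homomorphism $\eta\colon\mathcal{W}^{'}_\Gamma\to\widehat{A}$ matching variables with free generators in each factor. Let $J\lhd\widehat{A}$ be generated by all evaluations on $\widehat{A}$ of polynomials in $\Gamma$, and let $J^{\mathrm{tr}}\supseteq J$ be its smallest extension closed under multiplication by traces of elements of $\widehat{A}_e$. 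By Lemma \ref{representable}, $B_{(\alpha, s(\alpha))}:=\widehat{A}/J^{\mathrm{tr}}$ is representable, and since $J\subseteq J^{\mathrm{tr}}$, the map $\eta$ descends to a $G$-graded surjection $\varphi\colon\mathcal{U}_\Gamma\to B_{(\alpha, s(\alpha))}$, giving $\Gamma\subseteq\id_G(B_{(\alpha, s(\alpha))})$.

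To see that $\varphi$ embeds $S_0$, suppose $s\in S_0$ satisfies $\varphi(s)=0$, so that $\eta(s)\in J^{\mathrm{tr}}$. Every element of $S_0$ is an evaluation of a polynomial from the $T$-ideal $S_{(\alpha, s(\alpha))}$ generated by Kemer polynomials at the point, so $\eta(s)$ lies in the ideal $(S_{(\alpha, s(\alpha))})_{\widehat{A}}\lhd\widehat{A}$. Proposition \ref{product} shows that $(S_{(\alpha, s(\alpha))})_{\widehat{A}}$ is already trace-closed, so the extra trace multiplications in passing from $J$ to $J^{\mathrm{tr}}$ cannot create new elements in this Kemer-generated ideal beyond those already in $J$; that is, $J^{\mathrm{tr}}\cap(S_{(\alpha, s(\alpha))})_{\widehat{A}}=J\cap(S_{(\alpha, s(\alpha))})_{\widehat{A}}$, whence $\eta(s)\in J$. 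Pulling back, $s$ is rewritten modulo $I_1$ as a combination of Kemer-polynomial evaluations coming from elements of $\Gamma$; but $\Gamma$ and $A$ share Kemer polynomials and no Kemer polynomial of $\Gamma$ belongs to $\Gamma$, so the $\Gamma$-Phoenix property (Corollary \ref{Phoenix property for Kemer polynomials}) forces $s\in I_1$, and combined with the already-recorded fact $S_0\cap I_1=0$ we conclude $s=0$ in $\mathcal{W}^{'}_\Gamma$.

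The main obstacle is precisely the equality $J^{\mathrm{tr}}\cap(S_{(\alpha,s(\alpha))})_{\widehat{A}}=J\cap(S_{(\alpha, s(\alpha))})_{\widehat{A}}$ used above: one needs to check that trace multiplication applied to evaluations of $\Gamma$ cannot produce, inside the Kemer-generated ideal, an expression not already expressible from $J$ alone. Proposition \ref{product} supplies trace-closedness of the Kemer-generated side, but the careful bookkeeping between $J$, its trace-closure $J^{\mathrm{tr}}$, and the intersection with $(S_{(\alpha, s(\alpha))})_{\widehat{A}}$ is where the delicate interplay with the distinguished $X_G$-substitutions defining $S_0$ must be made airtight, so that no Kemer pattern can be rewritten into $\Gamma$ via trace multipliers.
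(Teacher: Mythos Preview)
Your approach is genuinely different from the paper's and, as you yourself flag, has a real gap at the crucial step.

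The paper does \emph{not} build $B_{(\alpha,s(\alpha))}$ from relatively free algebras of basic components, nor does it use Proposition~\ref{product} or the Phoenix property here. Instead it works directly with $\mathcal{W}'_\Gamma/I_2$ (where $I_2$ kills repeated $X_G$-variables), chooses an essential Shirshov base $\{a_0,\dots,a_{t-1}\}\subset(\mathcal{W}'_\Gamma)_e$, and builds a chain $B^{(0)},\dots,B^{(t)}$ by adjoining central indeterminates $\lambda_{j,1},\dots,\lambda_{j,n}$ and forcing each $a_j$ to satisfy a monic relation $a_j^{n+1}+\lambda_{j,1}a_j^{n}+\cdots+\lambda_{j,n}a_j=0$. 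Representability of $B^{(t)}$ then follows from Shirshov's theorem (finite module over the centroid). The heart of the argument---that $S_0$ survives each step---is proved with two ingredients you never invoke: the Zubrilin-Razmyslov identity (Proposition~\ref{Zubrilin-Razmyslov-Lemma}), which shows that for polynomials alternating on a set of $n$ $e$-variables the operators $u^b_j$ behave like the coefficients of a Cayley--Hamilton relation, and the Interpretation Lemma (Lemma~\ref{Interpretation Lemma}), which lets one specialize $\lambda_j\mapsto u^b_j$ without collapsing $S_0$. The Phoenix property is used only afterwards, in Section~\ref{Representability of affine $G$-graded
algebras}, to pass from ``$S_0$ is representable'' to Theorem~\ref{PI-equivalence-affine}.

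Your argument breaks precisely where you say it does. Trace-closedness of $(S_{(\alpha,s(\alpha))})_{\widehat{A}}$ (Proposition~\ref{product}) says only that this ideal is stable under multiplication by traces; it gives no control over how $J^{\mathrm{tr}}$, the trace-closure of a \emph{different} ideal $J$, meets it. An element of $J^{\mathrm{tr}}\cap (S_{(\alpha,s(\alpha))})_{\widehat{A}}$ is a sum $\sum \Tr(z_k)j_k$ with $j_k\in J$, and there is no reason the individual summands lie in $(S_{(\alpha,s(\alpha))})_{\widehat{A}}$, so you cannot reduce to $J\cap (S_{(\alpha,s(\alpha))})_{\widehat{A}}$. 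Even granting that equality, your ``pull-back'' step is also a gap: $\eta(s)\in J$ does not imply $s\in I_1$ unless you know something about $\ker\eta$, and the Phoenix property (a statement about $T$-ideals and non-identities) does not supply that. Finally, note that your map $\eta\colon\mathcal{W}'_\Gamma\to\widehat{A}$ is not obviously well-defined: $\mathcal{W}_\Gamma$ already satisfies $\Gamma$, while generic elements of $\mathcal{A}_i$ satisfy only $\id_G(A_i)$, and $\Gamma\subseteq\id_G(A_i)$ is not given. You can repair this by mapping directly $\mathcal{U}_\Gamma\to\widehat{A}/J$, but the substantive gaps above remain.
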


It is appropriate to view the theorem above as a ``partial
success": Our final goal is to show that the algebra
$\mathcal{U}_{G}$ is representable but here we ``only" prove that
the subspace $S_{0}$ (spanned by Kemer polynomials) is
representable. In order to complete the proof of Theorem
\ref{PI-equivalence-affine} we must invoke the Phoenix property of
Kemer polynomial. The reader may want to ``jump" to section
\ref{Representability of affine $G$-graded
algebras} and see how to
finalize the proof of Theorem \ref{PI-equivalence-affine} using
the Phoenix property of Kemer polynomials and the representability
of the space $S_{0}$.

The construction of $B_{(\alpha, s(\alpha))}$ is based on two key
lemmas. One is the ``Zubrilin-Razmyslov identity" and the second
is a lemma named as the ``interpretation lemma". We start with the
``Zubrilin-Razmyslov identity (see \cite{BR} for the ungraded
case).

Let $\{x_{1,e},\ldots,x_{n,e},x_{n+1,e}\}$ be a set of
\underline{$e$-variables}, $Y_{G}$ a set of arbitrary $G$-graded
variables and $z=z_{e}$ an additional \underline{$e$-variable}.
For a given $G$-graded polynomial
$f(x_{1,e},\ldots,x_{n,e},x_{n+1,e};Y_{G})$, multilinear in the
$x$'s, we define $u_{j}^{z}(f)$ to be the homogeneous component of
degree $j$ in $z$ in the polynomial
$f((z+1)x_{1,e},\ldots,(z+1)x_{n,e},x_{n+1,e};Y_{G})$. In other
words $u_{j}^{z}(f)$ is the sum of all polynomials obtained by replacing
$x_{i,e}$ by $zx_{i,e}$ in $j$ positions from $\{x_{1,e},\ldots,x_{n,e}\}$.
Clearly, if $f$ alternates in the variables
$\{x_{1,e},\ldots,x_{n,e}\}$ then $u_{j}^{z}(f)$ alternates in
these variables as well. Note that for any $1 \leq i,j \leq n$,
the operators $u_{i}^{z}$ and $u_{i}^{z}$ commute.

Let $A$ be any $G$-graded algebra over $F$. Let $f$ be as above
and assume it alternates in $\{x_{1,e},\ldots,x_{n,e}\}$. Consider
the polynomial

$$\widetilde{f}(x_{1,e},\ldots,x_{n,e},x_{n+1,e};Y_{G})=
f(x_{1,e},\ldots,x_{n,e},x_{n+1,e};Y_{G})$$ $$- \sum_{k=1}^{n}
f(x_{1,e},\ldots,
x_{k-1,e},x_{n+1,e},x_{k+1,e},\dots,x_{n,e},x_{k,e};Y_{G}).$$

Note that $\widetilde{f}(x_{1,e},\ldots,x_{n,e},x_{n+1,e};Y_{G})$
alternates in the variables $\{x_{1,e},\ldots,x_{n+1,e}\}$. The
proof of the following proposition is identical to the proof of
Proposition 2.44 in \cite{BR} and hence is omitted.

\begin{proposition}[Zubrilin-Razmyslov identity]\label{Zubrilin-Razmyslov-Lemma}

With the above notation: if
$$\widetilde{f}(x_{1,e},\ldots,x_{n,e},x_{n+1,e};Y_{G})$$ is a
$G$-graded identity of $A$ then also is
$$\sum_{j=0}^{n}(-1)^{j}u_{j}^{z}(f(x_{1,e},\ldots,x_{n,e}, z^{n-j}x_{n+1,e};Y_{G}).$$

\end{proposition}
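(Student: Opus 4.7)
The plan is to follow the combinatorial proof of Proposition~2.44 in \cite{BR} essentially verbatim, observing that the grading imposes no new constraint: every variable participating in the alternation, namely $x_{1,e},\ldots,x_{n+1,e}$, as well as the auxiliary variable $z$, is of degree $e\in G$, so all the substitutions used in the ungraded argument remain admissible in the $G$-graded setting.

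Conceptually, I would view the statement as a Cayley--Hamilton identity. Regard $f$ as a multilinear alternating $n$-form in $x_{1,e},\ldots,x_{n,e}$ with values in the module of multilinear functions of $x_{n+1,e}$ and $Y_G$. The hypothesis that $\tilde f$ is a graded identity of $A$ says precisely that on any admissible evaluation this $n$-form extends to an alternating $(n+1)$-form which vanishes identically; equivalently, $x_{n+1,e}$ is functionally expressible in terms of $x_{1,e},\ldots,x_{n,e}$ through an adjugate construction. Left multiplication by $z$ then defines an endomorphism $L_z$ on this module: the substitution $x_{i,e}\mapsto zx_{i,e}$ at position $i$ inserts $L_z$ into slot $i$, while $x_{n+1,e}\mapsto z^{n-j}x_{n+1,e}$ applies $L_z^{n-j}$ in the last slot. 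Under this reading, the alternating sum $\sum_{j=0}^{n}(-1)^{j}u_{j}^{z}(f(x_{1,e},\ldots,x_{n,e},z^{n-j}x_{n+1,e};Y_G))$ is the evaluation at $x_{n+1,e}$ of the characteristic polynomial of $L_z$ restricted to the span of the $x_{i,e}$'s, which vanishes by Cayley--Hamilton; the role of $\tilde f$ is exactly to provide the alternating structure needed to invoke that theorem.

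To translate this picture into a direct algebraic verification I would proceed by induction on $n$. Starting from $\tilde f\equiv 0$ on $A$, substitute tuples of the form $(z^{a_1}x_{1,e},\ldots,z^{a_{n+1}}x_{n+1,e})$ with $\sum a_i = n$, expand $\tilde f$ using its definition, and regroup the resulting $n+1$ swap terms together with the main term so that each swap of $x_{n+1,e}$ with $x_{k,e}$ contributes to a specific $u_j^z$-term of matching $z$-degree. The base case $n=0$ is trivial, and the inductive step reduces, after factoring a single $x_{i,e}$ out of the alternation, to the identity for $n-1$. The main obstacle will be the sign and multiplicity bookkeeping: one must check that every term produced by expanding $\tilde f$ under these $z$-substitutions cancels against precisely one term from the $u_j^z$-sum at the appropriate $j$, with compatible coefficients. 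Once that combinatorics is settled exactly as in \cite{BR}, no further work is needed for the $G$-graded setting, since all variables in sight are $e$-homogeneous and every intermediate substitution respects the $G$-grading.
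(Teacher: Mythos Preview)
Your proposal is correct and matches the paper's own treatment exactly: the paper simply states that the proof is identical to that of Proposition~2.44 in \cite{BR} and is therefore omitted, which is precisely your opening observation that the ungraded argument carries over verbatim because all variables involved ($x_{1,e},\ldots,x_{n+1,e}$ and $z$) are $e$-homogeneous. Your additional Cayley--Hamilton interpretation and inductive sketch go beyond what the paper provides, but they are consistent with the standard proof in \cite{BR}.
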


\begin{lemma}[Interpretation Lemma]\label{Interpretation Lemma}

Let $A$ be a $G$-graded algebra over a field $F$ and $I$ a
$G$-graded ideal of $A$. Let $\Lambda
=F[\theta_{1},\dots,\theta_{n}]$ be a commutative, finitely
generated $F$-algebra. Suppose $\Lambda$ acts on $I$ as linear
operators and the action commutes with the multiplication in $A$
(we view the elements of $\Lambda$ as homogeneous of degree $e\in
G$). Consider the extension of $A$ by commuting $e$-variables
$\{\lambda_{1},\dots,\lambda_{n}\}$ and let $K$ be the $G$-graded
ideal of $A[\lambda_{1},\dots,\lambda_{n}]$ generated by the
elements $(\lambda_{i}x-\theta_{i}x)$, $i=1,\dots,n$ and $x\in I$.
Then the natural map $A \rightarrow A^{'}=
A[\lambda_{1},\dots,\lambda_{n}]/K$ is an embedding.

\end{lemma}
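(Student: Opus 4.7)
The plan is to construct a $G$-graded $F$-algebra $\tilde{A}$ containing $A$ as a $G$-graded subalgebra and equipped with central $e$-homogeneous elements $\mu_1,\dots,\mu_n \in \tilde{A}$ satisfying $\mu_i\cdot x = \theta_i(x)$ for every $x\in I$. Once such $\tilde{A}$ exists, the universal property of $A' = A[\lambda_1,\dots,\lambda_n]/K$ yields a $G$-graded $F$-algebra homomorphism $\psi: A' \to \tilde{A}$ sending $\lambda_i$ to $\mu_i$ and restricting to the inclusion on $A$. Since $A \hookrightarrow \tilde{A}$ is manifestly injective, $A \to A'$ must be injective as well.

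To construct $\tilde{A}$, I would first pick a $G$-graded $F$-vector space complement $C$ of $I$ in $A$, so $A = I\oplus C$ as $G$-graded spaces (possible because $I$ is graded and $F$ is a field: extend a graded basis of $I$ to one of $A$). Write $R = F[\lambda_1,\dots,\lambda_n]$ and let $R_+$ be its augmentation ideal. Set
$$\tilde{A} \;=\; A \,\oplus\, (C \otimes_F R_+)$$
as a $G$-graded $F$-vector space, the grading on $C\otimes_F R_+$ being inherited from $C$. The multiplication on $\tilde{A}$ is then forced by the requirement that the $\mu_i := 1_A \otimes \lambda_i$ be central and satisfy $\mu_i y = \theta_i(y)$ for $y\in I$ (if $A$ has no unit, adjoin one and restrict at the end). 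Concretely, for $a\in A$ and $c\otimes \lambda^\alpha \in C\otimes_F R_+$ with $|\alpha|\geq 1$, decompose $ac = y + d$ with $y\in I$, $d\in C$, and declare
$$a\cdot(c\otimes \lambda^\alpha) \;=\; \theta^\alpha(y) \;+\; d\otimes \lambda^\alpha;$$
the right multiplication and the product of two elements of $C\otimes_F R_+$ are dictated by analogous formulas.

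The real work is to verify that this product is associative and $G$-graded, which reduces to bookkeeping using $A = I\oplus C$ together with the compatibility $\theta^\alpha(ab) = a\theta^\alpha(b)$ and $\theta^\alpha(ba) = \theta^\alpha(b)a$ for $b\in I$, $a\in A$. An equivalent and perhaps conceptually cleaner route is to define directly the $G$-graded $F$-linear map
$$\phi: A[\lambda_1,\dots,\lambda_n] \longrightarrow \tilde{A}, \qquad \sum_\alpha a_\alpha \lambda^\alpha \;\longmapsto\; a_0 + \sum_{|\alpha|\geq 1}\theta^\alpha(x_\alpha) \;+\; \sum_{|\alpha|\geq 1} c_\alpha\otimes \lambda^\alpha,$$
where $a_\alpha = x_\alpha + c_\alpha$ is the $I\oplus C$ decomposition, then verify by a short direct computation that $\phi$ annihilates each generator $\lambda_i x - \theta_i x$ (and hence all of $K$), and transport the algebra structure of $A[\lambda]/K$ onto $\tilde{A}$ through $\phi$.

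The main obstacle is precisely this associativity/multiplicativity check in the first approach, or equivalently the verification that $\phi$ annihilates $K$ in the second; both arguments ultimately rest on the commutation of each $\theta_i$ with multiplication by elements of $A$. Once that hurdle is cleared, the injectivity of $A\to A'$ drops out immediately from the direct-sum decomposition $\tilde{A} = A\oplus (C\otimes_F R_+)$, completing the proof.
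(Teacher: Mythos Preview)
Your proposal is correct and follows essentially the same route as the paper: choose a graded complement of $I$ in $A$ (the paper calls it $V$, you call it $C$), form the explicit model $I \oplus (V\otimes_F F[\lambda_1,\dots,\lambda_n])$ with the multiplication dictated by $\lambda_i\mapsto\theta_i$ on $I$, and observe that $A$ sits inside as a direct summand. The paper, like you, leaves the associativity verification implicit; your framing via the universal property of $A'$ is a minor cosmetic difference only.
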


\begin{proof}
We prove the lemma by giving an explicit description of $A^{'}$.
Let $V$ be a complement of $I$ in $A$ (as an $F$-vector space).
Since $I$ is $G$-graded we may assume that $V$ is spanned by
homogeneous elements. Let
$F[\lambda_{1},\dots,\lambda_{n}]\otimes_{F}V$ be the extension
$V$ by the $\lambda_{i}$'s and consider the subspace
$C=F[\lambda_{1},\dots,\lambda_{n}]\otimes_{F}V + I$ of
$A[\lambda_{1},\dots,\lambda_{n}]$. We introduce an action of
$F[\lambda_{1},\dots,\lambda_{n}]$ on $C$ as follows: The action
on $F[\lambda_{1},\dots,\lambda_{n}]\otimes_{F}V$ is the obvious
one where the action on $I$ is given by $\lambda_{i}x=\theta_{i}x$
for every $x\in I$. Next we introduce a multiplication on $C$: Let
$\mu: F[\lambda_{1},\dots,\lambda_{n}] \rightarrow
F[\theta_{1},\dots,\theta_{n}]$ be the algebra map defined by
$\lambda_{i} \mapsto \theta_{i}$. Take $v_{1}$ and $v_{2}$ in $V$
and let their multiplication in $A$ be given by $v_{1}v_{2}=v_{3}
+ a$ where $v_{3}\in V$ and $a\in I$. Then we define
$(r_{1}\otimes v_{1})(r_{2}\otimes v_{2})=r_{1}r_{2}\otimes
v_{3}+\mu(r_{1}r_{2})a$. The product of $r\otimes v$ and an
element of $I$ is defined in the same way (using the map $\mu$).
Now it is clear that the algebras $C$ and $A^{'}$ are isomorphic
and that $A$ is embedded in $C$.
\end{proof}

\begin{remark}
A similar statement can be proved for algebras over an arbitrary commutative,
noetherian ring $R$. Instead of the space $V$ one can consider the coset
representatives of $I$ in $A$ and be more ``careful" with the addition operation.
\end{remark}

We can turn now to the construction of $B_{(\alpha, s(\alpha))}$.

Consider the ideal $I_{2}$ of $\mathcal{W}^{'}_{\Gamma}$ generated
by all elements of the form $x_{g}zx_{g}$, $z\in
\mathcal{W}^{'}_{\Gamma}$ and $x_{g}\in X_{g}$. Clearly the
natural map $\mathcal{W}^{'}_{\Gamma}\longrightarrow
\mathcal{W}^{'}_{\Gamma}/I_{2}$ maps the space $S_{0}$
isomorphically. To simplify the notation we denote the image of
$S_{0}$ in $\mathcal{W}^{'}_{\Gamma}/I_{2}$ again by $S_{0}$. Note
that the ideal of $\mathcal{W}^{'}_{\Gamma}/I_{2}$ generated by
the elements of $X_{G}$ is nilpotent.

In order to construct the algebra $B_{(\alpha, s(\alpha))}$ we
construct a sequence of algebras $B^{(r)}$, $r=0,\ldots,t$, where
$B^{(0)}= \mathcal{W}^{'}_{\Gamma}/I_{2}$, $B^{(t)}= B_{(\alpha,
s(\alpha))}$, and $B^{(r+1)}$ is obtained from $B^{(r)}$ by first
extending its centroid with a certain finite set of indeterminates
$\lambda_{i,1},\ldots,\lambda_{i,n}$ ($n$ is the cardinality of an
$e$-small set) and then by moding out from
$B^{(r)}[\lambda_{i,1},\ldots,\lambda_{i,n}]$  a suitable ideal
which we denote by $J_{a_r}$. Our main tasks will be

\begin{enumerate}

\item

to show that $B_{(\alpha, s(\alpha))}$ is a finite module over its
centroid (and hence representable by \cite{Bei}).

\item

to show that the subspace of $B^{(r)}$ spanned by the image of
$S_{0}$ is mapped isomorphically into $B^{(r+1)}$.
\end{enumerate}

We choose an essential Shirshov base $\{a_0,\ldots,a_{t-1}\}$ of
$\mathcal{W}^{'}_{\Gamma}$. As shown in Proposition \ref{Essential
Shirshov base in the e-component}, these elements can be taken
from $(\mathcal{W}^{'}_{\Gamma})_{e}$. Moreover, since the ideal
generated $X_{G}$ is nilpotent we can assume the $a_i$'s are
$X_{G}$-free. Clearly, the (images of) elements
$\{a_0,\ldots,a_{t-1}\}$ form an essential Shirshov base of
$B^{(0)}=\mathcal{W}^{'}_{\Gamma}/I_{2}$. Moreover since the
construction of $B^{(j+1)}$ consists of extending the centroid of
$B^{(j)}$ and moding out by a certain ideal, the image of
$\{a_0,\ldots,a_{t-1}\}$ in $B^{(j)}$ is an essential Shirshov
base for $B^{(j)}$, $j=0,\ldots,t$.  We are now ready to define
$B^{(j+1)}$.

Let $B^{(j+1)}=B^{(j)}[\lambda_{j,1},\ldots,\lambda_{j,n}]/J_{a_j}$
where $J_{a_j}$ is the ideal generated by the expression
$$a_{j}(a_{j}^n+\lambda_{j,1} a_{j}^{n-1}+\lambda_{j,2}
a_{j}^{n-2}+\cdots +\lambda_{j,n})=a_{j}^{n+1}+\lambda_{j,1}
a_{j}^{n}+\lambda_{j,2} a_{j}^{n-1}+\cdots +\lambda_{j,n}a_{j}.$$

From the definition of $J_{a_j}$ it follows that the image of
$a_{j}$ in $B^{(t)}$ is integral over the centroid. In other words
$B^{(t)}$ has an essential Shirshov base consisting of integral
elements and so it is a finite module over its centroid. This
proves (1).

For the proof of (2) we need to show
that $\overline{S}_{0}$, the image of $S_{0}$ in
$B^{(r)}[\lambda_{r,1},\ldots,\lambda_{r,n}]$, intersects trivially
$J_{a_{r}}$. This is an immediate consequence of the lemma below.
We insist in rephrasing it as a separate lemma in order to emphasize that
its proof is independent of the inductive process presented above.

\bigskip

Let $W$ be a \textit{PI}, $G$-graded affine algebra over a field
$K$. Let $(\alpha, s(\alpha))$ be a Kemer point of $W$. Fix a
configuration of big sets according to the
Kemer point $(\alpha, s(\alpha))$, that is
we fix an $s(\alpha)$-tuple $u=(g_{1},\ldots,g_{s(\alpha)})$ in $G^{s(\alpha)}$.
Let $X_{(\alpha, s(\alpha),u)}$ be a set of graded variables
with $\mu$ small sets of $g$-variables of cardinality $\alpha_{g}$,
for every $g\in G$ and big sets $\Lambda_{g_i}$ of $g_{i}$-variables,
of cardinality $\alpha_{g_i}+1$ for $i=1,\ldots,s(\alpha)$.
Thus the total number of variables in $X_{(\alpha, s(\alpha),u)}$ is given by

$$\mu \sum_{g\in G} \alpha_{g} + \sum_{i=1}^{s(\alpha)}(\alpha_{g_i}+1).$$

(Note that here we don't require that $W$ has a Kemer polynomial with such configuration.)

Consider the algebra $\widehat{W}=W\ast\{X_{(\alpha,
s(\alpha),u)}\}/(I_{1}+I_{2})$ where $I_{1}$ is the ideal of
$W\ast \{X_{(\alpha, s(\alpha),u)}\}$ generated by all evaluations
of $\id_{G}(W)$ on $W\ast\{X_{(\alpha, s(\alpha),u)}\}$ and
$I_{2}$ the ideal of $W\ast\{X_{(\alpha, s(\alpha),u)}\}$
generated by elements of the form $x_{g}w^{'}x_{g}$ where $w^{'}
\in W\ast\{X_{(\alpha, s(\alpha),u)}\}$. Consider the scalar
extension $\widehat{W}[\lambda_{1},\dots,\lambda_{n}]$ of
$\widehat{W}$ where $\lambda_{i}$ are indeterminates and
$n=\alpha_{e}$. Given an element $b\in W$, let $J_{b}$ be the
ideal of $W\ast\{X_{(\alpha, s(\alpha))}\}/(I_{1}+I_{2})$
generated by the the expression
$$
b(b^n+\lambda_{1}
b^{n-1}+\lambda_{2}b^{n-2}+\cdots
+\lambda_{n})=b^{n+1}+\lambda_{1} b^{n}+\lambda_{2} b^{n-1}+\cdots
+\lambda_{n}b.
$$

\begin{lemma}
Let $S$ be the subspace of $\widehat{W}$ spanned by all polynomials
in the graded variables of $X_{(\alpha, s(\alpha),u)}$ which alternate
on small and big sets according to the configuration described above.
Then the restriction to $S$ of the natural map

$$\widehat{W}\rightarrow \widehat{W}[\lambda_{1},\dots,\lambda_{n}]/J_{b}$$

is an embedding.
\end{lemma}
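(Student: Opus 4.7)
The approach is to combine the Zubrilin-Razmyslov identity (Proposition 9.1) with the Interpretation Lemma (Lemma 9.2). Fix $\mu$ large enough so that $\mu - 1 \geq \mu(\Gamma, \alpha)$, where $\Gamma = \id_{G}(\widehat{W})$, and recall $n = \alpha_{e}$ and $b \in W_{e}$. Given $f \in S$ alternating in some small $e$-set $\{x_{1,e}, \ldots, x_{n,e}\}$, let $\tilde f$ be the polynomial obtained by adjoining one more $e$-variable $x_{n+1,e}$ and antisymmetrizing over the resulting $n+1$ variables. Then $\tilde f$ has $\mu - 1$ small $e$-folds but $s(\alpha) + 1$ big sets, which exceeds the stabilized maximum, so $\tilde f \in \Gamma$. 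By Proposition 9.1, the polynomial
$$
\sum_{j=0}^{n} (-1)^{j}\, u_{j}^{z}\bigl(f(x_{1,e}, \ldots, x_{n,e}, z^{n-j}x_{n+1,e}; Y_{G})\bigr)
$$
also lies in $\Gamma$.

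Specializing $z$ to $b$ in this identity and interpreting each term via the alternating structure of $f$, I extract linear operators $\theta_{1}, \ldots, \theta_{n}$ acting on the $\widehat{W}$-bimodule $I := \langle S\rangle_{\widehat{W}}$. These operators are defined by redistributing $b$-factors among the alternating slots of polynomials in $S$ while leaving the bordering $\widehat{W}$-context intact; consequently they commute with both left and right multiplication by $\widehat{W}$. Moreover, they satisfy the Cayley-Hamilton type relations
$$
b^{n+1}x + b^{n}\theta_{1}(x) + \cdots + b\,\theta_{n}(x) = 0, \qquad xb^{n+1} + \theta_{1}(x)b^{n} + \cdots + \theta_{n}(x)\,b = 0
$$
for every $x \in I$. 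Applying Lemma 9.2 with $\Lambda = F[\theta_{1}, \ldots, \theta_{n}]$ and the ideal $I$ yields an embedding $\iota : \widehat{W} \hookrightarrow \widehat{W}[\lambda_{1}, \ldots, \lambda_{n}]/K$, where $K$ is the two-sided ideal generated by $\lambda_{i}x - \theta_{i}(x)$ for $x \in I$ and $i = 1, \ldots, n$.

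To conclude, observe that modulo $K$ the generator $\bar g$ of the image of $J_{b}$ annihilates $\iota(I)$ from both sides: for any $x \in I$,
$$
\bar g \cdot \iota(x) = b^{n+1}x + \sum_{i}\theta_{i}(b^{n+1-i}x) = b^{n+1}x + \sum_{i}b^{n+1-i}\theta_{i}(x) = 0
$$
in $\widehat{W}[\lambda]/K$, using $b^{n+1-i}x \in I$, the commutation of $\theta_{i}$ with $\widehat{W}$-multiplication, and the left Cayley-Hamilton identity; the right version follows symmetrically. Thus the image of $J_{b}$ in $\widehat{W}[\lambda]/K$ annihilates $\iota(I)$ on both sides. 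Given $f \in S$ with $f \in J_{b}$, writing $f = \sum_{i} a_{i}\,g\,b_{i}$ in $\widehat{W}[\lambda]$, I would then show using the bimodule action of $\Lambda$ on $I$ that every $\lambda$-monomial appearing in such a representation can be absorbed by a $K$-relation when the total element is forced to lie in $S \subset I$; combined with the annihilation property of $\bar g$ on $\iota(I)$, this forces $\iota(f) = 0$, and hence $f = 0$ in $\widehat{W}$ by the injectivity of $\iota$. The main obstacle is precisely this last bookkeeping step: ensuring that the $\lambda$-dependent contributions in $\sum a_{i}g b_{i}$ can systematically be routed through the $\theta_{i}$-action on $I$, exploiting in an essential way that the landing subspace $S$ consists of polynomials alternating on small sets of size exactly $\alpha_{g}$ for each $g \in G$.
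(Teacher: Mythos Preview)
Your setup correctly identifies the two ingredients (Zubrilin--Razmyslov and the Interpretation Lemma) and the need to show that passing to $\widehat{W}[\lambda]/J_b$ does not kill $S$. However, the step you flag as ``the main obstacle'' is a genuine gap, and your framework of showing that $\bar g$ annihilates $\iota(I)$ on both sides cannot close it: in a representation $f=\sum_i a_i\,g\,b_i$ the coefficients $a_i,b_i\in\widehat{W}[\lambda]$ need not lie in $I$ (and typically carry $\lambda$'s), so the two-sided annihilation of $\iota(I)$ by $\bar g$ says nothing about $\sum_i \bar a_i\,\bar g\,\bar b_i$. There is also a prior issue: your operators $\theta_i$ on $I=\langle S\rangle_{\widehat W}$, defined by ``redistributing $b$'s while leaving the bordering context intact'', are not obviously well-defined, since an element of $I$ may admit several expressions $\sum w_j s_j w'_j$ with $s_j\in S$.

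The paper resolves exactly this obstacle by a move you are missing: it applies the \emph{full alternation operator} (over all small and big sets of $X_{(\alpha,s(\alpha),u)}$) to each individual summand $h=p_1(X_1)\,g\,p_2(X_2)\,g(\lambda)$ of $f$ written in $J_b$-form. Since $f\in S$ is already alternating, this multiplies $f$ by a nonzero integer $\pi$ (a product of factorials), while each summand becomes an alternated polynomial $\hat h$ that now itself carries the full small/big-set alternating structure. One can then apply Zubrilin--Razmyslov to $\hat h$ term-by-term: viewing the slot occupied by $b^{n+1-j}$ as $z^{n-j}x_{n+1,e}$ with $z=x_{n+1,e}=b$, the interpretation $\lambda_j\mapsto u_j^b$ (acting on the alternating $e$-small set of $\hat h$) turns $\hat h$ into the Zubrilin--Razmyslov sum, which vanishes in $\widehat W$ because the ``one more $e$-variable'' antisymmetrization of $\hat h$ has $s(\alpha)+1$ big sets and $\mu-1\ge\mu(\Gamma,\alpha)$ small $e$-folds, hence lies in $\Gamma$. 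The Interpretation Lemma then guarantees that this substitution does not touch the $\lambda$-free element $f$, so $\pi f=0$ in $\widehat W$ and $f=0$. The alternation of the summands is the missing idea: it is precisely what transfers the alternating structure from $f$ to the pieces $a_i\,g\,b_i$, bypassing the bookkeeping you were unable to carry out.
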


\begin{proof}

We need to show that if $f\in S \cap J_{b}$ then
$f=0$. Being in $J_{b}$, $f$ has the form
$$\sum p_{i}(X, \lambda)(b^{n+1}+\lambda_{1} b^{n}+\lambda_{2}
b^{n-1}+\cdots +\lambda_{n}b)q_{i}(X, \lambda)$$

for some $p_i$ and $q_i$. Furthermore $f$ can be written as sums
of expressions of the form

$$p_{1}(X_1) (b^{n+1}+\lambda_{1}
b^{n}+\lambda_{2} b^{n-1}+\cdots +\lambda_{n}b)p_{2}(X_2)
g(\lambda)$$

where

\begin{enumerate}

\item

$p_{i}(X_i)$ are polynomials in variables of $X_{(\alpha, s(\alpha),u)}$.

\item

from the definition of the ideal $I_{2}$ above we can assume that
all variables of $X_{(\alpha, s(\alpha),u)}$ appear exactly once
in either $p_{1}(X_1)$ or $p_{2}(X_2)$.

\item

the polynomials $p_{i}(X_i)$ are free of $\lambda$'s.
\item

$g(\lambda)$ is $X_{(\alpha, s(\alpha),u)}$ free.
\end{enumerate}

Let us alternate the variables of $X_{(\alpha, s(\alpha),u)}$
(according to its decomposition to small and big sets).

\begin{note} Since the polynomial $f$ is already alternating
in the variables of $X_{(\alpha, s(\alpha),u)}$, the alternation
above as the effect of multiplying the polynomial $f$ by an
integer $\pi$ which is a product of factorials. Since the
characteristic of the field $F$ is zero we have $\pi \neq 0$. This
is why the result of this section is not characteristic free. In
positive characteristics (proceeding as above) one can conclude
only that the \textit{images} of the alternating operators form a
representable space.
\end{note}

Applying this alternation on each summand
$$h=p_{1}(X_1) (b^{n+1}+\lambda_{1} b^{n}+\lambda_{2}
b^{n-1}+\cdots +\lambda_{n}b)p_{2}(X_2) g(\lambda) \in J_{b}$$

yields a polynomial
$$\widehat{h}= \sum \sgn(\sigma)p_{1}(X_{\sigma,1})
(b^{n+1}+\lambda_{1} b^{n}+\lambda_{2} b^{n-1}+\cdots
+\lambda_{n}b)p_{2}(X_{\sigma,2}) g(\lambda)$$

that alternates on small sets and big sets of $X_{(\alpha,
s(\alpha),u)}$. We will present an interpretation of the variables
$\lambda_{i}$ which annihilates $\widehat{h}=h(x_1,\dots,x_n, y)$.
But then, since $f$ is free of $\lambda$'s (that is the
interpretation does not annihilate $f$) the result follows.

Recall the operators $u^{b}_{j}$ from the Zubrilin-Razmyslov
identity (Proposition \ref{Zubrilin-Razmyslov-Lemma}). Factoring
the algebra $\widehat{W}[\lambda_{1},\ldots,\lambda_{n}]$ by the
ideal generated by $\lambda_{j}-u^{b}_{j}$ applied to the
polynomials of $S$ yields the algebra
$$D=\widehat{W}[\lambda_{1},\ldots,\lambda_{n}]/ <(\lambda_{j}-u^{b}_{j})S_{0}>.$$

Invoking the Interpretation Lemma (Lemma \ref{Interpretation
Lemma}) for $\theta_{j}=u^{b}_{j}$, $j=1,\dots,n$ we have that
$\widehat{W}$ and in particular $S$, are embedded in
$D$ and hence the interpretation does not annihilate $f$. Finally,
let us see that the substitution $\theta_{j}=u^{b}_{j}$,
$j=1,\dots,n$ annihilates

$\widehat{h}= \sum \sgn(\sigma)p_{1}(X_{\sigma,1})
(b^{n+1}+\lambda_{1} b^{n}+\lambda_{2} b^{n-1}+\cdots
+\lambda_{n}b)p_{2}(X_{\sigma,2}) g(\lambda)$.

Indeed, this follows from Proposition
\ref{Zubrilin-Razmyslov-Lemma} and the fact that $\widehat{h}$ is
alternating on small an big sets which correspond to the Kemer
point $(\alpha,s(\alpha))$. This completes the proof of Theorem
\ref{Representable Space}.
\end{proof}
We close the section with the following general statement. The proof
is similar to the proof \ref{Representable Space} and hence is
omitted. Let $W$, $\overline{W}$, $S$ as in the previous lemma.
\begin{theorem}
The subspace $S$ of $\widehat{W}$ is representable.
\end{theorem}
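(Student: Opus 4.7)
The plan is to mirror the proof of Theorem \ref{Representable Space}, using the immediately preceding lemma as the inductive step. The goal is to build a finite tower of $G$-graded algebras above $\widehat{W}$ whose top $B^{(t)}$ is a finite module over its centroid (and hence representable by \cite{Bei}) while the subspace $S$ embeds at every level of the tower.

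First I would invoke Proposition \ref{Essential Shirshov base in the e-component} for $W$ to obtain an essential Shirshov base $\{a_0, a_1, \ldots, a_{t-1}\}$ of $W$ consisting of elements of $W_e$. Since the variables of $X_{(\alpha,s(\alpha),u)}$ generate a nilpotent ideal of $\widehat{W}$ (via the ideal $I_2$), the images of the $a_i$ form an essential Shirshov base of $\widehat{W}$ and can be chosen free of $X_{(\alpha,s(\alpha),u)}$-variables.

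Next I would define inductively $B^{(0)} = \widehat{W}$ and
$$B^{(j+1)} = B^{(j)}[\lambda_{j,1}, \ldots, \lambda_{j,n}] \big/ J_{a_j},$$
where $n = \alpha_e$ and $J_{a_j}$ is the ideal generated by the Cayley--Hamilton-type expression $a_j^{n+1} + \lambda_{j,1} a_j^n + \cdots + \lambda_{j,n} a_j$. After $t$ steps every $a_j$ is integral over the (extended) centroid of $B^{(t)}$, so $B^{(t)}$ has an essential Shirshov base of integral elements. Applying Theorem \ref{finite module}, $B^{(t)}$ is a finite module over a commutative Noetherian ring, and hence is representable.

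The heart of the argument, and the main obstacle, is to show that $S$ is embedded at each stage, i.e.\ that the natural map $B^{(j)} \to B^{(j+1)}$ is injective on the image of $S$. This is precisely what the immediately preceding lemma furnishes, applied with $b = a_j$: the hypothesis that $S$ is spanned by polynomials alternating on the small and big sets of the fixed configuration $(\alpha, s(\alpha), u)$ is preserved from one step to the next, because each $a_j$ is $X_{(\alpha,s(\alpha),u)}$-free and the adjoined central indeterminates $\lambda_{i,k}$ commute with every variable and therefore do not disturb the alternations. Composing these embeddings produces a $G$-graded surjection $\widehat{W} \twoheadrightarrow B^{(t)}$ whose restriction to $S$ is an embedding, which is exactly the statement that $S$ is representable. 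A subtle point one must be careful about, exactly as in the preceding proof, is that the passage from a general element of the ideal in $J_{a_j}$ to an alternating one is only permitted because the characteristic is zero (so that the resulting multiplicative factor, a product of factorials, is nonzero); in positive characteristic one would only recover representability of the image of the alternation operator.
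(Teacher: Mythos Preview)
Your proposal is correct and follows exactly the approach the paper intends: the paper explicitly omits this proof, stating only that it ``is similar to the proof [of Theorem \ref{Representable Space}] and hence is omitted,'' and your outline faithfully reproduces that argument---build the tower $B^{(0)}=\widehat{W},\,B^{(1)},\ldots,B^{(t)}$ by successive Cayley--Hamilton extensions over an $X$-free essential Shirshov base in $W_e$, use the preceding lemma at each stage to keep $S$ embedded, and conclude by Theorem \ref{finite module} together with \cite{Bei}. Your remark on the characteristic-zero caveat is also consistent with the paper's own Note in the proof of that lemma.
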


\end{section}

\begin{section}{Representability of affine $G$-graded algebras} \label{Representability of affine $G$-graded
algebras}

In this section we complete the proof of Theorem
\ref{PI-equivalence-affine}.

Proof of Theorem \ref{PI-equivalence-affine}: Choose a Kemer point
$(\alpha, s(\alpha))$ of $\Gamma$ and let $S_{(\alpha,
s(\alpha))}$ be the $T$-ideal generated by all Kemer polynomial
which correspond to the point $(\alpha, s(\alpha))$ with at least
$\mu$-folds of small sets. Consider the $T$-ideal $\Gamma^{'}=
\langle \Gamma + S_{(\alpha, s(\alpha))} \rangle$. Observe that
the Kemer set of $\Gamma^{'}$ is strictly contained in the Kemer
set of $\Gamma$ (since $(\alpha, s(\alpha))$ is not a Kemer point
of $\Gamma^{'}$). Hence, applying induction (if $(\alpha,
s(\alpha))=0$ is the only Kemer point of $\Gamma$ then
$\Gamma=\id_{G}(0)$), there exists a finite dimensional algebra
$A^{'}$ with $\Gamma^{'}=\id_{G}(A^{'})$. We show that $\Gamma$ is
\textit{PI}-equivalent to the algebra $A^{'}\oplus B_{(\alpha,
s(\alpha))}$. Clearly, $\Gamma$ is contained in the intersection
of the $T$-ideals $\id_{G}(A^{'})$ and $\id_{G}(B_{(\alpha,
s(\alpha))})$. For the converse take an identity $f$ of $A^{'}$
which is not in $\Gamma$. We can assume that $f$ is generated by
Kemer polynomials and hence by Corollary \ref{Phoenix property for
Kemer polynomials} it has a corollary $f^{'}$ which is Kemer. But
then it has an evaluation in $\mathcal{W}^{'}_{\Gamma}$ which
yields a nonzero element in $S_{0}$. Applying Theorem
\ref{Representable Space} we have that $S_{0}\cap
\id_{G}(B_{(\alpha, s(\alpha))})=0$ and the result follows. This
completes the proof of Theorem \ref{PI-equivalence-affine}.

\begin{corollary}[$G$-graded representability-affine]\label{representability of G-graded-affine}
The relatively free $G$-graded algebra $\Omega_{F,G}/\id_{G}(W)$ is
representable. That is, $\Omega_{F,G}/\id_{G}(W)$ can be embedded in a
finite dimensional algebra over a (sufficiently large) field $K$.

\end{corollary}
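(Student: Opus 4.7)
The plan is to derive this corollary directly from Theorem \ref{PI-equivalence-affine} combined with the generic algebra construction of Lemma \ref{generic}. By Theorem \ref{PI-equivalence-affine} there is a field extension $K/F$ and a finite dimensional $G$-graded $K$-algebra $A$ with $\id_{G}(W) = \id_{G}(A)$; consequently $\Omega_{F,G}/\id_{G}(W) = \Omega_{F,G}/\id_{G}(A)$, i.e.\ the relatively free $G$-graded algebra we must represent is exactly the relatively free algebra of the finite dimensional algebra $A$.

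The second step is to apply (a countable-variable version of) Lemma \ref{generic}. Pick a homogeneous $K$-basis $\{b_{j,g}\}_{j=1}^{t_{g}}$ of each component $A_{g}$ and introduce a countable family of commuting indeterminates $\Lambda_{G} = \{\lambda_{i,j,g}\}_{i \in \mathbb{N},\, 1 \leq j \leq t_{g},\, g \in G}$ which centralize $A$. The generic elements $y_{i,g} = \sum_{j} b_{j,g}\,\lambda_{i,j,g} \in A \otimes_{K} K[\Lambda_{G}]$ are homogeneous of degree $g$, and the proof of Lemma \ref{generic}, which is formal in the variables $x_{i,g}$, carries over verbatim from the finite to the countable case and yields a $G$-graded isomorphism between $\Omega_{F,G}/\id_{G}(A)$ and the $G$-graded subalgebra $\widetilde{\mathcal{A}} \subseteq A \otimes_{K} K[\Lambda_{G}]$ generated by the $y_{i,g}$'s.

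Finally, localize: the inclusion $K[\Lambda_{G}] \hookrightarrow K(\Lambda_{G})$, after tensoring with $A$ over $K$, extends to a $G$-graded embedding $A \otimes_{K} K[\Lambda_{G}] \hookrightarrow A \otimes_{K} K(\Lambda_{G})$, whose target is a $G$-graded algebra of dimension $\dim_{K}(A)$ over the field $K(\Lambda_{G})$, with the grading inherited from $A$. Composing, we obtain a $G$-graded embedding $\Omega_{F,G}/\id_{G}(W) \hookrightarrow A \otimes_{K} K(\Lambda_{G})$ of the desired form. There is no substantive obstacle here beyond verifying that Lemma \ref{generic} is insensitive to the cardinality of the variable set; this is immediate since the isomorphism $x_{i,g} \mapsto y_{i,g}$ is constructed one variable at a time and does not rely on the variable set being finite.
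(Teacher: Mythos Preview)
Your proof is correct and follows the same approach as the paper: invoke Theorem~\ref{PI-equivalence-affine} to replace $W$ by a finite dimensional $A$, then use the generic algebra construction (Lemma~\ref{generic}) to represent the relatively free algebra of $A$. The paper's proof is terser and simply cites that $\mathcal{A}$ is representable, whereas you spell out the passage to countably many variables and the final localization to $K(\Lambda_{G})$; these are exactly the details the paper leaves implicit.
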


\begin{proof} By Theorem \ref{PI-equivalence-affine} we know
that there exists a finite dimensional $G$-graded algebra $A$ with
$\id_{G}(W)=\id_{G}(A)$. Consequently the corresponding relatively
free algebras $\mathcal{W}$ and $\mathcal{A}$ are isomorphic.
Since $\mathcal{A}$ is representable the result follows.
\end{proof}

We close the section with a theorem which is a corollary of Theorem
\ref{PI-equivalence-affine}, the reduction to
direct products of basic algebras (Proposition \ref{basic algebras})
and Kemer's Lemma $2$ (Lemma \ref{Kemer's Lemma 2}).

\begin{theorem} \label{TheoremAffineAdequateModel}
Every variety $M_{W}$ of an affine algebra $W$ can be generated by a
finite dimensional algebra which is a finite direct product of basic
algebras $B_1,\dots,B_n$.
\end{theorem}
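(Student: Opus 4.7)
The statement is essentially a packaging of results already established in the paper, so my plan is to combine them in the natural order rather than prove anything substantively new.

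First, I would invoke Theorem \ref{PI-equivalence-affine}: since $W$ is an affine $G$-graded \textit{PI}-algebra over $F$, there exists a field extension $K$ of $F$ and a finite dimensional $G$-graded algebra $A$ over $K$ such that $\id_{G}(W) = \id_{G}(A)$. In particular, the variety $M_W$ (characterized by its $T$-ideal of $G$-graded identities) is generated by $A$.

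Next, I would apply Proposition \ref{basic algebras} to $A$: every finite dimensional $G$-graded algebra is $G$-graded \textit{PI}-equivalent to a direct product $B_1 \times \cdots \times B_n$ of $G$-graded \textit{PI}-basic algebras, where each $B_i$ is \textit{PI}-minimal, full and subdirectly irreducible. Since $\id_{G}(A) = \id_{G}(B_1 \times \cdots \times B_n)$, we obtain $\id_{G}(W) = \id_{G}(B_1 \times \cdots \times B_n)$, which is exactly the conclusion: the variety $M_W$ is generated by the finite direct product of basic algebras $B_1, \ldots, B_n$.

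The role of Kemer's Lemma $2$ (Lemma \ref{Kemer's Lemma 2}) in the statement of the theorem is to guarantee that each basic component $B_i$ is genuinely controlled by a single Kemer point, namely $G\text{-}\Par(B_i) = (d_{B_i, g_1}, \ldots, d_{B_i, g_r}, n_{B_i} - 1)$; without this, ``basic'' would be a purely structural notion detached from the combinatorial invariants that govern the variety. Thus in the writeup I would note (after extracting the direct product decomposition) that by Lemma \ref{Kemer's Lemma 2} and the discussion preceding it, the Kemer set of each $B_i$ consists of a unique point coinciding with $G\text{-}\Par(B_i)$, so the decomposition is ``adequate'' in the sense that the Kemer points of $M_W$ are precisely the maximal elements among the $G\text{-}\Par(B_i)$'s.

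There is no real obstacle here: the three ingredients (representability by a finite dimensional $G$-graded algebra, reduction to a direct product of basic algebras, and the identification of Kemer invariants for each basic piece) have all been proved earlier in the paper, and the theorem is a formal corollary. The only point requiring some care is making sure the ground field of the resulting finite dimensional algebra is the same in both steps, which is immediate since passing from $A$ to $B_1 \times \cdots \times B_n$ in Proposition \ref{basic algebras} is done over the same field $K$.
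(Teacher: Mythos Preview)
Your proposal is correct and follows exactly the approach indicated in the paper: the theorem is stated there as a direct corollary of Theorem \ref{PI-equivalence-affine}, Proposition \ref{basic algebras}, and Kemer's Lemma $2$ (Lemma \ref{Kemer's Lemma 2}), with no further argument given. Your write-up simply makes explicit the two-step combination (representability followed by decomposition into basic algebras) and the role of Kemer's Lemma $2$, which matches the paper's intent.
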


Note that we can view the basic algebras $B_i$ as \textit{adequate
models} of the variety: this means that combinatorial parameters,
namely, cardinalities of small sets and number of big sets of
Kemer polynomials coincide with dimensions of graded components of
the semisimple part of $B_i$ and the nilpotency index of $J(B_i)$.
\end{section}

\begin{section}{Specht Problem for $G$-graded affine
algebras}\label{Specht Problem for $G$-graded affine algebras}

In this section we prove Theorem \ref{Specht}.

Let $W$ be an affine \textit{PI} $G$-graded algebra over $F$ and let
$\id_{G}(W)$ be its $T$-ideal of $G$-graded identities. Our goal is
to find a finite generating set for $\id_{G}(W)$. Since we are
assuming that $W$ is (ungraded) \textit{PI}, we have by Theorem
\ref{PI-equivalence-affine} that $\id_{G}(W)=\id_{G}(A)$ where $A$
is an algebra over $K$ (a field extension of $F$), $G$-graded and
finite dimensional. If the dimension of $A$ is $m$ say, then clearly
$W$ satisfies $c_{m+1}$, the ungraded Capelli identity on $2(m+1)$
variables, or equivalently, the \textit{finite} set of $G$-graded
identities $c_{(G,m+1)}$ which follow from $c_{m+1}$ by designating
$G$-degrees to its variables.

Now, observe that any $T$-ideal of $G$-graded identities is
generated by at most a countable number of graded identities
(indeed, for each $n$ the space of multilinear $G$-graded identities
of degree $n$ is finite dimensional) hence we may take a sequence of
graded identities $f_{1},\ldots,f_{n},\ldots$ which generate
$\id_{G}(W)$. Clearly, since the set $c_{(G,m+1)}$ is finite, in order
to prove the finite generation of $\id_{G}(W)$ it is sufficient to
show that the ascending chain of graded $T$-ideals $\Gamma_{1}
\subseteq \Gamma_{2} \subseteq \ldots \subseteq \Gamma_{n} \subseteq
\ldots$, where $\Gamma_{n}$ is the $T$-ideal generated by the
polynomials $c_{(G,m+1)} \cup \{f_{1},\ldots,f_{n}\}$, stabilizes.

Now by subsection \ref{Finite generation of the relatively free
algebra}, for each $n$, the $T$-deal $\Gamma_{n}$ corresponds to
an affine algebra and hence invoking Theorem
\ref{PI-equivalence-affine} we may replace each $\Gamma_{n}$ by
$\id_{G}(A_{n})$ where $A_{n}$ is a $G$-graded finite dimensional
algebra over a suitable field extension $K_{n}$ of $F$. Clearly,
extending the coefficients to a sufficiently large field $K$ we
may assume all algebras $A_{n}$ are finite dimensional over an
algebraically closed field $K$.

So we need to show that the sequence $\id_{G}(A_{1}) \subseteq
\id_{G}(A_{2}) \subseteq \ldots$ stabilizes in $\Omega_{F,G}$ or
equivalently, that the sequence stabilizes in $\Omega_{K,G}$.
Consider the Kemer sets of the algebras $\{A_{n}\}$, $n \geq 1$.
Since the sequence of ideals is increasing, the corresponding
Kemer sets are monotonically decreasing (recall that this means
that for any Kemer point $(\alpha,s)$ of $A_{i+1}$ there is a
Kemer point $(\alpha^{'}, s^{'})$ of $A_{i}$ with $(\alpha,s)
\preceq (\alpha^{'}, s^{'})$). Furthermore, since these sets are
finite, there is a subsequence $\{A_{i_{j}}\}$ whose Kemer points
(denoted by $E$) coincide. Clearly it is sufficient to show that
the subsequence $\{\id_{G}(A_{i_{j}})\}$ stabilizes and so, in
order to simplify notation, we replace our original sequence
$\{\id_{G}(A_{i})\}$ by the subsequence.

Choose a Kemer point $(\alpha, s)$ in $E$. Clearly we may replace
the algebra $A_{i}$ by a direct product of basic algebras
$A^{'}_{i,1} \times A^{'}_{i,2}\times\cdots\times
A^{'}_{i,u_{i}}\times \widehat{A}_{i,1}\times\cdots\times
\widehat{A}_{i,r_{i}}$ where the $A^{'}_{i,j}$'s correspond to the
Kemer point $(\alpha, s)$ and the $\widehat{A}_{i,l}$ have Kemer
index $\neq (\alpha, s)$ (note that their index may or may not be
in $E$).

Our goal is to replace (for a subsequence of indices $i_{k}$) the
direct product $A^{'}_{i,1} \times A^{'}_{i,2}\times\cdots\times
A^{'}_{i,u_{i}}$ (the basic algebras that correspond to the Kemer
point $(\alpha, s)$) by a certain $G$-graded algebra $B$ such that

$\id_{G}(B \times \widehat{A}_{i,1}\times\cdots\times
\widehat{A}_{i,r_{i}}) = \id_{G}(A_{i})$ for all $i$.

Let us show how to complete the proof assuming such $B$ exists. Replace
the sequence of indices $\{i\}$ by the subsequence $\{i_k\}$.
(Clearly, it is sufficient to show that the subsequence of $T$-ideals
$\{\id_{G}(A_{i_{k}})\}$ stabilizes).

Let $I$ be the $T$-ideal generated by Kemer polynomials of $B$
which correspond to the Kemer point $(\alpha,s)$. Note that the
polynomials in $I$ are identities of the basic algebras
$\widehat{A}_{i,l}$'s. It follows that the Kemer sets of the
$T$-ideals $\{(\id_{G}(A_{i}) + I)\}$ do not contain the point
$(\alpha, s)$ and hence are strictly smaller. By induction we
obtain that the sequence of $T$-ideals

$(\id_{G}(A_{1}) + I) \subseteq (\id_{G}(A_{2}) + I)\subseteq
\ldots $ stabilizes.

On the other hand we claim that $I \cap (\id_{G}(A_{i}) = I \cap
(\id_{G}(A_{j})$ for any $i,j$. This follows at once since $A_{i}
= B \times \widehat{A}_{i,1}\times\cdots\times
\widehat{A}_{i,r_{i}}$ and $I \subseteq
\widehat{A}_{i,1}\times\cdots\times \widehat{A}_{i,r_{i}}$.

Combining the last statements the result follows.

\smallskip

Let us show now the existence of the algebra $B$.

Let $A$ be a $G$-graded basic algebra which corresponds to the Kemer
point $(\alpha, s)$. Let $A=\overline{A} \oplus J(A)$ be the
decomposition of $A$ into the semisimple and radical components. As
shown in Section \ref{Section: Kemer's Lemma $1$},
$\alpha_{g}=\dim(\overline{A}_{g})$ for every $g\in G$ and so, in
particular, the dimension of $\overline{A}$ is determined by
$\alpha$. The following claim is key (see \cite{BSZ}).

\begin{proposition}

The number of isomorphism classes of $G$-graded \textit{
semisimple} algebras of a given dimension is finite.

\end{proposition}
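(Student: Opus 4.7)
My plan is to use the structure theorem of Bahturin--Sehgal--Zaicev (Theorem \ref{BSZ}) to parametrize the possible $G$-gradings on a semisimple algebra of a given dimension by a finite set of discrete data, and then observe that each piece of data ranges over a finite set because $G$ is finite.

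First I would reduce to the $G$-simple case. Any finite dimensional semisimple $G$-graded algebra $A$ over the algebraically closed field $F$ decomposes as a direct product $A \cong A_1 \times \cdots \times A_q$ of $G$-simple components. If $\dim_F A = n$, then writing $n_i = \dim_F A_i$ we have $n = n_1 + \cdots + n_q$ with $n_i \geq 1$, so there are only finitely many partitions of $n$ to consider and in particular $q \leq n$. Thus it suffices to show that for each fixed $d$, the number of isomorphism classes of $G$-graded $G$-simple algebras of dimension $d$ is finite.

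Next I would apply Theorem \ref{BSZ}: a $G$-simple algebra of dimension $d$ is $G$-graded isomorphic to $C = F^{f}H \otimes M_{k}(F)$ for some subgroup $H \leq G$, a $2$-cocycle $f : H \times H \to F^{*}$ (with trivial $H$-action on $F$), an integer $k \geq 1$, and a $k$-tuple $(g_1 = e, g_2, \ldots, g_k) \in G^{(k)}$, where $\dim_F C = |H| \cdot k^2 = d$. I would then enumerate the choices: (i) the equation $|H|\cdot k^2 = d$ has only finitely many solutions in positive integers; (ii) since $G$ is finite, the number of subgroups $H$ of any fixed order is finite; (iii) for each such finite $H$, the Schur multiplier $H^{2}(H, F^{*})$ is a finite abelian group (as $F$ is algebraically closed of characteristic zero and $H$ is finite), so the number of cohomology classes of $2$-cocycles $f$ is finite, and cohomologous cocycles yield isomorphic twisted group algebras; (iv) for each $k$, the number of $k$-tuples in $G^{(k)}$ is $|G|^{k-1}$ (the first entry is pinned to $e$), which is finite. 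Any two $G$-simple algebras with the same tuple of data are manifestly $G$-graded isomorphic, so the isomorphism classes are surjected upon by a finite set.

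Combining the two steps, the number of $G$-graded isomorphism classes of semisimple algebras of dimension $n$ is bounded by the finite product, over partitions of $n$, of the finite numbers of choices for each $G$-simple factor. I do not anticipate a serious obstacle: the only substantive inputs are Theorem \ref{BSZ} and the finiteness of $H^{2}(H, F^{*})$ for finite $H$ over an algebraically closed field of characteristic zero, both of which are standard. The main point worth flagging is that different parameter tuples $(H, [f], k, (g_1, \ldots, g_k))$ may give $G$-graded isomorphic algebras, but this only makes the count smaller, so finiteness is preserved.
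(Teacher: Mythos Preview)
Your proposal is correct and follows essentially the same route as the paper: reduce to the $G$-simple case, invoke Theorem \ref{BSZ} to parametrize by $(H,[f],k,(g_1,\ldots,g_k))$, and observe that the subgroups of $G$, the $k$-tuples in $G^{k}$, and the Schur multiplier $H^{2}(H,F^{*})$ are all finite. Your version is slightly more explicit about the reduction via partitions of the dimension and the constraint $|H|\cdot k^{2}=d$, but the argument is the same.
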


Clearly it is sufficient to show that the number of isomorphisms
classes of $G$-graded semisimple algebras $C$ of a given
dimension, which are $G$-simple, is finite. To see this recall
that the $G$-graded structure is given by a subgroup $H$ of $G$, a
$2$nd cohomology class in $H^{2}(H,K^{*})$ and a $k$-tuple
$(g_{1},\ldots,g_{k})$ in $G^{k}$ where $k^{2} \leq
\dim(\overline{C})$. Clearly the number of subgroups $H$ of $G$
and the number of $k$-tuples are both finite. For the cardinality
of $H^{2}(H,K^{*})$, note that since $K$ is algebraically closed
the cohomology group $H^{2}(H,K^{*})$ coincides with the Schur
multiplier of $H$ which is known to be finite. This proves the
Proposition.

We obtain:

\begin{corollary}

The number of $G$-graded structures on the semisimple components of
all basic algebras which correspond to the Kemer point $(\alpha,s)$
is finite.

\end{corollary}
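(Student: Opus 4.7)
The plan is to observe that the Corollary is essentially a direct consequence of the preceding Proposition once one notes that the Kemer point $(\alpha,s)$ fixes the total dimension of the semisimple part of any basic algebra realizing it.

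First, I would invoke Proposition \ref{unique point} (Kemer's Lemma $1$) from Section \ref{Section: Kemer's Lemma $1$}: if $A$ is a basic (hence in particular full and subdirectly irreducible) $G$-graded algebra whose (unique) Kemer point is $(\alpha,s)$, then for every $g \in G$ we have $\alpha_g = \dim_F(\overline{A}_g)$. Summing over $G$ gives
\[
\dim_F(\overline{A}) \;=\; \sum_{g \in G} \alpha_g,
\]
which depends only on $\alpha$ and not on the particular basic algebra $A$. Thus all semisimple parts $\overline{A}$ arising from basic algebras with Kemer point $(\alpha,s)$ have one and the same $F$-dimension.

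Next, I would apply the preceding Proposition, which states that the number of isomorphism classes of $G$-graded semisimple algebras of a given fixed dimension is finite. Specializing this finiteness statement to the dimension $\sum_{g\in G}\alpha_g$ immediately yields that only finitely many $G$-graded structures can occur on the semisimple parts of basic algebras corresponding to $(\alpha,s)$, proving the Corollary.

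There is essentially no obstacle here: the whole content of the Corollary lies in the preceding Proposition, which in turn rests on the structure theorem of Bahturin--Sehgal--Zaicev (Theorem \ref{BSZ})---finitely many subgroups $H\leq G$, finitely many tuples $(g_1,\dots,g_k)\in G^k$ with $k^2 \leq \sum_g \alpha_g$, and finiteness of $H^2(H,K^*)$ (the Schur multiplier) over the algebraically closed field $K$. The only thing to check in passing from the Proposition to the Corollary is that $\alpha$ determines $\dim_F(\overline{A})$, which is precisely the content of Kemer's Lemma~$1$ recalled above.
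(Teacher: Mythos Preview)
Your proposal is correct and follows exactly the paper's reasoning: the paper notes just before the Proposition that $\alpha_g=\dim(\overline{A}_g)$ (by Kemer's Lemma~$1$), so $\dim_F(\overline{A})$ is determined by $\alpha$, and then the Corollary is stated as an immediate consequence (``We obtain:'') of the Proposition. You have simply made explicit the two-line deduction the paper leaves implicit.
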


It follows that by passing to a subsequence $\{i_{s}\}$ we may
assume that all basic algebras that appear in the decompositions
above and correspond to the Kemer point $(\alpha,s)$ have
$G$-graded isomorphic semisimple components (which we denote by
$C$) and have the same nilpotency index ($=s$).

Consider the $G$-graded algebras
$$\widehat{C}_{i}= C \ast K\langle X_{G}\rangle / (I_{i} +
J)$$

where $X_{G}$ is a set of $G$-graded variables of cardinality
$(s-1)\cdot \ord(G)$ (that is $s-1$ variables for each $g\in G$),
$I_{i}$ is the ideal generated by all evaluations of
$\id_{G}(A_{i})$ on $C \ast K\langle X_{G}\rangle$ and $J$ is the
ideal generated by all words in $C \ast K\langle X_{G}\rangle$
with $s$ variables from $X_{G}$.

\begin{proposition}

\begin{enumerate}

\item

The ideal generated by variables from $X_{G}$ is nilpotent.

\item

For any $i$, the algebra $\widehat{C}_{i}$ is finite dimensional.

\item For any $i$, $\id_{G}(\widehat{C}_{i} \times
\widehat{A}_{i,1}\times\cdots\times
\widehat{A}_{i,r_{i}})=\id_{G}(A_{i})$.
\end{enumerate}

\end{proposition}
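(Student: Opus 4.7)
The plan is to verify the three parts in turn, with the third being the substantive one. For (1), modulo the ideal $J$ every monomial containing at least $s$ occurrences of generators from $X_G$ vanishes by construction, so the two-sided ideal of $\widehat{C}_i$ generated by $X_G$ is nilpotent of index at most $s$. For (2), every element of $\widehat{C}_i$ is represented modulo $J$ by a linear combination of words of the form $c_0 x_{k_1,g_1} c_1 x_{k_2,g_2} \cdots x_{k_l,g_l} c_l$ with $l \leq s-1$, the $x_{k_j,g_j}$ drawn from the finite set $X_G$ of cardinality $(s-1)\ord(G)$ and each $c_i$ drawn from the finite dimensional algebra $C$. Since the number of word shapes of length at most $s-1$ in $X_G$ is finite and each such shape contributes a finite dimensional subspace, $\widehat{C}_i$ is finite dimensional; the further quotient by $I_i$ only cuts the dimension down.

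For (3), the inclusion $\id_G(\widehat{C}_i \times \widehat{A}_{i,1}\times\cdots\times \widehat{A}_{i,r_i}) \supseteq \id_G(A_i)$ is immediate: if $f \in \id_G(A_i)$, then every $G$-graded evaluation of $f$ on $C \ast K\langle X_G\rangle$ is, by definition, a generator of $I_i$ and hence vanishes in $\widehat{C}_i$, while $f$ vanishes on each $\widehat{A}_{i,l}$ since they are direct factors of $A_i$.

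For the reverse inclusion, write $A_i = A^{'}_{i,1}\times\cdots\times A^{'}_{i,u_i}\times \widehat{A}_{i,1}\times\cdots\times \widehat{A}_{i,r_i}$, so that $\id_G(A_i) = \bigcap_{j=1}^{u_i} \id_G(A^{'}_{i,j}) \cap \bigcap_{l=1}^{r_i} \id_G(\widehat{A}_{i,l})$. Given $f$ in the left-hand side, the membership $f \in \id_G(\widehat{A}_{i,l})$ is automatic for every $l$, so the task reduces to showing $f \in \id_G(A^{'}_{i,j})$ for every $j$. Suppose for contradiction that some $A^{'}_{i,j}$ admits homogeneous elements $v_1,\dots,v_n$ with $f(v_1,\dots,v_n) \neq 0$. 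By the $G$-graded Wedderburn-Malcev theorem applied to $A^{'}_{i,j}= \overline{A^{'}_{i,j}}\oplus J(A^{'}_{i,j})$, and identifying $\overline{A^{'}_{i,j}}$ with the fixed $C$, I may assume each $v_l$ lies either in $C$ or in $J(A^{'}_{i,j})$; multilinearity of $f$ combined with $J(A^{'}_{i,j})^{s}=0$ forces at most $s-1$ of the $v_l$ to be radical, and in particular at most $s-1$ of them in any single $G$-degree. I would then define a $G$-graded homomorphism $\psi \colon C \ast K\langle X_G\rangle \to A^{'}_{i,j}$ via the inclusion $C \hookrightarrow A^{'}_{i,j}$ on $C$, and by assigning to the radical generators $x_{1,g},\dots,x_{s-1,g}$ the radical $v_l$ of degree $g$ in some order (padding with $0$). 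The map $\psi$ factors through $\widehat{C}_i$: it annihilates $J$ because any product of $s$ elements from $X_G$ is sent to a product of $s$ radical elements in $A^{'}_{i,j}$ and hence to $0$, and it annihilates $I_i$ because $\id_G(A_i) \subseteq \id_G(A^{'}_{i,j})$, so every generator of $I_i$ is a value of an identity of $A^{'}_{i,j}$. The induced map $\widehat{C}_i \to A^{'}_{i,j}$ carries the tautological lift of $(v_1,\dots,v_n)$ inside $\widehat{C}_i$ (with the radical $v_l$ replaced by the corresponding $x_{k,g}$) to the non-zero element $f(v_1,\dots,v_n)$, so the $\widehat{C}_i$-evaluation is itself non-zero, contradicting $f \in \id_G(\widehat{C}_i)$.

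The main obstacle I anticipate is precisely the construction of the homomorphism $\psi$ and the verification that $\ker\psi \supseteq I_i + J$: this is the step that couples the three pieces prepared in the preceding reductions, namely the uniform bound $s$ on the nilpotency indices of all relevant basic components, the common $G$-graded semisimple part $C$ secured by the Bahturin-Sehgal-Zaicev classification and the passage to a subsequence, and the sufficient supply of generators in $X_G$, exactly $s-1$ per $G$-degree, which is what allows every possible non-zero radical specialization of a multilinear $f$ on any $A^{'}_{i,j}$ to be emulated inside $\widehat{C}_i$. Once $\psi$ is constructed the contradiction argument above closes the proof.
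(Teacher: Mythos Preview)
Your proposal is correct and follows essentially the same approach as the paper: parts (1) and (2) are handled identically, and for part (3) both you and the paper construct a $G$-graded homomorphism $C \ast K\langle X_G\rangle \to A'_{i,j}$ (the paper calls it $\eta$, you call it $\psi$) sending $C$ isomorphically and mapping variables of $X_G$ onto the radical values of a non-vanishing evaluation, then verify it kills $I_i + J$ so that the non-identity lifts to $\widehat{C}_i$. Your write-up is in fact slightly more explicit than the paper's in checking that $\psi$ annihilates $I_i$ and $J$ and in noting why the supply of $s-1$ variables per $G$-degree suffices.
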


\begin{proof}

The first part is clear. In order to prove (2) consider a typical
non-zero monomial which represents an elements in the algebra
$\widehat{C}_{i}$. It has the form

$$a_{t_{1}}x_{t_{1}}a_{t_{2}}x_{t_{2}}\cdots
a_{t_{r}}x_{t_{r}}a_{t_{(r+1)}}$$

Since the set $X_{G}$ is finite and also the number of variables
appearing in a non-zero monomial is bounded by $s-1$, we have that
the number of different configurations of these monomials (namely,
the number of different tuples $x_{t_{1}},\ldots,x_{t_{r}}$) is
finite. In between these variables we have the elements
$a_{t_{j}}$, $j=1,\dots,r+1$, which are taken from the finite
dimensional algebra $C$. This proves the second part of the
Proposition. We now show the 3rd part of the Proposition.

Clearly, $\id_{G}(\widehat{A}_{i,j}) \supseteq \id_{G}(A_{i})$ for
$j=1,\dots,r_{i}$. Also, from the definition of $\widehat{C}_{i}$
we have that $\id_{G}(\widehat{C}_{i}) \supseteq \id_{G}(A_{i})$
and so $\id_{G}(\widehat{C}_{i} \times
\widehat{A}_{i,1}\times\cdots\times \widehat{A}_{i,r_{i}})
\supseteq \id_{G}(A_{i})$. For the converse we show that
$\id_{G}(\widehat{C}_{i}) \subseteq \id_{G}(A^{'}_{i,j})$ for
every $j=1,\ldots,u_{i}$. To see this let us take a multilinear,
$G$-graded polynomial
$p=p(x_{i_1,g_{i_1}},\ldots,x_{i_t,g_{i_t}})$ which is a graded
non-identity of $A^{'}_{i,j}$ and show that $p$ is in fact a
graded non-identity of $\widehat{C}_{i}$. Fix a non vanishing
evaluation of $p$ on $A^{'}_{i,j}$ where
$x_{j_1,g_{j_1}}=z_{1},\dots,x_{j_k,g_{j_k}}=z_{k}$ ($k \leq s-1$)
are the variables with the corresponding radical evaluations and
$x_{q_1,g_{q_1}}=c_{1},\dots,x_{q_k,g_{q_k}}=c_{k}$ are the other
variables with their semisimple evaluations. Consider the
$G$-graded map
$$ \eta: C \ast K\langle X_{G}\rangle \longrightarrow A^{'}_{i,j} $$
where

\begin{enumerate}

\item

$C$ is mapped isomorphically.

\item

A subset of $k$ variables $\{y_1,\dots,y_k\}$ of $X_{G}$ (with
appropriate $G$-grading) are mapped onto the set
$\{z_1,\dots,z_k\}$. The other variables from $X_{G}$ are mapped
to zero.
\end{enumerate}

Note that $\eta$ vanishes on $(I_{i} + J)$ and hence we obtain a
$G$-graded map $\overline{\eta}: \widehat{C}_{i} \longrightarrow
A^{'}_{i,j}$. Clearly, the evaluation of the polynomial
$p(x_{i_1,g_{i_1}},\ldots,x_{i_t,g_{i_t}})$ on $\widehat{C}_{i}$
where $x_{q_1,g_{q_1}}=c_{1},\dots,x_{q_k,g_{q_k}}=c_{k}$ and
$x_{j_1,g_{j_1}}=y_{1},\dots,x_{j_k,g_{j_k}}=y_{k}$ is non-zero
and the result follows.
\end{proof}
At this point we have a sequence of $T$-ideals
\begin{eqnarray*}
\id_{G}(\widehat{C}_{1} \times \widehat{A}_{i,1}\times\cdots\times
\widehat{A}_{1,r_{1}}) \subseteq \ldots \subseteq
\id_{G}(\widehat{C}_{i} \times \widehat{A}_{i,1}\times\cdots\times
\widehat{A}_{i,r_{i}})\subseteq \\
\subseteq \id_{G}(\widehat{C}_{i+1} \times
\widehat{A}_{i+1,1}\times\cdots\times \widehat{A}_{i+1,r_{i+1}})
\subseteq \ldots
\end{eqnarray*}

In order to complete the construction of the algebra $B$ (and
hence the proof of the Specht problem) we will show that in fact,
by passing to a subsequence, all $\widehat{C}_{i}$ are $G$-graded
isomorphic. Indeed, since $\id_{G}(A_i) \subseteq
\id_{G}(A_{i+1})$ we have a (surjective) map
$$\widehat{C}_{i}= C \ast K\langle X_{G}\rangle /
(I_{i} + J) \longrightarrow \widehat{C}_{i+1}= C \ast K\langle
X_{G}\rangle / (I_{i+1} + J)$$ Since the algebras
$\widehat{C}_{i}$'s are finite dimensional the result follows.
\end{section}

\begin{section}{Non-affine algebras}\label{Section:Non-affine}

In this section we prove Theorem \ref{PI-equivalence-general} and
Theorem \ref{Specht-general}.

\begin{proof}(Th. \ref{PI-equivalence-general})
We proceed as in \cite{BB} where the Hopf algebra $H$ is replaced by
$(FG)^{*}$, the dual Hopf algebra of the group algebra $FG$. Let $W$
be a \textit{PI} $G$-graded (possibly) non-affine algebra. We
consider the algebra $W^{*}=W\otimes E$ where $E$ is the Grassmann
algebra. Note that the algebra $W^{*}$  is $\mathbb{Z}/2\mathbb{Z}
\times G$-graded where the $G$-grading comes from the $G$-grading on
$W$ and the $\mathbb{Z}/2\mathbb{Z}$-grading comes from the
$\mathbb{Z}/2\mathbb{Z}$-grading on $E$.

By (\cite{BB}, Lemma 1) there exists an affine
$\mathbb{Z}/2\mathbb{Z} \times G$-graded algebra $W_{affine}$ such
that $\id_{\mathbb{Z}/2\mathbb{Z} \times
G}(W^{*})=\id_{\mathbb{Z}/2\mathbb{Z} \times G}(W_{affine})$ and
hence by Theorem \ref{PI-equivalence-affine}, it coincides with
$\id_{\mathbb{Z}/2\mathbb{Z} \times G}(A)$ where $A$ is a finite
dimensional $\mathbb{Z}/2\mathbb{Z} \times G$-graded algebra.
Applying the $*$ operator to $W^{*}$ (and using the fact that
$\id_{G}(W)=\id_{G}(W^{**})$) we obtain that
$\id_{G}(W)=\id_{G}(A^{*})$ as desired.
\end{proof}

\begin{proof}(Th. \ref{Specht-general})
Let $\Gamma$ be the $T$-ideal of $G$-graded identities of $W$. Let
$\Gamma_{1} \subseteq \Gamma_{2} \subseteq \dots.$ be an ascending
sequence of $T$-ideals whose union is $\Gamma$. Since $W$ is
assumed to be \textit{PI} (as in the affine case) we can add to
all $\Gamma_{i}$'s a finite set of $G$-graded identities so that
the ideals obtained correspond to $T$-ideals of \textit{PI}
$G$-graded algebras. By Theorem \ref{PI-equivalence-general}
these $T$-ideals correspond to Grassmann envelopes of finite
dimensional ${\mathbb{Z}/2\mathbb{Z} \times G}$-graded algebras $A_{i}$,
that is we obtain an ascending chain of the form
$\id_{\mathbb{Z}/2\mathbb{Z} \times G}((A_{1})^{*})\subseteq
\id_{\mathbb{Z}/2\mathbb{Z} \times G}((A_{2})^{*}) \subseteq \dots$.
Applying the $*$ operator, we get an ascending chain of $T$-ideals
of identities of finite dimensional algebras so it must stabilize.
The result now follows from the fact that $*$ is an involution.
\end{proof}
We conclude the section with the theorem corresponding to the
theorem \ref{TheoremAffineAdequateModel}.

\begin{theorem}\label{TheoremAdequateModel}
Every variety $M$ of $G$-graded algebras can be generated by a
finite direct product $B^{*}_1 \times \cdots \times B^{*}_r\times
B'_1\times\cdots\times B'_s$ of Grassmann envelopes $B^{*}_i$ of
basic algebras $B_i$ and basic algebras $B'_j$.
\end{theorem}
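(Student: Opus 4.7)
Plan: The natural approach is to bootstrap from the affine case via Theorem \ref{PI-equivalence-general}. Starting with a variety $M$ of $G$-graded algebras, Theorem \ref{PI-equivalence-general} produces a finite dimensional $\mathbb{Z}/2\mathbb{Z}\times G$-graded algebra $A$ (over a suitable extension $K/F$) such that $M$ is generated by the Grassmann envelope $A^{*}$. Being finite dimensional, $A$ is in particular affine as a $\mathbb{Z}/2\mathbb{Z}\times G$-graded algebra, and since the entire affine machinery of Sections 2 through 11 uses only the finiteness of the grading group, we may apply Theorem \ref{TheoremAffineAdequateModel} in the category of $\mathbb{Z}/2\mathbb{Z}\times G$-graded algebras. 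This gives a $\mathbb{Z}/2\mathbb{Z}\times G$-graded PI-equivalence
$$A \sim_{PI} C_1\times C_2\times\cdots\times C_n,$$
where each $C_k$ is a basic $\mathbb{Z}/2\mathbb{Z}\times G$-graded algebra.

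Since the Grassmann envelope functor distributes over finite direct products (each homogeneous component being defined componentwise), we obtain a $G$-graded PI-equivalence
$$A^{*}\sim_{PI}C_1^{*}\times C_2^{*}\times\cdots\times C_n^{*}.$$
I would then partition the indices according to whether the odd component $(C_k)_1$ is nonzero or zero. For $k$ of the first type, relabel $C_k$ as $B_i$, and the factor $B_i^{*}$ is literally the Grassmann envelope of a basic algebra, as required. For $k$ of the second type $C_k$ is concentrated in $\mathbb{Z}/2\mathbb{Z}$-degree zero, so $C_k^{*}=C_k\otimes_K E_0$; since $E_0$ is a commutative, central, infinite dimensional $K$-algebra concentrated in $G$-degree $e$, and since in characteristic zero $T$-ideals are generated by their multilinear parts, tensoring by $E_0$ preserves the $T$-ideal of $G$-graded identities (any multilinear $f$ evaluated on pure tensors factors through the product of the $E_0$-scalars). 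Thus $C_k^{*}$ is $G$-graded PI-equivalent to $C_k$, which after forgetting the trivial $\mathbb{Z}/2\mathbb{Z}$-grading I relabel $B'_j$.

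The main obstacle is verifying that these $B'_j$ are themselves basic as $G$-graded algebras. This will be essentially routine: when the $\mathbb{Z}/2\mathbb{Z}$-grading of $C_k$ is trivial, $\mathbb{Z}/2\mathbb{Z}\times G$-graded ideals of $C_k$ coincide with $G$-graded ideals, so subdirect irreducibility and fullness transfer automatically, and the parameter $G\text{-}\Par(B'_j)$ agrees with its $\mathbb{Z}/2\mathbb{Z}\times G$-analogue. Moreover, any finite dimensional $G$-graded algebra which is $G$-graded PI-equivalent to $B'_j$ can be endowed with the trivial $\mathbb{Z}/2\mathbb{Z}$-grading and becomes $\mathbb{Z}/2\mathbb{Z}\times G$-graded PI-equivalent to $C_k$ (identities in which some variable is assigned a nonzero $\mathbb{Z}/2\mathbb{Z}$-degree vanish trivially on the enlargement), so PI-minimality in the finer category forces PI-minimality in the coarser one. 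Assembling the two types of factors then yields the desired decomposition $B_1^{*}\times\cdots\times B_r^{*}\times B'_1\times\cdots\times B'_s$.
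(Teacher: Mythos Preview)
Your proposal is correct and follows precisely the route the paper has in mind: the theorem is stated at the end of Section~\ref{Section:Non-affine} without proof, as an immediate consequence of Theorem~\ref{PI-equivalence-general} combined with Theorem~\ref{TheoremAffineAdequateModel} applied to the finite group $\mathbb{Z}/2\mathbb{Z}\times G$. Your handling of the two types of factors---passing from $C_k^{*}=C_k\otimes E_0$ to $C_k$ when $(C_k)_1=0$, and the verification that basicness survives forgetting a trivial $\mathbb{Z}/2\mathbb{Z}$-grading---fills in exactly the details the paper leaves implicit.
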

\end{section}

\appendix

\begin{section}{Polynomials and finite dimensional
algebras}\label{Appendix A}

In this section, $F$ will denote an algebraically closed field of
characteristic zero.

Our goal here is to explain some of the basic ideas the relate the
structures of polynomials and finite dimensional algebras. Recall
that the Capelli polynomial $c_{n}$ is defined by
$$c_{n}=\sum_{\sigma\in \Sym(n)} \sgn(\sigma)x_{\sigma(1)}y_{1}x_{\sigma(2)}y_{2}\cdots
x_{\sigma(n)}y_{n}.$$

We say that the Capelli polynomial
alternates in the $x$'s. More generally, let
$f(X;Y)=f(x_{1},\ldots,x_{m}; Y)$ be a polynomial which is
multilinear in the set of variables $X$. We say that $f(X,Y)$ is
alternating in the set $X$ (or that the variables of $X$ alternate
in $f$) if there exists a polynomial
$h(X;Y)=h(x_{1},x_{2},\ldots,x_{m};Y)$ such that
$$f(x_{1},x_{2},\ldots,x_{m};Y)=\sum_{\sigma\in
\Sym(m)}\sgn(\sigma)h(x_{\sigma(1)},
x_{\sigma(2)},\ldots,x_{\sigma(m)};Y).$$

Following the notation in \cite{BR}, if $X_{1},
X_{2},\ldots,X_{p}$ are $p$ disjoint sets of variables,
we say that a polynomial
$f(X_{1},\ldots,X_{p};Y)$, is alternating in the sets $\{X_{1},\ldots,X_{p}\}$,
if it is alternating in each set $X_{i}$.

Clearly, in order to test whether a multilinear polynomial (and in
particular $c_{n}$) is an identity of a finite dimensional algebra
$A$, it is sufficient to evaluate the variables on basis elements.
It follows that if $A$ is an algebra over $F$ of dimension $n$
then $c_{n+1} \in \id(A)$. Clearly, we cannot expect that
Capelli's polynomial will detect precisely the dimension of a
finite dimensional algebra since on one hand we can just take a commutative
algebra $A$ of arbitrary dimension over $F$, and on the other hand $c_{2} \in
\id(A)$. This simple fact will lead us to consider (below) minimal
or adequate models.

Given an algebra $A$, finite dimensional over a field $F$, it is
well known that $A$ decomposes as a vector space into $A \cong
\overline{A} \oplus J(A)$ where $\overline{A}$ is semisimple and
$J(A)$ is the radical of $A$. Moreover, $\overline{A}$ is closed
under multiplication. As mentioned above, in order to test whether
a multilinear polynomial $f$ is an identity of $A$ it is
sufficient to evaluate the variables on any chosen basis of $A$
over $F$ and hence we may take a basis consisting of elements which belong either to
$\overline{A}$ or $J(A)$. We refer to these evaluations as
semisimple or radical evaluations respectively. Our aim is to
present a set of polynomials which detect the dimension of
$\overline{A}$ over $F$ and also the nilpotency index of $J(A)$.

Denote by $n=\dim_{F}(\overline{A})$ and by $s$ the nilpotency
index of $J(A)$.

For every integer $r$ consider the set of multilinear polynomials
with $r$-folds of alternating sets of variables of cardinality
$m$. Let us denote these sets of variables by $X_{1},
X_{2},\dots,X_{r}$. Clearly, if $n < m$ each alternating set will
assume at least one radical evaluation and hence if $r \geq s$ we
will have at least $s$ radical evaluations. This shows that the
polynomial vanishes upon any evaluation. It follows that if $n <
m$ and $r \geq s$ then $f$ is an identity of $A$. In other words
if we know that for every positive integer $r$ there is a
non-identity, multilinear polynomial with alternating sets of
cardinality $m$, then $n \geq m$. The question which arises
naturally is whether a finite dimensional algebra $A$, where
$n=\dim_{F}(\overline{A})$, will always admit, for arbitrary large
integer $r$, non-identities (multilinear) with $r$-alternating
sets of cardinality $n$. The answer is clearly negative since
again, on one hand we can take a semisimple commutative algebra of
arbitrary dimension over $F$, and on the other hand, the
cardinality of alternating sets cannot exceed $1$. Again, this
leads us to consider adequate models. The following terminology is
not standard and will be used only in this Appendix.

\begin{definition}

A finite dimensional algebra $A$ is \textit{weakly adequate} if
for every integer $r$ there is a multilinear polynomial,
non-identity of $A$, which has $r$-alternating sets of cardinality
$n=\dim_{F}(\overline{A})$.

\end{definition}

An important result due to Kemer implies:

\begin{lemma}(``Kemer's Lemma $1$")

Any finite dimensional algebra $A$ is \textit{PI}-equivalent (i.e. the
same $T$-ideal of identities) to a direct product $A_{1} \times
\cdots \times A_{k}$ where $A_{i}$ is weakly adequate.
\end{lemma}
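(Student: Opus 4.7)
The plan is a three-step reduction paralleling the graded arguments of Sections 4--5 of the paper, in its simpler ungraded form. First, I would reduce to subdirectly irreducible algebras: if $A$ has nonzero ideals $I, J$ with $I \cap J = \{0\}$, the embedding $A \hookrightarrow A/I \times A/J$ gives $\id(A) = \id(A/I) \cap \id(A/J)$, so $A$ and $A/I \times A/J$ are PI-equivalent with both factors of strictly smaller dimension; induction on $\dim_F A$ handles this step, and weak adequacy is visibly stable under direct product.

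Second, I would introduce the notion of \emph{full}: $A$ is full if some multilinear non-identity admits a nonzero evaluation in which every simple summand of $\overline{A}$ receives at least one semisimple substitution. For a subdirectly irreducible $A$ that is not full, I would run the tensoring-with-$H$ trick used earlier in the paper, with $H = F[\lambda_1, \ldots, \lambda_q]/(\lambda_i^2 - \lambda_i, \lambda_1 \cdots \lambda_q)$: the subalgebra $\widetilde{A} \subseteq A \otimes H$ generated by the blocks $e_i A e_j \otimes \widetilde{e}_i \widetilde{e}_j$ is PI-equivalent to $A$ (failure of fullness is exactly what kills the monomials touching all $q$ idempotents), and decomposes subdirectly into the quotients $\widetilde{A}/\langle e_j \otimes \widetilde{e}_j \rangle$, each of which has strictly fewer simple components. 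Induction on the number of simple components (with the unit vs.\ no-unit cases handled as in the graded argument by adjoining $e_0 = 1 - (e_1 + \cdots + e_q)$) thus reduces to full, subdirectly irreducible algebras.

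The core step, and the main obstacle, is showing that a full subdirectly irreducible $A$ is weakly adequate. Starting from a non-identity $f$ witnessing fullness, I would multiply each variable that receives a semisimple value from the $i$-th simple component by a new auxiliary variable, which may be evaluated by $1_{A_i}$ without destroying non-vanishing. Using the Wedderburn decomposition $\overline{A} \cong \prod_i M_{k_i}(F)$, each $1_{A_i}$ can be written, up to a nonzero scalar, as a specific product of all matrix units $E^i_{p,q}$ in a fixed order; replacing each auxiliary variable by such a monomial in fresh variables, bordered by appropriate diagonal-idempotent variables, yields a polynomial whose $n = \dim_F \overline{A}$ new variables can be alternated as a single set. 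Orthogonality of matrix units within a component and of central idempotents across components forces every non-trivial permutation to vanish on the chosen evaluation, so alternation preserves non-vanishing; iterating the construction with $r$ disjoint copies produces an $r$-fold alternating non-identity with sets of cardinality $n$, establishing weak adequacy. The subtle point, which is exactly the specialization of Lemma 5.2 to the ungraded case, is to exhibit a \emph{single} non-vanishing evaluation in which every basis element of $\overline{A}$ appears as the value of some distinct variable and in which orthogonality relations force every alternation to preserve non-vanishing; the idempotent-bordering device is essential here.
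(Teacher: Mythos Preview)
Your approach is correct and is precisely the ungraded specialization of the paper's Sections 4--5 argument (the paper states this Appendix lemma without proof, deferring to the graded version). Two small slips are worth fixing. First, the remark that ``weak adequacy is visibly stable under direct product'' is false (e.g.\ $F$ is weakly adequate with $n=1$, but $F\times F$ is commutative and admits no non-identity alternating on a set of size $2$); fortunately you never use this---the induction only needs that the property ``PI-equivalent to a product of weakly adequate algebras'' is stable under direct product, which is trivial. Second, $1_{A_i}=\sum_j E^i_{j,j}$ is \emph{not} a product of matrix units (any such product is either $0$ or a single $E_{p,q}$); the correct move, exactly as in the paper's proof of Lemma~\ref{full-folds}, is to first use multilinearity to replace the value $1_{A_i}$ of the auxiliary variable by some $E^i_{j_i,j_i}$ that still gives a nonzero evaluation, and only then exhibit a product of all $k_i^2$ matrix units equal to $E^i_{j_i,j_i}$. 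With these adjustments your sketch is complete.
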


The lemma allows us to control the dimension of the semisimple
component of $A$ (after passing to direct products of weakly
adequate algebras) in terms of noncommutative polynomials. But we
need more. We would like to control also the nilpotency index in
terms of noncommutative polynomials. For this we need to
strengthen the definition of weakly adequacy.

\begin{definition}

A finite dimensional algebra $A$ is \textit{adequate} if for every
integer $r$ there is a multilinear polynomial, non-identity of
$A$, which has $r$-alternating sets of cardinality
$n=\dim_{F}(\overline{A})$ and precisely $s-1$ alternating sets of
variables of cardinality $n+1$.

\end{definition}

As noted above a non identity of $A$ cannot have more than $s-1$
alternating sets of cardinality $n+1$.

A key result of Kemer (``Kemer's Lemma $2$") implies:

\begin{theorem}[Adequate model theorem]

Any finite dimensional algebra $A$ is \textit{PI}-equivalent to a direct
product $A_{1} \times \cdots \times A_{k}$ where $A_{i}$ is
adequate.
\end{theorem}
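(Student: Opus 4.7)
The plan is to combine two reductions developed in the body of the paper, specialized to the ungraded case: first, the decomposition of a finite dimensional algebra, up to \textit{PI}-equivalence, into a direct product of basic algebras; second, the ungraded analog of Kemer's Lemma $2$, which shows that a basic algebra is adequate in the present sense.

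For the first reduction, one argues as in Lemma \ref{decomposition into irreducible} and the subsequent fullness lemma. Using that the Jacobson radical and a Wedderburn--Malcev complement are defined, one first splits $A$ into subdirectly irreducible factors by successively dividing by transversal ideals $I\cap J=0$. Then the idempotent tensoring trick (extending scalars by the commutative algebra $F[\lambda_1,\dots,\lambda_q]/(\lambda_i^2-\lambda_i,\,\lambda_1\cdots\lambda_q)$) splits any subdirectly irreducible factor that fails to be full into factors with strictly fewer simple components (or, if $A$ has no identity, with the same number of simple components but now unital). An induction on the number of simple components, followed by the passage to a \textit{PI}-minimal model inside each \textit{PI}-equivalence class, yields a decomposition into basic (i.e.\ \textit{PI}-minimal, full, subdirectly irreducible) components.

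For the second step, one must show that each basic factor $A_i$ is adequate. This is the content of the ungraded analog of Lemma \ref{Kemer's Lemma 2}: if $n_i=\dim_F(\overline{A}_i)$ and $s_i$ is the nilpotency index of $J(A_i)$, then for every integer $r$ there exists a multilinear non-identity of $A_i$ carrying $r$ alternating sets of cardinality $n_i$ and precisely $s_i-1$ alternating sets of cardinality $n_i+1$. This matches the definition of adequate verbatim, so concatenating the $r$-parameter families over the finitely many factors completes the proof.

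The main obstacle, exactly as in the graded setting treated in Section \ref{Section: Kemer's Lemma $2$}, lies in producing a single multilinear non-identity that is simultaneously full (so that Kemer's Lemma $1$ furnishes arbitrarily many $r$-folds of $n$-element alternating sets whose non-zero evaluations necessarily visit every simple component) and satisfies Property $K$ (every non-zero evaluation carries the maximal number $s-1$ of radical substitutions). Fullness is given because $A$ is basic; Property $K$ is the ungraded analog of Proposition \ref{property K} and follows from \textit{PI}-minimality through the auxiliary algebra $\widehat{A}_{s-1}$ of Proposition \ref{Control on nilpotency index}. The merging step uses subdirect irreducibility: if $f$ witnesses fullness and $h$ witnesses Property $K$, the evaluation ideals of $\langle f\rangle$ and $\langle h\rangle$ in $A$ are both non-zero, so by subdirect irreducibility their intersection is non-zero, producing a non-identity in $\langle f\rangle\cap\langle h\rangle$. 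A bordering argument using the central idempotents of the simple components (available by Theorem \ref{BSZ}, or in the ungraded case by the classical Wedderburn structure) then attaches each of the mandatory $s-1$ radical variables to one of the small alternating sets, enlarging it to an alternating set of size $n+1$; non-triviality of every resulting alternation is forced by Property $K$ on one hand and by the mismatch of borderings across distinct simple components on the other. This delivers the required Kemer-type polynomial and hence adequacy.
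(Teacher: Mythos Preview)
Your proposal is correct and follows exactly the approach the paper itself indicates: the Remark immediately after the Adequate Model Theorem states that one first reduces to a direct product of basic algebras and then applies Kemer's Lemma~$2$ to see that each basic algebra is adequate, which is precisely your two-step plan. Your specialization of the graded machinery (Lemma~\ref{decomposition into irreducible}, the fullness reduction, Proposition~\ref{property K}, Lemma~\ref{combining Full and property K}, and Lemma~\ref{Kemer's Lemma 2}) to the ungraded setting is the intended route, and the details you sketch---the idempotent tensoring trick, the $\widehat{A}_{s-1}$ construction for Property~$K$, the subdirect-irreducibility merge, and the bordering argument---are all faithful to the paper's arguments.
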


\begin{remark}
In fact one shows by a sequence of reductions, that any finite
dimensional algebra $A$ is \textit{PI}-equivalent to a direct product of
algebras which are called \textit{basic}. Kemer's Lemma 2 says that any
basic algebra is adequate.

\end{remark}

\begin{remark}

It should be emphasized that our main application of Kemer's Lemma
is in the ``reverse direction'': We start with $\Gamma$, the
$T$-ideal of identities of an affine algebra $W$. First one shows
that there exists a finite dimensional algebra $A$ such that
$\Gamma \supseteq \id(A)$. Then one shows easily that there exist
a pair $(n,s)$ of non-negative integers, such that for any integer
$r$ there exist polynomials $f$ \textit{outside} $\Gamma$ (called
Kemer polynomials for $\Gamma$) which have $r$ sets of alternating
variables of cardinality $n$ and $s-1$ sets of alternating
variables of cardinality $n+1$. Moreover the pair $(n,s)$ is
maximal with respect to the usual lexicographic order. The point
of Kemer's Lemma's is that one can find a basic algebra which
``realizes'' these parameters, i.e. a finite dimensional algebra
$A$ where $n=\dim_{F}(\overline{A})$, $s$ is the nilpotency index
of $J(A)$ and such that the Kemer polynomials for $\Gamma$ are
\textit{outside} $\id(A)$. As pointed out in the introduction,
this is the connection which allows us to prove the Phoenix
property for Kemer polynomials.

\end{remark}

\title{\textbf{Acknowledgment}: We thank Vesselin Drensky for
pointing out an error which occurred in Theorem
\ref{PI-equivalence-general} in an earlier version of this paper.}

\end{section}

\end{document}